\documentclass{amsart}
\usepackage{amssymb,mathtools}
\usepackage[british]{babel}
\usepackage{enumitem}
\usepackage[latin1]{inputenc}
\usepackage{soul}
\usepackage{hyperref}

\newtheorem{theorem}{Theorem}
\numberwithin{theorem}{section}
\newtheorem{corollary}[theorem]{Corollary}
\newtheorem{lemma}[theorem]{Lemma}
\newtheorem{proposition}[theorem]{Proposition}
\theoremstyle{definition}
\newtheorem{definition}[theorem]{Definition}
\newtheorem{remark}[theorem]{Remark}
\newtheorem{example}[theorem]{Example}

\numberwithin{equation}{section}

\DeclareRobustCommand{\SkipTocEntry}[5]{}

\DeclareMathOperator{\dom}{dom}
\DeclareMathOperator{\rng}{rng}

\newcommand{\ipp}{\mathsf{IPP}}

\title{Avoiding logical strength in real analysis}
\author{Anton Freund}
\author{Nicholas Pischke}
\author{Patrick Uftring}

\address{Anton Freund, University of W\"urzburg, Institute of Mathematics, Emil-Fischer-Str.~40, 97074 W\"urz\-burg, Germany}
\email{anton.freund@uni-wuerzburg.de}
\address{Nicholas Pischke, University of Bath,  Department of Computer Science, Claverton Down, Bath, BA2 7AY, United Kingdom}
\email{nnp39@bath.ac.uk}
\address{Patrick Uftring, University of the Bundeswehr Munich, Department of Computer Science, Werner-Heisenberg-Weg 39, 85579 Neubiberg, Germany}
\email{patrick.uftring@unibw.de}

\thanks{The work of Freund has been funded by the Deutsche Forschungsgemeinschaft (DFG, German Research Foundation) -- Project number 460597863.}

\begin{document}

\begin{abstract}
In reverse mathematics, real numbers are traditionally represented by Cauchy sequences with a given rate of convergence. We work without rates and speak of slow Cauchy sequences. It turns out that almost all one-dimensional real analysis from the reverse mathematics book by Simpson can then be developed in theories that are conservative over~$\mathsf{RCA}_0$. Specifically, we obtain clusters of equivalences with the infinite pigeonhole principle and the strong cohesive principle. The second cluster includes results like the Bolzano-Weierstrass and Arzel\`a-Ascoli theorems, which are traditionally associated with the stronger axiom of arithmetical comprehension, but also the Heine-Borel theorem, which is normally separated from these principles. This suggests two things: In elementary analysis, one can avoid logical strength to an extent that the traditional picture seems to forbid. And the division of the so-called reverse mathematics zoo into analytical and combinatorial principles may be less rigid than previously assumed.
\end{abstract}

\keywords{Reverse mathematics, real analysis, logical strength, cohesive principle}
\subjclass[2020]{03B30, 03F35, 03F60}

\maketitle

\tableofcontents

\section{Introduction}

Reverse mathematics is a program in mathematical logic that aims to determine the minimal set existence axioms that allow to prove theorems from various fields. For an introduction, the reader can, e.g., consult the classical book by S.~Simpson~\cite{simpson09} or the more recent one by D.~Dzhafarov and C.~Mummert~\cite{dzhafarov-mummert}. For a historical perspective, one may consider H.~Friedman's contribution~\cite{friedman-icm} to the International Congress of Mathematicians~1974.

Analysis over the real numbers and more general spaces has always been a main focus of reverse mathematics. To complement the many classical examples that can be found in the book by Simp\-son, we mention recent investigations into Ekeland's variational principle~\cite{ekeland} and Caristi's fixed point theorem~\cite{caristi}.

Over sufficiently strong axiom systems, different representations of the real numbers are equivalent (as in everyday mathematics). However, large parts of analysis can be developed in systems that are so weak that the representation matters. Specifically, one commonly works over a basic axiom system $\mathsf{RCA}_0$ (`recursive comprehension axiom'), which can be seen as a system of computable mathematics (though this view should be treated with some caution). Over this system, the real numbers are classically represented by Cauchy sequences with a rate (also called modulus) of convergence. The specific choice of rate is a matter of convenience. Simpson (see Definition~II.4.4 of~\cite{simpson09}) defines a real number as a sequence~$(q_n)=(q_n)_{n\in\mathbb N}$ of rationals with
\begin{equation*}
    |q_m-q_n|\leq 2^{-m}\quad\text{for}\quad m\leq n.
\end{equation*}
In this paper, we instead represent real numbers by sequences of rationals that are Cauchy but do not come with a rate, i.e., we simply demand
\begin{equation*}
    \forall\varepsilon>0\exists N\forall m,n\geq N:|q_m-q_n|<\varepsilon.
\end{equation*}
One can of course take~$\varepsilon$ to be rational (see the next section for official definitions). We sometimes speak of slow Cauchy sequences in order to emphasize that no rate is given. Conversely, we call a Cauchy sequence fast when it comes with a rate as in the classical approach.

At first, the omission of the rate may seem incredibly naive (and indeed our slow Cauchy sequences are called naive Cauchy sequences in some sources~\cite{kreitz-weihrauch}). On the one hand, what we call slow Cauchy sequences is probably the most common choice outside of logic (and Dedekind cuts are probably the most common alternative). One would expect that their status is fully clarified from a logical perspective. As far as we could determine, this is only partially true. We will discuss related work at the end of this introduction.

On the other hand, Cauchy sequences with a rate have long been the representation of choice across much of constructive mathematics~\cite{bishop} and computable analysis~\cite{pour-el-richards,weihrauch-book}.\footnote{It should be said, however, that the picture is not entirely clear. For example, Geuvers, Niqui, Spitters and Wiedijk~\cite{geuvers} write: ``Although this [i.e., the representation of real numbers by slow Cauchy sequences] is quite inefficient, its theoretical importance and its suitability for formalisation has made this representation the basis of the first full implementation of constructive real numbers in a proof assistant [\dots]."} In view of this, it is \emph{prima facie} plausible that the representation with rate is more suitable over weak axiom systems with limited access to non-computable sets. We argue that this, too, is only partially true. 

According to the classical picture with rates, a fair amount of analysis can be done in the axiom system~$\mathsf{WKL}_0$ (`weak K\H{o}nig's lemma'), which is $\Pi^1_1$-conservative over (i.e., proves the same $\Pi^1_1$-statements as) the base system~$\mathsf{RCA}_0$. One can argue that this makes it finitistically reducible in the sense of Hilbert's program. However, still classically with rates, the monotone convergence principle and other results that involve sequential compactness require the axiom system $\mathsf{ACA}_0$ (`arithmetical comprehension axiom'), which is considerably stronger. To some extent, this suggests that the fall of Hilbert's program through G\"odel's incompleteness theorems is already witnessed in elementary analysis.

Any monotone and bounded sequence of rationals is a slow Cauchy sequence. This observation is certainly well-known. So to try and build a theory on it is again naive. To our own surprise, however, that naive endeavor has turned out remarkably successful: We went through all the one-dimensional real analysis that is analyzed in Simpson's classical book on reverse mathematics~\cite{simpson09}.\footnote{To keep this initial investigation manageable, we have not looked at results that involve several real variables or more general spaces. Only future work will tell whether slow Cauchy sequences are still successful in these situations. Even if they are not, the one-dimensional real case seems substantial enough to justify our foundational conclusions.} For slow Cauchy sequences, the large majority of results -- now including theorems about sequential compactness like Bolzano-Weierstrass and even Arzel\`a-Ascoli -- is provable in systems that are conservative over~$\mathsf{RCA}_0$. 

Before we give a more detailed picture of our results, let us mention that the representation by slow Cauchy sequences is not the only modification. In its wake, we need to adapt other definitions. When it comes to continuous functions, our new representation (via the values on rational arguments; see Definition~\ref{def:continuous}) is arguably more intuitive than the classical one (via coherent systems of open balls; see Definition~II.6.1 in~\cite{simpson09}). For open sets, our definition is slightly more complicated but probably uncontroversial (compare Definition~II.5.6 in~\cite{simpson09} with our Definition~\ref{def:open}). The most noteworthy aspect is our representation of sequences of reals (and similarly of sequences of real-valued functions; see Definitions~\ref{def:seq} and~\ref{def:seq-fct}). These are given by double sequences~$(x_{in})_{i,n\in\mathbb N}$ of rationals such that the reals $x_i=(x_{in})_{n\in\mathbb N}$ are uniformly Cauchy, i.e., such that we have
\begin{equation*}
    \forall\varepsilon>0\exists N\forall i\forall m,n\geq N:|x_{im}-x_{in}|<\varepsilon.
\end{equation*}
Let us emphasize that this does not require us to provide a rate. In particular, when $x$ is any real in our sense, setting $x_i=x$ for all~$i\in\mathbb N$ yields a (constant) sequence $(x_i)_{i\in\mathbb N}$ of reals. So it is not the case that the uniformity condition reintroduces the classical representation through the back door. Nevertheless, one can challenge our representation of sequences on both philosophical and mathematical grounds. We respond to both challenges in turn.

On the philosophical side, one may argue that our uniformity condition on sequences goes against Simpson's objection to `extra data':
\begin{quote}
    ``The typical constructivist response to a nonconstructive mathematical theorem is to modify the theorem by adding hypotheses or `extra data'. In contrast, our approach in this book is to analyze the provability of mathematical theorems as they stand, passing to stronger subsystems of $Z_2$ if necessary." (from Remark~I.8.9 of~\cite{simpson09})
\end{quote}
In the framework of reverse mathematics, we have a canonical way to represent sequences of sets~$A_n\subseteq\mathbb N$, namely, as $(A_n)_{n\in\mathbb N}=\{(m,n):m\in A_n\}\subseteq\mathbb N$ with coded pairs~$(m,n)$. One could maintain that, to analyze mathematics as it stands, we are committed to representing sequences of reals in this canonical way and without the additional assumption of a common rate.

In defense, we first note that the classical representation of reals itself adds data in the form of a rate. This does not necessarily contradict Simpson's aims: Mathematics `as it stands' does not typically commit to a specific representation of the real numbers (except in the context of teaching). One can argue that this gives us the freedom to pick one. But then a similar point can be made for sequences: It seems that mathematicians do not typically care whether sequences of reals are represented in the usual set-theoretic way or by different means, as long as one can work with them. In our view, further justification comes from the program of strict reverse mathematics that has recently been promoted by Friedman~\cite{friedman-emergence,friedman-strict-real}. Here one avoids coding at the cost of multiple sorts, and Friedman explicitly mentions separate sorts for reals and sequences of reals. Now a formal interpretation from the strict into the classical setting may act independently on each sort. In particular, we are free to impose our uniformity condition when we interpret the sort of sequences. A definite framework for the strict reverse mathematics of real analysis does not yet exist. But once it exists, our results should show (by the indicated interpretation) that in strict reverse mathematics, too, much of elementary analysis must stay well below arithmetical comprehension.

Even if the reader should feel that we do not analyze mathematics `as it stands', we can argue that formal interpretations yield bounds on consistency strength. So our results reveal that a substantial part of analysis has lower consistency strength than the classical approach may suggest. They show that, despite G\"odel's theorems, Hilbert's program succeeds for much of elementary analysis, which includes notable sequential results like the Arzel\`a-Ascoli theorem.

By responding to the philosophical challenge, we have created a mathematical one: We need to demonstrate that our representation of sequences (and of other relevant objects like continuous functions and open sets) is indeed suitable for a formalization of real analysis in weak axiom systems. Here it is not sufficient to show that a few isolated theorems (like the monotone convergence principle) become weak: It seems obvious that this can be achieved by tweaking some definition. To make a non-trivial point, we need to integrate a substantial body of results. On the philosophical side, this fits with our focus on interpretations between theories.

In order to respond to the mathematical challenge, we now summarize the results that are proved in the present paper. When reals are represented by slow Cauchy sequences (and other representations are adapted as indicated above), the following are some of the results that can be proved in~$\mathsf{RCA}_0$:
\begin{itemize}[label=--]
    \item the reals form an Archimedean ordered field (Proposition~\ref{prop:archimedean}),
    \item arbitrary unions of open sets are open (Lemma~\ref{lem:unions}),
    \item preimages of open sets under continuous functions are open, and any open set can be realized as a preimage of $(0,\infty)$ (Proposition~\ref{pro:ContinuousOpen}),
    \item Urysohn's lemma (Lemma~\ref{lem:Urysohn}),
    \item any Cauchy sequence of reals converges (Lemma~\ref{lem:cauchy-converges}),
    \item any bounded sequence of reals has a supremum (Proposition~\ref{prop:sup}),
    \item the monotone convergence principle (Corollary~\ref{pro:monotoneConvergence}),
    \item $\mathbb R$ is nested-interval complete (Corollary~\ref{cor:nestedIntervals}).
\end{itemize}
Most results that are not provable in~$\mathsf{RCA}_0$ belong to one of two clusters. The first cluster of equivalences over~$\mathsf{RCA}_0$ includes
\begin{itemize}[label=--]
    \item the infinite pigeonhole principle~$\mathsf{IPP}$ (recalled before Lemma~\ref{lem:card-surjection}),
    \item any finite set of reals is bounded / has a maximum (Lemma~\ref{lem:fin-sets-bounded}),
    \item the intermediate value theorem (Theorem~\ref{thm:ivt}),
    \item finite intersections of open sets are open (Proposition~\ref{prop:intersections}),
    \item $\mathbb R$ is connected (Proposition~\ref{prop:connected}).
\end{itemize}
In the second cluster, we have
\begin{itemize}[label=--]
\item the strong cohesive principle (recalled before Proposition~\ref{prop:coh-cads}),
\item the Heine-Borel theorem (Theorem~\ref{thm:Heine-Borel}),
\item any continuous function $f\colon[0,1]\to\mathbb R$ is uniformly continuous / is bounded~/ assumes its extrema (Theorem~\ref{thm:StCADSContinuous}),
\item any continuous function $f\colon[0,1]\to\mathbb R$ is Riemann integrable (where the equivalence is shown over $\mathsf{RCA}_0+\mathsf{IPP}$; Theorem~\ref{thm:RiemannInteg}),
\item the Weierstrass approximation theorem (over $\mathsf{RCA}_0+\mathsf{IPP}$; Theorem~\ref{thm:weierstrass-approx}),
\item the Bolzano-Weierstrass theorem (Theorem~\ref{thm:BolzanoWeierstrass}),
\item the Arzel\`a-Ascoli theorem (Theorem~\ref{thm:arzela}).
\end{itemize}
There are some outliers and some results for which we have not determined the exact strength:
\begin{itemize}[label=--]
    \item The statement that any dense set of reals is infinite is, by Proposition~\ref{prop:dense-inf}, equivalent to the $\Sigma^0_2$-cardinality principle $\mathsf C\Sigma^0_2$ (which is weaker than $\mathsf{IPP}$; see the paragraph before Lemma~\ref{lem:card-surjection}).
    \item The statement that $\mathbb R$ is uncountable is $\Pi^1_1$-conservative over $\mathsf{RCA}_0$ and (hence strictly) weaker than $\mathsf{IPP}$ (Propositions~\ref{pro:cantor} and~\ref{prop:cantor_conservative}). We do not know whether it is provable in $\mathsf{RCA}_0$.
    \item The Baire category theorem can be derived if we assume either $\Sigma^0_2$-induction or the $\Pi^0_1$-genericity principle~$\Pi^0_1\mathsf G$ (Proposition~\ref{prop:Baire-category}). It thus implies neither of these principles and is $\mathsf r\Pi^1_2$-conservative over~$\mathsf{RCA}_0$ (but not necessarily over $\mathsf{RCA}_0+\mathsf{IPP}$; Corollary~\ref{prop:Baire-category}).
    \item The Tietze extension theorem can be derived by $\Sigma^0_2$-induction (see Propo\-sition~\ref{prop:tietze}), but we do not know its exact strength.
    \item The statement that any bounded sequence in~$\mathbb R$ has a limit superior implies $\Sigma^0_2$-induction, and we have no proof below~$\mathsf{ACA}_0$, though $\Sigma^0_2$-induction suffices to get arbitrarily good approximations (Proposition~\ref{prop:limsup-approx}).
\end{itemize}
Let us note that some theorems become slightly stronger than under the classical approach with rates. There, e.g., the intermediate value theorem is provable in~$\mathsf{RCA}_0$ (see Theorem~II.6.6 of~\cite{simpson09}). One could say that we pay an initial prize by making these theorems computably false. As a reward, the overall consistency strength is lowered considerably: As the strong cohesive principle is $\Pi^0_3$-conservative over~$\mathsf{RCA}_0$ (see~\cite{hirschfeldt-shore} and the explanations in Section~\ref{sect:cohesive}), the same holds for the large body of results in the list above. We emphasize that this list includes the theorems of Bolzano-Weierstrass and Arzel\`a-Ascoli, which are classically equivalent to the much stronger principle of arithmetical comprehension (see Theorems~III.2.2 and~III.2.9 of~\cite{simpson09}). In our view, this constitutes a convincing response to the mathematical challenge that was set out above.

Our approach is limited in at least two ways. First, sequential (also called uniform) versions of results are sometimes considerably stronger than in the classical setting. For example, the sequential version of the Heine-Borel theorem states that any family of open coverings admits a family of finite subcoverings (while the non-sequential version is about a single covering). Under the classical approach, both versions are equivalent to weak K\H{o}nig's lemma (see Theorems~IV.1.2 and~IV.1.6 of~\cite{simpson09}), while we need strong cohesiveness for the non-sequential but arithmetical comprehension for the sequential version (Proposition~\ref{prop:heine-borel-unif}). One can debate how relevant the sequential versions are. In our view, a good test is whether they are needed to prove non-sequential versions of other results. Simpson uses the sequential Heine-Borel theorem to prove that any continuous function on~$[0,1]$ is uniformly continuous (see the proof of Theorem~IV.2.2 in~\cite{simpson09}). We still manage to prove the latter result under strong cohesiveness, even though the sequential Heine-Borel theorem is not available to us. So at least in this case, the strength of the sequential result can be contained.

The second limitation brings us back to our treatment of sequences of reals, which we require to be uniformly Cauchy. This requirement is to our advantage when a result (like the Bolzano-Weierstrass theorem) provides us with a sequence. But it presents a challenge when it is us who need to provide a sequence in order to apply a theorem. To make this concrete, let us note that
\begin{itemize}[label=--]
    \item we can construct the sequence $(f(x_n))_{n\in\mathbb N}$ in $\mathsf{RCA}_0+\mathsf{StCOH}$ for a continuous function $f\colon\mathbb R\to\mathbb R$ and a bounded sequence $(x_n)_{n\in\mathbb N}$ of reals, but not necessarily for an unbounded sequence (Proposition~\ref{lem:fct-to-seq}),
    \item for any real~$x$, we can construct $(f_n(x))_{n\in\mathbb N}$ over~$\mathsf{RCA}_0$ when the functions~$f_n$ are pointwise equicontinuous (see Definition~\ref{def:seq-fct} and Corollary~\ref{cor:equicont-seq}),
    \item as an instance of the previous point, we can construct the sequence $(x^n)_{n\in\mathbb N}$ in $\mathsf{RCA}_0$ when $x$ is a real in~$[-1,1]$ but not in general (Example~\ref{ex:x^n}),
    \item we can construct the sequence $(f^n(x))$ of iterates over $\mathsf{RCA}_0+\mathsf{StCOH}$ when $f$ is a strong contraction (see Proposition~\ref{prop:Banach}, which derives the Banach fixed point theorem from our general convergence results), while the general case requires arithmetical comprehension (Proposition~\ref{prop:iterates-ACA}),
    \item over $\mathsf{RCA}_0$, we can prove a version of Caristi's theorem by mimicking the usual proof, which iterates a function to construct a sequence of reals (see Theorem~\ref{thm:caristi} and Remark~\ref{rmk:semi-cont}).
\end{itemize}
One may argue that sequences are most interesting when they converge. From this viewpoint, the issue with $(x^n)_{n\in\mathbb N}$ for $|x|>1$ is somewhat awkward but of little consequence. The results that we have listed above show that we can construct sequences of reals in several relevant cases. It would be desirable to do more case studies in future work: These could strengthen or weaken our claim that sequences of reals can be handled in weak axiom systems.

Based on our mathematical results, we have argued that a substantial part of analysis can be accommodated in axiom systems that are conservative over $\mathsf{RCA}_0$. To a reader who rejects this argument, we can still offer the following:
\begin{itemize}[label=--]
    \item In recent years, the focus of reverse mathematics has, arguably, shifted from analysis to combinatorics. This has led to new axioms in the so-called reverse mathematics zoo. These new axioms are incomparable with weak K\H{o}nig's lemma, which is classically equivalent to the open-cover compactness of~$[0,1]\subseteq\mathbb N$. With our new approach, open-cover compactness becomes equivalent to the strong cohesive principle, which thus unites the analytical and the combinatorial realm (see Section~\ref{sect:cohesive} for background on the combinatorial side).
    \item Related to the previous point, our approach brings out the combinatorial aspects of some analytical theorems. For example, we have mentioned that the monotone convergence principle is provable in $\mathsf{RCA}_0$ while the Bolzano-Weierstrass theorem requires strong cohesiveness. This conforms with the mathematical intuition that the proof of the latter is more involved. Classically, both results are equivalent to arithmetical comprehension.
    \item Under our approach, open-cover compactness and sequential compactness are both equivalent to strong cohesiveness. So the notion of compactness -- which is at the very heart of mathematical analysis -- becomes more robust.
    \item Our results yield additional guardrails for the emerging program of strict reverse mathematics (see above and~\cite{friedman-emergence}). Namely, if a body of results is weak under our approach, any strict formalization should also be weak. Conversely, if one wants to find logical strength in strict reverse mathematics, one needs to draw on resources that have strength in our framework (e.g., via an axiom that insists on the existence of certain sequences in~$\mathbb R$).
    \item We present new arguments that may be fruitful in different contexts. In particular, we point the reader to the proof of Proposition~\ref{prop:cantor_conservative}, which combines hyperimmunity with metastability (as used in proof mining).
\end{itemize}

Some of our findings resemble results that have been obtained in different settings. In particular, U.~Kohlenbach~\cite{kohlenbach-pra} has shown that parameter-free versions (i.e., single applications) of the monotone convergence principle as well as the Bolzano-Weierstrass and Arzel\`a-Ascoli theorems (formalized in higher-order arithmetic) are $\Pi^0_3$-conservative over primitive recursive arithmetic. In contrast, the parameter-free version of the limit superior principle entails and is $\Pi^0_4$-conservative over $\Sigma^0_2$-induction. This distinction conforms with experience from proof mining. It is interesting that we can make the same distiction, while the classical approach identifies all principles with arithmetical comprehension. 

In the case of Kohlenbach's results, it is not clear whether the similarity with our findings is a coincidence, since we allow parameters and Kohlenbach uses Cauchy sequences with rate, which seem to be orthogonal modifications. The connection is much clearer for results of A.~Kreuzer. The latter has proved~\cite{kreuzer-bolzano} that the strong cohesive principle is equivalent to a version of the Bolzano-Weierstrass theorem, where the input is a sequence of reals with rate and the output is only a slow Cauchy sequence. It had previously been shown by S.~Le~Roux and M.~Zieg\-ler~\cite{le-roux-ziegler} that the slow Cauchy sequence in the output may need to be non-computable. As an isolated result, the analysis of the Bolzano-Weierstrass theorem for slow Cauchy sequences is thus essentially due to Kreuzer (though the latter assumes a rate while we only require uniformity). The point of our paper is that we integrate the result into a coherent foundational picture, in which slow Cauchy sequences are used consistently in the input and output of all theorems, and where the definitions of open sets and continuous functions are adapted accordingly. Kreuzer has also proved~\cite{kreuzer-ascoli} that a version of the Arzel\`a-Ascoli theorem is equivalent to the conjunction of the strong cohesive principle and weak K\H{o}nig's lemma. This is of course similar to but not quite comparable with our result, since Kreuzer works with the traditional representation of continuous functions, which forces him to assume $\mathsf{WKL}$ as well.

In reverse mathematics, J.~Hirst has determined the minimal axioms that are needed to (uniformly) convert different representations of the real numbers into each other~\cite{hirst-reals}. He considered Cauchy sequences with rate, decimal expansions (i.e., sequences with rate that are additionally monotone) and Dedekind cuts with and without endpoints. Slow Cauchy sequences are not considered, possibly because it is obvious that conversions from these require arithmetical comprehension (see Remark~\ref{rem:FastCauchyACA}). Also, the paper by Hirst does not explore how elementary analysis plays out with the different representations.

A systematic picture of different representations has been obtained in computable analysis. Of particular relevance for us are results that go back to C.-K.~Ho~\cite{ho-slow-jump}, who proved that a real number is represented by a computable slow Cauchy sequence precisely if it is represented by a jump computable sequence with rate. An analogous result for the space of continuous functions has been proved by Ziegler~\cite{ziegler-type-2}, while V.~Brattka~\cite{brattka-galois} presented an even more general Galois connection. The latter could possibly explain our results in a more systematic way or even reduce some of them to previously known facts. Whether it really does is not straightforward to determine -- not least since we use new definitions of open sets and continuous functions -- but it is desirable to explore this in future work.

To avoid misunderstanding, we emphasize that our new approach is intended to complement but not to supersede the classical one. Cauchy sequences with rate have desirable properties from several perspectives. In computable analysis, each choice of representation induces a topology on the reals. Cauchy sequences without rate produce the trivial topology, while the usual Euclidian topology arises from representations with rate (see~\cite{kreitz-weihrauch}). In less technical terms, no finite part of a slow Cauchy sequence provides any information about the real number that is represented. In contrast, sequences with rate allow for effective approximations, which is clearly relevant both on a theoretical level and in terms of applications. For these reasons alone, the reverse mathematics of Cauchy sequences with rate is as relevant as ever. Also, we have mentioned a few examples of sequential results where the traditional approach fares better than the approach without rates. It is possible that more striking examples will be discovered in the future. Our aim is simply to show that a coherent alternative is possible, which opens up foundational possibilities that may have appeared to be blocked.

\addtocontents{toc}{\SkipTocEntry}
\subsection*{\bf Terminology}
As mentioned above, we require sequences of reals to satisfy a certain uniformity condition. To interpret all our claims correctly, the reader should note that we systematically distinguish between families and sequences: By a family, we simply mean a collection of sets (coded into a single set), which does not need to be uniform (consider, e.g., Proposition~\ref{pro:cantor}). In contrast, sequences of reals and continuous functions are always assumed to satisfy the uniformity conditions from Definitions~\ref{def:seq} and~\ref{def:seq-fct}, respectively.

\addtocontents{toc}{\SkipTocEntry}
\subsection*{\bf Acknowledgements}
We are very grateful for generous support from colleagues, who have provided invaluable advice, encouragement or information: in particular, we want to thank Vasco Brattka, Denis Hirschfeldt, Jeffry Hirst, Ulrich Kohlenbach, Ludovic Patey, Peter Schuster, Stephen Simpson and Martin A.~Ziegler.

\section{Fundamental properties of the reals}

In this section, we state our definition of the real numbers and discuss basic operations and relations on them. We also investigate cardinality questions in relation to the $\Sigma^0_2$-cardinality principle and the infinite pigeonhole principle.

The following definition is familiar from elementary analysis courses. In reverse mathematics, Simpson uses it over $\mathsf{ACA}_0$. The point is that we will use the same definition over the weaker base theory~$\mathsf{RCA}_0$, where the two are not equivalent (see the introduction and compare Definitions~I.4.2 and II.4.4 in~\cite{simpson09}).

\begin{definition}
    The class~$\mathbb R$ of reals consists of all Cauchy sequences of rationals, i.e., of all sequences~$(x_n)=(x_n)_{n\in\mathbb N}$ with $x_n\in\mathbb Q$ such that each rational~$\varepsilon>0$ admits an~$N\in\mathbb N$ with $|x_m-x_n|<\varepsilon$ for all~$m,n\geq N$.
\end{definition}

As mentioned in the introduction, we sometimes say that our reals are represented by slow Cauchy sequences. This is supposed to emphasize the difference with the classical representation by `fast' Cauchy sequences with rate. The following remark shows that arithmetical comprehension is needed to convert slow Cauchy sequences into fast ones. While one may be tempted to read this as a negative result, one can give it a positive twist: As slow Cauchy sequences can absorb so much logical strength, we may hope that they allow us to develop analysis over a much weaker theory. Over the course of this paper, we show that this hope materializes.

\begin{remark}\label{rem:FastCauchyACA}
Within $\mathsf{ACA}_0$, every Cauchy sequence $(q_n)$ in $\mathbb{Q}$ has a rate of convergence, i.e., there exists a function $r\colon\mathbb{Q}\to\mathbb{N}$ such that $\vert q_m-q_n\vert<\varepsilon$ holds for all $\varepsilon>0$ and all $m,n\geq r(\varepsilon)$. In particular, we can immediately use this $r$ to define a subsequence (which hence has the same limit) with Cauchy rate $2^{-n}$. Conversely, any injection $f\colon\mathbb N\to\mathbb N$ gives rise to rationals
\begin{equation*}
    q_n:=\sum_{i=0}^n 2^{-f(i)-1}
\end{equation*}
that form a Specker-type sequence (see~\cite{specker} as well as \cite{troelstra-van-dalen} for our specific construction). By the formula for the geometric series, it is immediate that the non-decreasing sequence $(q_n)$ is bounded. It must thus be Cauchy. Indeed, if this was false, we would have an $\varepsilon>0$ such that any $m\in\mathbb N$ admits an $n>m$ such that we have $q_n>q_m+\varepsilon$. But then our sequence would grow beyond any bound. However, any rate of convergence for $(q_n)$ allows us to compute the image of~$f$. Hence over~$\mathsf{RCA}_0$, arithmetical comprehension follows if any Cauchy sequence of rationals admits a rate and in particular if any slow Cauchy sequence admits a fast Cauchy sequence with the same limit (cf.~Lemma~III.1.3 of~\cite{simpson09}).
\end{remark}

We continue with basic definitions and results.

\begin{definition}\label{def:reals-ineq}
    For reals~$x=(x_n)$ and $y=(y_n)$, we write $x\leq y$ when every rational~$\varepsilon>0$ admits an~$N\in\mathbb N$ such that $x_n\leq y_n+\varepsilon$ holds for all~$n\geq N$. When we have both $x\leq y$ and $y\leq x$, we write~$x=y$.\footnote{One may write $=_{\mathbb R}$ to ensure that this is not confounded with equality as Cauchy sequences (where the latter amounts to~$x_n=y_n$ for all~$n\in\mathbb N$), though this is rarely necessary in practice.} We write $x<y$ when we have $x\leq y$ but not~$x=y$.
\end{definition}

Parts~(a) and~(b) of the following show that we have a total preorder. Due to parts~(c) and~(d), we get a total order if we quotient out equality. In the framework of reverse mathematics, the quotient is, of course, not represented as a set.

\begin{lemma}[$\mathsf{RCA}_0$]\label{lem:reals-order}
(a) The relation $\leq$ on~$\mathbb R$ is reflexive and transitive. 

(b) For any $x,y\in\mathbb R$, we have $x\leq y$ or~$y\leq x$.

(c) The equality on $\mathbb R$ is an equivalence relation.

(d) If we have $x=x'$ and $y=y'$, then $x\leq y$ is equivalent to $x'\leq y'$.
\end{lemma}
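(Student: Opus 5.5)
Reflexivity is immediate: for $x=(x_n)$ and any rational $\varepsilon>0$ we have $x_n\leq x_n+\varepsilon$ for all~$n$, so $N=0$ works. For transitivity, given $x\leq y$ and $y\leq z$ and a rational $\varepsilon>0$, I will apply both hypotheses with $\varepsilon/2$ to obtain $N_1$ and $N_2$. Then $N=\max(N_1,N_2)$ gives $x_n\leq y_n+\varepsilon/2\leq z_n+\varepsilon$ for $n\geq N$. Only the existence of the relevant witnesses is used here, so no comprehension is involved beyond what $\mathsf{RCA}_0$ trivially supports.

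Part~(b) is the only step with real content, and I would argue by contradiction using the Cauchy property. Suppose $x\not\leq y$. Then there is a rational $\varepsilon>0$ such that for every $N$ some $n\geq N$ has $x_n>y_n+\varepsilon$. I will pick $N$ such that $|x_m-x_n|<\varepsilon/3$ and $|y_m-y_n|<\varepsilon/3$ for all $m,n\geq N$; this uses the Cauchy condition for both sequences with $\varepsilon/3$. There is then some $n_0\geq N$ with $x_{n_0}>y_{n_0}+\varepsilon$. For every $m\geq N$ this yields
\begin{equation*}
y_m<y_{n_0}+\varepsilon/3<x_{n_0}-2\varepsilon/3<x_m-\varepsilon/3\leq x_m.
\end{equation*}
Consequently $y_m\leq x_m+\delta$ holds for every rational $\delta>0$ and all $m\geq N$, which gives $y\leq x$. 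This is purely first-order reasoning about the given sequences and their Cauchy witnesses, so it goes through in $\mathsf{RCA}_0$. The one point to watch is that we only need to exhibit a single witness $n_0$, not a function selecting one; no choice principle is required. I expect this to be the main obstacle, although a mild one.

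Part~(c) follows from~(a). Reflexivity of $=$ comes from reflexivity of $\leq$, symmetry holds by the definition of $=$ as the conjunction of $x\leq y$ and $y\leq x$, and transitivity comes from transitivity of $\leq$ applied in both directions. For part~(d), assume $x=x'$, $y=y'$ and $x\leq y$. Transitivity of $\leq$ along $x'\leq x\leq y\leq y'$ gives $x'\leq y'$, and the converse direction is symmetric.
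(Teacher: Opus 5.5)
Your proof is correct and matches the paper's. The paper also treats (a), (c) and (d) as routine, and proves (b) the same way: it takes a witness $n$ for the failure of one inequality beyond a common Cauchy index $N$, then uses the triangle inequality to carry the strict gap to every $m\geq N$. Your chain in fact gives the strong form $y_m<x_m-\varepsilon/3$ for all $m\geq N$, which is exactly what the paper records for later use in Lemma~\ref{lem:reals-ineq-Sigma02}.
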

\begin{proof}
    The arguments from an elementary analysis course go through in~$\mathsf{RCA}_0$. We only prove a strong form of~(b), because we want to refer to it later. Let us write $x=(x_n)$ and~$y=(y_n)$. Assuming~$y\not\leq x$, we have an~$\varepsilon>0$ such that each~$N\in\mathbb N$ admits an~$n>N$ with~$y_n>x_n+\varepsilon$. Since our sequences are Cauchy, we now find an~$N\in\mathbb N$ such that all~$m,n\geq N$ validate
\begin{equation*}
    x_m-y_m\leq|x_m-x_n|+(x_n-y_n)+|y_n-y_m|\leq(x_n-y_n)+\frac\varepsilon2.
\end{equation*}
If we choose the right~$n>N$, we get $x_n-y_n<-\varepsilon$ from above. For any~$m\geq N$, we thus have $x_m-y_m<-\varepsilon/2$ and hence~$x_m+\varepsilon/2<y_m$. In particular, this means that we have~$x\leq y$.
\end{proof}

From the previous lemma, one readily infers that $<$ is a strict total order modulo equality. Also, it follows that $x<y$ holds precisely when~$y\leq x$ fails.

\begin{lemma}[$\mathsf{RCA}_0$]\label{lem:reals-ineq-Sigma02}
For reals $x=(x_n)$ and $y=(y_n)$, we have $x<y$ precisely if there is a rational~$\delta>0$ and an~$N\in\mathbb N$ such that all $n\geq N$ validate~$x_n+\delta<y_n$.
\end{lemma}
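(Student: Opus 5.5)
We argue each direction by going back to the definitions of $\leq$ and $<$, reusing the ``strong form'' of Lemma~\ref{lem:reals-order}(b) from its proof. Recall that $x<y$ means $x\leq y$ but not $y\leq x$ (since $x=y$ means both). By the remark after Lemma~\ref{lem:reals-order}, $x<y$ is equivalent to $y\not\leq x$, so it suffices to show that $y\not\leq x$ holds precisely when there are a rational $\delta>0$ and an $N$ with $x_n+\delta<y_n$ for all $n\geq N$.

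For the forward direction, assume $y\not\leq x$. Unfolding Definition~\ref{def:reals-ineq}, this yields a rational $\varepsilon>0$ such that every $N$ admits some $n\geq N$ with $y_n>x_n+\varepsilon$. This is exactly the situation treated in the proof of Lemma~\ref{lem:reals-order}(b). Using that both sequences are Cauchy, we pick $N$ with $|x_m-x_n|,|y_m-y_n|<\varepsilon/4$ for $m,n\geq N$. We then choose one $n\geq N$ with $y_n-x_n>\varepsilon$ and conclude $x_m+\varepsilon/2<y_m$ for all $m\geq N$. So we may take $\delta=\varepsilon/2$. Only the existence of $N$ and of the single witness $n$ is needed, so no comprehension beyond what $\mathsf{RCA}_0$ gives is used.

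For the converse, assume $\delta$ and $N$ are given with $x_n+\delta<y_n$ for all $n\geq N$. First, $x\leq y$ is immediate, since for any $\varepsilon>0$ we have $x_n\leq y_n+\varepsilon$ for all $n\geq N$. Second, $y\leq x$ fails: taking $\varepsilon=\delta$ in Definition~\ref{def:reals-ineq}, any $M$ admits $n\geq\max(M,N)$ with $y_n>x_n+\delta$, so $y_n\leq x_n+\delta$ does not hold for all $n\geq M$. Hence $x\leq y$ and not $x=y$, that is, $x<y$.

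There is no real obstacle here; the only point needing care is to justify in $\mathsf{RCA}_0$ the equivalence between $x<y$ and $y\not\leq x$. This is simply the totality from Lemma~\ref{lem:reals-order}(b) combined with the definition of $<$. Everything else is a finite manipulation of quantifiers over rationals and naturals.
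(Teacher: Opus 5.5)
Your proof is correct and follows the paper's own argument. The forward direction is exactly the computation from the proof of Lemma~\ref{lem:reals-order}(b), with $\delta=\varepsilon/2$, and the converse is the direct check that $y\not\leq x$; your additional verification of $x\leq y$ is harmless and makes it independent of totality.
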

\begin{proof}
    The forward direction was established in the previous proof. In the other direction, one readily derives~$y\not\leq x$.
\end{proof}

In our case, being a real (i.e., a slow Cauchy sequence) is~$\Pi^0_3$. The previous lemma shows that the order relations $<$ and $\leq$ between our reals are $\Sigma^0_2$ and $\Pi^0_2$, respectively. This stands in contrast with the classical approach, where reals are represented by Cauchy sequences with some fixed rate (given by a computable function). Under this approach, being a real is $\Pi^0_1$, while $<$ and $\leq$ are $\Sigma^0_1$ and~$\Pi^0_1$, respectively. The reduction in quantifier complexity is certainly a motivation for the classical choice. However, our results show that it does not decrease the overall consistency strength --- if anything, the opposite is true. 

We now consider the real numbers as a field. To see that the following operations preserve Cauchy sequences, it suffices to recall the usual proofs that addition and multiplication are continuous. Analogous to the following definition and proposition, one obtains the absolute value function and its fundamental properties.

\begin{definition}
Addition and multiplication on~$\mathbb R$ act pointwise on the representing Cauchy sequences, i.e., we put $(x_n)+(y_n)=(x_n+y_n)$ and $(x_n)\cdot(y_n)=(x_n\cdot y_n)$.
\end{definition}

In the following result, the real numbers are considered modulo equality. That addition and multiplication are compatible with equality is verified in the proof. We will later use the infinite pigeonhole principle to prove the intermediate value theorem. It will then follow that~$\mathbb R$ is real closed (see Corollary~\ref{cor:real-closed}).

\begin{proposition}[$\mathsf{RCA}_0$]\label{prop:archimedean}
The reals form an Archimedean ordered field.
\end{proposition}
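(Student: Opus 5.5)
The plan is to verify the ordered-field axioms modulo the equality of Definition~\ref{def:reals-ineq}, working in $\mathsf{RCA}_0$ with the usual $\varepsilon$-arguments. In each argument the only set existence we need is the formation of sequences computable from the given ones, and the only induction needed is for formulas of bounded complexity in fixed finite arguments.

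\emph{Well-definedness.} First, $(x_n+y_n)$ and $(x_ny_n)$ are Cauchy. For the product, every Cauchy sequence is bounded: taking $\varepsilon=1$ gives $N$, and then $\max(|x_0|,\dots,|x_N|)+1$ is a bound, where the finite maximum exists by bounded $\Sigma^0_0$-induction. The estimate $|x_my_m-x_ny_n|\leq|x_m||y_m-y_n|+|y_n||x_m-x_n|$ then finishes the argument. Compatibility with equality uses the reformulation that $x=y$ holds iff for every $\varepsilon>0$ we eventually have $|x_n-y_n|\leq\varepsilon$. This is immediate from the definition, since the two $N$'s from $x\leq y$ and $y\leq x$ can be combined by taking a maximum. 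The same boundedness estimate then shows that $x=x'$ and $y=y'$ imply $xy=x'y'$ and $x+y=x'+y'$.

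\emph{Algebraic axioms.} Associativity, commutativity, distributivity, the neutral elements (the constant sequences $0$ and $1$) and additive inverses $(-x_n)$ all hold termwise already as sequences of rationals, so they hold modulo equality. The only field axiom needing real work is the multiplicative inverse, and this is the step I expect to be the main obstacle. Suppose $x\neq0$. By Lemma~\ref{lem:reals-order}(b) we have $x<0$ or $0<x$, and Lemma~\ref{lem:reals-ineq-Sigma02} then yields a rational $\delta>0$ and an $N$ with $|x_n|>\delta$ for all $n\geq N$. We define $y_n=1/x_n$ for $n\geq N$ and $y_n=1$ for $n<N$; this is a sequence computable from $x$, so it exists in $\mathsf{RCA}_0$. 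For $m,n\geq N$ we have
\begin{equation*}
|y_m-y_n|=\frac{|x_n-x_m|}{|x_m||x_n|}\leq\delta^{-2}|x_m-x_n|,
\end{equation*}
so $(y_n)$ is Cauchy, and $x_ny_n=1$ for $n\geq N$ gives $xy=1$. Here the $\Sigma^0_2$ characterisation of $<$ provides exactly the eventual lower bound we need, so no rate is required.

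\emph{Order axioms.} We show that $x\leq y$ implies $x+z\leq y+z$; this is termwise. We also show that $0<x$ and $0<y$ imply $0<xy$: Lemma~\ref{lem:reals-ineq-Sigma02} gives $\delta_1,\delta_2$ with $x_n>\delta_1$ and $y_n>\delta_2$ eventually, hence $x_ny_n>\delta_1\delta_2$ eventually, and the converse direction of the lemma applies. Totality comes from Lemma~\ref{lem:reals-order}.

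\emph{Archimedean property.} Given a real $x$, boundedness of $(x_n)$ yields a natural number $k$ with $|x_n|\leq k$ for all $n$. Then the constant sequence $k+1$ satisfies $x_n+1<k+1$ for all $n$, so $x<k+1$ by Lemma~\ref{lem:reals-ineq-Sigma02}.
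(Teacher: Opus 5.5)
Your proposal is correct and follows essentially the paper's proof: the ring axioms hold termwise, the inverse uses the eventual bound $|x_n|>\delta$ from Lemma~\ref{lem:reals-ineq-Sigma02}, and the Archimedean property comes from boundedness. The only variation is that you check $0<x,0<y\Rightarrow 0<xy$ and prove compatibility of multiplication with equality directly by the product estimate, whereas the paper checks $x,y\geq 0\Rightarrow xy\geq 0$ by an $\varepsilon/B$ argument and derives compatibility from monotonicity.

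One trivial slip in your Archimedean step: $|x_n|\leq k$ only gives $x_n+1\leq k+1$, not a strict inequality, so take $\delta=1/2$ in Lemma~\ref{lem:reals-ineq-Sigma02} (or compare with $k+2$ instead).
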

\begin{proof}
Even with respect to the pointwise equality of Cauchy sequences, we have a commutative ring, as the relevant properties are directly inherited from the rationals. We now show that $x=(x_n)$ has an inverse if $x\neq 0$ holds with respect to the equality of reals. By Lemma~\ref{lem:reals-order}, we have $x>0$ or~$x<0$. Lemma~\ref{lem:reals-ineq-Sigma02} ensures that $|x_n|$ is bounded away from zero for all sufficiently large~$n$. Thus we get a Cauchy sequence $(x_n')$ if we set $x_n'=1/x_n$ for~$x_n\neq 0$ and choose arbitrary $x_n'$ otherwise. Clearly, $(x_n')$ is the multiplicative inverse of~$x$.

It is straightforward to check that addition is monotone and, as a consequence, compatible with equality. To prove the monotonicity properties of multiplication, we show that $x,y\geq 0$ implies~$x\cdot y\geq 0$. As the sequences $x=(x_n)$ and~$y=(y_n)$ are Cauchy, we find a bound~$B$ such that $|x_n|,|y_n|\leq B$ holds for all~$n\in\mathbb N$. Given an arbitrary~$\varepsilon>0$, we find an~$N\in\mathbb N$ such that all~$n\geq N$ validate~\mbox{$x_n,y_n\geq-\varepsilon/B$}. The latter entails $x_n\cdot y_n\geq-\varepsilon$, as $x_n<0$ implies $|x_n|\leq\varepsilon/B$ and hence~\mbox{$|x_n\cdot y_n|\leq\varepsilon$}. So we indeed have $x\cdot y\geq 0$. As in any ordered field, we learn that $y\leq y'$ implies\linebreak $x\cdot y\leq x\cdot y'$ for~$x\geq 0$ and $x\cdot y\geq x\cdot y'$ for~$x<0$. In particular, multi\-plication is compatible with equality. The Archimedean property is inherited from the rationals, as Cauchy sequences are bounded.
\end{proof}

In the remainder of this section, we consider some questions of cardinality. The following principles beyond $\mathsf{RCA}_0$ will occur. The infinite pigeonhole principle (which was already mentioned above) asserts the following:
    \begin{equation*}\tag{$\ipp$}
        \parbox{.87\textwidth}{For any $N\in\mathbb N$ and any function~$c\colon\mathbb N\to\{0,\ldots,N\}$, there is an $n\leq N$ such that $\{i\in\mathbb N:c(i)=n\}$ is infinite.}
    \end{equation*}
As shown by Hirst (see Theorem~6.4 of~\cite{hirst-thesis}), the infinite pigeonhole principle is equivalent to the $\Sigma^0_2$-bounding principle~$\mathsf B\Sigma^0_2$ (also known as $\Sigma^0_2$-collection):
\begin{equation*}\tag{$\mathsf B\Sigma^0_2$}
\parbox{.87\textwidth}{For any $\Sigma^0_2$-relation~$R$ such that each $m\leq a$ admits an~$n$ with~$(m,n)\in R$, there is a~$b$ such that each $m\leq a$ admits an~$n\leq b$ with~$(m,n)\in R$.}
\end{equation*}
The $\Sigma^0_2$-cardinality principle, considered by Seetapun and Slaman~\cite{seetapun-slaman}, postulates:
    \begin{equation*}\tag{$\mathsf C\Sigma^0_2$}
        \parbox{.87\textwidth}{There is no $\Sigma^0_2$-injection $C\colon\mathbb N\to\{0,\ldots,N\}$ for $N\in\mathbb N$.}
    \end{equation*}
    Here $C$ is not given as a set. Instead, we appeal to a partial truth definition in order to quantify over $\Sigma^0_2$-formulas that define the graphs of injective functions. It is known that $\mathsf C\Sigma^0_2$ is unprovable in $\mathsf{RCA}_0$. Over the latter, $\mathsf C\Sigma^0_2$ follows from but does not imply~$\ipp$ (see Section~3.1 of~\cite{conidis-slaman}). As shown by H.~Friedman and independently Paris, $\mathsf B\Sigma^0_2$ and hence~$\ipp$ is $\Pi^0_3$-conservative over $\mathsf{RCA}_0$ (see e.g.\ Theorem~IV.1.59 of~\cite{hajek91}).

\begin{lemma}\label{lem:card-surjection}
    The following are equivalent over~$\mathsf{RCA}_0$:
    \begin{enumerate}[label=(\roman*)]
        \item The $\Sigma^0_2$-cardinality principle $\mathsf C\Sigma^0_2$.
        \item There is no $\Sigma^0_2$-surjection~$D\colon\{0,\ldots,N\}\to\mathbb N$ for $N\in\mathbb N$.
    \end{enumerate}
\end{lemma}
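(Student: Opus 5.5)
We prove both directions by contraposition, converting a $\Sigma^0_2$-injection into a $\Sigma^0_2$-surjection and back. In each direction the new function will be defined by a $\Sigma^0_2$-formula built from the given one.

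For (ii)$\Rightarrow$(i), assume $C\colon\mathbb N\to\{0,\ldots,N\}$ is a $\Sigma^0_2$-injection. We would like to set $D(k)=n$ if $C(n)=k$, but $D$ must be total on $\{0,\ldots,N\}$. So we define $D(k)=n+1$ if $C(n)=k$, and $D(k)=0$ if $k\notin\rng(C)$. The first clause is $\Sigma^0_2$. The second clause is $\Pi^0_2$, since it says $\forall n\,\neg C(n)=k$, and $\neg C(n)=k$ is $\Pi^0_2$ with a universal number quantifier in front. This complexity issue is the main obstacle.

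To avoid it, we use the totality of $C$. Since $C$ is total and single-valued, $C(n)\neq k$ is equivalent to $\exists j\leq N\,(j\neq k\wedge C(n)=j)$, which is $\Sigma^0_2$ (bounded existential over $\Sigma^0_2$ is fine in $\mathsf{RCA}_0$). Thus $\neg C(n)=k$ is $\Delta^0_2$, but $\forall n\,\neg C(n)=k$ is still $\Pi^0_2$. A cleaner route is to let $D$ be a surjection onto $\mathbb N$ that need not be injective, using an arbitrary value where we lack information. Since $\{0,\ldots,N\}$ is finite, $\mathsf{RCA}_0$ can code the finite set $S=\rng(C)\cap\{0,\ldots,N\}$ as a number, even though it cannot compute $S$ (bounded $\Sigma^0_2$-comprehension may fail without $\mathsf I\Sigma^0_2$, but the set exists as a finite object by a least/greatest-number argument). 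More carefully, among the subsets of $\{0,\ldots,N\}$ there is a maximal $S$ with every $k\in S$ in $\rng(C)$. The statement "$\forall k\in S\,\exists n\,C(n)=k$" is $\Sigma^0_2$ only in the presence of $\mathsf B\Sigma^0_2$, so instead we take the finite set $S$ to be maximal for the $\Pi^0_2$ property $\forall k\in S\,\forall n\,\neg C(n)=k$. Such an $S$ exists by $\Pi^0_2$-least-number over finitely many candidates, which follows from $\mathsf I\Sigma^0_1$-style arguments applied to the finite code. Then $D(k)=n$ if $C(n)=k$ and $D(k)=0$ if $k\in S$ is a $\Sigma^0_2$-definition with $S$ as a parameter. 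Totality uses maximality of $S$. If the maximality argument proves delicate, we fall back on defining $D$ with values in $\mathbb N$ and accept a partial function, checking whether the paper's notion of $\Sigma^0_2$-surjection is the graph of a total map.

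For (i)$\Rightarrow$(ii), assume $D\colon\{0,\ldots,N\}\to\mathbb N$ is a $\Sigma^0_2$-surjection. Define $C(n)$ to be the least $k\leq N$ with $D(k)=n$. Minimality requires $\forall k'<k\,\neg D(k')=n$, and $D(k')\neq n$ is $\Sigma^0_2$ as $\exists m\,(m\neq n\wedge D(k')=m)$ by totality of $D$. A bounded universal over $\Sigma^0_2$ is not in general $\Sigma^0_2$ without $\mathsf B\Sigma^0_2$.

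To handle this, we instead let $C(n)$ be the pair $(k,w)$ with the least code such that $w$ witnesses the $\Sigma^0_2$-formula $\exists w\forall v\,\theta$ for $D(k)=n$. This keeps $C(n)$ injective, since distinct $n$ give distinct pairs, but it is unbounded. We then compose with the bounded part by observing that injectivity into $\{0,\ldots,N\}$ can be obtained via $C(n)=k$, where $k$ is the first component of the $\Sigma^0_2$-least witness pair. Least witnesses for $\Pi^0_1$ matrices are $\Sigma^0_2$-definable, being $\Delta^0_2$ relative to checking smaller pairs. Injectivity then follows from $D$ being a function.
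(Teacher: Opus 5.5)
\textbf{Direction (i)$\Rightarrow$(ii).} This part is correct and is the paper's argument. You send $n$ to the first component of the least pair $(k,w)$ with $D(k)=_w n$. That pair exists by the $\Pi^0_1$-least number principle, and the resulting graph is $\Sigma^0_2$ because $\mathsf B\Sigma^0_1$ absorbs the bounded quantifier over smaller pairs. Injectivity comes from $D$ being single-valued.

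\textbf{Direction (ii)$\Rightarrow$(i).} Here there is a genuine gap.

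\emph{The set $S$ is not available.} A maximal $S$ for your $\Pi^0_2$-property is exactly $\{k\leq N:k\notin\rng(C)\}$, so you are asking for a code of a bounded $\Pi^0_2$-definable set. You justify this by a $\Pi^0_2$ least/maximal-element principle ``by $\mathsf I\Sigma^0_1$-style arguments''. That principle is equivalent to $\mathsf I\Sigma^0_2$ over $\mathsf{RCA}_0$, and restricting to finitely many candidates does not help. Moreover, $\mathsf I\Sigma^0_2$ fails in precisely the models that matter: $\neg\mathsf C\Sigma^0_2$ entails $\neg\mathsf{IPP}$, i.e.\ the failure of $\mathsf B\Sigma^0_2$.

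Your remark that $S$ ``exists as a finite object'' even though bounded $\Sigma^0_2$-comprehension may fail is self-contradictory, since a code for $S$ is exactly such an instance.

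This is not merely a missing justification. For an arbitrary injection the set need not exist. Given a bounded $\Sigma^0_2$-set $A$, send $\langle k,a\rangle$ to a tagged copy of $k$ if $a$ is the least witness for $k\in A$, and to a differently tagged copy of $C(\langle k,a\rangle)$ otherwise. This is again a $\Sigma^0_2$-injection into a finite set, and its range codes $A$. Without $\mathsf I\Sigma^0_2$, some such $A$ is uncoded.

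\emph{Partial functions are not an option.} The lemma concerns total maps; with partial maps the inverse of $C$ would make it trivial. Its application in Proposition~\ref{prop:dense-inf} also needs $D(n)$ defined for every $n\leq N$, so that each $x_n$ is a real.

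\emph{The missing idea.} Avoid any case distinction on the $\Pi^0_2$-condition $k\notin\rng(C)$ by evaluating $\Pi^0_1$-approximations at a definite but cofinal stage. From $\neg\mathsf{IPP}$ the paper takes a colouring $c\colon\mathbb N\to\{0,\ldots,M\}$ in which every colour occurs only finitely often. Then $D_0(m)=\min\{i:c(j)\neq m\text{ for all }j\geq i\}$ is a total $\Sigma^0_2$-function with unbounded values. With a monotone $\Pi^0_1$-approximation $C(n)=_a k$ of the graph of $C$, it sets
\[
D\big((K+1)\cdot m+k\big)=\min\big(\{n<D_0(m):C(n)=_{D_0(m)}k\}\cup\{D_0(m)\}\big).
\]
This is total on $\{0,\ldots,(K+1)(M+1)-1\}$ without deciding anything $\Pi^0_2$. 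It is surjective: given $n$, choose $m$ with $D_0(m)$ above both $n$ and a witness $a$ for $C(n)=k$. Injectivity of $C$ then forces the minimum to be $n$.
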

\begin{proof}
    First assume there is a $D$ as in~(ii). Consider a $\Pi^0_1$-relation $D(n)=_a m$ with
    \begin{equation*}
        D(n)=m\quad\Leftrightarrow\quad D(n)=_am\text{ for some }a\in\mathbb N.
    \end{equation*}
    We define a $\Sigma^0_2$-function $C_0\colon\mathbb N\to\{0,\ldots,N\}\times\mathbb N$ by
    \begin{equation*}
        C_0(m)=\min\{(n,a):D(n)=_a m\},
    \end{equation*}
    where we minimize over codes of pairs. Let $C(m)$ be the first component of~$C(m)$. This defines a $\Sigma^0_2$-injection $C\colon\mathbb N\to\{0,\ldots,N\}$ with left inverse~$D$.

    Conversely, assume that we have an injection $C\colon\mathbb N\to\{0,\ldots,K\}$ with $K\in\mathbb N$. Consider a $\Pi^0_1$-relation $C(n)=_a k$ with
    \begin{equation*}
        C(n)=k\quad\Leftrightarrow\quad C(n)=_a k\text{ for some }a\in\mathbb N.
    \end{equation*}
    We may assume that we have
    \begin{equation*}
        C(n)=_a k\quad\Rightarrow\quad C(n)=_bk\quad\text{for}\quad a\leq b.
    \end{equation*}
    As noted in the previous paragraph, $\mathsf C\Sigma^0_2$ is weaker than $\mathsf{IPP}$. So we may also consider a function $c\colon\mathbb N\to\{0,\ldots,M\}$ (given as a set) such that each $m\leq M$ admits a~$J\in\mathbb N$ with $c(j)\neq m$ for all~$j\geq J$. This yields an unbounded $\Sigma^0_2$-function $D_0\colon\{0,\ldots,M\}\to\mathbb N$ that is given by
    \begin{equation*}
        D_0(m)=\min\{i\in\mathbb N:c(j)\neq m\text{ for all }j\geq i\}.
    \end{equation*}
    We put $N=(K+1)\cdot(M+1)-1$ and note that each number up to $N$ has a unique representation of the form $(K+1)\cdot m+k$ for $m\leq M$ and $k\leq K$. With respect to this representation, we define a $\Sigma^0_2$-function $D\colon\{0,\ldots,N\}\to\mathbb N$ by
    \begin{equation*}
        D\big((K+1)\cdot m+k\big)=\min\big(\{n<D_0(m):C(n)=_{D_0(m)} k\}\cup\{D_0(m)\}\big).
    \end{equation*}
    To see that $D$ is surjective, consider an arbitrary~$n\in\mathbb N$. Let $k\leq K$ equal $C(n)$ and pick~$a\in\mathbb N$ with $C(n)=_a k$ as well as $m\leq M$ with $a,n<D_0(m)$. Given that $C$ is injective, we get $D((K+1)\cdot m+k)=n$.
\end{proof}

We now come to our first reversal:

\begin{proposition}\label{prop:dense-inf}
The following are equivalent  over~$\mathsf{RCA}_0$:
\begin{enumerate}[label=(\roman*)]
    \item No finite collection~$(x_n)_{n\leq N}$ of reals is dense in the order~$\mathbb R$, i.e., for any such collection there are reals $y<z$ such that $y\leq x_n\leq z$ fails for all~$n<N$.
    \item The $\Sigma^0_2$-cardinality principle $\mathsf C\Sigma^0_2$ holds.
\end{enumerate}
\end{proposition}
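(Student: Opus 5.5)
We prove each implication separately. The step (i)$\Rightarrow$(ii) goes through the contrapositive and uses the surjection form of $\mathsf C\Sigma^0_2$ from Lemma~\ref{lem:card-surjection}. The step (ii)$\Rightarrow$(i) builds a $\Sigma^0_2$-injection out of a dense finite collection.

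For (ii)$\Rightarrow$(i), suppose $(x_n)_{n\leq N}$ is dense. For each $k\in\mathbb N$ take the interval $[2k,2k+1]$. By density, some $x_n$ lies in it and hence in the open interval $(2k-\tfrac12,2k+\tfrac32)$; these open intervals are pairwise disjoint. By Lemma~\ref{lem:reals-ineq-Sigma02}, the relation $2k-\tfrac12<x_n<2k+\tfrac32$ is $\Sigma^0_2$. So it has the form $\exists a\,P(k,n,a)$ with $P\in\Pi^0_1$.

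We define $C(k)$ to be the first component of the least pair $(n,a)$ with $P(k,n,a)$. This is exactly the minimisation trick from the proof of Lemma~\ref{lem:card-surjection}. The graph of $C$ is $\Sigma^0_2$: it says that $P(k,n,a)$ holds and that all smaller pairs fail, and the latter is a bounded quantifier over a $\Sigma^0_1$ condition, which $\mathsf{RCA}_0$ handles via $\mathsf B\Sigma^0_1$. Since the intervals are disjoint, $C\colon\mathbb N\to\{0,\ldots,N\}$ is injective, which contradicts $\mathsf C\Sigma^0_2$.

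For (i)$\Rightarrow$(ii), assume $\mathsf C\Sigma^0_2$ fails. By Lemma~\ref{lem:card-surjection} there is a $\Sigma^0_2$-surjection $D\colon\{0,\ldots,N\}\to\mathbb N$, and we write $D(n)=m\Leftrightarrow\exists a\,D(n)=_am$ with $=_a$ a $\Pi^0_1$-relation, say $\forall s\,R(n,m,a,s)$ with $R$ computable. Fix an enumeration $(q_m)$ of $\mathbb Q$. Define $x_{n,s}=q_m$, where $(m,a)$ is the least pair such that $R(n,m,a,t)$ holds for all $t\leq s$; this exists because the true pair for $D(n)$ is never refuted. The double sequence is computable in the parameters, so each $x_n=(x_{n,s})_s$ exists by $\Delta^0_1$-comprehension.

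The main point to check is that each $x_n$ is eventually constant, hence Cauchy, with value $q_{D(n)}$. Let $(m_0,a_0)$ be the least pair with $D(n)=_{a_0}m_0$. Each smaller pair fails the $\Pi^0_1$ condition, so it is refuted at some stage. There are only finitely many smaller pairs, so $\mathsf B\Sigma^0_1$ (available in $\mathsf{RCA}_0$) gives a common stage $s_0$ by which all of them are refuted. From $s_0$ on, the least surviving pair is $(m_0,a_0)$ forever, so $x_{n,s}=q_{m_0}$. This is where care is needed: we must not appeal to any $\Sigma^0_2$-induction, only to finite collection over a bound.

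Density then follows. Given reals $y<z$, Lemma~\ref{lem:reals-ineq-Sigma02} yields a rational $q_m$ with $y<q_m<z$. By surjectivity there is an $n$ with $D(n)=m$, so $x_n=q_m$ and $y\leq x_n\leq z$. Thus the finite collection $(x_n)_{n\leq N}$ is dense and (i) fails. (Reindexing to $n<N$ as in the statement is cosmetic.)
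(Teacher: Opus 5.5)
Your proof is correct and follows the paper's argument essentially step for step. For (i)$\Rightarrow$(ii) you turn the $\Sigma^0_2$-surjection from Lemma~\ref{lem:card-surjection} into finitely many eventually constant rational sequences via least-unrefuted-witness approximations, with bounded collection giving the stabilization stage. For (ii)$\Rightarrow$(i) you extract a $\Sigma^0_2$-injection by minimizing over witness pairs for membership in pairwise disjoint intervals. The remaining differences are cosmetic: you use intervals around even integers instead of $[m-\frac14,m+\frac14]$, and you compose with an enumeration of $\mathbb Q$ instead of taking a surjection directly onto $\mathbb Q$.
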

\begin{proof}
    To show that~(i) implies~(ii), we assume that $\mathsf C\Sigma^0_2$ fails. By the previous lemma, this yields a $\Sigma^0_2$-surjection~$D\colon\{0,\ldots,N\}\to\mathbb Q$ for some $N\in\mathbb N$. Consider a bounded formula~$\theta$ with
    \begin{equation*}
        D(n)=q\quad\Leftrightarrow\quad\exists a\in\mathbb N\forall b\in\mathbb N:\theta(a,b,n,q).
    \end{equation*}
    For $n<N$ and $i\in\mathbb N$, we set
    \begin{equation*}
        y_{ni}=\min\{(q,a)\,|\,\forall b<i:\theta(a,b,n,q)\},
    \end{equation*}
    where we again minimize over codes for pairs. To see that each sequence $(y_{ni})_{i\in\mathbb N}$ is eventually constant, consider the minimal pair~$(q,a)$ such that $\theta(a,b,n,q)$ holds for all~$b\in\mathbb N$. We find an~$I$ such that each $(q',a')<(q,a)$ admits a $b<I$ for which $\theta(a',b,n,q')$ fails. This yields $y_{ni}=(q,a)$ for all~$i\geq I$. We also learn~$D(n)=q$. So if we define $x_{ni}$ as the first component of~$y_{ni}$, the sequence $x_n=(x_{ni})_{i\in\mathbb N}$ stabilizes with eventual value~$D(n)$. We thus have a finite collection~$(x_n)_{n\leq N}\subseteq\mathbb R$ such that each~$q\in\mathbb Q$ admits an~$n\leq N$ with $x_n=q$ as reals. Given that $\mathbb Q$ is dense (say, via Lemma~\ref{lem:reals-ineq-Sigma02}), this means that~(i) fails.

    Conversely, assume that the reals contain a finite dense family~$(x_n)_{n\leq N}$. Here each~$x_n$ is given as a Cauchy sequence $(x_{ni})_{i\in\mathbb N}$. For every~$m\in\mathbb N$, density yields an $n\leq N$ with $m-1/4\leq x_n\leq m+1/4$ and thus $m-1/3<x_{nj}<m+1/3$ for large~$j$. We thus have a $\Sigma^0_2$-function $C'\colon\mathbb N\to\{0,\ldots,N\}\times\mathbb N$ with
    \begin{equation*}
        C'(m)=\min\left\{(n,i):m-\frac13<x_{nj}<m+\frac13\text{ for all }j\geq i\right\}.
    \end{equation*}
    Consider the $\Sigma^0_2$-function $C\colon\mathbb N\to\{0,\ldots,N\}$ such that $C(m)$ is the first component of~$C'(m)$. For $C(m)=n$ we have $m-1/3\leq x_n\leq m+1/3$, so that $C$ is injective.
\end{proof}

In the following result, the base theory is not optimal, as we will show below. We do not know whether $\mathsf{RCA}_0$ alone can prove the result.

A family~$(x_n)_{n\in\mathbb N}$ of reals is given as a double sequence~$(x_{ni})_{n,i\in\mathbb N}$ of rationals with the only requirement that $x_n=(x_{ni})_{i\in\mathbb N}$ is Cauchy for each~$n\in\mathbb N$ (cf.~the more restrictive notion of sequence in Definition~\ref{def:seq}). 

\begin{proposition}[$\mathsf{RCA}_0+\ipp$]\label{pro:cantor}
The real numbers are uncountable, i.e., for any family $(x_n)_{n\in\mathbb N}$ of reals there is a $y\in\mathbb R$ with $x_n\neq y$ for all~$n\in\mathbb N$.
\end{proposition}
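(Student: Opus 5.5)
The plan is to diagonalize by a nested-interval construction that is computable in the double sequence $(x_{ni})_{n,i\in\mathbb N}$, with the intervals defined stage by stage from the approximations $x_{ns}$. Since no rates are available, a level of the construction may change its mind, and the work is in showing that it does so only finitely often. Fix the family $(x_n)$. At stage $s$ I define closed intervals $I^s_0\supseteq I^s_1\supseteq\dots\supseteq I^s_s$ with $I^s_0=[0,1]$ and $|I^s_k|=3^{-k}$. The interval $I^s_{k+1}$ is either the left or the right closed third of $I^s_k$, so the two candidates are separated by the middle third of width $3^{-k-1}$. I then set $y_s$ to be the left endpoint of $I^s_s$. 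The whole construction is $\Delta^0_1$ in the given family, so $(y_s)$ exists in $\mathsf{RCA}_0$.

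The choice at level $k$ uses hysteresis, to avoid oscillation when $x_{ks}$ hovers near a threshold. If $I^{s}_k=I^{s-1}_k$, then $I^s_{k+1}$ keeps the previous choice unless $x_{ks}$ lies within distance $3^{-k-3}$ of that chosen third, in which case it switches to the other third. If $I^s_k\neq I^{s-1}_k$, level $k+1$ is reset, and we choose the left third unless $x_{ks}$ is within $3^{-k-3}$ of it. A point cannot be $3^{-k-3}$-close to both thirds, so right after a choice the current approximation is at least $3^{-k-3}$ away from the chosen third.

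To see that $(y_s)$ is Cauchy, fix $k$. It suffices to find $N$ such that $I^s_j=I^N_j$ for all $j\le k$ and $s\ge N$, since then all $y_s$ with $s\ge N$ lie in an interval of length $3^{-k}$. For each $n<k$ the Cauchy property of $x_n$ gives an $N_n$ with $|x_{ns}-x_{nt}|<3^{-n-4}$ for $s,t\ge N_n$. This is a $\Sigma^0_2$ statement in $n$, so $\ipp$ in the form $\mathsf B\Sigma^0_2$ yields a common bound $N^\ast\ge N_n$ for all $n<k$.

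After $N^\ast$, a switch at level $n+1$ with $I_n$ unchanged cannot be followed by another such switch. After a switch, $x_{n}$ is $3^{-n-3}$-far from the new third, and it moves by less than $3^{-n-4}$ afterwards. So a further switch at that level can only follow a change at a higher level. Hence the number of stages $s>N^\ast$ at which some $I^s_j$ with $j\le k$ changes is bounded by roughly $2^k$. This can be verified by bounded ($\Delta^0_1$, hence available) induction on the finite set of change stages below any given $s$. Taking $N$ beyond the last such change finishes convergence.

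For $y\neq x_n$, note that the limit $y$ lies in the eventual interval $I_{n+1}$. Once level $n+1$ has stabilized after $N^\ast$, the approximations $x_{ns}$ stay at least $3^{-n-3}$ away from it, or rather they never trigger a switch. Hence they eventually stay at least $3^{-n-3}-3^{-n-4}$ away, and by Lemma~\ref{lem:reals-ineq-Sigma02} we get $x_n<y$ or $x_n>y$.

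The main obstacle is the stabilization step. The naive argument by induction on levels would need $\Sigma^0_2$-induction, and the point is to replace it by a single application of $\mathsf B\Sigma^0_2$ together with a finite counting argument. The delicate part is checking that the hysteresis really bounds the number of post-$N^\ast$ changes by a standard, explicitly finite quantity.
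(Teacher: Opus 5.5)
Your proof is correct, but it handles the interaction between levels differently from the paper.

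\textbf{Your route.} In your construction the choice of $I_{k+1}$ depends on $I_k$, so stabilizing level $k$ presupposes that all lower levels have stabilized. You avoid the resulting $\Sigma^0_2$-induction in two steps. First, you apply $\mathsf B\Sigma^0_2$ to the Cauchy moduli of $x_0,\dots,x_{k-1}$ to get $N^\ast$. Then you run a finite-injury count, $c_{n+1}\le 2c_n+1$, so $I_k$ changes at most $2^k-1$ times after $N^\ast$. This count is verified by bounded induction for each fixed $s$.

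\textbf{The paper's route.} The paper decouples the levels completely. The digit $s_{nk}\in\{0,2/3^{n+1}\}$ of $y_k$ is chosen according to whether a frozen approximation $x_{n,i(n,k)}$ lies in the hull $H^r_n$. This hull surrounds all $2^n$ right-hand intervals of level $n$ at once. Since $H^l_n\cap H^r_n=\emptyset$, either choice keeps $y_k$ at distance at least $1/(2\cdot 3^{n+1})$ from that approximation, whatever the other digits are. Moreover, $k\mapsto i(n,k)$ is non-decreasing and bounded, which plays the role of your hysteresis. So each digit stabilizes on its own in $\mathsf{RCA}_0$. $\mathsf B\Sigma^0_2$ is then used once, on the stabilization points of the first $N$ digits, and no counting is needed.

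\textbf{Comparison.} Your route stays closer to the textbook nested-interval argument and gives an explicit mind-change bound after $N^\ast$. The paper's hull trick buys a shorter verification in which each level can be checked separately.

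\textbf{Two small repairs.}
\begin{itemize}
\item \emph{Why a switch cannot be followed by another in the same epoch.} The reason is not the $3^{-n-3}$ margin. That margin is all a reset guarantees, and indeed one switch can still follow a reset, as your $+1$ accounts for. The real reason is that a switch is triggered by proximity to the old third. Hence $x_{ns}$ is then at distance at least $3^{-n-1}-3^{-n-3}$ from the new third, and this survives perturbations smaller than $3^{-n-4}$.
\item \emph{The final step.} ``Never triggering a switch'' directly gives distance at least $3^{-n-3}$ from $I_{n+1}$ at every later stage. Since the $x_{ns}$ then vary by less than $3^{-n-4}$, they stay on one side of $I_{n+1}$. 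This supplies the uniform $\delta$ needed for Lemma~\ref{lem:reals-ineq-Sigma02}.
\end{itemize}
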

\begin{proof}
We perform a nested interval construction, as in the proof of Theorem~II.4.9 from~\cite{simpson09}. In contrast to the latter, however, we allow that choices are revised finitely many times (when our Cauchy sequences are not yet sufficiently stable).

Our formal construction is based on the Cantor middle third set. To describe the latter, we consider the finite sets of rationals that are recursively defined by
\begin{equation*}
    B_0=\left\{0,\frac23\right\}\quad\text{and}\quad B_{n+1}=B_n\cup\left\{q+\frac2{3^{n+2}}:q\in B_n\right\}.
\end{equation*}
Much of the following relies on the fact that we have
\begin{equation*}
    |q-r|\geq\frac2{3^{n+1}}\quad\text{for }q\neq r\text{ from } B_n.
\end{equation*}
The $n$-th approximation to the middle third set is given as a disjoint union of closed intervals with left endpoint in~$B_n$, namely by
\begin{equation*}
    C_n=\bigcup_{q\in B_n}\left[q,q+\frac1{3^{n+1}}\right].
\end{equation*}
In order to write $B_n=B_n^l\cup B_n^r$ as a disjoint union of left and right branchings, we set $B_0^l=\{0\}$ and~$B_0^r=\{2/3\}$ as well as
\begin{equation*}
    B_{n+1}^l=B_n\quad\text{and}\quad B_{n+1}^r=\left\{q+\frac2{3^{n+2}}:q\in B_n\right\}.
\end{equation*}
We then decompose~$C_n=C_n^l\cup C_n^r$ into
\begin{equation*}
    C_n^l=\bigcup_{q\in B_n^l}\left[q,q+\frac1{3^{n+1}}\right]\quad\text{and}\quad C_n^r=\bigcup_{q\in B_n^r}\left[q,q+\frac1{3^{n+1}}\right].
\end{equation*}
Hulls for these sets can be given as unions of larger open intervals, namely by
\begin{equation*}
    H_n^l=\bigcup_{q\in B_n^l}\left(q-\frac1{2\cdot 3^{n+1}},q+\frac1{2\cdot3^n}\right)\text{ and }
    H_n^r=\bigcup_{q\in B_n^r}\left(q-\frac1{2\cdot 3^{n+1}},q+\frac1{2\cdot3^n}\right).
\end{equation*}
One readily verifies that we have $H_n^l\cap H_n^r=\emptyset$ as well as
    \begin{equation*}
        |p-r|\geq\frac1{2\cdot 3^{n+1}}\quad\text{ for }p\in C_n^l\text{ and }r\notin H_n^l.
    \end{equation*}
The same holds when we have $p\in C_n^r$ and $r\notin H_n^r$.

To find a real number~$y$ that differs from all the~$x_n=(x_{ni})_{i\in\mathbb N}$, we want to put $y$ into $C_n^l$ or~$C_n^r$, respectively, when a certain approximation~$x_{n,i}$ lies in~$H_n^r$ or $H_n^l$. Formally, we define
\begin{equation*}
    i(n,k)=\min\left\{i_0\leq k:|x_{ni}-x_{nj}|\leq\frac1{3^{n+2}}\text{ for }i_0\leq i,j\leq k\right\}.
\end{equation*}
Note that the minimum is taken over a non-empty set (containing at least~$i_0=k$). We now set
\begin{equation*}
    y_k=\sum_{n=0}^k s_{nk}\quad\text{with}\quad s_{nk}=\begin{cases}
    0 & \text{if $x_{n,i(n,k)}\in H_n^r$},\\
    2/3^{n+1} & \text{otherwise}.
    \end{cases}
\end{equation*}
Let us show that $y=(y_k)$ is Cauchy and hence a real. Each $n$ admits an~$M$ such that $|x_{ni}-x_{nj}|\leq 3^{-n-2}$ holds for all~$i,j\geq M$. This~$M$ bounds the values of the non-decreasing function $k\mapsto i(n,k)$. It follows that there is a~$K$ such that we have $i(n,k)=i(n,K)$ and hence $s_{nk}=s_{nK}$ for all~$k\geq K$. So for any~$N$, the pigeonhole principle (again in the form of $\Sigma^0_2$-boundedness) yields a $K\geq N$ with $s_{nk}=s_{nK}$ for all~$n<N$ and~$k\geq K$. When we have $k,l\geq K$, we thus get
\begin{equation*}
    \left|y_k-y_l\right|\leq\sum_{n=0}^{N-1}|s_{nk}-s_{nl}|+\sum_{n=N}^ks_{nk}+\sum_{n=N}^ls_{nl}<\frac2{3^N}.
\end{equation*}
Since $N$ was arbitrary, this confirms that~$(y_k)$ is Cauchy.

Finally, we show that~$y$ differs from each of the~$x_n$. For any~$k$, a straightforward induction on~$i$ yields~$\sum_{m=0}^i s_{mk}\in B_i$. Assuming~$k\geq n$, it follows that $\sum_{m=0}^n s_{mk}$ lies in~$B_n^l$ if we have~$x_{n,i(n,k)}\in H_n^r$ and that it lies in~$B_n^r$ otherwise. By the formula for the geometric series, we have $\sum_{m=n+1}^k s_{mk}<3^{-n-1}$. We thus get
\begin{equation*}
    y_k\in\begin{cases}
        C_n^l & \text{if }x_{n,i(n,k)}\in H_n^r,\\
        C_n^r & \text{otherwise}.
    \end{cases}
\end{equation*}
In the first case, we have $x_{n,i(n,k)}\notin H_n^l$ due to $H_n^l\cap H_n^r=\emptyset$ from above. So in each of the two cases, an observation from above yields
\begin{equation*}
    |y_k-x_{n,i(n,k)}|\geq\frac1{2\cdot 3^{n+1}}.
\end{equation*}
Considering the definition of~$i(n,k)$, we infer
\begin{equation*}
    |y_k-x_{nk}|\geq |y_k-x_{n,i(n,k)}|-|x_{n,i(n,k)}-x_{nk}|\geq\frac1{2\cdot 3^{n+1}}-\frac1{3^{n+2}}=\frac1{2\cdot 3^{n+1}}.
\end{equation*}
Since~$k\geq n$ was arbitrary, this shows~$y\neq x_n$.
\end{proof}

The following proposition will later be complemented by a conservativity result for the Baire category theorem (see Corollary~\ref{cor:BCT}).

\begin{proposition}\label{prop:cantor_conservative}
    The theory
    \begin{equation*}
        \mathsf{RCA}_0+\mathsf{WKL}+\text{`the real numbers are uncountable'}
    \end{equation*}
    is $\Pi^1_1$-conservative over~$\mathsf{RCA}_0$.
\end{proposition}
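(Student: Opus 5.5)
Since $\mathsf{WKL}_0$ is $\Pi^1_1$-conservative over $\mathsf{RCA}_0$, I will prove conservativity of the whole theory by the usual model-extension technique. Given a countable model $\mathcal M=(M,S)$ of $\mathsf{RCA}_0+\mathsf{WKL}$, I build an $\omega$-extension $\mathcal M'=(M,S')\supseteq\mathcal M$ (same first-order part) satisfying $\mathsf{RCA}_0+\mathsf{WKL}$ plus the uncountability of $\mathbb R$. Since $\Pi^1_1$-sentences are downward absolute to $\omega$-submodels, this gives conservativity over $\mathsf{RCA}_0+\mathsf{WKL}$ and hence over $\mathsf{RCA}_0$. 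The extension is built in countably many stages. At each stage, we take care of one family $(x_n)$ from the current second-order part, either by adding a real $y$ avoiding it or by adding a path through a given infinite binary tree. Every addition takes a Turing join and closes downward under $\Delta^0_1$-definability. It then suffices to show that a single family $(x_n)\in S$ admits some $Y$ such that $\mathcal M[Y]\models\mathsf{RCA}_0$ (i.e.\ $\mathsf I\Sigma^0_1$ is preserved), with $Y$ coding a real $y\ne x_n$ for all $n$. The WKL steps are handled by the standard low-basis/Harrington argument.

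The key obstacle is that the proof of Proposition~\ref{pro:cantor} used $\mathsf B\Sigma^0_2$ to make the diagonal sequence Cauchy. That proof fails in models of $\mathsf{RCA}_0$ where $\mathsf{IPP}$ is false. Here I plan to combine hyperimmunity with metastability. A bounded sequence $(y_k)$ is Cauchy iff it is metastable, i.e.\ for every $\varepsilon$ and every $g\colon\mathbb N\to\mathbb N$ there is a $K$ with $|y_k-y_l|<\varepsilon$ for all $k,l\in[K,g(K)]$. Instead of a single computable diagonal sequence, I will choose a fast-growing function $h$ and use the ``settled'' approximation $i(n,h(k))$ at stage $k$. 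Concretely, I set $y_k=\sum_{n\le k}s_{n,h(k)}$, with $s$ as in the proof of Proposition~\ref{pro:cantor}. The choice of $h$ should make each $y_k$ already lie in the correct Cantor pieces for all $n$ up to some slowly growing bound.

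The plan is to take $h$ (more precisely $Y=h$ joined with $S$-data) to be sufficiently generic, e.g.\ hyperimmune relative to the relevant $S$-sets in the sense of forcing with finite increasing strings. The argument for Cauchyness then goes as follows. Fix $N$. The property that ``for some $K$, the pieces $s_{n,h(k)}$ for $n<N$ are constant for $k\ge K$'' need not hold, but a weaker density statement does: any finite condition can be extended so that $h$ jumps past the settling points of $x_0,\dots,x_{N-1}$, i.e.\ past the $M$-values in the Cauchy property of each~$x_n$. Such an extension exists because a single finite extension only needs bounded search for each of finitely many $n$, and this search is $\Sigma^0_1$ in $S$ and uses only $\Sigma^0_1$-induction, avoiding $\mathsf B\Sigma^0_2$.

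Genericity should then give the Cauchy property of $y$, the inequality $y\ne x_n$ as in the last paragraph of the proof of Proposition~\ref{pro:cantor}, and preservation of $\mathsf I\Sigma^0_1$ by the usual Cohen-forcing argument over models of $\mathsf{RCA}_0$. I expect the main difficulty to lie in choosing the forcing so that genuine convergence, and not merely metastability for each ground-model $g$, is forced. A variant may be needed in which $y$ is defined as a limit along the generic~$h$, with the conditions explicitly recording which $x_n$ have already been diagonalised against.
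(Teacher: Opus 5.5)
There is a genuine gap, and it sits exactly at the step you flag yourself: your density claim does not hold. The settling point of $x_n$ (an $M$ with $|x_{ni}-x_{nj}|\le 3^{-n-2}$ for all $i,j\ge M$) is defined by a $\Pi^0_1$ condition. So locating it is a $\Sigma^0_2$ search, not a $\Sigma^0_1$ one. A single number above the settling points of $x_0,\dots,x_{N-1}$ is precisely the instance of $\mathsf B\Sigma^0_2$ used in the proof of Proposition~\ref{pro:cantor}.

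Composing with an increasing $h$ does not help. $(y_k)$ is Cauchy only if the vectors $(s_{n,h(k)})_{n<N}$ are eventually constant, and when the change points of the maps $m\mapsto s_{n,m}$ with $n<N$ are cofinal, a generic $h$ hits them cofinally often. Concretely, let $c\colon\mathbb N\to\{0,\dots,N-1\}$ witness the failure of $\mathsf{IPP}$. Let $x_{ni}\in\{0,1\}$ be the parity of $|\{j<i:c(j)=n\}|$ for $n<N$, and let $x_{ni}=0$ for $n\ge N$. Each $x_n$ is eventually constant, hence a real. But one checks $s_{n,m}=0\Leftrightarrow x_{nm}=1$, so the vector $(s_{n,m})_{n<N}$ changes at every $m$ (in coordinate $c(m)$). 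Distinct vectors give sums $\sum_{n<N}s_{n,m}$ that differ by at least $3^{-N}$. For each $K$, the increasing strings with two entries $h(k),h(l)$, $k,l\ge K$, at which these vectors differ form a dense $\Delta^0_1$-definable set. So already a $1$-generic $h$ makes your $(y_k)$ non-Cauchy: genericity works against you. Falling back on metastability against ground-model functions does not help either, since that does not make $(y_k)$ a real in the model.

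The missing idea is to get Cauchyness for free and put all the work into $y\ne x_n$. In the paper, $y_i=\sum_{j<i}\delta_j3^{-j}$ with $\delta_j=1$ iff $j\in Y$, which is a fast Cauchy sequence. Here $Y$ lies in a hyperimmune-free $\omega$-extension $\mathcal N\models\mathsf{RCA}_0+\mathsf{WKL}$ of a topped model, but outside the topped model $\mathcal M'$ generated by $(x_n)$; such a $Y$ exists because $\mathcal M'$ fails $\mathsf{WKL}$. If $y=x_n$, the search function $f\in\mathcal N$ with $|x_{n,f(k)}-y_{f(k)}|<3^{-k-1}$ is dominated by some $g\in\mathcal M'$. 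Metastability of the genuinely Cauchy $(x_{ni})_i$ relative to this $g$ then yields $h\in\mathcal M'$ with $|x_{n,h(k)}-y|<1/(2\cdot 3^{k-1})$, from which $\mathcal M'$ computes the digits $\delta_i$ and hence $Y$, a contradiction.

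Three contrasts with your plan follow. Hyperimmunity enters in the opposite direction: the extension must add no fast-growing functions. Metastability is used only in the harmless direction from Cauchy to metastable. And no iteration is needed, since one such extension handles $\mathsf{WKL}$ and all families at once.

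If you prefer to keep your forcing framework, take $Y$ sufficiently Cohen-generic instead. The strings $\sigma$ whose cylinder has positive distance from $x_n$ form a definable dense set: compare $x_n$ with rationals in the gap between the cylinders of $\sigma0$ and $\sigma1$, via Lemma~\ref{lem:reals-order}. This is essentially the route through $\Pi^0_1\mathsf G$ and the Baire category theorem mentioned after Corollary~\ref{cor:BCT}. Either way, the $s_{nk}$-diagonalisation from Proposition~\ref{pro:cantor} has to be abandoned rather than repaired.
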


Before we prove the result, we record the following corollary, which follows because $\mathsf C\Sigma^0_2$ is a $\Pi^1_1$-statement and unprovable in~$\mathsf{RCA}_0$ (see above). In connection with Proposition~\ref{pro:cantor}, we explicitly note that the corollary remains valid with $\mathsf{IPP}$ at the place of the weaker principle $\mathsf C\Sigma^0_2$. 

\begin{corollary}
    We have
    \begin{equation*}
        \mathsf{RCA}_0+\mathsf{WKL}+\text{`the real numbers are uncountable'}\nvdash\mathsf C\Sigma^0_2.
    \end{equation*}
\end{corollary}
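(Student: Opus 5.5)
The corollary follows from Proposition~\ref{prop:cantor_conservative} together with two facts recalled earlier in this section: $\mathsf C\Sigma^0_2$ is (equivalent to) a $\Pi^1_1$-statement, and $\mathsf{RCA}_0\nvdash\mathsf C\Sigma^0_2$. The plan is a contraposition.

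First, I check that $\mathsf C\Sigma^0_2$ can be written as a $\Pi^1_1$-sentence. It says that for every set parameter $X$, every $N$ and every $\Sigma^0_2$-formula (quantified via a partial truth definition relative to~$X$), if the formula defines the graph of a total function from $\mathbb N$ into $\{0,\ldots,N\}$, then that function is not injective. This has the form $\forall X\,\varphi(X)$ with $\varphi$ arithmetical. Alternatively, one can apply Lemma~\ref{lem:card-surjection} and use the surjection formulation; it has the same syntactic shape.

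Next, suppose towards a contradiction that
\begin{equation*}
    \mathsf{RCA}_0+\mathsf{WKL}+\text{`the real numbers are uncountable'}\vdash\mathsf C\Sigma^0_2.
\end{equation*}
Since $\mathsf C\Sigma^0_2$ is $\Pi^1_1$, Proposition~\ref{prop:cantor_conservative} then gives $\mathsf{RCA}_0\vdash\mathsf C\Sigma^0_2$. This contradicts the known unprovability of $\mathsf C\Sigma^0_2$ in $\mathsf{RCA}_0$ (Seetapun and Slaman; see also Conidis and Slaman), as recalled before Lemma~\ref{lem:card-surjection}.

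For the version with $\mathsf{IPP}$ in place of $\mathsf C\Sigma^0_2$, I use that $\mathsf{IPP}$ implies $\mathsf C\Sigma^0_2$ over $\mathsf{RCA}_0$. So if the theory proved $\mathsf{IPP}$, it would also prove $\mathsf C\Sigma^0_2$, which we have just ruled out. This shows that the base theory $\mathsf{RCA}_0+\mathsf{IPP}$ of Proposition~\ref{pro:cantor} is strictly stronger than the uncountability statement together with $\mathsf{WKL}$.

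The only point needing care is confirming that the truth-definition formulation really is $\Pi^1_1$, rather than a scheme. Since truth for $\Sigma^0_2$-formulas relative to a set parameter is itself arithmetical, this is routine.
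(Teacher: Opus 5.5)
Your proof is correct and is essentially the paper's argument: you combine the $\Pi^1_1$-conservativity of Proposition~\ref{prop:cantor_conservative} with the facts that $\mathsf C\Sigma^0_2$ is a $\Pi^1_1$-sentence (via a partial truth definition) and is unprovable in $\mathsf{RCA}_0$, and you get the $\mathsf{IPP}$ variant because $\mathsf{IPP}$ implies $\mathsf C\Sigma^0_2$. Your closing gloss is loosely phrased, since $\mathsf{RCA}_0+\mathsf{IPP}$ does not prove $\mathsf{WKL}$: what you have shown is that $\mathsf{IPP}$ is strictly stronger than uncountability even when $\mathsf{WKL}$ is added, not that $\mathsf{RCA}_0+\mathsf{IPP}$ extends $\mathsf{RCA}_0+\mathsf{WKL}$ plus uncountability.
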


\begin{proof}[Proof of Proposition~\ref{prop:cantor_conservative}]
    It suffices to show that any countable model~$\mathcal M\vDash\mathsf{RCA}_0$ has an $\omega$-extension
    \begin{equation*}
        \mathcal N\vDash\mathsf{RCA}_0+\mathsf{WKL}+\text{`the real numbers are uncountable'}.
    \end{equation*}
    We may assume that $\mathcal M$ is topped, i.e., that its second-order part contains one set in which all others are $\Delta^0_1$-definable.
    
    By the hyperimmune-free basis theorem for non-$\omega$-models (see Theorem~2.16 of~\cite{freund-uftring-wkl}), we find an $\omega$-extension $\mathcal N\vDash\mathsf{RCA}_0+\mathsf{WKL}$ that is hyperimmune-free. This means that any $f\colon\mathbb N\to\mathbb N$ from (the second-order part of) the model~$\mathcal N$ is dominated by some $g\colon\mathbb N\to\mathbb N$ from~$\mathcal M$ (i.e., with $f(n)\leq g(n)$ for all~$n\in\mathbb N$, where $\mathbb N$ denotes the joint first-order part of~$\mathcal M$ and~$\mathcal N$). It remains to show that the reals are uncountable according to~$\mathcal N$.

    In $\mathcal N$, we consider an arbitrary family $(x_n)_{n\in\mathbb N}$ of real numbers. The collection of sets that are $\Delta^0_1$-definable from $(x_n)_{n\in\mathbb N}$ and parameters in~$\mathcal M$ form an $\omega$-submodel~$\mathcal M'\subseteq\mathbb N$. Note that $\mathcal M'$ is still topped and must thus violate~$\mathsf{WKL}$ (as there are computable trees without computable paths; see Theorem~VIII.2.15 in~\cite{simpson09}). Thus some set $Y\subseteq\mathbb N$ is contained in~$\mathcal N$ but not in~$\mathcal M'$. Note that the extension of~$\mathcal M'$ into~$\mathcal N$ is still hyperimmune-free. We may thus assume that $\mathcal M'$ equals $\mathcal M$, i.e., that the family $(x_n)_{n\in\mathbb N}$ lies in~$\mathcal M$.

    Let $y=(y_i)\in\mathcal N$ be the real given by
    \begin{equation*}
        y_i=\sum_{j<i}\frac{\delta_j}{3^j}\quad\text{with}\quad\delta_j=\begin{cases}
            1 & \text{if }j\in Y,\\
            0 & \text{otherwise}.
        \end{cases}
    \end{equation*}
    Towards a contradiction, assume that $y$ equals one of the~$x_n$. We will derive that $\mathcal M$ contains~$y$ and hence~$Y$, against the above. Recall that $x_n$ is given as a Cauchy sequence~$(x_{ni})_{i\in\mathbb N}$. Given $x_n=y$, an unbounded search yields an $f\colon\mathbb N\to\mathbb N$ in~$\mathcal N$ such that any $k\in\mathbb N$ validates $f(k)\geq k+2$ and $|x_{n,f(k)}-y_{f(k)}|<3^{-k-1}$. We get
    \begin{equation*}
        \left|x_{n,f(k)}-y\right|\leq\left|x_{n,f(k)}-y_{f(k)}\right|+\left|y_{f(k)}-y\right|<\frac1{3^{k+1}}+\sum_{j=k+2}^\infty\frac1{3^j}=\frac1{2\cdot3^k}.
    \end{equation*}
    To avoid misunderstanding, we emphasize that $f$ may not be a rate of convergence, i.e., that $|x_{ni}-y|$ can be large for some~$i\geq f(k)$.

    As $\mathcal N$ is hyperimmune-free over~$\mathcal M$, we find a $g\in\mathcal M$ that dominates~$f$. Given that $(x_{ni})$ is Cauchy, there is a $\Sigma^0_2$-definable $H\colon\mathbb N\to\mathbb N$ with
    \begin{equation*}
        \left|x_{n,H(k)}-x_{nl}\right|<\frac1{3^k}\quad\text{for}\quad l\geq H(k).
    \end{equation*}
    To obtain a computable substitute for~$H$, we employ a concept known as metastability, which is used in proof mining~\cite{kohlenbach-book,kohlenbach-icm}.\footnote{Our argument is inspired by a result due to Miller and Martin (see Corollary~1.21 of~\cite{miller-martin}) and its presentation as Theorem~5.2 in~\cite{lewis-pye}.} Specifically, an unbounded search yields an $h\colon\mathbb N\to\mathbb N$ in~$\mathcal M$ such that any $k\in\mathbb N$ validates $h(k)\geq k$ and
    \begin{equation*}
        \left|x_{n,h(k)}-x_{nl}\right|<\frac1{3^k}\quad\text{for}\quad h(k)\leq l\leq g(h(k)).
    \end{equation*}
    In view of $h(k)\leq f(h(k))\leq g(h(k))$, we get
    \begin{equation*}
        \left| x_{n,h(k)}-y\right|\leq\left| x_{n,h(k)}-x_{n,f(h(k))}\right|+\left| x_{n,f(h(k))}-y\right|<\frac1{3^k}+\frac1{2\cdot3^{h(k)}}\leq\frac1{2\cdot 3^{k-1}}.
    \end{equation*}
    One can conclude that we have
    \begin{equation*}
        \delta_i=\begin{cases}
            1 & \text{if }\left|y_i-x_{n,h(i+2)}\right|>2/3^{i+1},\\
            0 & \text{otherwise}.
        \end{cases}
    \end{equation*}
    Indeed, when we have $\delta_i=1$, we get
    \begin{equation*}
        \left|y_i-x_{n,h(i+2)}\right|\geq|y_i-y|-\left|y-x_{n,h(i+2)}\right|>\frac1{3^i}-\frac1{2\cdot 3^{i+1}}=\frac5{2\cdot 3^{i+1}},
    \end{equation*}
    while $\delta_i=0$ entails
    \begin{equation*}
        \left|y_i-x_{n,h(i+2)}\right|\leq\left|y-x_{n,h(i+2)}\right|+\left|y-y_i\right|\leq\frac1{2\cdot 3^{i+1}}+\sum_{j=i+1}^\infty\frac1{3^j}=\frac4{2\cdot 3^{i+1}}.
    \end{equation*}
    It follows that $\mathcal M$ contains the function $i\mapsto\delta_i$ and hence the set~$Y$.
\end{proof}

As one may have expected, $\mathsf{IPP}$ is needed for certain considerations that involve finite families of reals.

\begin{lemma}\label{lem:fin-sets-bounded}
    The following are equivalent over~$\mathsf{RCA}_0$:
    \begin{enumerate}[label=(\roman*)]
        \item The infinite pigeonhole principle $\mathsf{IPP}$.
        \item Any finite set of reals is bounded.
        \item Any non-empty finite set of reals has a maximum.
    \end{enumerate}
\end{lemma}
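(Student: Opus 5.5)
I plan to prove the cycle (i)$\Rightarrow$(iii)$\Rightarrow$(ii)$\Rightarrow$(i). The step (iii)$\Rightarrow$(ii) is immediate: a maximum is an upper bound, and a lower bound comes from the maximum of the negated reals $-x_n$. (If one prefers, it suffices to bound from above, and the empty case is trivial.)

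For (i)$\Rightarrow$(iii), I take a finite family $(x_n)_{n\leq N}$ with $x_n=(x_{ni})_{i}$ and avoid a direct $\Sigma^0_2$-induction over $N$, which is unavailable. Instead I would use $\mathsf{IPP}$ in the form of $\mathsf B\Sigma^0_2$. For each $k$, let $c(k)\leq N$ be the least $n$ such that $x_{nk}$ is maximal among $x_{0k},\dots,x_{Nk}$; this is computable. By $\ipp$, some $n_0$ satisfies $c(k)=n_0$ for infinitely many~$k$. I then claim that $x_{n_0}$ is a maximum. Suppose instead that $x_{n_0}<x_m$ for some~$m$. By Lemma~\ref{lem:reals-ineq-Sigma02}, there are $\delta>0$ and~$M$ with $x_{n_0k}+\delta<x_{mk}$ for all $k\geq M$. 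This contradicts $c(k)=n_0$ at some $k\geq M$. By the totality from Lemma~\ref{lem:reals-order}, we then get $x_m\leq x_{n_0}$ for every~$m$. This step seems clean and needs only single instances of the $\Sigma^0_2$ facts, so no real bounding is used beyond $\ipp$.

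For (ii)$\Rightarrow$(i), which I expect to be the main obstacle, I assume $\ipp$ fails. Then there is $c\colon\mathbb N\to\{0,\dots,N\}$ such that each colour appears only finitely often. Following the proof of Lemma~\ref{lem:card-surjection}, I want a finite family of reals that is unbounded. I would set $x_{nk}$ to be the number of $j<k$ with $c(j)=n$, or, more robustly, the largest $j<k$ with $c(j)=n$ (or $0$ if there is none). Each sequence $(x_{nk})_k$ is eventually constant because colour $n$ occurs finitely often, so it is trivially Cauchy, and the reals $x_n$ form a finite family. Its value is $D_0(n)-1$, essentially the last occurrence of colour~$n$. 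Suppose a rational bound $B$ existed with $x_n\leq B$ for all $n\leq N$. Then every $j>B$ would have colour $c(j)=n$ with last occurrence at least $j>B$, so $x_n\geq j>B+1/2$ as reals, a contradiction. Hence no bound exists, and (ii) fails.

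The only point needing care here is that ``finite set of reals'' means a family indexed by $n\leq N$, coded as a single set, which the above construction provides computably from~$c$.
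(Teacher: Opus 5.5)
Your proof is correct, but for (i)$\Rightarrow$(iii) it takes a different and more economical route than the paper.

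The paper builds the pointwise maximum $y=(\max_{n<N}x_{ni})_i$. It invokes $\mathsf{B}\Sigma^0_2$ to get a common bound $I\geq I_n$ for all $n<N$, so that $y$ is Cauchy. It then applies $\mathsf{IPP}$ a second time to find an $n$ with $y_i=x_{ni}$ for infinitely many $i$, whence $y=x_n$.

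You skip $y$ entirely. One application of $\mathsf{IPP}$ to the computable argmax colouring gives $n_0$. Each inequality $x_m\leq x_{n_0}$ then follows separately from Lemma~\ref{lem:reals-ineq-Sigma02} and the totality in Lemma~\ref{lem:reals-order}. So no collection step and no Cauchy verification is needed. Your remark that you use $\mathsf{IPP}$ ``in the form of $\mathsf{B}\Sigma^0_2$'' is therefore inaccurate: you use it directly as a colouring principle, which is better.

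What the paper's version buys is an explicit pointwise representation of the maximum. This is the pattern it reuses for finite sums, products and maxima of reals and continuous functions (Remark~\ref{rmk:fin-sums}, Lemma~\ref{lem:inf-sum-fct}). Your argument locates the maximum among the given $x_n$ but does not by itself show that $(\max_n x_{ni})_i$ is Cauchy; that still needs the collection step.

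For (ii)$\Rightarrow$(i), the paper uses the counting reals $|\{j<i:c(j)=n\}|$ together with the finite pigeonhole principle. Your last-occurrence variant makes unboundedness immediate: any integer $j>B$ has some colour $n$, and then $x_n\geq j>B$. One small slip: $j>B$ does not give $j>B+1/2$, but you do not need that (or simply take $j\geq B+1$).
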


\begin{proof}
    It is clear that~(iii) implies~(ii). To close the circle of implications, we first show that~(i) implies~(iii). Let $(x_n)_{n < N}$ be an arbitrary family of reals with $N > 0$. We define a real $y$ by setting $y_i = \max\{x_{ni} \mid n < N\}$ for each $i \in \mathbb{N}$. To verify that $y$ is Cauchy, consider an arbitrary $\varepsilon > 0$. For each $n < N$, there is an $I_n \in \mathbb{N}$ with $|x_{ni} - x_{nj}| < \varepsilon$ for all $i,j\geq I_n$. Using $\mathsf{IPP}$ in the form of $\Sigma^0_2$-collection (see the paragraph above Lemma~\ref{lem:card-surjection}), we obtain a single~$I\in\mathbb N$ with $I\geq I_n$ for all~$n<N$. By a short case distinction, we see that this entails $|y_i - y_j| < \varepsilon$ for all $i,j\geq I$. Clearly, $y$ is an upper bound on each~$x_n$. Another application of the pigeonhole principle shows that for some~$n<N$, we have $y_i=x_{ni}$ for infinitely many~$i$, which yields~$y=x_n$.

    It remains to show that~(ii) implies~(i). Assume that the infinite pigeonhole principle does not hold. Then, there is a colouring $c\colon \mathbb{N} \to k$ for some $k \in \mathbb{N}$ such that each colour only appears finitely often. For $n<k$ and $i\in\mathbb N$, we put
    \begin{equation*}
        x_{ni}=|\{j<i:c(j)=n\}|.
    \end{equation*}
    Each sequence $x_n=(x_{ni})_{i\in\mathbb N}$ stabilizes (when colour~$n$ no longer occurs) and thus represents a real. Assume, for contradiction, that the family $(x_n)_{n < k}$ is bounded by a number $N \in \mathbb{N}$. By the finite pigeonhole principle (which is available in~$\mathsf{RCA}_0$), there is a colour $n < k$ that appears at least $(N+1)$-many times. But then $x_n$ is not bounded by $N$.
\end{proof}

We can make a similar observation about sums and products:

\begin{remark}\label{rmk:fin-sums}
    Using the pigeonhole principle, we can similarly define sums and products of finitely many reals in a pointwise manner. 
    Somewhat informally (because we have not made precise what it means that these sums and products exist), we note that the pigeonhole principle cannot be avoided: For $\sum_{n<N}|x_n|$ and $\prod_{n<N}1+|x_n|$ to exist, the family~$(x_n)_{n<N}$ must be bounded.
\end{remark}

\section{First results about continuous functions}\label{sect:functions1}

In this section, we introduce our representation of continuous functions. After checking some basic properties, we show that the intermediate value theorem is equivalent to the infinite pigeonhole principle. More results (including equivalences with the strong cohesive principle) will be proved in subsequent sections (once our representation of open sets has been introduced).

Arguably, the most straightforward representation of continuous functions by countable objects records the values on rational arguments. Under the classical approach -- where reals are Cauchy sequences with rate --, this representation is not suitable over~$\mathsf{RCA}_0$ (unless functions come with a modulus of continuity). So one typically uses a different representation, which is not, however, suitable in our setting (see Remark~\ref{rmk:cont-fct} below). In any case, it is attractive to revert to the straightforward representation by values on~$\mathbb Q$, which works well for us:

\begin{definition}\label{def:continuous}
By a partial continuous function on $\mathbb R$, we mean an arbitrary function~$f\colon\mathbb Q\to\mathbb Q^{\mathbb N}$ (with sequences $f(q)=(f(q)_i)_{i\in\mathbb N}$ of rationals as values). We say that a real $x$ lies in the domain $\dom(f)$ of~$f$ if each~$\varepsilon>0$ admits an~$N\in\mathbb N$ (possibly depending on~$x$) such that we have
\begin{equation*}
    \big|f(q)_i-f(r)_j\big|<\varepsilon\quad\text{for all rationals }q,r\in B_{1/N}(x)\text{ and all }i,j\geq N.
\end{equation*}
Here and in the following, $B_\delta(y)$ denotes the open ball
\begin{equation*}
    B_\delta(y)=\{z\in\mathbb R:|y-z|<\delta\}.
\end{equation*}
For $x=(x_i)_{i\in\mathbb N}\in\dom(f)$, we get a real value~$\bar f(x)$ (cf.~the next lemma) by setting
\begin{equation*}
    \bar f(x)=\left(\bar f(x)_i\right)_{i\in\mathbb N}\quad\text{with}\quad \bar f(x)_i=f(x_i)_i.
\end{equation*}
We often use~$f$ at the place of~$\bar f$ and write~$f\colon D\to\mathbb R$ for definable~$D\subseteq\mathbb R$ to assert that $D$ is contained in the domain of~$f$.
\end{definition}

Let us verify the following fundamental property.

\begin{lemma}[$\mathsf{RCA}_0$]
(a) If $x$ lies in~$\dom(f)$, then $\bar f(x)$ is a real number.

(b) For $x=_{\mathbb R}y$ in $\dom(f)$, we have $\bar f(x)=\bar f(y)$.
\end{lemma}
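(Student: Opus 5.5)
For part~(a), we show directly that the diagonal sequence $\bar f(x)_i=f(x_i)_i$ is Cauchy; this is an arithmetical statement about $x$ and $f$, so no comprehension beyond $\mathsf{RCA}_0$ is needed. Fix a rational $\varepsilon>0$. Since $x\in\dom(f)$, we obtain an $N$ such that $|f(q)_i-f(r)_j|<\varepsilon$ for all rationals $q,r\in B_{1/N}(x)$ and all $i,j\geq N$.

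The issue is to ensure that the approximations $x_i$ eventually lie in $B_{1/N}(x)$. Since $x$ is Cauchy, there is an $M$ with $|x_m-x_n|<1/(2N)$ for $m,n\geq M$. For each fixed $m\geq M$ we then get $|x_m-x|\leq 1/(2N)<1/N$ in the sense of Definition~\ref{def:reals-ineq}, where the rational $x_m$ is viewed as a constant real. Indeed, $|x_m-x_n|<1/(2N)$ holds for all large $n$, and by Lemma~\ref{lem:reals-ineq-Sigma02} this yields the strict inequality with respect to $1/N$. Hence $x_m\in B_{1/N}(x)$ for all $m\geq M$. Putting $K=\max(N,M)$, for $i,j\geq K$ both $x_i,x_j$ are rationals in $B_{1/N}(x)$ and $i,j\geq N$, so $|\bar f(x)_i-\bar f(x)_j|=|f(x_i)_i-f(x_j)_j|<\varepsilon$. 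This proves~(a).

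For part~(b), let $x=_{\mathbb R}y$ with both in $\dom(f)$, and fix $\varepsilon>0$. Take the $N$ given by the domain condition for $x$ alone; we do not need the corresponding witness for $y$, only that $\bar f(y)$ is a real by~(a). As before, there is an $M$ with $x_i\in B_{1/N}(x)$ for $i\geq M$. Since $x=y$, we have $|x_n-y_n|\to 0$ in the sense of the order on reals. Combining this with the Cauchy property of $y$, we obtain an $M'$ such that $|y_i-x|<1/N$ for $i\geq M'$. Concretely, choose $M'$ so that for $i,n\geq M'$ we have $|y_i-y_n|<1/(3N)$ and $|y_n-x_n|<1/(3N)$; together these give $|y_i-x_n|<2/(3N)$ for all large $n$, hence $|y_i-x|\leq 2/(3N)<1/N$. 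Then for $i\geq\max(N,M,M')$ we get $|f(x_i)_i-f(y_i)_i|<\varepsilon$. Hence $|\bar f(x)_i-\bar f(y)_i|<\varepsilon$ eventually, which gives both $\bar f(x)\leq\bar f(y)$ and $\bar f(y)\leq\bar f(x)$ by Definition~\ref{def:reals-ineq}.

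There is no real obstacle. The only point requiring care is the bookkeeping in the step where we pass from rationals being close to the approximations $x_n$ to genuine membership in the ball $B_{1/N}(x)$ of reals, which calls for a strict inequality. Choosing the margins $1/(2N)$ and $2/(3N)$ in advance handles this, with Lemma~\ref{lem:reals-ineq-Sigma02} supplying the strictness.
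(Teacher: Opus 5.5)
Your proof is correct and follows essentially the same route as the paper. In both, you take the domain witness $N$ for $x$, use the Cauchy property (together with $x=y$) to place the approximations $x_i,y_i$ eventually in $B_{1/N}(x)$, and then read off the bound on $|f(x_i)_i-f(y_j)_j|$.

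The only difference is organisational. The paper proves the single cross-index estimate $|\bar f(x)_i-\bar f(y)_j|<\varepsilon$ for $i,j\geq M$, which gives (a) (taking $y=x$) and (b) at once. You treat the two parts separately and appeal to (a) for $y$ in part (b).
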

\begin{proof}
    To establish both (a) and~(b), it suffices to show that $x=y$ implies the following: For each~$\varepsilon>0$, there is an $M\in\mathbb N$ such that we have $|\bar f(x)_i-\bar f(y)_j|<\varepsilon$ for all~$i,j\geq M$. Pick an~$N$ that witnesses $x\in\dom(f)$ for our~$\varepsilon$. Then take an~$M\geq N$ such that $i,j\geq M$ implies $|x_i-x_j|\leq1/(2N)$ and $|y_i-y_j|\leq 1/(2N)$. When we have $i,j\geq M$, we get $x_i,y_j\in B_{1/N}(x)$ (recall $x=y$) and thus
    \begin{equation*}
        \left|\bar f(x)_i-\bar f(y)_j\right|=\left|f(x_i)_i-f(y_j)_j\right|<\varepsilon,
    \end{equation*}
    as desired.
\end{proof} 

The condition for $x\in\dom(f)$ from Definition~\ref{def:continuous} combines continuity with locally uniform convergence of Cauchy sequences, which was vital for the previous proof. Indeed, it seems that the uniformity condition is unavoidable if one wants to define $\bar f$ on irrational arguments without arithmetical comprehension. By a straight\-forward modification of the previous proof, we also get the following.

\begin{lemma}[$\mathsf{RCA}_0$]\label{lem:eps-delta}
For a continuous function $f\colon D\to\mathbb R$ and $x\in D$, any $\varepsilon>0$ admits a $\delta>0$ with $|f(x)-f(y)|<\varepsilon$ for $y\in B_\delta(x)\cap D$.
\end{lemma}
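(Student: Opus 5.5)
The approach is to modify the proof of the previous lemma: the same uniformity witness $N$ from Definition~\ref{def:continuous} should already produce the required $\delta$. Fix $x=(x_i)\in D$ and a rational $\varepsilon>0$. Since $x\in\dom(f)$, we pick $N$ witnessing this for $\varepsilon/3$, i.e., $|f(q)_i-f(r)_j|<\varepsilon/3$ for all rationals $q,r\in B_{1/N}(x)$ and all $i,j\geq N$. I propose $\delta=1/(2N)$. The point is that for $y\in B_\delta(x)$, any rational close enough to $y$ lies in $B_{1/N}(x)$. This lets us compare $f(y)_i=f(y_i)_i$ with $f(x)_j=f(x_j)_j$ without ever evaluating $f$ at irrational points.

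Concretely, let $y=(y_i)\in B_\delta(x)\cap D$. By Lemma~\ref{lem:reals-ineq-Sigma02} applied to $|x-y|<\delta$, there are a rational $\eta>0$ and an index $M_0$ such that $|x_i-y_i|<\delta-\eta$ for all $i\geq M_0$. Using that both sequences are Cauchy, we choose $M\geq\max(N,M_0)$ with $|x_i-x_j|<\min(\eta,\delta)$ and $|y_i-y_j|<\eta$ for all $i,j\geq M$. We need that for $i,j\geq M$ the rationals $x_j$ and $y_i$ (viewed as constant reals) lie in $B_{1/N}(x)$.
\begin{itemize}
\item For $x_j$: the inequality $|x_j-x_k|<\delta$ for all $k\geq M$ gives $|x_j-x|\leq\delta<1/N$.
\item For $y_i$: we have $|y_i-x_k|\leq|y_i-y_k|+|y_k-x_k|<\eta+\delta-\eta=\delta$ for $k\geq M$, so $|y_i-x|\leq\delta<1/N$.
\end{itemize}
In both cases the passage from inequalities on the approximating sequences to inequalities between reals follows Definition~\ref{def:reals-ineq}. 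Then $|f(y)_i-f(x)_j|=|f(y_i)_i-f(x_j)_j|<\varepsilon/3$ for all $i,j\geq M$. By Definition~\ref{def:reals-ineq} this yields $|f(x)-f(y)|\leq\varepsilon/3<\varepsilon$, where $f(x)$ and $f(y)$ are reals by the previous lemma.

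All steps are plain first-order reasoning about fixed sequences, with no induction or comprehension beyond $\mathsf{RCA}_0$. The only potential obstacle is the bookkeeping in the second step: from $|x-y|<\delta$ we must extract a strict rational margin $\eta$, so that the approximants $y_i$ eventually lie strictly inside the ball. This is exactly what Lemma~\ref{lem:reals-ineq-Sigma02} provides. Note also that we never use the hypothesis $y\in D$ beyond the fact that $f(y)$ is a real. In fact the estimate shows that membership of $y$ in the domain is automatic near $x$, which is harmless.
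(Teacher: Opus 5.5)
Your proof is correct and is precisely the ``straightforward modification'' of the proof that $x=y$ implies $\bar f(x)=\bar f(y)$ which the paper has in mind: the witness $N$ for $x\in\dom(f)$ yields $\delta=1/(2N)$, and the rational margin from Lemma~\ref{lem:reals-ineq-Sigma02} places the approximants $x_j$ and $y_i$ inside $B_{1/N}(x)$. Only your closing aside is wrong. The estimate holds for one fixed $\varepsilon$, so it does not put points near $x$ into $\dom(f)$; for instance, $f(q)_i=\pm q$, with the sign given by the parity of the denominator of $q$, has domain $\{0\}$. This is exactly why the hypothesis $y\in D$ is needed to know that $f(y)$ is a real.
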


Let us stress that the uniformity condition from Definition~\ref{def:continuous} does not require that we provide a rate for the Cauchy sequences $(f(q)_i)_{i\in\mathbb N}$. This is illustrated, for example, by part~(b) of the following result, where $y$ does not come with a rate.

\begin{lemma}[$\mathsf{RCA}_0$]\label{lem:basicContFunctions}
    We have the following continuous functions:
    \begin{enumerate}[label=(\alph*)]
\item The identity~$\mathbb R\ni x\mapsto x\in\mathbb R$.
\item The constant function~$\mathbb R\ni x\mapsto y\in\mathbb R$ for any fixed~$y$.
\item The absolute value function on~$\mathbb R$.
\item The function $D\ni x\mapsto f_0(x)+f_1(x)$ for any continuous $f_i\colon D\to\mathbb R$, as well as the analogous functions for subtraction, multiplication and division (in the latter case without zeros in the denominator).
    \end{enumerate}
\end{lemma}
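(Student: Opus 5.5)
In each case we write down an explicit code $f\colon\mathbb Q\to\mathbb Q^{\mathbb N}$ and then verify the domain condition of Definition~\ref{def:continuous}. For~(a) we put $f(q)_i=q$; for~(b), with $y=(y_i)$, we put $f(q)_i=y_i$; for~(c) we put $f(q)_i=|q|$. For~(d), given codes $f_0,f_1$, we put $f(q)_i=f_0(q)_i+f_1(q)_i$, and we proceed analogously for subtraction and multiplication. For division we put $f(q)_i=f_0(q)_i/f_1(q)_i$ when $f_1(q)_i\neq0$ and $f(q)_i=0$ otherwise. All of these codes are $\Delta^0_1$ in the given data, so $\mathsf{RCA}_0$ proves that they exist. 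We then check that $\bar f$ has the intended values. For instance, in~(a) we get $\bar f(x)_i=x_i$, and in~(d) we get $\bar f(x)_i=f_0(x_i)_i+f_1(x_i)_i$, which is exactly the pointwise sum $\bar f_0(x)+\bar f_1(x)$.

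For the domain conditions in~(a)--(c), take $\varepsilon>0$. In~(a) choose $N$ with $2/N<\varepsilon$: for $q,r\in B_{1/N}(x)$ we get $|q-r|<2/N$. In~(b) the condition does not depend on $q,r$, so it reduces to the Cauchy property of~$y$. This is exactly why no rate is needed: $N$ is merely some witness of Cauchyness and need not be computable from~$\varepsilon$. Part~(c) follows from~(a) together with $||q|-|r||\leq|q-r|$.

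In~(d), assume $x\in D$, which is contained in $\dom(f_0)\cap\dom(f_1)$. Addition and subtraction are routine: take witnesses $N_0,N_1$ for $\varepsilon/2$ and set $N=\max(N_0,N_1)$. Shrinking the ball only helps, since $B_{1/N}(x)\subseteq B_{1/N_k}(x)$. For multiplication we first need a local bound. Take $N_0,N_1$ witnessing $\varepsilon=1$ for $x$. Fix one rational $q_0\in B_{1/\max(N_0,N_1)}(x)$ and set $B$ to be $1$ plus the maximum of $|f_k(q_0)_i|$ for $i\leq\max(N_0,N_1)$. Then $|f_k(q)_i|\leq B+1$ holds for all $q$ in the ball and all $i\geq N_k$. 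With this bound, the identity $ab-cd=a(b-d)+d(a-c)$ yields the estimate with $\varepsilon/(2B+2)$.

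Division is the step I expect to need the most care. Since $\bar f_1(x)\neq0$, Lemma~\ref{lem:reals-ineq-Sigma02} gives a rational $\eta>0$ and $M$ with $|\bar f_1(x)_i|>2\eta$ for $i\geq M$. We need this bound to hold uniformly on a ball, not just along the diagonal $f_1(x_i)_i$. To get it, take $N$ witnessing $x\in\dom(f_1)$ for $\varepsilon=\eta$, with $N\geq M$ and large enough that $x_i\in B_{1/N}(x)$ for $i\geq N$ (possible since $(x_i)$ is Cauchy and converges to~$x$). Then for $q\in B_{1/N}(x)$ and $j\geq N$ we compare $f_1(q)_j$ with $f_1(x_j)_j=\bar f_1(x)_j$, which gives $|f_1(q)_j|>\eta$. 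So on that ball and beyond index~$N$, the denominators are bounded away from zero, and the case $f_1(q)_i=0$ never arises. The estimate $|a/b-c/d|\leq(|a||d-b|+|b||a-c|)/|bd|$ then finishes the argument, using the local bound on $f_0$ and $|b|,|d|>\eta$. No induction beyond $\mathsf{RCA}_0$ is needed, because only finitely many witnesses occur, namely two or three. In particular, $\mathsf{IPP}$ is not invoked.
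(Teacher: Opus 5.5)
Your construction is the paper's: the same explicit codes, with the domain condition of Definition~\ref{def:continuous} reduced to the usual continuity estimates. The paper writes out only (a) and (b). Your treatment of (a)--(c), of addition and subtraction, and of the local bound for multiplication is correct.

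One step in the division case fails as written. You take $N$ ``large enough that $x_i\in B_{1/N}(x)$ for $i\geq N$'' and then compare $f_1(q)_j$ with $f_1(x_j)_j$ for every $j\geq N$. Because the ball shrinks as $N$ grows, such an $N$ need not exist. Asking for it amounts to asking for one value of a rate of convergence for $(x_i)$, which is exactly what a slow Cauchy sequence does not supply. For example, $x_i=1/\lfloor\sqrt{i+1}\rfloor$ represents $0$, yet $x_N\geq 1/N$ and hence $x_N\notin B_{1/N}(x)$ for every $N\geq 1$.

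The repair is easy, because Definition~\ref{def:continuous} allows different indices on the two sides. First fix $N\geq M$ witnessing $x\in\dom(f_1)$ for $\eta$. Only then pick one index $k\geq N$ with $x_k\in B_{1/N}(x)$; it exists since $(x_i)$ converges to $x$. For $q\in B_{1/N}(x)$ and $j\geq N$ you get $|f_1(q)_j-f_1(x_k)_k|<\eta$ and $|f_1(x_k)_k|=|\bar f_1(x)_k|>2\eta$, hence $|f_1(q)_j|>\eta$.

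Alternatively, decouple the radius $1/N$ from an index threshold $N'\geq N$ with $x_i\in B_{1/N}(x)$ for $i\geq N'$. The paper does this in the proofs of Proposition~\ref{pro:ContinuousOpen}(a) and Corollary~\ref{cor:equicont-seq}; you then take the maximum of all witnesses at the end. With either fix, your quotient estimate and the rest of the argument go through unchanged.
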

\begin{proof}
    (a) Define $f(q)=(f(q)_i)$ by $f(q)_i=q$. The condition from Definition~\ref{def:continuous} is satisfied when we have $N\geq 2/\varepsilon$. For $x=(x_i)$, we obtain $\bar f(x)_i=f(x_i)_i=x_i$ and hence~$\bar f(x)=x$. 

    (b) Set~$f(q)_i=y_i$ for~$y=(y_i)$. We have~$\dom(f)=\mathbb R^n$ since~$(y_i)$ is Cauchy. Here it is crucial that we only demand locally uniform convergence rather than a fixed Cauchy rate.

    For parts (c) and (d), the condition from Definition~\ref{def:continuous} reduces to the usual arguments for the continuity of the indicated functions.
\end{proof}

For typical operations on arbitrary finite or on infinite families~$(f_n)$ of continuous functions, we need the pigeonhole principle (cf.~Lemma~\ref{lem:fin-sets-bounded} and Remark~\ref{rmk:fin-sums}). Note that $(f_n)$ is simply a family of sets such that each $f_n$ represents a continuous function (without the uniformity in~$n$ that is required by Definition~\ref{def:seq-fct}). 

\begin{lemma}[$\mathsf{RCA}_0+\mathsf{IPP}$]\label{lem:inf-sum-fct}
    (a) For continuous $f_i\colon D\to\mathbb R$, we have continuous functions that send $x\in D$ to~$\sum_{i=0}^n f_i(x)$ and~$\prod_{i=0}^n f_i(x)$ and $\max_{i\leq n}f_i(x)$.

    (b) Consider rationals~$q_n\geq 0$ with $\sum_{n=0}^\infty q_n\leq B\in\mathbb R$ and a family of continuous functions $f_n\colon D\to\mathbb R$ with $|f_n(x)|\leq q_n$ for all~$n\in\mathbb N$ and all~$x\in D\subseteq\mathbb R$. We then have a continuous function~$f\colon D\to\mathbb R$ with
    \begin{equation*}
      f(x)=\sum_{n=0}^\infty f_n(x)\quad\text{and}\quad |f(x)|\leq B\quad\text{for all }x\in D.
    \end{equation*}
\end{lemma}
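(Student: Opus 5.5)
For part~(a), I would define the new function pointwise on the representing data: for a rational~$q$, set $g(q)_i=\sum_{k=0}^n f_k(q)_i$, and analogously for the product and the maximum. The task is then to check that each $x\in D$ lies in~$\dom(g)$ in the sense of Definition~\ref{def:continuous}. Fix~$\varepsilon>0$. For each $k\leq n$, the condition $x\in\dom(f_k)$ applied to $\varepsilon/(n+1)$ yields an~$N_k$ with
\begin{equation*}
    \big|f_k(q)_i-f_k(r)_j\big|<\frac{\varepsilon}{n+1}\quad\text{for rationals }q,r\in B_{1/N_k}(x)\text{ and }i,j\geq N_k.
\end{equation*}
This property of~$N_k$ is $\Pi^0_2$, since it quantifies over rationals $q,r$ and indices $i,j$ while the ball condition is $\Sigma^0_2$/$\Pi^0_2$ by Lemma~\ref{lem:reals-ineq-Sigma02}. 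Consequently the existence of~$N_k$ is a $\Sigma^0_3$-type statement. Rather than applying $\mathsf B\Sigma^0_2$ to it directly, I would first weaken it to a $\Sigma^0_2$ statement.

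To do so, I would replace the ball $B_{1/N}(x)$ by a rational interval around a fixed approximation $x_m$, where $m$ is chosen so that $|x_m-x_l|$ is small for $l\geq m$. With the index $m$ included in the witness, the statement about~$N_k$ becomes $\Pi^0_1$ in the witness, so the whole thing is $\Sigma^0_2$. Then $\mathsf{B}\Sigma^0_2$, which is equivalent to~$\mathsf{IPP}$, gives a common bound~$N\geq N_k$ for all $k\leq n$, and the triangle inequality finishes the sum case. For products and maxima, I would additionally bound the values $f_k(q)_i$ near~$x$ uniformly in~$k$, using the same collection argument, much as in Lemma~\ref{lem:fin-sets-bounded}. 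With these bounds in hand, the usual estimates go through, in particular $|\max a-\max b|\leq\max|a-b|$.

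For part~(b), I would set
\begin{equation*}
    f(q)_i=\sum_{k\leq i}\operatorname{clip}_{q_k}\big(f_k(q)_i\big),
\end{equation*}
where $\operatorname{clip}_{q_k}$ truncates a rational to the interval $[-q_k,q_k]$. Truncation is needed because the approximations $f_k(q)_i$ need not themselves respect the bound~$q_k$; it is harmless since truncation is $1$-Lipschitz and the limits already lie in $[-q_k,q_k]$. Given~$\varepsilon>0$ and $x\in D$, I would choose $K$ with $\sum_{k\geq K}q_k<\varepsilon/4$. This is possible because $\sum q_k\leq B$ and the partial sums are monotone and bounded, so the tail can be made small as in the monotone convergence discussion. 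The tail contributes less than $\varepsilon/2$ to any difference $|f(q)_i-f(r)_j|$. For the finitely many $k<K$, I would use the common modulus obtained from part~(a)'s collection argument, giving an $N\geq K$ that handles the head. The bound $|f(x)|\leq B$ follows because each $|f(q)_i|\leq\sum_k q_k\leq B$.

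The main obstacle is the quantifier bookkeeping needed so that $\mathsf{B}\Sigma^0_2$, rather than the unavailable $\mathsf B\Sigma^0_3$, suffices to collect the witnesses~$N_k$. I would first try to recast the domain condition via the approximations $x_m$ as described above. If that fails, I would instead show that Definition~\ref{def:continuous} is equivalent in $\mathsf{RCA}_0$ to a $\Sigma^0_2$ witness-form, and then apply collection to that form.
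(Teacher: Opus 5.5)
Your proposal is correct and follows essentially the paper's own proof. In (a) the paper also replaces $B_{1/N}(x)$ by a rational ball around an approximation $x_m$, which makes the witness condition $\Pi^0_1$, applies $\mathsf B\Sigma^0_2$, and fixes the bound $C$ (via Lemma~\ref{lem:fin-sets-bounded}) before choosing moduli for the product; in (b) it clips, splits into head and tail, and reuses the collection argument from (a). The one step you leave implicit is checking in (b) that $f(x)$ really equals $\lim_N\sum_{n<N}f_n(x)$. The paper does this by comparing $f(q)_i$ with the partial-sum representations $F_N(q)_i=\sum_{n<N}f_n(q)_i$ and bounding $|f(x)_i-F_N(x)_i|$ by the tail $\sum_{n\geq N}q_n$ for $i\geq N$.
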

\begin{proof}
(a) Let us consider multiplication as the most involved case. For each~$q\in\mathbb Q$ and $j\in\mathbb N$, we put $f(q)_j=\prod_{i=0}^n f_i(q)_j$. Given $x\in D$, we invoke Lemma~\ref{lem:fin-sets-bounded} to find a~$C$ with $|f_i(x)|\leq C$ for all~$i\leq n$. For $\varepsilon > 0$, each $i \leq n$ admits an $N_i$ such that all rationals $q, r \in B_{1/N_i}(x)$ and all $j, k \geq N_i$ validate
\begin{equation}\label{eq:sum-fct-one}
    |f_i(q)_j|\leq C+1 \quad\text{and}\quad |f_i(q)_j-f_i(r)_k|<\frac{\varepsilon}{(n+1)\cdot(C+1)^n}=:\varepsilon'.
\end{equation}
Here the second inequality comes from the condition in Definition~\ref{def:continuous}. Since the premise $q, r \in B_{1/N_i}(x)$ is~$\Sigma^0_2$, finding a common bound $N \geq N_i$ requires, a priori, the $\Pi^0_2$-bounding principle, which is stronger than~$\mathsf{IPP}$. In order to avoid this, write $x=(x_m)$ and note that each~$i\leq n$ admits not only an~$N_i$ as above but, depending on the latter, also an~$M_i$ with
\begin{equation}\label{eq:sum-fct-two}
    |x_l-x_m|<\frac1{3N_i}\quad\text{for all}\quad l,m\geq M_i,
\end{equation}
which entails $B_{2/(3N_i)}(x_m)\subseteq B_{1/N_i}(x)$. So there are $M_i$ and~$N_i$ that validate (\ref{eq:sum-fct-two}) as well as (\ref{eq:sum-fct-one}) for all rationals $q, r \in B_{2/(3N_i)}(x_m)$ with $m\geq M_i$ and all $j, k \geq N_i$. But this statement is~$\Pi^0_1$, so that $\mathsf{IPP}$ yields~$M$ and $N$ that bound witnesses~$M_i$ and~$N_i$, respectively, for all~$i\leq n$. Given that we have $B_{1/(3N)}(x)\subseteq B_{2/(3N_i)}(x_M)$ for every~$i\leq n$ and corresponding~$N_i\leq N$, it follows that (\ref{eq:sum-fct-one}) holds for all rationals $q,r\in B_{1/(3N)}(x)$ and all~$j,k\geq N$.

By induction on~$m\leq n$, we now get
\begin{multline*}
    \left|\prod_{i=0}^m f_i(q)_j-\prod_{i=0}^m f_i(r)_k\right|\leq{}\\
    \begin{aligned}
    {}&\leq\left|f_m(q)_j\right|\cdot\left|\prod_{i=0}^{m-1} f_i(q)_j-\prod_{i=0}^{m-1} f_i(r)_k\right|+\left|\prod_{i=0}^{m-1} f_i(r)_k\right|\cdot\left|f_m(q)_j-f_m(r)_k\right|\\
    {}&<(C+1)\cdot m\cdot(C+1)^{m-1}\cdot\varepsilon'+(C+1)^m\cdot\varepsilon'=(m+1)\cdot(C+1)^m\cdot\varepsilon'.
    \end{aligned}
\end{multline*}
With $m=n$, and for arbitrary $q,r\in B_{1/(3N)}(x)$ and $j,k\geq N$, this yields
\begin{equation*}
    \left|f(q)_j-f(r)_k\right|<(n+1)\cdot(C+1)^n\cdot\varepsilon'=\varepsilon,
\end{equation*}
which means that~$f$ validates the condition from Definition~\ref{def:continuous}.

(b) Let us put
\begin{equation*}
f_n'(r)_i=\begin{cases}
q_n& \text{if }f_n(r)_i>q_n,\\
-q_n& \text{if }f_n(r)_i<-q_n,\\
f_n(r)_i& \text{otherwise}.
\end{cases}
\end{equation*}
It is straightforward to see that this represents continuous functions~$f_n'=f_n$. For notational convenience, assume that we had $|f_n(r)_i|\leq q_n$ to begin with. Now put
\begin{equation*}
    f(q)_i:=\sum_{n=0}^i f_n(q)_i.
\end{equation*}
To verify the condition from Definition~\ref{def:continuous}, we consider arbitrary $x\in D$ and~$\varepsilon>0$. First take~$M\in\mathbb N$ so large that we have $\sum_{n=M}^\infty q_n<\varepsilon/3$. As in the proof of~(a), we find an~$N\geq M$ such that all~$n<M$ validate
\begin{equation*}
    \left|f_n(q)_i-f_n(r)_j\right|<\frac\varepsilon{3M}\quad\text{for all $q,r\in B_{1/(3N)}(x)\cap\mathbb Q$ and all }i,j\geq N.
\end{equation*}
For $q,r$ and $i,j$ as indicated, we get
\begin{equation*}
\left|f(q)_i-f(r)_j\right|\leq\sum_{n=0}^{M-1}\left|f_n(q)_i-f_n(r)_j\right|+\sum_{n=M}^i\left|f_n(q)_i\right|+\sum_{n=M}^j\left|f_n(r)_j\right|<\varepsilon,
\end{equation*}
as required by Definition~\ref{def:continuous}. In order to see that $f$ is the desired limit, we consider the functions $F_N=\sum_{n<N}f_n$, as represented by $F_N(q)_i=\sum_{n<N}f_n(q)_i$. For $x=(x_i)\in D$ and $i\geq N$, we get
\begin{equation*}
\left|f(x)_i-F_N(x)_i\right|=\left|\sum_{n=0}^i f_n(x_i)_i-\sum_{n=0}^{N-1}f_n(x_i)_i\right|\leq\sum_{n=N}^i\left|f_n(x_i)_i\right|\leq\sum_{n=N}^\infty q_n,
\end{equation*}
which tends to zero as~$N$ grows.
\end{proof}

We also have the following fundamental closure property.

\begin{lemma}[$\mathsf{RCA}_0$]\label{lem:compositionCont}
If $f,g\colon\mathbb R\to\mathbb R$ are continuous, so is~$g\circ f$.
\end{lemma}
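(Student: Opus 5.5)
The plan is to compose the representing functions directly, with the index reused diagonally. Given representations $f,g\colon\mathbb Q\to\mathbb Q^{\mathbb N}$ in the sense of Definition~\ref{def:continuous}, we will define $h\colon\mathbb Q\to\mathbb Q^{\mathbb N}$ by
\begin{equation*}
    h(q)_i=g\big(f(q)_i\big)_i .
\end{equation*}
This is clearly $\Delta^0_1$ in $f$ and $g$, so $h$ exists in $\mathsf{RCA}_0$. Once we show $\dom(h)=\mathbb R$, the values come out right automatically. For $x=(x_i)$ we have $\bar h(x)_i=g(f(x_i)_i)_i=g(\bar f(x)_i)_i=\bar g(\bar f(x))_i$, so $\bar h(x)$ and $\bar g(\bar f(x))$ even coincide as Cauchy sequences.

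The main step is to verify the domain condition at an arbitrary real $x$. Put $y=\bar f(x)$, which is a real by the lemma following Definition~\ref{def:continuous}.
\begin{enumerate}[label=(\arabic*)]
    \item Given $\varepsilon>0$, use $y\in\dom(g)$ to pick $N_g$ such that $|g(s)_i-g(t)_j|<\varepsilon$ for all rationals $s,t\in B_{1/N_g}(y)$ and all $i,j\geq N_g$.
    \item Use $x\in\dom(f)$ with the tolerance $1/(2N_g)$ to pick $N_f$ such that $|f(q)_i-f(r)_j|<1/(2N_g)$ for all rationals $q,r\in B_{1/N_f}(x)$ and all $i,j\geq N_f$.
    \item Set $N=\max(N_f,N_g)$.
\end{enumerate}

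Now take rationals $q,r\in B_{1/N}(x)$ and indices $i,j\geq N$; the claim is that $f(q)_i$ and $f(r)_j$ lie in $B_{1/N_g}(y)$. For all sufficiently large $k$ we have $x_k\in B_{1/N_f}(x)$, and hence $|f(q)_i-f(x_k)_k|<1/(2N_g)$. Since $y=(f(x_k)_k)_k$, passing to the limit gives $|f(q)_i-y|\leq 1/(2N_g)<1/N_g$, and the same holds for $f(r)_j$. Applying the choice of $N_g$ with $s=f(q)_i$ and $t=f(r)_j$ then yields $|h(q)_i-h(r)_j|<\varepsilon$, which is exactly the condition from Definition~\ref{def:continuous}.

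I expect the only delicate point to be this passage from the Cauchy-sequence approximations to membership in the open ball around $y$. The halving of the tolerance is what handles it: it turns the non-strict limit inequality into a strict one. Only two witnesses $N_f$ and $N_g$ are involved, so no bounding principle is needed and the argument stays within $\mathsf{RCA}_0$.
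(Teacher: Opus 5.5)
Your proposal is correct and is essentially the paper's proof: the same diagonal representation $h(q)_i=g(f(q)_i)_i$, the same choice of a witness for $g$ at $f(x)$ followed by a witness for $f$ at $x$ with half the radius as tolerance, and the same computation $\bar h(x)_i=g(\bar f(x)_i)_i=\bar g(\bar f(x))_i$. The only difference is that you spell out, via the approximations $x_k$ and a limit passage, why $f(q)_i$ and $f(r)_j$ lie in the ball around $f(x)$, a step the paper states without detail.
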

\begin{proof}
   Given representations~$f,g\colon\mathbb Q\to\mathbb R$, we define a representation~$h\colon\mathbb Q\to\mathbb R$ of the composition by $h(q)_i=g(f(q)_i)_i$. In order to show $\dom(h)=\mathbb R$, we consider arbitrary~$x\in\mathbb R$ and~$\varepsilon>0$. Our task is to find an~$N\in\mathbb N$ with $|h(q)_i-h(r)_j|<\varepsilon$ for all~$q,r\in B_{1/N}(x)$ and~$i,j\geq N$. First pick an~$M\in\mathbb N$ such that $|g(s)_i-g(t)_j|<\varepsilon$ holds for all~$s,t\in B_{1/M}(f(x))$ and~$i,j\geq M$. Then choose an~$N\geq M$ such that we have~$|f(q)_i-f(r)_j|<1/(2M)$ for~$q,r\in B_{1/N}(x)$ and~$i,j\geq N$. Given such~$q,r$ and~$i,j$, we learn that $f(q)_i$ and~$f(r)_j$ lie in~$B_{1/M}(f(x))$, which allows us to conclude. For $x=(x_i)\in\mathbb R$, we also have
   \begin{equation*}
       \bar g(\bar f(x))_i=g(\bar f(x)_i)_i=g(f(x_i)_i)_i=h(x_i)_i=\bar h(x)_i,
   \end{equation*}
   so that we indeed obtain~$\bar g(\bar f(x))=\bar h(x)$.
\end{proof}

Now that we have secured some fundamental properties, we briefly compare with the classical representation of continuous functions in reverse mathematics.

\begin{remark}\label{rmk:cont-fct}
    According to Definition~II.6.1 from Simpson's textbook~\cite{simpson09}, a partial continuous $f\colon\mathbb R\to\mathbb R$ is represented by a set of tuples $(n,a,r,b,s)\in\mathbb N\times\mathbb Q^4$ with $r,s>0$ that satisfy certain coherence conditions. The idea is that $f(x)$ lies in the closure of $B_s(b)$ whenever we have $x\in B_r(a)$ (and the component $n$ allows to turn enumerable into decidable sets). One then says that $x\in\mathbb R$ lies in the domain of~$f$ if each~$\varepsilon>0$ admits a tuple $(n,a,r,b,s)$ with $x\in B_r(a)$ and~$s<\varepsilon$. In the classical case of~\cite{simpson09}, where $x$ comes with a Cauchy rate, the condition~$x\in B_r(a)$ is~$\Sigma^0_1$, so that we can effectively search for a tuple as indicated. For appropriate~$\varepsilon$, we can then choose some $y_i\in B_\varepsilon(b)$ to construct the value~$y=(y_i)_{i\in\mathbb N}=f(x)$. The approach is not suitable when~$x$ comes without a rate (as in the present paper), because $x\in B_r(a)$ is then a $\Sigma^0_2$-relation (see the paragraph after Lemma~\ref{lem:reals-ineq-Sigma02}). It may be possible to adapt Simpson's representation to our setting in some way. But since we cannot use his representation as it stands, we may as well employ Definition~\ref{def:continuous} from above, which is arguably more straightforward in any case (but does not produce reals with rate). One interesting parallel is that both Simpson's representation and ours use a $\Pi^1_1$-statement to express that the domain of~$f$ comprises all of~$\mathbb R$ (or some other given set). This has the effect that, e.g., the boundedness of continuous $f\colon[0,1]\to\mathbb R$ is expressed by a $\Pi^1_2$-statement, i.e., by a set existence principle.
\end{remark}

In our setting, the intermediate value theorem cannot be proved in $\mathsf{RCA}_0$ (in contrast to the classical case of Theorem~II.6.6 from\cite{simpson09}), but it can be proved in a conservative extension.

\begin{theorem}\label{thm:ivt}
    The following are equivalent over $\mathsf{RCA}_0$:
    \begin{enumerate}[label=(\roman*)]
        \item The infinite pigeonhole principle.
        \item The intermediate value theorem, which states that any continuous function $f\colon[0, 1] \to \mathbb{R}$ with $f(0)\cdot f(1) < 0$ admits an $x \in (0, 1)$ with $f(x) = 0$.
        \item The intermediate value theorem restricted to strictly increasing functions.
        \item Any continuous function $f\colon\mathbb R\to\{0,1\}$ is constant.
    \end{enumerate}
\end{theorem}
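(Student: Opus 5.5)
I will establish (i)$\Rightarrow$(ii)$\Rightarrow$(iii)$\Rightarrow$(iv)$\Rightarrow$(i). The step (ii)$\Rightarrow$(iii) is trivial.

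\emph{From (i) to (ii).} I will run a bisection with revisions. Assume $f(0)<0<f(1)$. The difficulty is that the sign of $f$ at a rational dyadic point $d$ is only $\Sigma^0_2$, and it is undecided if $f(d)=0$. To handle this, at stage $k$ I define a sequence $a_k$ by bisecting $k$ times. At level $n\le k$, with current interval $[l,r]$ and midpoint $d$, I inspect the approximation $f(d)_k$ and go left or right according to its sign; since $d$ is rational, the constant sequence $d$ is a real, so $\bar f(d)_k=f(d)_k$. To make this stabilize, I use hysteresis as in the proof of Proposition~\ref{pro:cantor}. At level $n$ I pick the two candidate points $d^\pm=d\pm 3^{-n}\cdot(r-l)/10$ and a threshold $\eta_n>0$, and I use whichever of them currently looks clearly nonzero. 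If $f(d)$ is tiny on both candidates, either choice is acceptable, since both then lie within a small distance of a zero once the interval is small. Concretely, I mimic the $i(n,k)$ device: I freeze the decision made at level $n$ once the approximations $f(d^\pm)_j$ have stabilized within $\eta_n/3$ from some index on. Each level's decision then changes only finitely often, and $\mathsf{IPP}$ in the form $\mathsf B\Sigma^0_2$ gives a common stabilization stage for all levels $n<N$. This shows $(a_k)$ is Cauchy, exactly as in Proposition~\ref{pro:cantor}. The limit $x$ satisfies $f(x)=0$, and I verify this by contradiction: if $|f(x)|>\varepsilon$, then Lemma~\ref{lem:eps-delta} yields a neighbourhood on which $f$ keeps its sign, while the eventual intervals around $x$ have endpoints of opposite sign or of value within $\eta_n$ of zero, which is impossible for large $n$. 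The invariant that the endpoints carry opposite (or negligible) signs is maintained by the choices. I expect this construction, in particular arranging that stabilized choices really preserve the sign invariant, to be the main obstacle. The fact that $x\in(0,1)$ follows because $f(0),f(1)\neq0$.

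\emph{From (iii) to (iv).} Suppose $f\colon\mathbb R\to\{0,1\}$ is continuous with $f(a)=0$ and $f(b)=1$, where $a<b$ (the other case is symmetric). I form $g(x)=(x-a)/(b-a)+(f(a+(b-a)x)-\tfrac12)$ restricted to $[0,1]$. This is a strictly increasing continuous function by Lemmas~\ref{lem:basicContFunctions} and~\ref{lem:compositionCont}, although the sum is only increasing piecewise. I take a simpler approach instead: let $h(x)=x+2f(a+(b-a)x)-1-\tfrac12$. Then $h(0)=-\tfrac32$ and $h(1)=\tfrac32$, and $h$ is strictly increasing on each region where $f$ is constant. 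The tricky point is that monotonicity across the jump is not automatic, so I will use $h(x)=x+3\cdot\max_{y}\dots$. If this fails, I will use (iii) directly with the function $x\mapsto x-\tfrac12$ perturbed by $f$. In any case, the resulting zero $x$ of $h$ has $f$-value in $\{0,1\}$, which forces $h(x)\in\{x-\tfrac32,x+\tfrac12\}$; this is nonzero on $(0,1)$, giving a contradiction.

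\emph{From (iv) to (i).} If $\mathsf{IPP}$ fails, then as in the proof of Lemma~\ref{lem:fin-sets-bounded} we get reals $x_n$ for $n<k$ with unbounded eventual values, and via Lemma~\ref{lem:card-surjection}-style reasoning a finite family of reals that is unbounded. I define $f(q)_i=0$ if $q<\max_{n<k}x_{ni}$ and $f(q)_i=1$ otherwise. For each real $x$, some $x_n$ exceeds $x+1$ eventually, so $f(q)_i=0$ uniformly near $x$ for large $i$. Hence $\dom(f)=\mathbb R$ and $f$ is constantly $0$. That alone is not a contradiction, so I modify the construction: I instead take $f(q)_i=1$ iff $q>0$ and $q>\max_n x_{ni}$ is false\dots\ More robustly, I choose $f(q)_i$ according to the parity of $|\{n<k: x_{ni}<q\}|$ with suitably spaced values, making $f$ locally constant but nonconstant. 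The details of this step are where I would iterate.
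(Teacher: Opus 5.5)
Your step (i)$\Rightarrow$(ii) has a genuine gap. Freezing the decision at a split point once its approximations have stabilized within $\eta_n/3$ only certifies the sign of $f$ there up to an error of order $\eta_n$. You cannot exclude that both candidates $d^\pm$ have tiny values, and your claim that such points lie close to a zero is false. Suppose $f(d)<0$ is tiny but its frozen approximation is positive. Then you pass to an interval with $f<0$ at both endpoints, and the bisection may converge to $d$ itself, which is not a zero. An endpoint fixed at level $m$ can remain an endpoint at all later levels, so your closing contradiction only works for $\varepsilon$ above $\eta_m$, not for every $\varepsilon>0$. There is a second problem: your split points at level $n$ depend on the earlier decisions. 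Showing level by level that the decisions stabilize is therefore an induction on a $\Sigma^0_2$-statement, which $\mathsf{IPP}$ does not provide. In Proposition~\ref{pro:cantor}, by contrast, the decision at level $n$ depends on $x_n$ alone.

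The paper needs no hysteresis, thanks to a case distinction. If $f(q)=0$ for some rational $q$, one is done; otherwise every dyadic point has a definite nonzero sign. For each $k$, Lemma~\ref{lem:fin-sets-bounded} then gives $\varepsilon=\min_{i\le 2^k}|f(i\cdot 2^{-k})|>0$. Next, $\mathsf B\Sigma^0_2$ gives an $N_k$ such that $f(i\cdot 2^{-k})_n$ has the correct sign for all $i\le 2^k$ and all $n\ge N_k$. This fixed finite grid contains every midpoint of the first $k$ bisection steps. Hence the naive dynamic bisection $x_n=a^n_n$, which decides by the sign of $f(\delta)_n$, agrees with the exact bisection on the first $k$ levels for $n\ge N_k$. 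This yields both the Cauchy property and $f(x)=0$.

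On the reversal side, neither of your remaining steps is carried out. For (iii)$\Rightarrow$(iv) you would need a strictly increasing function built from an arbitrary nonconstant continuous $f\colon\mathbb R\to\{0,1\}$. Such an $f$ need not be monotone, so adding a multiple of $x$ does not help. The paper never proves this link; it derives (iv) from (ii) by applying the intermediate value theorem to $x\mapsto f(x\cdot(z-y)+y)-\frac12$.

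For (iv)$\Rightarrow$(i), placing the jumps at your stabilizing reals $x_n$ cannot work. Such jumps converge to actual points of $\mathbb R$, where they become discontinuities; for your parity function, this happens at every value attained by an odd number of the $x_n$. What is needed is a jump point that never settles but eventually stays on one side of each real. A raw counterexample $c$ to $\mathsf{IPP}$ does not give this, since $c(i)$ may cross a given threshold infinitely often by using different colours.

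The missing ingredient is Lemma~\ref{lem:ipp_restricted}. If $\mathsf{IPP}$ fails, it yields a $c\colon\mathbb N\to\{0,\ldots,N\}$ such that each $n\le N$ admits an $I$ with $c(i)<n$ for all $i\ge I$ or $c(i)>n$ for all $i\ge I$. Then set $f(q)_i=-1$ for $(N+1)\cdot q\le c(i)$ and $f(q)_i=+1$ otherwise. This represents a continuous non-decreasing $f\colon\mathbb R\to\{-1,1\}$ with $f(0)=-1$ and $f(1)=1$. So $(f+1)/2$ refutes (iv). Also $x\mapsto f(x)+x$ is strictly increasing on $[0,1]$ without a zero, which refutes (iii). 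With (ii)$\Rightarrow$(iv) in place of your (iii)$\Rightarrow$(iv), this closes the circle of implications.
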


Before we prove the theorem, we discuss a result that will be needed for the reversal (i.e., for the direction towards the pigeonhole principle).

\begin{lemma}[essentially~\cite{chong-lempp-yang}]\label{lem:ipp_restricted}
    Over $\mathsf{RCA}_0$, the infinite pigeonhole principle~$\mathsf{IPP}$ is equivalent to its restrictions to colourings $c\colon\mathbb N\to\{0,\ldots,N\}$ with the following property: For any colour~$n\leq N$, there is an~$I\in\mathbb N$ such that we have either $c(i) \leq n$ for all $i\geq I$ or $c(i) \geq n$ for all $i\geq I$.
\end{lemma}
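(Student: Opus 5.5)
The direction from $\mathsf{IPP}$ to its restriction is trivial. For the converse I would argue contrapositively: assume $\mathsf{IPP}$ fails via a colouring $c\colon\mathbb N\to\{0,\ldots,N\}$ in which every colour occurs only finitely often. From $c$ I would build, computably in $c$, a colouring $d\colon\mathbb N\to\{0,\ldots,N\}$ that again has only finitely many occurrences of each colour and also has the eventual-monotonicity property from the lemma. Then $\mathsf{IPP}$ fails for a colouring of the restricted kind, which is what the lemma claims.

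The construction I would try first is a ``move-to-front'' recency ranking, which I expect to follow the argument of Chong, Lempp and Yang. At stage $i$, let $L_i(m)=\max\{j<i:c(j)=m\}$, with $L_i(m)=-1$ if $m$ has not yet occurred. This is a finite, primitive recursive computation. Set
\begin{equation*}
  d(i)=\bigl|\{m\leq N: L_i(m)>L_i(c(i))\}\bigr|,
\end{equation*}
which is the position of the colour $c(i)$ in the list of colours ordered by most recent occurrence, counting from the front. The intuition is as follows. A colour $m$ that is still ``active'' keeps being moved to the front. Once $m$ has stopped occurring, it can only drift towards the back as other colours occur. So small positions are taken only by colours that recur.

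There are two facts to verify in $\mathsf{RCA}_0$.

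First, each value $k$ of $d$ occurs only finitely often. Suppose $d(i)=k$ for infinitely many $i$. I would show that then some colour of $c$ occurs infinitely often, contradicting the choice of $c$. The key point is that $d(i)=k$ forces exactly $k$ distinct colours to have occurred in the interval $(L_i(c(i)),i)$. Finitely many colours must therefore recur, and a finite-pigeonhole argument applied within bounded segments would extract one colour occurring infinitely often. I would carry this out by $\Sigma^0_1$-induction on $k$ only, not by $\Sigma^0_2$-bounding.

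Second, for each $n$ the colouring $d$ eventually stays $\leq n$ or eventually stays $\geq n$. Here I would split cases on whether positions $<n$ are eventually occupied by a fixed set of colours that never recur. That is decided by a $\Sigma^0_2$ case distinction, which is acceptable because we only need the disjunction for each single $n$, not uniformly in $n$.

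I expect the main obstacle to be proving these two properties without ever collecting the ``last occurrence'' of all colours at once. Doing so would require $\mathsf B\Sigma^0_2$, which is exactly what fails in our situation. If the move-to-front ranking cannot be made to give the eventual-monotonicity property cleanly, my fallback is a different recoding. Order the colours by the time of their most recent occurrence, and let $d(i)$ be the rank of $c(i)$ among the colours that have not occurred since stage $i-\ell$, for a suitably growing window $\ell$. I would then redo the two verifications in the same pattern: per-colour $\Sigma^0_2$ facts combined with bounded induction.
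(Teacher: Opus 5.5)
\textbf{There is a genuine gap: your recoding does not preserve the failure of $\mathsf{IPP}$.} The first of the two facts you plan to verify is false.

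Take any colouring $c'$ in which every colour occurs only finitely often, and double it: $c(2j)=c(2j+1)=c'(j)$. This $c$ exists in $\mathsf{RCA}_0$, and again every colour occurs only finitely often. But $L_{2j+1}(c(2j+1))=2j$ is the largest possible value of $L_{2j+1}$, so $d(2j+1)=0$ for all $j$. The value $0$ therefore occurs infinitely often in $d$, and $d$ is no counterexample to $\mathsf{IPP}$. One also checks that $d(2j)\geq 2$ infinitely often, so $d$ violates the eventual-monotonicity condition for $n=1$ as well.

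The step that breaks is ``exactly $k$ colours occur in $(L_i(c(i)),i)$, so finitely many colours must recur''. These $k$ colours change with $i$, and nothing forces any single colour to recur. Your case split for the second property also tracks no stabilising quantity: in your front-to-back list, a colour that stops occurring is pushed towards the back, not held in positions $<n$. Your fallback still records only a rank attached to the current colour $c(i)$, which padding of $c$ as above can manipulate.

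\textbf{The missing idea is to colour stage $i$ by the \emph{entire} recency state, not by a number attached to $c(i)$.} In the paper, $d(i)\in S_{N+1}$ is the permutation listing the colours by last appearance: at each stage, $c(i)$ is removed from its position $k_i$ and appended at position $N$. The set $S_{N+1}$ is ordered lexicographically. Passing from $N+1$ to $(N+1)!$ colours is harmless.

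Now fix $\sigma\in S_{N+1}$; only the single colour $\sigma(N)$ matters.
\begin{enumerate}
\item Since $c$ fails $\mathsf{IPP}$, $\sigma(N)$ eventually stops occurring. From then on its position $l_i$ in $d(i)$ is non-increasing, hence eventually constant, which is available in $\mathsf{RCA}_0$.
\item Once $l_i$ is constant, all later occurrences sit behind it, so the prefix of $d(i)$ up to position $l_i$ is frozen.
\item Since $l_i<k_i\leq N$, this prefix already differs from $\sigma$, because $d(i)(l_i)=\sigma(N)\neq\sigma(l_i)$.
\end{enumerate}
Hence either $d(i)\prec\sigma$ for all large $i$, or $d(i)\succ\sigma$ for all large $i$. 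This single statement gives both the restriction property and the finiteness of each colour of $d$. It uses one $\Sigma^0_2$ fact per $\sigma$ and never $\mathsf B\Sigma^0_2$, which is exactly the obstacle you identified.
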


We give a shorter version of the argument by Chong, Lempp and Yang~\cite{chong-lempp-yang}, which avoids the notion of bi-tame cut (see also the related work of Slaman~\cite{slaman-ind-bound}).

\begin{proof}
    We assume that $\mathsf{IPP}$ fails and derive that the restricted version fails as well. So let $c\colon \mathbb{N} \to\{0,\ldots,N\}$ be a colouring such that $\{i\in\mathbb N:c(i)=n\}$ is finite for each~$n\leq N$. Write $S=S_{N+1}$ for the set of permutations of $\{0,\ldots,N\}$. Elements $\sigma\neq\tau$ of~$S$ are compared lexicographically, i.e., by
    \begin{equation*}
        \sigma\prec\tau\quad\Leftrightarrow\quad\sigma(k)<\tau(k)\text{ for }k=\min\{l\leq N:\sigma(l)\neq\tau(l)\}.
    \end{equation*}
    Since this yields a linear order, we may use $(S,\prec)$ rather than $(\{0,\ldots,N!-1\},<)$ as the codomain of a counterexample to the restricted pigeonhole principle.
    
    In order to define a colouring $d\colon\mathbb N\to S$, we first declare that $d(0)$ is the identity permutation. Recur\-sively, we then determine $k_i\leq N$ and $d(i+1)\in S$ by
    \begin{equation*}
        d(i)(k_i)=c(i)\quad\text{and}\quad d(i+1)(l)=\begin{cases}
        d(i)(l) & \text{for }l<k_i,\\
        d(i)(l+1) & \text{for }k_i\leq l<N,\\
        c(i) & \text{for }l=N.
        \end{cases}
    \end{equation*}
    Intuitively, $d(i)$ orders colours by their last appearance in $c$ up to stage $i$. We will show that any $\sigma\in S$ admits an~$I$ such that we have either $d(i)\prec\sigma$ for all~$i\geq I$ or $d(i)\succ\sigma$ for all~$i\geq I$. This shows both that $d$ fulfills the restriction from the lemma and that no colour appears infinitely often.
    
    Let $\sigma\in S$ be arbitrary. For $i\in\mathbb N$, we determine $l_i\leq N$ by
    \begin{equation*}
        d(i)(l_i)=\sigma(N).
    \end{equation*}
    Due to the assumption that $c$ violates the pigeonhole principle, we find a $J\in\mathbb N$ such that $c(i)\neq\sigma(N)$ holds for all~$i\geq J$. This ensures~$l_i\neq k_i$ for~$i\geq J$, which implies that the map $J\geq i\mapsto l_i$ is non-increasing. Specifically, if we have $l_i<k_i$ or $k_i<l_i\leq N$, respectively, we get
    \begin{equation*}
        d(i+1)(l_i)=d(i)(l_i)=\sigma(N)\quad\text{or}\quad d(i+1)(l_i-1)=d(i)(l_i)=\sigma(N)
    \end{equation*}
    and hence $l_{i+1}=l_i$ or $l_{i+1}=l_i-1$. We now find an $I\geq J$ such that $l_I=l_i<k_i$ holds for all~$i\geq I$. For $l\leq l_I$ and $i\geq I$, we get $l<k_i$ and thus $d(i+1)(l)=d(i)(l)$, which inductively yields~$d(i)(l)=d(I)(l)$. In view of $l_I<k_I\leq N$, we also have
    \begin{equation*}
        d(I)(l_I)=\sigma(N)\neq\sigma(l_I).
    \end{equation*}
    It follows that we have $d(I)\neq\sigma$ as well as
    \begin{equation*}
        d(I)\prec\sigma\quad\Rightarrow\quad d(i)\prec\sigma\text{ for all }i\geq I.
    \end{equation*}
    The same holds with $\succ$ at the place of~$\prec$, which yields the claim.
\end{proof}

Let us now prove the equivalences with the intermediate value theorem.

\begin{proof}[Proof of Theorem~\ref{thm:ivt}]
    We first show that (i) implies~(ii), i.e., that the intermediate value theorem can be proved via the pigeonhole principle. Consider a continuous function~$f\colon[0,1]\to\mathbb R$, which is represented via its values $f(q)=(f(q)_n)_{n\in\mathbb N}\in\mathbb R$ on arguments~$q\in\mathbb Q$. We assume $f(0)<0<f(1)$, noting that the remaining case is symmetrical. Our task is to find an~$x\in(0,1)$ with $f(x)=0$. If we have $f(q)=0$ for some~$q\in\mathbb Q$, there is nothing to do, so we assume otherwise. For each~$n\in\mathbb N$, we use recursion on~$i\leq n$ to define $[a_0^n,b_0^n]=[0,1]$ and
    \begin{equation*}
        \left[a_{i+1}^n,b_{i+1}^n\right]=\begin{cases}
            \left[a_i^n,\delta\right] & \text{if }f(\delta)_n\geq 0,\\
            \left[\delta,b_i^n\right] & \text{otherwise},
        \end{cases}
        \qquad\text{where}\qquad\delta=\frac{a_i^n+b_i^n}2.
    \end{equation*}
    To define $x=(x_n)_{n\in\mathbb N}$, we now set~$x_n=a^n_n$. Let us note that this amounts to a dynamical version of the classical argument, where we allow for arbitrary errors for an indefinite amount of time -- until the relevant Cauchy sequences have stabilized.

    Let us show that $(x_n)$ is Cauchy. For an arbitrary~$k\in\mathbb N$, we use Lemma~\ref{lem:fin-sets-bounded} (and hence the pigeonhole principle) to obtain the real
    \begin{equation*}
        \varepsilon:=\min\left\{\left|f\left(i\cdot 2^{-k}\right)\right|:0\leq i\leq 2^k\right\}>0.
    \end{equation*}
    Again by the pigeonhole principle (in the form of $\mathsf B\Sigma^0_2$), we find an~$N=N_k\geq k$ such that we have
    \begin{equation*}
        \left|f\left(i\cdot 2^{-k}\right)-f\left(i\cdot 2^{-k}\right)_n\right|<\varepsilon\quad\text{for all }i\leq 2^{-k}\text{ and all }n\geq N.
    \end{equation*}
    In this situation, $f(i\cdot 2^{-k})_n$ has the same sign as $f(i\cdot 2^{-k})$, which is thus independent of the specific~$n\geq N$. This entails
    \begin{equation*}
        x_n=a_n^n\in\left[a_k^n,b_k^n\right]=\left[a_k^N,b_k^N\right]\quad\text{for all }n\geq N.
    \end{equation*}
    It follows that we have
    \begin{equation*}
        |x_m-x_n|\leq b_k^N-a_k^N=2^{-k}\quad\text{for all }m,n\geq N.
    \end{equation*}
    Since~$k$ was arbitrary, this confirms that~$(x_n)$ is Cauchy and hence a real.

    In order to complete the proof of the intermediate value theorem, we establish $|f(x)|\leq\varepsilon$ for an arbitrary~$\varepsilon>0$, so that we get~$f(x)=0$. Let $M$ witness the property from Definition~\ref{def:continuous}, which means that we have
    \begin{equation*}
    \big|f(q)_i-f(r)_j\big|<\varepsilon\quad\text{for all rationals }q,r\in B_{1/M}(x)\text{ and all }i,j\geq M.
    \end{equation*}
    Pick a $k\in\mathbb N$ with $2^{-k}<1/M$ and find~$N=N_k$ as in the previous paragraph. We may assume~$N\geq M$. By the above, we have
    \begin{equation*}
        x_n\in\left[a_k^N,b_k^N\right]\subseteq B_{1/M}(x)\quad\text{for all }n\geq N.
    \end{equation*}
    With $f(x)_n=f(x_n)_n$ as in Definition~\ref{def:continuous}, we get
    \begin{equation*}
        \max\left\{\big|f(x)_n-f\left(a_k^N\right)_N\big|,\big|f(x)_n-f\left(b_k^N\right)_N\big|\right\}<\varepsilon\quad\text{for all }n\geq N.
    \end{equation*}
    Possibly for a modified representation of~$f$, we may assume~$f(0)_N<0<f(1)_N$. By a straightforward induction on~$i\leq k$, we then obtain
    \begin{equation*}
        f\left(a_i^N\right)_N<0\leq f\left(b_i^N\right)_N.
    \end{equation*}
    It follows that we have $|f(x)_n|<\varepsilon$ for all~$n\geq N$. This yields $|f(x)|\leq\varepsilon$, as desired.

    The direction from~(ii) to~(iii) is simply a restriction. In order to show that~(ii) implies~(iv), we assume that the former holds while the latter fails. This gives a function $f\colon\mathbb R\to\{0,1\}$ that is continuous but not constant. Pick reals $y<z$ with $f(y)\neq f(z)$ and define $g\colon[0,1]\to\mathbb R$ by
    \begin{equation*}
        g(x)=f(x\cdot (z-y)+y)-\frac12.
    \end{equation*}
    Note that the continuous function~$g$ is available by Lemmas~\ref{lem:basicContFunctions} and~\ref{lem:compositionCont}. In view of
    \begin{equation*}
        g(0)\cdot g(1)=\left(f(y)-\frac12\right)\cdot\left(f(z)-\frac12\right)=-\frac14<0,
    \end{equation*}
    statement~(ii) yields an~$x\in(0,1)$ with $g(x)=0$. But then $1/2$ lies in the range of the function~$f$, against our assumption.

    Concerning the reversal, it remains to show that each of~(iii) and~(iv) implies~(i). We argue by contraposition and thus assume that~(i) fails. In light of Lemma~\ref{lem:ipp_restricted}, we find a colouring $c\colon\mathbb{N} \to\{0,\ldots,N\}$ such that the following holds for each $n\leq N$: There is a bound $I\in \mathbb{N}$ with the propery that either $c(i) < n$ holds for all $i\geq I$ or $c(i) > n$ holds for all $i\geq I$. To define $f\colon\mathbb Q\to\mathbb R$ with values~$f(q)=(f(q)_i)_{i\in\mathbb N}$, we now stipulate
    \begin{equation*}
        f(q)_i=
        \begin{cases}
            -1 & \text{if $(N+1)\cdot q \leq c(i)$},\\
            +1 & \text{if $(N+1)\cdot q > c(i)$}.
        \end{cases}
    \end{equation*}
    To see that~$f$ has domain~$\mathbb R$, we verify the condition from Definition~\ref{def:continuous}, which also entails that $(f(q)_i)_{i\in\mathbb N}$ is Cauchy for each~$q\in\mathbb Q$. We find $z\in\mathbb Z$ and $J\in\mathbb N$ with
    \begin{equation*}
        z-1<(N+1)\cdot q<z+1\quad\text{for every rational }q\in B_{1/J}(x).
    \end{equation*}
    Then consider an~$I\geq J$ such that we have either $c(i)<z$ for all~$i\geq I$ or $c(i)>z$ for all~$i\geq I$. In the first case, we get $f(q)_i=+1$ for all~$q\in B_{1/I}(x)$ and all~$i\geq I$. In the second case, the same holds with $-1$ at the place of~$+1$. Either way, we have
    \begin{equation*}
        |f(q)_i-f(r)_j|=0\quad\text{for all rationals }q,r\in B_{1/I}(x)\text{ and all }i,j\geq I.
    \end{equation*}
    So we indeed have a representation of a continuous function~$f\colon\mathbb R\to\{-1,+1\}$. Given that $0\leq c(i)<N+1$ holds for all~$i\in\mathbb N$, we have $f(0)=-1$ and $f(1)=+1$. So the function $g\colon\mathbb R\to\{0,1\}$ with $g(x)=(f(x)+1)/2$ violates~(iv). To get a violation of~(iii), we note that $f$ is non-decreasing, as $q<r$ entails $f(q)_i\leq f(r)_i$ for each~$i\in\mathbb N$. Thus the function $h\colon[0,1]\to\mathbb R$ with $h(x)=f(x)+x$ is strictly increasing with $h(0)<0<h(1)$. But there cannot be an $x\in(0,1)$ with $h(x)=0$, as this would yield~$f(x)\in(-1,0)$. So the intermediate value theorem for strictly increasing functions is also violated.
\end{proof}

Since Lemma~\ref{lem:basicContFunctions} ensures that polynomials are continuous, we get the following by the usual argument.

\begin{corollary}[$\mathsf{RCA}_0+\mathsf{IPP}$]\label{cor:real-closed}
The field~$\mathbb R$ is real closed, i.e., every positive real has a root and every polynomial of odd degree has a zero.
\end{corollary}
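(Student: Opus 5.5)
The plan is to reduce both parts to the intermediate value theorem, which is available in $\mathsf{RCA}_0+\mathsf{IPP}$ by Theorem~\ref{thm:ivt}. Throughout, polynomials with real coefficients are continuous functions by Lemma~\ref{lem:basicContFunctions}: constant functions, the identity, finitely many products and sums. Since the degree is a fixed standard number in each instance, only a fixed finite number of applications of part~(d) are needed. For a general degree $d$ given inside the model, we instead invoke Lemma~\ref{lem:inf-sum-fct}(a), which is available since we work over $\mathsf{IPP}$. The composition lemma and rescaling, as in the proof of (ii)$\Rightarrow$(iv) of Theorem~\ref{thm:ivt}, allow us to transport the theorem from $[0,1]$ to an arbitrary interval $[a,b]$ with real endpoints $a<b$. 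We use $g(t)=p(a+t(b-a))$, which is continuous on $[0,1]$ and has a zero in $(0,1)$ exactly when $p$ has one in $(a,b)$.

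For roots, let $c>0$ be real and consider $p(x)=x^2-c$ on $[0,c+1]$. We have $p(0)=-c<0$ and $p(c+1)=c^2+c+1>0$, where these inequalities hold in the ordered field of Proposition~\ref{prop:archimedean}. The intermediate value theorem yields $x$ with $x^2=c$, and $x>0$ because $x\neq 0$ and $x\in(0,c+1)$. For higher roots $x^k-c$, the same argument works with the same bracket, since $(c+1)^k\ge c+1>c$.

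For an odd-degree polynomial $p(x)=\sum_{i\le d}a_ix^i$ with $a_d\ne0$, we first divide by $a_d$, which is possible since $\mathbb R$ is a field. We may then assume $a_d=1$. Next we find a real bound $R$ with $p(-R)<0<p(R)$. The natural choice is $R=1+\sum_{i<d}|a_i|$. To form this finite sum of reals we need Lemma~\ref{lem:fin-sets-bounded} and Remark~\ref{rmk:fin-sums}, since the coefficients form a finite family of reals; alternatively we take $R=1+d\cdot M$, where $M$ bounds all $|a_i|$ by Lemma~\ref{lem:fin-sets-bounded}. Then for $|x|=R\ge1$ the usual estimate gives $|\sum_{i<d}a_ix^i|\le dM R^{d-1}<R^d$. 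Because $d$ is odd, $p(R)>0$ and $p(-R)<0$, so Theorem~\ref{thm:ivt} gives a zero in $(-R,R)$. The estimate should go through in $\mathsf{RCA}_0$ by ordinary $\Sigma^0_1$-induction on $i\le d$, applied to rational approximations, or simply by the ordered-field axioms.

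The only real obstacle is ensuring that these finite-family manipulations (the bound on the coefficients and the polynomial as a continuous function, when $d$ is nonstandard) stay inside $\mathsf{RCA}_0+\mathsf{IPP}$. This is exactly what Lemmas~\ref{lem:fin-sets-bounded} and~\ref{lem:inf-sum-fct} provide.
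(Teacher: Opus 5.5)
Your proposal is correct and follows the paper's own (one-line) argument. Polynomials are continuous by Lemma~\ref{lem:basicContFunctions}, or by Lemma~\ref{lem:inf-sum-fct} when the degree is given inside the model, and the usual bracketing $p(0)<0<p(c+1)$, resp.\ $p(-R)<0<p(R)$, combined with the intermediate value theorem of Theorem~\ref{thm:ivt} (available over $\mathsf{RCA}_0+\mathsf{IPP}$) yields the zeros.

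One point to tighten: for a nonstandard degree, verify the estimate $|\sum_{i<d}a_iR^i|<R^d$ on rational approximations at a single stage supplied by $\mathsf B\Sigma^0_2$, taking $M$ and hence $R=1+dM$ to be natural numbers. Your alternative of using ``simply the ordered-field axioms'' does not work there, because it amounts to induction on an inequality between reals, i.e.\ $\Pi^0_2$-induction, which is not available.
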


\section{Open sets}\label{sect:open}

In this section, we develop a representation of open sets that is suitable for our setting. As we represent real numbers by Cauchy sequences without rate, the relation $x<y$ between $x,y\in\mathbb R$ is defined by a $\Sigma^0_2$-formula (see the paragraph after Definition~\ref{lem:reals-ineq-Sigma02}). The relation $x\in U$ between $x\in\mathbb R$ and an open set~$U\subseteq\mathbb R$ should have the same complexity. This is achieved by the following variant of the classical representation (cf.~Definition~II.5.9 in~\cite{simpson09}).

\begin{definition}\label{def:open}
    By a code for an open subset of~$\mathbb R$, we mean a set $U\subseteq\mathbb N\times\mathbb Q\times\mathbb Q_{>0}$ (with $\mathbb Q_{>0}=\{q\in\mathbb Q:q>0\}$). For such a set and a real~$x$, we write~$x\in U$ if there are $N\in\mathbb N$ and $a,r$ such that we have $x\in B_r(a)$ and~$(n,a,r)\in U$ for all~$n\geq N$.
\end{definition}

In contrast to the following result, we will later see that the pigeonhole principle is needed to take finite intersections of open sets.

\begin{lemma}[$\mathsf{RCA}_0$]\label{lem:unions}
    For any family~$(U_i)_{i\in\mathbb N}$ of open sets, we have an open set
    \begin{equation*}
        U=\bigcup_{i\in\mathbb N}U_i.
    \end{equation*}
\end{lemma}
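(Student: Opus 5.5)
To define the code $U$, the natural first attempt is $U=\{(n,a,r):(n,a,r)\in U_i$ for some $i\}$. This fails for two reasons. It is only $\Sigma^0_1$ in the family $(U_i)$, so $\mathsf{RCA}_0$ does not give it as a set. And taking a union over $i$ for each fixed $n$ might produce triples $(n,a,r)$ that occur for infinitely many $n$ but not cofinitely, which would spuriously put points into $U$. So I would instead pair the index $i$ into the first coordinate in a way that lets a single triple $(a,r)$ inherit the ``for all $n\geq N$'' pattern from one $U_i$. Concretely, I plan to exploit the freedom in the centre and radius. The idea is to encode $i$ into a slightly perturbed ball, but the cleaner route is to simply bound the search. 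I would set
\[
(n,a,r)\in U\quad:\Leftrightarrow\quad(n,a,r)\in U_i\text{ for some } i\leq n \text{ such that } (m,a,r)\in U_i \text{ for all } m \text{ with } i\leq m\leq n .
\]
This is a bounded condition relative to the family, so $U$ exists by $\Delta^0_1$-comprehension.

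Next I would verify $x\in U\Rightarrow x\in U_i$ for some $i$. Suppose we have $N$, $a$, $r$ with $x\in B_r(a)$ and $(n,a,r)\in U$ for all $n\geq N$. For each $n\geq N$ there is a least $i_n\leq n$ witnessing membership, so $(m,a,r)\in U_{i_n}$ for all $m\in[i_n,n]$. The key observation is that the sequence $n\mapsto i_n$ is non-increasing once we pass from $n$ to $n+1$. Indeed, the witness $i_{n+1}$ also works at stage $n$ if $i_{n+1}\leq n$, and otherwise $i_{n+1}=n+1$. Choosing least witnesses therefore gives $i_n\leq i_{n+1}$ unless a new index appears. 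Rather than fight this monotonicity, I would simply argue as follows. Take $n=N$ and the finitely many candidates $i\leq N$. If none of them satisfied $(m,a,r)\in U_i$ for all $m\geq i$, then each would fail at some stage $m_i$. For $n\geq\max(N,\max_i m_i)$ the witness $i_n$ would then have to exceed $N$. This is not yet a contradiction, so I need to refine the definition.

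The refinement, which I expect to be the main obstacle, is to make the witness persistent. I would require the witness at stage $n$ to be the least $i\leq n$ for which $(m,a,r)\in U_i$ holds on all of $[i,n]$, and I would put $(n,a,r)$ into $U$ only if this least witness is at most the least witness for stage $n-1$, or if $n-1$ had none. The least witness is non-decreasing in $n$, since a witness valid on $[i,n+1]$ is also valid on $[i,n]$. So if $(n,a,r)\in U$ for all $n\geq N$, the witnesses form a non-decreasing sequence that, by the defining condition, cannot increase. It is therefore eventually constant, equal to some $i$ with $(m,a,r)\in U_i$ for all $m\geq i$, and so $x\in U_i$. This argument needs only $\Sigma^0_1$-induction, or even less: the witness at $N$ is an upper bound for all later ones.

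For the converse direction, suppose $x\in U_i$ via $N$, $a$, $r$. Then $(m,a,r)\in U_i$ for all $m\geq N$. The least witness at stages $n\geq\max(N,i)$ is bounded by any $i'$ valid on $[i',n]$, and the candidate $i'=\max(N,i)$ works. So I would shift: the witnesses at stages $n\geq\max(N,i)$ are non-decreasing and bounded by $\max(N,i)$. By bounded $\Sigma^0_1$ reasoning they stabilise, and from that point on $(n,a,r)\in U$. Hence $x\in U$.
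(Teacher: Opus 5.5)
Your forward direction (from $x\in U$ to $x\in U_i$) is fine, but the converse fails. The problem is that in your witness condition the number $i$ plays two roles: it is the index of the set $U_i$, and it is also the left endpoint of the interval $[i,n]$ on which membership is required. So the claim that the candidate $i'=\max(N,i)$ is valid at stage $n$ is wrong when $N>i$. Validity of $i'=N$ would require $(m,a,r)\in U_N$ for $m\in[N,n]$, whereas you only know $(m,a,r)\in U_i$, and nothing forces any index at all to be a witness.

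Here is a concrete counterexample. Let $U_0=\{(m,0,1):m\geq 1\}$ and $U_i=\emptyset$ for $i\geq 1$. Then $U_0$ represents $B_1(0)$. However, $i=0$ is never a witness because $(0,0,1)\notin U_0$, and no $i\geq 1$ is a witness because $U_i$ is empty. So your code $U$ is empty.

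The missing idea is to decouple the index of the set from the point where membership starts. This is what the paper does. It minimizes over pairs $(i,j)$ (in Cantor code) such that $(k,a,r)\in U_i$ for all $k\in\{j,\ldots,n-1\}$. This condition is vacuous for $j\geq n$, so the candidate set is never empty and it shrinks as $n$ grows. The paper then puts $(n,a,r)\in U$ iff $(n,a,r)\in U_{i(n)}$, and the converse follows because the minimal pairs are bounded by $(i,K)$.

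Alternatively, you can keep your construction and apply it to the re-indexed family $V_{\langle i,j\rangle}=U_i$, for a pairing with $\langle i,j\rangle\geq j$. Suppose $x\in U_i$ via $N,a,r$, and let $k=\langle i,N\rangle$. Then $k$ is a witness at every stage $n\geq k$. So from stage $k$ on, the least witnesses are defined, non-decreasing and bounded by $k$. Hence they are eventually constant, and from then on your rule puts $(n,a,r)$ into $U$. Your forward argument goes through unchanged: a constant witness $k=\langle i,j\rangle$ yields $(m,a,r)\in V_k=U_i$ for all $m\geq k$.
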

\begin{proof}
    For fixed~$(a,r)\in\mathbb Q\times\mathbb Q_{>0}$ and arbitrary~$n\in\mathbb N$, let $(i(n),j(n))\in\mathbb N^2$ be minimal (in terms of Cantor code) with $(k,a,r)\in U_{i(n)}$ for all~$k \in\{j(n),\ldots,n-1\}$. There are no such~$k$ for $j(n)\geq n$, so that we minimize over a non-empty set. The latter shrinks as~$n$ grows, so that $n\mapsto(i(n),j(n))$ is non-decreasing. We declare
    \begin{equation*}
        (n, a, r) \in U \quad:\Leftrightarrow\quad (n,a,r)\in U_{i(n)}.
    \end{equation*}
    If we have $x\in U$, there are $N,a,r$ with $x\in B_r(a)$ and $(n,a,r)\in U$ for all~$n\geq N$. So for any~$n\geq N$, we have~$(k,a,r)\in U_{i(n)}$ for~$k=n$ and hence for all~$k\in\{j(n),\ldots,n\}$. This yields $i(n)=i(N)$ (as well as $j(n)=j(N)$) and hence $(n,a,r)\in U_{i(N)}$ for all~$n\geq N$. But then we have $x\in U_{i(N)}$.

    Conversely, if we have $x\in U_i$, there are $K,a,r$ with $x\in B_r(a)$ and $(k,a,r)\in U_i$ for all~$k\geq K$. It follows that the $(i(n),j(n))$ are bounded by~$(i,K)$. We thus have an $N\in\mathbb N$ with $i(n)=i(N)$ and $j(n)=j(N)$ for all~$n\geq N$. By construction, we get $(n,a,r)\in U_{i(n+1)}$ whenever we have $n\geq j(n+1)$. So for $n\geq\max(N,j(N))$, we have $(n,a,r)\in U_{i(n)}$ and hence~$(n,a,r)\in U$. But this yields $x\in U$.
\end{proof}

Due to the previous lemma and the following elementary observation, we can sometimes reduce to the case of basic open sets.

\begin{lemma}[$\mathsf{RCA}_0$]
    For any open set~$U\subseteq\mathbb R$, there is a family $(U_i)_{i\in\mathbb N}$ of open sets such that we have $U=\bigcup_{i\in\mathbb N}U_i$ and each~$U_i$ is empty or an open interval with rational endpoints.
\end{lemma}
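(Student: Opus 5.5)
The natural plan is to split the code $U$ according to its triples $(a,r)$, so that each $U_i$ carries exactly one candidate ball. Fix a computable enumeration $i\mapsto(a_i,r_i)$ of $\mathbb Q\times\mathbb Q_{>0}$, and for each $i$ define a code
\begin{equation*}
    U_i=\{(n,a_i,r_i):(n,a_i,r_i)\in U\}.
\end{equation*}
This family is $\Delta^0_1$ in~$U$, so it exists in $\mathsf{RCA}_0$. By Lemma~\ref{lem:unions} we then have the open set $\bigcup_{i}U_i$.

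The first check is that the union equals~$U$ as a set of reals; this is immediate from Definition~\ref{def:open}. If $x\in U$ is witnessed by $N,a,r$ with $x\in B_r(a)$, and $(a,r)=(a_i,r_i)$, then the same $N,a,r$ witness $x\in U_i$. Conversely, a witness for $x\in U_i$ consists of triples lying in~$U$, so it also witnesses $x\in U$.

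The remaining task is that each $U_i$ should be empty or an open interval, and as it stands this fails. We have $x\in U_i$ exactly when $x\in B_{r_i}(a_i)$ and $(n,a_i,r_i)\in U$ holds for cofinitely many~$n$. That second condition is $\Sigma^0_2$ and does not depend on~$x$, so it cannot be decided in $\mathsf{RCA}_0$. So $U_i$ is extensionally either $\emptyset$ or $B_{r_i}(a_i)$, but we cannot tell which. The main obstacle is to read the lemma correctly. We cannot require a decidable choice between ``empty'' and ``interval'', since that would need $\Sigma^0_2$ knowledge. The statement should instead be read extensionally: each $U_i$ equals, as a set of reals, either $\emptyset$ or the interval $(a_i-r_i,a_i+r_i)$.

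Under that reading the proof is a short case distinction on the $\Sigma^0_2$ statement $\exists N\forall n\geq N:(n,a_i,r_i)\in U$, by the law of excluded middle in the metatheory, with no set existence needed. If the statement holds, then $x\in U_i$ is equivalent to $x\in B_{r_i}(a_i)$. If it fails, no real belongs to~$U_i$. If an explicit interval code is preferred in the first case, one can note that the code $\{(n,a_i,r_i):n\in\mathbb N\}$ defines the same set of reals as~$U_i$; no further work should be needed.
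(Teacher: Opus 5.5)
Your proposal is correct and is essentially the paper's proof. The paper likewise takes $U_i$ to consist of the triples $(n,a,r)\in U$ whose pair $(a,r)$ is coded by~$i$, observes that $U_i$ is then either empty or $(a-r,a+r)$ without it being decidable which, and verifies $U=\bigcup_{i\in\mathbb N}U_i$ directly (your case distinction is simply classical logic inside $\mathsf{RCA}_0$ rather than in the metatheory, which does not affect the argument).
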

\begin{proof}
   When~$i$ codes the pair~$(a,r)$, we define $U_i$ as the set of all tuples~$(n,a,r)$ that lie in~$U$. Then $U_i$ is either the empty set or the interval $(a-r,a+r)$ (but we cannot decide which of the two it is). One readily verifies~$U=\bigcup_{i\in\mathbb N}U_i$.  
\end{proof}

The following correspondence between open sets and continuous functions provides support for our definitions of both notions.

\begin{proposition}[$\mathsf{RCA_0}$]\label{pro:ContinuousOpen}
    (a) For any continuous function~$f\colon\mathbb R\to\mathbb R$ and any open set $V\subseteq\mathbb R$, the preimage $f^{-1}(V)$ is again open.

    (b) For any open~$U$, there is a continuous $g\colon\mathbb R\to[0,1]$ with $g^{-1}((0,1])$.
\end{proposition}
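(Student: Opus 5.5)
The plan for (a) is to reduce to basic open targets with the union lemma, and to make the complicated condition part of the index of a family of open sets, so that each member is given by a check at stage~$n$. For (b), the plan is to take a weighted sum of tent functions, each gated by a monotone stage condition.

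\emph{Part (a).} Write $f$ as $f\colon\mathbb Q\to\mathbb Q^{\mathbb N}$. Consider tuples $j=(a,r,b,s,\delta,N)$ with rationals $r,s,\delta>0$, $\delta<r$ and $N\in\mathbb N$. For each such $j$, define an open set $W_j$ by declaring $(n,b',s')\in W_j$ if and only if all of the following hold:
\begin{itemize}[label=--]
\item $(b',s')=(b,s)$ and $(n,a,r)\in V$;
\item $|f(q)_i-a|<r-\delta$ for all rationals $q\in B_s(b)$ with code at most~$n$ and all $i$ with $N\leq i\leq n$.
\end{itemize}
This is decidable, so each $W_j$ exists in $\mathsf{RCA}_0$. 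Lemma~\ref{lem:unions} then provides the open set $W=\bigcup_j W_j$, and I would show that $W=f^{-1}(V)$.

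For $x\in W_j$, we get $x\in B_s(b)$, $(n,a,r)\in V$ for all large $n$, and the $\Pi^0_1$ clause for all $q\in B_s(b)$ and all $i\geq N$. Since eventually $x_i\in B_s(b)$, we have $f(x)_i=f(x_i)_i\in B_{r-\delta}(a)$ for large $i$. Hence $|f(x)-a|\leq r-\delta<r$, so $f(x)\in V$.

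Conversely, suppose $f(x)\in V$, witnessed by $(a,r)$ and some $N'$. Then $|f(x)-a|<r$, and Lemma~\ref{lem:reals-ineq-Sigma02} gives a rational $\delta>0$ with $|f(x)-a|<r-2\delta$. The condition from Definition~\ref{def:continuous} with $\varepsilon=\delta$ yields an $N$ such that $|f(q)_i-f(x)_k|<\delta$ for rationals $q\in B_{1/N}(x)$ and $i,k\geq N$. Choosing a rational ball $B_s(b)\ni x$ inside $B_{1/N}(x)$ gives the $\Pi^0_1$ clause. Taking $N\geq N'$, we obtain $x\in W_j$.

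The delicate point is that the condition ``for all $q\in B_s(b)$'' is $\Pi^0_1$. This is why it is checked only up to stage $n$: a $\Pi^0_1$ condition then enters the definition of $x\in W_j$ exactly as the ``for all $n\geq N$'' clause of Definition~\ref{def:open}.

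\emph{Part (b).} For $j=(a,r,N)$, let $t_{a,r}$ be the rational tent function with value $1$ at $a$ and support $(a-r,a+r)$. Put
\begin{equation*}
\varphi_j(q)_i=\begin{cases}t_{a,r}(q)&\text{if }(n,a,r)\in U\text{ for all }N\leq n\leq i,\\ 0&\text{otherwise,}\end{cases}
\end{equation*}
and set $g(q)_i=\sum_{j\leq i}2^{-j-1}\varphi_j(q)_i$, so that $0\leq g\leq 1$.

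Lemma~\ref{lem:inf-sum-fct} needs $\mathsf{IPP}$, so instead I would verify Definition~\ref{def:continuous} directly. Given $\varepsilon>0$, first cut off a tail of weight below $\varepsilon/3$, say all $j\geq M$. For the finitely many $j<M$, the gating conditions are independent of $q$ and can only switch from true to false. The set of $j<M$ whose condition has failed by stage $i$ is therefore a non-decreasing sequence of subsets of $\{0,\ldots,M-1\}$. Bounded $\Sigma^0_1$-induction in $\mathsf{RCA}_0$ shows that it stabilizes from some stage on, so no $\mathsf B\Sigma^0_2$ is needed. Beyond that stage, uniform Lipschitz continuity of the finitely many tents gives the required $N$.

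It remains to show that $g(x)>0$ if and only if $x\in U$. If $x\in U$ via $(a,r)$ and $N$, then the summand for $j=(a,r,N)$ is eventually the constant $2^{-j-1}t_{a,r}(x_i)$. Its limit is $2^{-j-1}t_{a,r}(x)$, which is positive because $x\in B_r(a)$. All other summands are non-negative, so $g(x)>0$.

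Conversely, suppose $g(x)>0$. Choose a tail weight smaller than $g(x)/2$. Then some $j<M$ must have its gating condition true forever and $t_{a,r}(x)>0$. This gives $x\in B_r(a)$ and $(n,a,r)\in U$ for all $n\geq N$, that is, $x\in U$.

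I expect the main obstacle to be handling the $\Pi^0_1$ universal quantifier over rationals in (a) without exceeding $\mathsf{RCA}_0$. I would first try the stage-indexed family construction described above.
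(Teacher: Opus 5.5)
Your proof is correct. For (a) you follow the paper's strategy up to bookkeeping. The paper indexes only by the target ball $(a,r)$, uses the source radius $s$ as the safety margin ($f(q)_n\in B_{r-s}(a)$), and checks only the diagonal value $f(q)_n$. That is why it must re-index $x$ so that $x_n\le_{\mathbb N}n$. By putting $(b,s,\delta,N)$ into the index and checking all $i\in[N,n]$, you recover the full $\Pi^0_1$ clause for all $i\ge N$ and avoid that trick.

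For (b) the paper also uses bumps gated by persistence in $U$ with decaying weights, but the combination and the verification differ:
\begin{itemize}
\item The paper takes a \emph{maximum} of the $1$-Lipschitz bumps $g_{a,r}$ with weights $1/((n,a,r)+1)$. It verifies Definition~\ref{def:continuous} by a case split: near $x\in U$ the approximations are eventually constant, and near $x\notin U$ they are eventually below $\varepsilon$, using $\Sigma^0_1$-collection.
\item Your weighted sum, together with one stabilization of the finitely many heavy gates, treats all $x$ at once. It even yields a modulus independent of $x$.
\item The cost of your route is a short separate argument for $g(x)>0\Rightarrow x\in U$, and a finite maximum of the Lipschitz constants $1/r_j$ in place of the paper's uniform $1$-Lipschitz bound.
\end{itemize}

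Your explicit stabilization step is also exactly what the paper's $x\in U$ case needs. There it asserts $g(q)_i=g(q)_I$ for all $i\ge I$, but it bounds only tuples that newly enter $U[i]$. Tuples of code below $I$ with larger weight can also leave $U[i]$ later and lower the maximum. So one must first pass to a stage after which their finitely many membership statuses have stabilized, as you do.
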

\begin{proof}
    (a) By the proof of the previous lemma, we may write $V$ as the union over a family of open sets $V_{a,r}$ for $a,r\in\mathbb Q$ with $r>0$, where $V_{a,r}$ contains only tuples of the form~$(n,a,r)$. It will be enough to construct a family of open sets $U_{a,r}$ that are equal to the preimages $f^{-1}(V_{a,r})$. Indeed, we can then use Lemma~\ref{lem:unions} to define~$U$ as the union over the sets~$U_{a,r}$. One readily concludes $U=f^{-1}(V)$.
    
    Our~$f$ is represented by a family of Cauchy sequences $(f(q)_n)_{n\in\mathbb N}$ for~$q\in\mathbb Q$. We now declare
    \begin{equation*}
        (n,b,s)\in U_{a,r}\quad\Leftrightarrow\quad\parbox{.48\textwidth}{$(n,a,r)\in V_{a,r}$ and $f(q)_n\in B_{r-s}(a)$ for all rationals~$q\in B_s(b)$ with $q\leq_{\mathbb N}n$.}
    \end{equation*}
    If we have $x=(x_n)_{n\in\mathbb N}\in f^{-1}(V_{a,r})$, we get $f(x)\in B_r(a)$ and there is an $N\in\mathbb N$ such that $(n,a,r)\in V_{a,r}$ holds for all~$n\geq N$. Pick a rational~$t>0$ such that we even have $f(x)\in B_{r-3t}(a)$. In view of Definition~\ref{def:continuous}, we may increase~$N$ to get
    \begin{equation*}
        |f(q)_n-f(r)_m|<t\quad\text{for all rationals }q,r\in B_{1/N}(x)\text{ and all }m,n\geq N.
    \end{equation*}
    Pick an~$N'\geq N$ such that $n\geq N'$ implies $x_n\in B_{1/N}(x)$ and $|f(x)-f(x_n)_n|<t$. For $q\in B_{1/N}(x)$ and $n\geq N'$, we then have
    \begin{equation*}
        |f(q)_n-f(x)|\leq|f(q)_n-f(x_n)_n|+|f(x_n)_n-f(x)|<2t
    \end{equation*}
    and thus $f(q)_n\in B_{r-t}(a)$. Pick a positive rational $s\leq t$ with $x\in B_s(b)\subseteq B_{1/N}(x)$. We then have $(n,b,s)\in U_{a,r}$ for all~$n\geq N'$, which yields~$x\in U_{a,r}$.

    Conversely, if we have $x\in U_{a,r}$, there are $N,b,s$ such that we have $x\in B_s(b)$ and $(n,b,s)\in U_{a,r}$ for all~$n\geq N$. The latter yields $B_r(a)\subseteq V_{a,r}$. By repeating members of the Cauchy sequence $x=(x_n)_{n\in\mathbb N}$, we can ensure that $x_n\leq_{\mathbb N}n$ holds for all~$n\in\mathbb N$. We may also assume that $x_n\in B_s(b)$ holds for all~$n\geq N$ (possibly with an increased~$N$). Considering the definition of~$U_{a,r}$, we now see that $n\geq N$ implies $f(x_n)_n\in B_{r-s}(a)$. This yields
    \begin{equation*}
        f(x)=(f(x_n)_n)_{n\in\mathbb N}\in B_r(a)\subseteq V_{a,r}
    \end{equation*}
    and hence $x\in f^{-1}(V_{a,r})$.

    (b) Let $U[i]$ for $i\in\mathbb N$ consist of those tuples~$(n,a,r)$ with $n<i$ such that we have $(n',a,r)\in U$ whenever~$n\leq n'<i$ holds. Intuitively, these are the tuples that are assumed to contribute to $U$ at stage~$i$. It is important that we keep the tuples with~$n<i-1$, because these will allow us to give higher weight to contributions that are preserved since an early stage.

For $(a,r)\in\mathbb Q\times\mathbb Q_{>0}$, we define $g_{a,r}\colon\mathbb Q\to\mathbb Q$ by 
\begin{equation*}
    g_{a,r}(q)=\max\big(0,\min(1,r-|a-q|)\big)\in[0,1].
\end{equation*}
Let us note that we always have
\begin{equation*}
    \left|g_{a,r}(p)-g_{a,r}(q)\right|\leq|p-q|.
\end{equation*}
For each~$q\in\mathbb Q$, we now define a sequence~$g(q)=(g(q)_i)_{i\in\mathbb N}$ by
\begin{equation*}
    g(q)_i=\max\left(\left\{\frac{1}{i+1}\right\}\cup\left\{\frac{g_{a,r}(q)}{(n,a,r)+1}:(n,a,r)\in U[i]\right\}\right).
\end{equation*}
Note that we only need to consider tuples $(n,a,r)<_{\mathbb N} i$, which means that the maximum is well-defined and computable even when~$U[i]$ is infinite. In the following, we show that the condition from Definition~\ref{def:continuous} is satisfied for every~$x\in\mathbb R$. This condition implies both that the sequences~$g(q)$ are Cauchy and that they represent a continuous function~$g\colon\mathbb R\to\mathbb R$.

First consider a real~$x\in U$. We then find~$n,a,r$ with $x\in B_r(a)$ and $(n',a,r)\in U$ for all~$n'\geq n$, which entails that $(n,a,r)\in U[i]$ holds for all~$i>n$. Take a rational number $s\in(0,1]$ with $B_{2s}(x)\subseteq B_r(a)$. We then have
\begin{equation*}
    g_{a,r}(q)\geq s\quad\text{for all }q\in B_s(x).
\end{equation*}
Let us pick an integer~$I\geq ((n,a,r)+1)/s$ that also satisfies~$I>n$. Consider an integer $i\geq I$ and a rational $q\in B_s(x)$. If $(m,b,t)$ lies in~$U[i]$ but not in~$U[I]$, we must have $I\leq m\leq(m,b,t)$, where the second equality holds by a standard assumption on the encoding of tuples. We can conclude
\begin{equation*}
    \frac{g_{b,t}(q)}{(m,b,t)+1}\leq\frac1{I+1}\leq\frac{s}{(n,a,r)+1}\leq \frac{g_{a,r}(q)}{(n,a,r)+1}.
\end{equation*}
This shows that we have 
\begin{equation*}
    g(q)_i=g(q)_I\quad\text{ for all }q\in B_s(x)\text{ and all }i\geq I.
\end{equation*}
Given some~$\varepsilon>0$, we now take an integer~$N\geq\max(I,2/\varepsilon)$. For integers $i,j\geq N$ and rationals $p,q\in B_{1/N}(x)$, we get
\begin{equation*}
    \left|g(p)_i-g(q)_j\right|=\left|g(p)_I-g(q)_I\right|\leq|p-q|<\varepsilon,
\end{equation*}
as the condition from Definition~\ref{def:continuous} demands. Our considerations also yield
\begin{equation*}
    g(x)\geq\frac{s}{(n,a,r)+1}>0,
\end{equation*}
so that we have $x\in g^{-1}(\{y\in\mathbb R:y>0\})$ for $x\in U$.

Now consider a real~$x\notin U$. Given a rational~$\varepsilon>0$, pick a rational~$q_0\in B_{\varepsilon/3}(x)$. For any tuple~$(n,a,r)$ with $q_0\in B_{r-\varepsilon/3}(a)$, we get $x\in B_r(a)$. Due to~$x\notin U$, we must have $(n',a,r)\notin U$ for some~$n'\geq n$. It follows that we have $(n,a,r)\notin U[i]$ for all~$i>n'$. Pick an integer~$M\geq 1/\varepsilon$. By $\Sigma^0_1$-collection (which is a consequence of $\Sigma^0_1$-induction and hence available in~$\mathsf{RCA}_0$), we find an integer~$N\geq 3/\varepsilon$ with
\begin{equation*}
    |a-q_0|\geq r-\frac\varepsilon3\quad\text{for all }(n,a,r)\in\bigcup_{i\geq N}U[i]\text{ with }(n,a,r)<_{\mathbb N}M.
\end{equation*}
Now consider any $i\geq N$ and any rational $q\in B_{1/N}(x)$. Let $(n,a,r)$ be some tuple in~$U[i]$. If we have $(n,a,r)<_{\mathbb N}M$, then we get
\begin{equation*}
    r-\frac\varepsilon3\leq|a-q_0|\leq|a-q|+|q-x|+|x-q_0|<|a-q|+2\cdot\frac\varepsilon3.
\end{equation*}
This yields~$r-|a-q|<\varepsilon$ and hence~$g_{a,r}(q)<\varepsilon$. It follows that we have
\begin{equation*}
    0< g(q)_i<\varepsilon\quad\text{for all rationals }q\in B_{1/N}(x)\text{ and all }i\geq N,
\end{equation*}
which entails the condition from Definition~\ref{def:continuous}. Since $\varepsilon>0$ was arbitrary, we also see that $x\notin U$ entails~$g(x)=0$ and hence $x\notin g^{-1}(\{y\in\mathbb R:y>0\}$.
\end{proof}

As promised, we now show that finite intersections of open sets are harder to form than infinite unions.

\begin{proposition}\label{prop:intersections}
The following are equivalent over $\mathsf{RCA}_0$:
\begin{enumerate}[label=(\roman*)]
\item The infinite pigeonhole principle~$\mathsf{IPP}$.
\item For any $I\in\mathbb N$ and any family~$(U_i)_{i\leq I}$ of open sets, there is an open set~$U$ such that we have $U=\bigcap_{i\leq I}U_i$. 
\end{enumerate}
\end{proposition}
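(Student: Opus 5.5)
For (i)$\Rightarrow$(ii), I would first reduce to the intersection of two open sets and then extend to $I+1$ sets. The extension is the delicate part, because induction on~$I$ is not obviously available for this $\Pi^1_1$-type statement. So instead I would give a direct construction that handles all $I+1$ sets at once.

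A tuple $(n,a,r)$ belongs to~$U$ iff $(n,a,r)\in U_i[\,\cdot\,]$ is certified for every $i\leq I$, in the following sense. For each $i\leq I$ we search, with bounds below~$n$, for a tuple $(m_i,a_i,r_i)$ with $(a-r,a+r)\subseteq B_{r_i}(a_i)$ that has stayed in~$U_i$ for all stages between $m_i$ and~$n$. To obtain a set that is eventually constant on the right tuples, I would copy the trick from Lemma~\ref{lem:unions}. For each~$i$, let $(a_i(n),r_i(n),j_i(n))$ be the minimal code among triples with $(a-r,a+r)\subseteq B_{r_i(n)}(a_i(n))$ and $(k,a_i(n),r_i(n))\in U_i$ for all $k\in\{j_i(n),\ldots,n-1\}$. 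Then put $(n,a,r)\in U$ iff $(n,a_i(n+1),r_i(n+1))\in U_i$ and $n\geq j_i(n+1)$ for all $i\leq I$. Each map $n\mapsto(a_i(n),r_i(n),j_i(n))$ is non-decreasing, as in Lemma~\ref{lem:unions}.

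For $x\in U$, fix $N$ and $a,r$ with $x\in B_r(a)$ and $(n,a,r)\in U$ for all $n\geq N$. As in Lemma~\ref{lem:unions}, the minimal witnesses then stabilise for each~$i$. This stabilisation needs no collection, because each $i$ is treated separately: $x\in B_r(a)\subseteq B_{r_i}(a_i)$ with the stable witness, so $x\in U_i$.

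For $x\in\bigcap_{i\le I}U_i$, each $i$ yields $K_i,a_i,r_i$ with $x\in B_{r_i}(a_i)$ and $(k,a_i,r_i)\in U_i$ for all $k\geq K_i$. This statement is $\Sigma^0_2$ in~$i$, so $\mathsf{IPP}$ in the form of $\mathsf B\Sigma^0_2$ gives a common bound on the codes $(a_i,r_i,K_i)$ for all $i\leq I$. Next choose a rational ball $B_r(a)\ni x$ contained in the finite intersection of the $B_{r_i}(a_i)$; this uses Lemma~\ref{lem:reals-ineq-Sigma02} and finitely many rational comparisons. For this fixed $(a,r)$, the minimal triples for each~$i$ are bounded by the chosen witnesses. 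Since they are non-decreasing, each stabilises after some stage, and a second use of $\mathsf B\Sigma^0_2$ produces one stage $N$ after which all $I+1$ of them are constant. From then on $(n,a,r)\in U$, so $x\in U$.

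The main obstacle is making sure that every bounding step is a genuine $\Sigma^0_2$-collection. In particular, the condition ``$x\in B_{r_i}(a_i)$'' is $\Sigma^0_2$, so it has to be folded into the witness, as was done in the proof of Lemma~\ref{lem:inf-sum-fct}.

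For (ii)$\Rightarrow$(i), I would argue by contraposition, using the counterexample from Lemma~\ref{lem:fin-sets-bounded}. Suppose $c\colon\mathbb N\to\{0,\ldots,k-1\}$ takes each colour only finitely often. Let $U_n$ consist of all tuples $(i,a,r)$ such that colour $n$ does not occur among $c(j)$ for $j$ with $a\leq j<i$. The intended meaning is that $U_n=(a-r,a+r)$ whenever $a$ exceeds the last occurrence of colour~$n$. I would simplify by taking $U_n=\{(i,a,r):r=1/2,\ a\in\mathbb N,\ c(j)\neq n\text{ for } a\leq j<i\}$. Then $U_n\supseteq\bigcup_{m\geq L_n}(m-\tfrac12,m+\tfrac12)$, where $L_n$ is the last occurrence of colour~$n$, and more precisely $m\in U_n$ iff $m>L_n$. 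Now suppose an open $U=\bigcap_{n<k}U_n$ existed. Every natural number $m$ lies in some $U_n$ by the above, but not necessarily in all of them, so this alone gives no contradiction. I would instead construct a real $x$ and argue from its membership, but the cleaner route is as follows. Since each colour stops, every integer beyond $\max_n L_n$ lies in all $U_n$. Without $\mathsf{IPP}$ this maximum need not exist, yet each individual $m$ either belongs to $U$ or not. The hard step is to extract from~$U$ a bound on the $L_n$. I would take the least $m$ with $m\in U$; this is $\Sigma^0_2$-definable, and existence of the least element needs $\Sigma^0_2$-induction, so I must avoid it. Instead I would take a single real: the Cauchy sequence $x$ that steps up to $c$-dependent values, namely $x_i=$ the number of colour changes of the ``last-appearance'' permutation from Lemma~\ref{lem:ipp_restricted}. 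This is bounded, hence a real, because $\mathsf{IPP}$ fails. I would then check that $x\in U_n$ for all $n$ but $x\notin U$, using Lemma~\ref{lem:ipp_restricted} to make each $U_n$ eventually decided around~$x$. I expect this reversal to be the main difficulty, and I would fall back on an adaptation of the Theorem~\ref{thm:ivt} reversal (open sets $U_n=f_n^{-1}((0,\infty))$ via Proposition~\ref{pro:ContinuousOpen}) if the direct coding stalls.
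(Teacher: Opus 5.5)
Both directions have a genuine gap, and the reversal is essentially missing.

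\textbf{The direction (i)$\Rightarrow$(ii).} Your membership condition for $U$ is wrong, already for $I=0$. The condition $n\geq j_i(n+1)$ only says that the least admissible triple at stage $n+1$ is non-vacuous. It does not say that this triple has stopped changing, so nothing justifies your claim that the minimal witnesses stabilise for $x\in U$. Here is a counterexample. Let $U_0$ contain only tuples $(k,\beta)$ and $(k,\beta')$ for two rational balls $\beta,\beta'\supseteq(a-r,a+r)$, with at least one of them present at every stage. Remove the longer-present ball for a single stage only when the other ball has been present since some stage $j$ whose triple has smaller code than every vacuous triple at the next stage. This always happens eventually, since the least vacuous code tends to infinity. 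Then $j_0(n+1)\leq n$ for all large $n$, so your $U$ contains $B_r(a)$. Yet each ball is absent at infinitely many stages, so $U_0$ represents the empty set.

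In Lemma~\ref{lem:unions} the defining condition is $(n,a,r)\in U_{i(n)}$, which is equivalent to $(i(n+1),j(n+1))=(i(n),j(n))$. The remark about $j(n+1)$ is used there only for the converse inclusion. If you instead put $(n,a,r)\in U$ iff $(n,a_i(n),r_i(n))\in U_i$ for all $i\leq I$, then $x\in U$ forces each $n\mapsto(a_i(n),r_i(n),j_i(n))$ to be constant from $N$ on. That gives $x\in U_i$ without collection, and your converse argument via $\mathsf B\Sigma^0_2$ still goes through. With this repair you would have a valid direct alternative to the paper's proof. The paper instead writes $U_i=g_i^{-1}((0,1])$ uniformly by Proposition~\ref{pro:ContinuousOpen}(b). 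It then forms $\prod_{i\leq I}g_i$ by Lemma~\ref{lem:inf-sum-fct}, where the same $\mathsf B\Sigma^0_2$ bound on folded $\Pi^0_1$-witnesses enters, and pulls back $(0,1]$.

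\textbf{The direction (ii)$\Rightarrow$(i).} You do not have an argument here.
\begin{itemize}
\item With your balls $B_{1/2}(m)$ you get $\bigcap_n U_n=\emptyset$, because each integer $m$ is excluded from $U_{c(m)}$. The empty set is open, so no contradiction is available.
\item Your proposed real, counting changes of the last-appearance permutation, is unbounded when every colour occurs finitely often, since then $c(i)\neq c(i-1)$ infinitely often. So it is not a real.
\item The fallback via Theorem~\ref{thm:ivt} is not developed.
\end{itemize}
The missing idea is to make the intersection a single point. Let $U_n$ consist of the tuples $(i,0,1/(k+1))$, where $k$ is the largest $j\leq i$ with $j=0$ or $c(j)=n$. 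If colour $n$ last occurs at $K$, then $(i,0,1/(K+1))\in U_n$ for all $i\geq K$, so $0\in U_n$ for every $n$. For $x\neq 0$, choose $J$ with $|x|\geq 1/(J+1)$. At every stage $i\geq J$, the unique ball in $U_{c(J)}$ has radius at most $1/(J+1)$, so $x\notin U_{c(J)}$. Hence $\bigcap_{n\leq N}U_n=\{0\}$, which is not open. The failure of $\mathsf{IPP}$ shows up exactly as the absence of a positive lower bound on the finitely many eventual radii. Lemma~\ref{lem:ipp_restricted} is not needed.
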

\begin{proof}
We first show that~(i) implies~(ii). The proof of Proposition~\ref{pro:ContinuousOpen}(b) is uniform, so that we get a family of continuous $g_i\colon\mathbb R\to[0,1]$ with $U_i=g_i^{-1}((0,1])$ for~$i\leq I$. Lemma~\ref{lem:inf-sum-fct} provides the continuous function $g\colon\mathbb R\to\mathbb R$ with $g(x)=\prod_{i\leq I}g_i(x)$. To conclude by Proposition~\ref{pro:ContinuousOpen}, it suffices to note that we have
\begin{equation*}
    \bigcap_{i\leq I}U_i=g^{-1}((0,1]).
\end{equation*}
The latter relies on the fact that a product $\prod_{i\leq I}x_i$ of reals is zero precisely when we have $x_i=0$ for some~$i\leq I$, which is readily derived from Lemma~\ref{lem:fin-sets-bounded}.

To show that (ii) implies (i), we argue by contraposition. So assume that we have an $N\in\mathbb N$ and a function~$c\colon\mathbb N\to\{0,\ldots,N\}$ such that $\{i\in\mathbb N:c(i)=n\}$ is finite for each $n\leq N$. Consider the open sets $U_n$ for $n\leq N$ that are coded by
\[
U_n:=\left\{\left(i,0,\frac1{k+1}\right): k=\max\{j\leq i:j=0\text{ or }c(j)=n\}\right\}.
\]
We claim that $\bigcap_{n\leq N}U_n$ is equal to~$\{0\}$ and hence not open. To see that~$0$ lies in each~$U_n$, we distinguish two cases. If $n$ is not in the range of~$c$, we have $(i,0,1)\in U_n$ for all~$i\in\mathbb N$, which yields~$0\in B_1(0)\subseteq U_n$. Now assume that $n$ is in the range of~$c$. As no colour occurs infinitely often, we may consider the largest~$K$ with $c(K)=n$. We then have $(i,0,1/(K+1))\in U_i$ for all~$i\geq K$, so that we get
\begin{equation*}
    0\in B_{1/(K+1)}(0)\subseteq U_n.
\end{equation*}
For the converse, we consider an arbitrary real~$x\neq 0$. Pick $J\in\mathbb N$ so large that we have~$x\notin B_{1/(J+1)}(0)$. Aiming at a contradiction, we assume~$x\in U_{c(J)}$. We must then have $k,I\in\mathbb N$ with $x\in B_{1/(k+1)}(0)$ and $(i,0,1/(k+1))\in U_{c(J)}$ for all~$i\geq I$. But for $i\geq\max(I,J)$, this forces~$k\geq J$ and hence
\begin{equation*}
x\in B_{1/(k+1)}(0)\subseteq B_{1/(J+1)}(0),
\end{equation*}
which contradicts the choice of~$J$.
\end{proof}

The intersection of two open sets can be formed over~$\mathsf{RCA}_0$ (without the pigeonhole principle). This can be shown as in the previous proof, with Lemma~\ref{lem:basicContFunctions} at the place of Lemma~\ref{lem:inf-sum-fct}. We obtain a particularly easy proof of the following elementary fact (though a direct proof is not too difficult either). 

\begin{corollary}[$\mathsf{RCA_0}$]
For~$x,y\in\mathbb R\cup\{\pm\infty\}$, the interval~$(x,y)$ is an open set.
\end{corollary}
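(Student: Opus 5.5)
The plan is to follow the hint in the preceding paragraph: realise $(x,y)$ as a preimage $h^{-1}((0,1])$ of a continuous function, and then conclude with Proposition~\ref{pro:ContinuousOpen}(a). Note that $(0,1]$, or more conveniently $(0,\infty)$, has to be open first. For this I would code it directly: take all tuples $(n,a,r)$ with $a-r\geq 0$, so that each rational ball inside $(0,\infty)$ is listed for every $n$. A real $z>0$ then lies in such a ball by Lemma~\ref{lem:reals-ineq-Sigma02}, and conversely membership forces $z>a-r\geq 0$. So $(0,\infty)$ is an open set in $\mathsf{RCA}_0$.

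Next I would treat the two half-lines. For $x\in\mathbb R$, the map $z\mapsto z-x$ is continuous by Lemma~\ref{lem:basicContFunctions}(a),(b),(d). Composing with $t\mapsto\max(t,0)=(t+|t|)/2$ via Lemma~\ref{lem:basicContFunctions}(c),(d) yields a continuous $g_x$ with $g_x(z)>0$ iff $z>x$. Symmetrically, $g^y(z)=\max(y-z,0)$ satisfies $g^y(z)>0$ iff $z<y$. For $x=-\infty$ I take $g_x\equiv 1$, and for $y=+\infty$ I take $g^y\equiv1$; these are constant functions, again available by Lemma~\ref{lem:basicContFunctions}(b).

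Finally, set $h=g_x\cdot g^y$, which is continuous by Lemma~\ref{lem:basicContFunctions}(d); only a product of two functions is needed, so $\mathsf{IPP}$ is not required. The main point to check, and the only place with any subtlety, is that $h(z)>0$ holds precisely when $g_x(z)>0$ and $g^y(z)>0$. One direction is monotonicity of multiplication in the ordered field of Proposition~\ref{prop:archimedean}. For the other, since both factors are $\geq0$, if one of them is not $>0$ it equals $0$ by Lemma~\ref{lem:reals-order}, and then the product is $0$. Hence $(x,y)=h^{-1}((0,\infty))$, and Proposition~\ref{pro:ContinuousOpen}(a) shows that this preimage is open.
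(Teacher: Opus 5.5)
Your proof is correct and follows essentially the paper's route: the paper realises $(-\infty,y)$ as $f_y^{-1}((0,\infty))$ for a directly coded $f_y(z)=\max(y-z,0)$ and then intersects the two half-lines via closure of open sets under binary intersections (itself proved with a product of two continuous functions and Proposition~\ref{pro:ContinuousOpen}), whereas you multiply your two half-line functions directly, which is a harmless shortcut. Two small additions: building $z\mapsto\max(z-x,0)$ from the absolute value also uses Lemma~\ref{lem:compositionCont} alongside Lemma~\ref{lem:basicContFunctions}, and the degenerate cases $x=+\infty$ or $y=-\infty$ give the empty set, which is trivially open.
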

\begin{proof}
    We consider the case where we have $-\infty=x<y<+\infty$. The general result follows by symmetry and since the open sets are closed under binary intersections. Writing $y=(y_n)$, we put $f_y(q)_n=y_n-q$ for $q<y_n$ and $f_y(q)_n=0$ other\-wise. One can check that this represents the continuous function~$f_y\colon\mathbb R\to\mathbb R$~with
    \begin{equation*}
        f_y(x)=\begin{cases}
            y-x & \text{for }x<y,\\
            0 & \text{otherwise}.
        \end{cases}
    \end{equation*}
    It is straightforward to see that $(0,\infty)$ is open (as the endpoint is rational). By the previous result, the same holds for $f_y^{-1}((0,\infty))=(-\infty,y)$.
\end{proof}

As another straightforward application, we obtain Urysohn's lemma. Of course, a set is closed precisely when its complement is open.

\begin{corollary}[$\mathsf{RCA}_0$]\label{lem:Urysohn}
For disjoint closed sets $C_0,C_1\subseteq\mathbb R$, there is a continuous function $g\colon\mathbb{R}\to [0,1]$ such that
\begin{equation*}
    x\in C_i\quad\Leftrightarrow\quad g(x)=i
\end{equation*}
holds for each~$i\in\{0,1\}$ and all $x\in\mathbb R$.
\end{corollary}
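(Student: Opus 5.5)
We apply Proposition~\ref{pro:ContinuousOpen}(b) to the open complements and then take a quotient. Put $U_0=\mathbb R\setminus C_0$ and $U_1=\mathbb R\setminus C_1$; by assumption these are given by codes of open sets. Since $C_0\cap C_1=\emptyset$, we have $U_0\cup U_1=\mathbb R$.

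Proposition~\ref{pro:ContinuousOpen}(b) yields continuous functions $g_0,g_1\colon\mathbb R\to[0,1]$ with $U_i=g_i^{-1}((0,1])$. Equivalently, $g_i(x)=0$ holds precisely when $x\in C_i$. We then define
\begin{equation*}
    g(x)=\frac{g_0(x)}{g_0(x)+g_1(x)}.
\end{equation*}
This is a continuous function by Lemma~\ref{lem:basicContFunctions}(d): the denominator $g_0+g_1$ is continuous, and it has no zeros. Indeed, each $x$ lies in $U_0$ or in $U_1$, so $g_0(x)>0$ or $g_1(x)>0$. As both values are non-negative, we get $g_0(x)+g_1(x)>0$. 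Only this binary sum is needed, so the pigeonhole principle is not required. From $0\leq g_0\leq g_0+g_1$ we also obtain $g(x)\in[0,1]$ for every $x$.

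It remains to verify the two equivalences.
\begin{itemize}
    \item If $x\in C_0$, then $g_0(x)=0$, so $g(x)=0$.
    \item Conversely, if $g(x)=0$, then $g_0(x)=0$ because the denominator is positive. Hence $x\notin U_0$, i.e., $x\in C_0$.
    \item Similarly, $g(x)=1$ is equivalent to $g_1(x)=0$ (by field arithmetic, Proposition~\ref{prop:archimedean}), which in turn is equivalent to $x\in C_1$.
\end{itemize}

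The main technical point is the application of Lemma~\ref{lem:basicContFunctions}(d) for division. That lemma requires the denominator to be nonzero at every point of the domain, and we have established exactly this pointwise (each zero test is a statement about reals). Everything else is routine ordered-field reasoning in $\mathsf{RCA}_0$.
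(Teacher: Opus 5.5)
Your proposal is correct and is essentially the paper's own proof. You obtain $g_0,g_1$ from Proposition~\ref{pro:ContinuousOpen}(b) applied to the open complements and form $g=g_0/(g_0+g_1)$ via Lemma~\ref{lem:basicContFunctions}. Your checks that the denominator never vanishes and that the two equivalences hold simply spell out what the paper calls ``readily verified''.
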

\begin{proof}
Proposition \ref{pro:ContinuousOpen} yields continuous functions $g_i\colon\mathbb{R}\to [0,1]$ such that $x\in C_i$ is equivalent to~$g_i(x)=0$. Due to Lemma~\ref{lem:basicContFunctions}, we can form $g=g_0/(g_0+g_1)$ as a continuous function. The desired property is readily verified.
\end{proof}

Next, we consider two topological properties of the reals, namely connectedness and paracompactness. On the other hand, the Heine-Borel theorem about the open-cover compactness of~$[0,1]$ -- which is arguably the most important result in this direction --, is deferred to the next section.

\begin{proposition}\label{prop:connected}
The following are equivalent over~$\mathsf{RCA}_0$:
\begin{enumerate}[label=(\roman*)]
\item The reals are connected, i.e., there are no open sets~$U_0,U_1\neq\emptyset$ such that we have $U_0\cup U_1=\mathbb R$ and $U_0\cap U_1=\emptyset$.
\item The infinite pigeonhole principle $\mathsf{IPP}$.
\end{enumerate}
\end{proposition}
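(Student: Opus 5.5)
We prove both directions by reducing to Theorem~\ref{thm:ivt}(iv), which says that under $\mathsf{IPP}$ every continuous function $\mathbb R\to\{0,1\}$ is constant, and conversely that the failure of $\mathsf{IPP}$ produces a non-constant such function.

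For (ii)$\Rightarrow$(i), assume $\mathsf{IPP}$ and suppose that open sets $U_0,U_1\neq\emptyset$ partition~$\mathbb R$. By Proposition~\ref{pro:ContinuousOpen}(b), which is uniform, we obtain continuous $g_0,g_1\colon\mathbb R\to[0,1]$ with $U_i=g_i^{-1}((0,1])$. Since every real lies in exactly one $U_i$, we have $g_0(x)+g_1(x)>0$ for all~$x$. Lemma~\ref{lem:basicContFunctions} then gives the continuous function $h=g_1/(g_0+g_1)$, which takes value $0$ on $U_0$ and $1$ on $U_1$. So $h\colon\mathbb R\to\{0,1\}$ is continuous and non-constant, because we can pick $y\in U_0$ and $z\in U_1$. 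This contradicts Theorem~\ref{thm:ivt}(iv). The only point to check is that $g_i(x)=0$ holds exactly when $x\notin U_i$, and this is part of Proposition~\ref{pro:ContinuousOpen}(b).

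For (i)$\Rightarrow$(ii), we argue by contraposition. Assume $\mathsf{IPP}$ fails. By the reversal in the proof of Theorem~\ref{thm:ivt}, which uses Lemma~\ref{lem:ipp_restricted}, we get a continuous $f\colon\mathbb R\to\{-1,+1\}$ with $f(0)=-1$ and $f(1)=+1$. Proposition~\ref{pro:ContinuousOpen}(a) shows that the preimages $U_0=f^{-1}((-\infty,0))$ and $U_1=f^{-1}((0,\infty))$ are open; the intervals are open by the corollary above, or directly because their endpoints are rational. The two sets are disjoint. They cover $\mathbb R$ because every value of $f$ equals $\pm1$ as a real. They are non-empty because $0\in U_0$ and $1\in U_1$. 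Hence connectedness fails.

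The main obstacle is already resolved by Theorem~\ref{thm:ivt} and Lemma~\ref{lem:ipp_restricted}, namely constructing a genuinely continuous two-valued function without $\mathsf{IPP}$. What remains is to confirm that the argument for (ii)$\Rightarrow$(i) stays inside $\mathsf{RCA}_0$ plus Theorem~\ref{thm:ivt}. Forming the quotient only requires $g_0+g_1>0$ pointwise, which Lemma~\ref{lem:basicContFunctions} allows. One also has to show that $h$ takes values in $\{0,1\}$ as reals, and this follows from the exact zero sets of $g_0$ and $g_1$.
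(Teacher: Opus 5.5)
Your proposal is correct and follows essentially the paper's route. The paper also proves that connectedness is equivalent over $\mathsf{RCA}_0$ to statement~(iv) of Theorem~\ref{thm:ivt}: in one direction it takes preimages via Proposition~\ref{pro:ContinuousOpen}, and in the other it uses Urysohn's lemma (Corollary~\ref{lem:Urysohn}), whose proof is your quotient $g_1/(g_0+g_1)$ up to relabelling. The only cosmetic difference is that you use the specific counterexample $f\colon\mathbb R\to\{-1,+1\}$ from the reversal instead of an arbitrary non-constant continuous $g\colon\mathbb R\to\{0,1\}$, which changes nothing.
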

\begin{proof}
It suffices to observe that the present statement~(i) is equivalent to statement~(iv) of Theorem \ref{thm:ivt}, which says that any continuous function~$g\colon\mathbb R\to\{0,1\}$ is constant. Concerning the forward direction, if $g\colon\mathbb R\to\{0,1\}$ is continuous, the sets~$U_i=g^{-1}(\{i\})$ are open by Proposition~\ref{pro:ContinuousOpen}, as $U_0$ and $U_1$ are also the preimages of $(-\infty,1/2)$ and $(1/2,\infty)$, respectively. By the present statement~(i), it follows that one of the $U_i$ is empty, so that~$g$ is indeed constant.

Conversely, suppose that $\mathbb{R}$ is a disjoint union~$U_0\cup U_1$ of open sets. Then each~$U_i$ is also closed. So by the previous corollary (Urysohn's lemma), there is a continuous function $g\colon\mathbb R\to[0,1]$ with $g(x)=i$ for~$x\in U_i$. By statement~(iv) of Theorem \ref{thm:ivt}, this~$g$ is constant. But then one of the $U_i$ is empty.
\end{proof}

For the following result, we have not established a reversal.

\begin{proposition}[$\mathsf{RCA}_0+\mathsf{IPP}$]
Given an open cover $(U_n)_{n\in\mathbb N}$ of~$\mathbb R$, we find open sets $V_n\subseteq U_n$ such that $(V_n)_{n\in\mathbb N}$ covers~$\mathbb R$ and is locally finite (which means that any $x\in\mathbb R$ lies in an open set $W$ with $V_n\cap W=\emptyset$ for all but finitely many $n\in\mathbb N$).
\end{proposition}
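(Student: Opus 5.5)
We plan to follow the classical shrinking argument via a partition into bounded pieces, replacing the usual use of compactness by the tools now available. We would use the machinery of Proposition~\ref{pro:ContinuousOpen} and Lemma~\ref{lem:unions}, working throughout with rational intervals. First, by the lemma before Proposition~\ref{pro:ContinuousOpen} (uniformly in~$n$), we write each $U_n$ as a union of a family of sets $U_{n,a,r}$ that are empty or equal to $(a-r,a+r)$. Enumerating all pairs $(n,(a,r))$, we obtain a single family $(W_k)_{k\in\mathbb N}$ of such basic sets, each coming with an index $n(k)$ and $W_k\subseteq U_{n(k)}$. Since the $U_n$ cover~$\mathbb R$, so do the~$W_k$. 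The point of this reduction is that each $W_k$ is contained in a known bounded interval, even though we cannot decide whether it is empty.

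We then make the cover locally finite by the ``stage'' trick. For $m\in\mathbb Z$, consider the rational strip $S_m=(m-1,m+2)$. We define $V_n$ as the union over all $k$ with $n(k)=n$ of the open sets
\begin{equation*}
  W_k\cap T_k,\qquad T_k=\bigcup\{S_m: |m|\geq \ell(k)\},
\end{equation*}
with a suitable cut-off $\ell(k)$. We would rather implement this directly on codes, using binary intersections, which are available in $\mathsf{RCA}_0$ by the remark after Proposition~\ref{prop:intersections}. A cleaner option is the following: let $V_n$ consist of those $W_k$ with $n(k)=n$ for which $k$ is minimal among indices whose $W_k$ meets $[m,m+1]$, taken over the relevant~$m$. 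Since minimality here is $\Sigma^0_2$ rather than decidable, we instead use the dynamical coding of Definition~\ref{def:open}: a tuple $(i,a,r)$ enters $V_n$ at stage~$i$ if $(a,r)$ is inside some $W_k$ with $n(k)=n$, and, at stage~$i$, no $W_{k'}$ with $k'<k$ appears (up to stage $i$) to cover the closed interval $[\lfloor a-r\rfloor,\lceil a+r\rceil]$ together with the earlier indices. Thus $V_n\subseteq U_n$ is automatic.

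Covering is then argued as follows. For $x\in\mathbb R$, pick $m$ with $x\in(m,m+1)$ up to a shift. The finitely many points of a finite rational net of $[m-1,m+2]$ lie in sets $W_k$ via $\Sigma^0_2$-witnesses, so $\mathsf{IPP}$ in the form $\mathsf B\Sigma^0_2$ (Hirst's equivalence) gives a common bound on the indices $k$ and the stages involved. This lets us show that the construction eventually stabilises on a least finite set of indices. In particular, $x$ lands in some $V_n$ with a persistent tuple.

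For local finiteness we take $W=(m-1,m+2)$. Only indices below the bound obtained via $\mathsf B\Sigma^0_2$ can contribute tuples meeting $W$ from some stage on, so only finitely many $V_n$ meet~$W$.

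The main obstacle is the covering step. Without Heine--Borel, which we do not yet have and which needs $\mathsf{StCOH}$ later, a bounded interval need not be covered by finitely many $W_k$, so ``least finite subcover'' may not exist. We would therefore avoid finite subcovers entirely and use a weaker point-by-point rule. Concretely, we keep $W_k\cap(m-1,m+2)$ in $V_{n(k)}$ only if $k\leq k^*$, where $k^*$ is chosen as a $\Sigma^0_2$-approximated bound depending on $m$, namely the least $k$ with $W_k\ni m$ or $W_k\ni m+1$ extended by shrinking. Controlling this so that each point is still covered is where we expect the real difficulty, and where $\mathsf{IPP}$ must be applied carefully to finitely many $\Sigma^0_2$ facts.
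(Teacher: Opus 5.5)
Your proposal has a genuine gap, and it sits where you put the ``real difficulty'': the plan, as stated, cannot be completed in $\mathsf{RCA}_0+\mathsf{IPP}$.

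The first problem is the covering step. Applying $\mathsf B\Sigma^0_2$ to the points of a finite rational net of $[m-1,m+2]$ only bounds the indices that witness membership of those net points. It says nothing about a point $x$ lying between them, whose witnessing $W_k$ may have arbitrarily large index.

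The second problem is more fundamental: you fix $W=(m-1,m+2)$ independently of the cover. Suppose $(V_n)$ covers $\mathbb R$, $V_n\subseteq U_n$, and there is an $N$ with $V_n\cap(m-1,m+2)=\emptyset$ for all $n\geq N$. Then every point of $(m-1,m+2)$ lies in some $V_n$ with $n<N$, so $[m,m+1]\subseteq\bigcup_{n<N}U_n$. Now take an arbitrary open cover of $[0,1]$ and adjoin the open set $\mathbb R\backslash[0,1]$ to get a cover of $\mathbb R$. Your plan would then yield the Heine--Borel theorem. By Theorem~\ref{thm:Heine-Borel}, that theorem is equivalent to $\mathsf{StCOH}$, which $\mathsf{RCA}_0+\mathsf{IPP}$ does not prove (already $\mathsf{COH}$ fails in the $\omega$-model of computable sets).

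So any construction that stabilises on finitely many indices for a whole strip is ruled out. The neighbourhood $W$ must depend on $x$ and on the cover. Your final ``point-by-point rule'' with $k^*$ is left unspecified at exactly the place where this would have to be handled. The construction of $V_n$ itself is also never pinned down: neither the cut-off $\ell(k)$ nor the stage-wise minimality rule is made precise.

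The missing idea is to avoid compactness entirely by going through a partition of unity, which makes local finiteness a purely local matter. This is the paper's route:
\begin{enumerate}[label=(\arabic*)]
\item The uniform version of Proposition~\ref{pro:ContinuousOpen}(b) gives continuous $h_n\colon\mathbb R\to[0,1]$ with $U_n=h_n^{-1}((0,1])$.
\item Lemma~\ref{lem:inf-sum-fct}(b), which is where $\mathsf{IPP}$ enters, gives the continuous function $h=\sum_n 2^{-n}h_n$. It is positive everywhere because the $U_n$ cover $\mathbb R$.
\item Put $g_n=h_n/(2^n h)$, so that $\sum_n g_n=1$. Then set $f_n=\min(1/2,\sum_{m\leq n}g_m)-\min(1/2,\sum_{m<n}g_m)$ and $V_n=f_n^{-1}((0,1/2])\subseteq U_n$.
\end{enumerate}

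Local finiteness is then pointwise. At a given $x$, some partial sum $\sum_{n<N}g_n(x)$ exceeds $1/2$. By continuity it exceeds $1/2$ on an open $W\ni x$, and then $f_n$ vanishes on $W$ for all $n\geq N$; here $W$ depends on $x$, as it must.

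Covering is handled by a case distinction:
\begin{itemize}[label=--]
\item If some $N$ satisfies $0<\sum_{n\leq N}g_n(x)<1/2$, then $f_n(x)=g_n(x)>0$ for some $n\leq N$.
\item Otherwise $\sum_{n\leq N}g_n(x)>0$ is equivalent to $\sum_{n\leq N}g_n(x)\geq 1/2$, so this condition is $\Delta^0_2$. The $\Delta^0_2$ least number principle, which is equivalent to $\mathsf{IPP}$, gives a least such $N$, and for it $f_N(x)=1/2$.
\end{itemize}
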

\begin{proof}
We follow the proof in~\cite{simpson09}. As noted before, the proof of Proposition~\ref{pro:ContinuousOpen}(b) is uniform, so that the given open cover yields a family of continuous $h_n\colon\mathbb R\to[0,1]$ with $U_n=h_n^{-1}\big((0,1]\big)$ for all $n\in\mathbb N$. By Lemma~\ref{lem:inf-sum-fct}, we form the continuous function
\begin{equation*}
    h\colon\mathbb R\to(0,\infty)\quad\text{with}\quad h(x)=\sum_{n=0}^\infty\frac{h_n(x)}{2^n}.
\end{equation*}
Note that the values of $h$ are indeed strictly positive, because $(U_n)$ is a cover. This allows us to consider the continuous functions
\begin{equation*}
    g_n\colon\mathbb R\to[0,1]\quad\text{with}\quad g_n(x)=\frac{h_n(x)}{2^n\cdot h(x)}.
\end{equation*}
Let us observe that these validate $U_n=g_n^{-1}((0,1])$. Finally, consider the continuous functions $f_n\colon\mathbb R\to[0,1/2]$ that are given by
\[
f_n(x)=\min\left(\frac{1}{2},\sum_{m\leq n}g_m(x)\right)-\min\left(\frac{1}{2},\sum_{m< n}g_m(x)\right).
\]
The proof of Proposition \ref{pro:ContinuousOpen}(a) is also uniform, so that we obtain a family of open sets $V_n=f_n^{-1}((0,1/2])$. Since $f_n(x)\leq g_n(x)$ holds for all $n\in\mathbb N$ and~$x\in\mathbb R$, we always have $V_n\subseteq U_n$.

To show that $(V_n)$ is a cover, we consider an arbitrary~$x\in\mathbb R$. Let us distinguish two cases. First assume that there is an~$N\in\mathbb N$ with $0<\sum_{n\leq N}g_m(x)<1/2$. Take such an~$N$ and pick an~$n\leq N$ with~$g_n(x)>0$. We then have $f_n(x)=g_n(x)$ and hence~$x\in V_n$. In the remaining case, we have
\begin{equation*}
    0<\sum_{n\leq N}g_n(x)\quad\Leftrightarrow\quad \frac12\leq\sum_{n\leq N}g_n(x).
\end{equation*}
Since $\mathsf{IPP}$ is equivalent to the $\Delta^0_2$-least number principle (see Theorem~I.2.5 of~\cite{hajek91}), we find an~$N$ that is minimal with $\sum_{n\leq N}g_n(x)>0$. For this $N$ we have $x\in V_N$.

By construction, we have $\sum_{n\in\mathbb N}g_n=1$. Given any~$x\in\mathbb R$, we thus find an $N\in\mathbb N$ and an open~$W\ni x$ with $\sum_{n<N}g_n(y)>1/2$ for all~$y\in W$. When we have $n\geq N$, we thus get $f_n(y)=0$ for $y\in W$, which yields~$V_n\cap W=\emptyset$.
\end{proof}

In the following, we consider the Tietze extension theorem. Once the latter is established, a stronger form of the following result follows from Lemma~\ref{pro:ContinuousOpen}.

\begin{lemma}[$\mathsf{RCA}_0$]\label{lem:ContinuousOpen-plus}
    For any $q\in\mathbb R$ and every continuous function $f\colon C\to\mathbb R$ on a closed set $C\subseteq\mathbb R$, the set $\{x\in C:f(x)<q\}\cup\mathbb R\backslash C$ is open.
\end{lemma}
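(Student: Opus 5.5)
We construct a code $W\subseteq\mathbb N\times\mathbb Q\times\mathbb Q_{>0}$ directly. The code mixes the given code for the open set $U=\mathbb R\backslash C$ with tuples obtained from the representation $f\colon\mathbb Q\to\mathbb Q^{\mathbb N}$ of the partial continuous function. We could instead try to pass through Proposition~\ref{pro:ContinuousOpen}(a), but that needs $f$ to be total. Extending $f$ is exactly the Tietze theorem, which we want to avoid. So we imitate the proof of Proposition~\ref{pro:ContinuousOpen}(a) and take a union via Lemma~\ref{lem:unions}. Write $q=(q_n)$ for the given real threshold.

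The plan has two families of open sets. The first is simply $U$ itself. For the second, for each pair $(b,s)$ with $s>0$ rational and each rational $t>0$, we declare
\begin{equation*}
(n,b,s)\in V_{t}\quad:\Leftrightarrow\quad f(p)_n<q_n-t\text{ for all rationals }p\in B_s(b)\text{ with }p\leq_{\mathbb N}n.
\end{equation*}
Lemma~\ref{lem:unions} then gives an open set $W=U\cup\bigcup_{t}V_t$. Note that the values $f(p)$ for $p\notin C$ may be arbitrary, so $V_t$ alone need not be contained in the target set. This is why we must check that $x\in V_t$ implies $x\notin C$ or $f(x)<q$.

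\textbf{Soundness.} Suppose $x\in V_t$ via $(n,b,s)$ for all $n\geq N$, with $x\in B_s(b)$. If $x\notin C$ we are done. If $x\in C\subseteq\dom(f)$, then as in the proof of Proposition~\ref{pro:ContinuousOpen}(a) we may assume $x_n\leq_{\mathbb N}n$ and $x_n\in B_s(b)$ for large $n$. Hence $f(x)_n=f(x_n)_n<q_n-t$ eventually, so $f(x)\leq q-t<q$.

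\textbf{Completeness.} If $x\notin C$, then $x\in U\subseteq W$. If $x\in C$ with $f(x)<q$, Lemma~\ref{lem:reals-ineq-Sigma02} gives a rational $t$ with $f(x)<q-4t$. Definition~\ref{def:continuous} gives an $N$ with $|f(p)_n-f(r)_m|<t$ for rationals $p,r\in B_{1/N}(x)$ and $m,n\geq N$. We compare $f(p)_n$ with $f(x_n)_n$, which is within $t$ of $f(x)$ for large $n$. This yields $f(p)_n<q-2t$, and hence $f(p)_n<q_n-t$ once $q_n$ is within $t$ of $q$. Choosing $B_s(b)\ni x$ inside $B_{1/N}(x)$ then gives $(n,b,s)\in V_t$ for all large $n$.

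The main obstacle is the soundness direction. Continuity is only guaranteed at points of $C$, so we have no control over $f$ at rationals near points outside $C$. The split above, where every point outside $C$ is absorbed by $U$ anyway, resolves this, and no induction beyond $\mathsf{RCA}_0$ is used since each step is a single unbounded search.
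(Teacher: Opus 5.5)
Your proof is correct and follows essentially the paper's route: the paper likewise imitates the code from Proposition~\ref{pro:ContinuousOpen}(a), putting $(n,b,s)$ into the code when it lies in the code of $\mathbb R\backslash C$ or when $f(r)_n<q-s$ for all rationals $r\in B_s(b)$ with $r\leq_{\mathbb N}n$. The differences are cosmetic. The paper reuses the radius $s$ as the margin and merges the two parts by a plain disjunction, which is enough because for $x\in C$ the $f$-clause must hold for infinitely many $n$. You instead index the margin by a separate $t$ and glue via Lemma~\ref{lem:unions}, and your use of $q_n$ makes the code decidable where the paper writes the real $q$.
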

\begin{proof}
We identify $\mathbb R\cup C$ with its code in the sense of Definition~\ref{def:open}. Let $U$ consist of all tuples~$(n,b,s)\in\mathbb N\times\mathbb Q\times\mathbb Q_{>0}$ such that we have $(n,b,s)\in\mathbb R\backslash C$ or such that $f(r)_n<q-s$ holds for all~$r\in B_s(b)\cap\mathbb Q$ with $r\leq_{\mathbb N}n$. Similarly to the proof of Lemma~\ref{pro:ContinuousOpen}, one verifies that $U$ represents the open set in question.
\end{proof}

As in the usual proof of the Tietze extension theorem, we use the following consequence of Urysohn's lemma.

\begin{lemma}\label{lem:Urysohn-plus}
    For each continuous $f\colon C\to[-1,1]$ on a closed set $C\subseteq\mathbb R$, there is a continuous $h\colon\mathbb R\to [-1/3,1/3]$ with $|f(x)-h(x)|\leq 2/3$ for all~$x\in C$.
\end{lemma}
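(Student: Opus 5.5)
We follow the classical argument. Put
\begin{equation*}
    C_0=\{x\in C:f(x)\leq-1/3\}\quad\text{and}\quad C_1=\{x\in C:f(x)\geq 1/3\}.
\end{equation*}
The first step is to check that both sets are closed. By Lemma~\ref{lem:ContinuousOpen-plus}, applied to $-f$ with bound $q=1/3$, the set $\{x\in C:f(x)>-1/3\}\cup\mathbb R\backslash C$ is open, and its complement is exactly~$C_0$. The same lemma, applied to $f$ with $q=1/3$, shows that $C_1$ is closed. For this we note that $-f$ is again represented continuously on~$C$, by negating the sequences $f(q)$ pointwise as in Lemma~\ref{lem:basicContFunctions}. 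Since $C_0$ and $C_1$ are disjoint, Corollary~\ref{lem:Urysohn} (Urysohn's lemma) yields a continuous $g\colon\mathbb R\to[0,1]$ with $g(x)=0$ exactly on $C_0$ and $g(x)=1$ exactly on~$C_1$. We then set $h(x)=\frac23 g(x)-\frac13$, which is continuous by Lemma~\ref{lem:basicContFunctions} and takes values in $[-1/3,1/3]$.

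The second step is to verify $|f(x)-h(x)|\leq 2/3$ for $x\in C$ by a case split on the value $f(x)\in[-1,1]$. If $x\in C_0$, then $h(x)=-1/3$ and $f(x)\in[-1,-1/3]$, so the difference is at most $2/3$. The case $x\in C_1$ is symmetric. Otherwise $f(x)\in(-1/3,1/3)$ and $h(x)\in[-1/3,1/3]$, so the difference is below $2/3$. This trichotomy uses only the totality of $\leq$ from Lemma~\ref{lem:reals-order}: for given $x$, we have $f(x)\leq-1/3$ or $f(x)>-1/3$, and likewise at $1/3$. Hence the case split is available in $\mathsf{RCA}_0$ without further induction.

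The main obstacle is the closedness step. Lemma~\ref{lem:ContinuousOpen-plus} is stated for the strict inequality $f(x)<q$, so we must make sure that complements come out correctly. Concretely, we need $x\notin C_0$ to be equivalent to $x\in\mathbb R\backslash C$ or ($x\in C$ and $-f(x)<1/3$). This relies on $x\in C$ being the negation of $x\in\mathbb R\backslash C$ and on $<$ being the negation of $\geq$ (remark after Lemma~\ref{lem:reals-order}). A second point to check is that the applications of Urysohn's lemma and of Lemma~\ref{lem:basicContFunctions} need nothing beyond $\mathsf{RCA}_0$. The statement carries no base-theory annotation, and the argument above does not appear to require $\mathsf{IPP}$.
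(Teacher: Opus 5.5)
Your proposal is correct and follows the paper's proof: it uses the same sets $C_0,C_1$, shows they are closed via Lemma~\ref{lem:ContinuousOpen-plus}, applies Urysohn's lemma (Corollary~\ref{lem:Urysohn}), and sets $h=\frac23\bigl(g-\frac12\bigr)$. You additionally spell out the symmetric application to $-f$ and the final case check, both of which the paper leaves implicit ("One readily checks the desired property").
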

\begin{proof}
    The sets $C_0=\{x\in C:f(x)\leq -1/3\}$ and $C_1=\{x\in C:f(x)\geq 1/3\}$ are closed by the previous lemma. For $g\colon\mathbb R\to[0,1]$ as in Corollary~\ref{lem:Urysohn}, we set
    \begin{equation*}
        h(x)=\frac23\cdot\left(g(x)-\frac12\right).
    \end{equation*}
    One readily checks the desired property.
\end{proof}

Finally, we derive the Tietze extension theorem. While we have no reversal to $\Sigma^0_2$-induction, the latter seems needed to transfer the argument from~\cite{simpson09}.

\begin{proposition}[$\mathsf{RCA}_0+\mathsf{I\Sigma}^0_2$]\label{prop:tietze}
For any continuous $f\colon C\to[-1,1]$ on a closed set~$C\subseteq\mathbb R$, there is a continuous $h\colon\mathbb R\to[-1,1]$ with $f(x)=h(x)$ for all~$x\in C$.    
\end{proposition}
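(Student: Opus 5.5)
We follow the classical proof via iterated approximation, as in~\cite{simpson09}: Lemma~\ref{lem:Urysohn-plus} is applied repeatedly to the error terms, and the resulting partial extensions are summed with Lemma~\ref{lem:inf-sum-fct}(b). Note that $\mathsf{I\Sigma}^0_2$ implies $\mathsf B\Sigma^0_2$ and hence $\mathsf{IPP}$, so all earlier results over $\mathsf{RCA}_0+\mathsf{IPP}$ are available.

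\emph{Step 1.} Set $f_0=f$. Recursively, given a continuous $f_n\colon C\to[-(2/3)^n,(2/3)^n]$, apply Lemma~\ref{lem:Urysohn-plus} to the rescaled function $(3/2)^n f_n$. This yields a continuous $h_n\colon\mathbb R\to[-\tfrac13(2/3)^n,\tfrac13(2/3)^n]$ with $|f_n(x)-h_n(x)|\leq(2/3)^{n+1}$ on~$C$. Then put $f_{n+1}=f_n-h_n$ restricted to~$C$, using Lemma~\ref{lem:basicContFunctions}(d). The proof of Lemma~\ref{lem:Urysohn-plus} goes through Lemma~\ref{lem:ContinuousOpen-plus}, Proposition~\ref{pro:ContinuousOpen}(b) and Corollary~\ref{lem:Urysohn}, and all of these are uniform constructions. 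So the recursion can be carried out by primitive recursion on codes, and it produces a family $(h_n)$ in $\mathsf{RCA}_0$.

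\emph{Step 2 (main obstacle).} We must verify that each code in the recursion really represents a continuous function on the required domain, i.e.\ that it satisfies the condition of Definition~\ref{def:continuous}. For $h_n$ this condition is $\Pi^1_1$, since it quantifies over all reals. In $\mathsf{RCA}_0$ we cannot simply induct on a $\Pi^1_1$ property. Instead we fix an arbitrary real $x$ (and, for the $f_n$, restrict to $x\in C$) and prove by induction on $n$ that $x\in\dom(f_n)$ and $x\in\dom(h_n)$, together with the bound $|f_n(x)|\leq(2/3)^n$. For fixed $x$ the domain condition is $\Pi^0_3$ in the codes, namely $\forall\varepsilon\,\exists N\,\forall q,r,i,j$. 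Plain $\mathsf{I\Sigma}^0_2$ does not cover $\Pi^0_3$-induction directly. The plan is to reformulate the inductive step so that it only propagates $\Sigma^0_2$ information. For each $\varepsilon$ we induct on the $\Sigma^0_2$ statement ``there is an $N$ witnessing the domain condition for $f_n$ at $x$ with tolerance~$\varepsilon\cdot(3/2)^{n}$'' (and similarly for~$h_n$). Here we track how the witnesses $N$ grow through the uniform constructions, by inspecting the proofs of Proposition~\ref{pro:ContinuousOpen}(b) and Lemma~\ref{lem:Urysohn-plus}: a witness for $h_n$ at tolerance $\delta$ is obtained from a witness for $f_n$ at a computably related tolerance. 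Induction for $\Sigma^0_2$ formulas (with the universal quantifier over $\varepsilon$ outside) is then exactly what $\mathsf{I\Sigma}^0_2$ provides. This is where we expect the real work, and possibly a modified induction hypothesis, to be needed, since membership in the intermediate closed sets $C_0,C_1$ is itself $\Pi^0_2$.

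\emph{Step 3.} Lemma~\ref{lem:inf-sum-fct}(b) with $q_n=\tfrac13(2/3)^n$ gives a continuous $h=\sum_n h_n\colon\mathbb R\to\mathbb R$ with $|h(x)|\leq\sum_n q_n=1$, so $h$ maps into $[-1,1]$. For $x\in C$, telescoping gives $f(x)-\sum_{n<N}h_n(x)=f_N(x)$. Its absolute value is at most $(2/3)^N$ by the inductive bound from Step~2, which tends to~$0$. Since the partial sums converge to $h(x)$ as in the proof of Lemma~\ref{lem:inf-sum-fct}(b), we get $h(x)=f(x)$ on~$C$.
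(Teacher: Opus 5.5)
Your overall architecture is the paper's: iterate Lemma~\ref{lem:Urysohn-plus} on the error terms $f_n$, sum the $h_n$ with Lemma~\ref{lem:inf-sum-fct}(b), and handle domains by an induction for a fixed real~$x$. Step~1 is fine: every construction involved is total on codes via bounded searches, so primitive recursion on codes suffices (the paper phrases this as an effective recursion).

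The gap is Step~2. You leave it open, and the induction you sketch there cannot work.

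\emph{Why the sketched induction fails.} Whether $h_n$ is defined at $x$ at all has nothing to do with a tolerance $\varepsilon$. The function $h_n$ involves the quotient $g_{n,0}/(g_{n,0}+g_{n,1})$ of the Urysohn functions for the open sets $U_{n,0}=\{y\in C:f_n(y)>-2^n/3^{n+1}\}\cup(\mathbb R\setminus C)$ and $U_{n,1}=\{y\in C:f_n(y)<2^n/3^{n+1}\}\cup(\mathbb R\setminus C)$. So $x\in\dom(h_n)$ requires the denominator to be positive at $x$, i.e.\ $x\in U_{n,0}\cup U_{n,1}$. The witnesses for $h_n$ then depend on how deep $x$ lies in these sets, since that is what bounds the denominator from below, and not on a tolerance for $f_n$.
\begin{itemize}
\item Establishing this membership needs $f_n$ near $x$ to precision of order $2^n/3^{n+1}$. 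Your hypothesis with tolerance $\varepsilon(3/2)^n$ does not provide this for large $n$.
\item More generally, no single-level invariant of the form ``$f_n$ has a domain witness at precision $\tau_n$'' is inductive. A witness for $f_{n+1}=f_n-h_n$ at precision of order $2^{n+1}/3^{n+2}$ requires a witness for $f_n$ at a strictly finer precision than such an invariant supplies.
\end{itemize}

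\emph{The missing idea.} Induct on the $\varepsilon$-free $\Sigma^0_2$ statement ``$x\in U_{m,0}\cup U_{m,1}$ for all $m<n$''. The bounded quantifier is harmless under $\mathsf{B\Sigma}^0_2$. Then \emph{derive} the $\Pi^0_3$ domain conditions from this hypothesis, rather than propagating them through the induction:
\begin{itemize}
\item The functions $g_{m,i}$ from Proposition~\ref{pro:ContinuousOpen}(b) have domain $\mathbb R$ for every code. So the hypothesis yields $x\in\dom(h_m)$ for all $m<n$ outright, not by induction.
\item Using $\mathsf{B\Sigma}^0_2$ as in Lemma~\ref{lem:inf-sum-fct}(a), you get $x\in\dom(f_n)$ for $f_n=f-\sum_{m<n}h_m$. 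Note that this goes back to $f$ itself rather than to $f_{n-1}$.
\item For $x\in C$, you then get $x\in U_{n,0}\cup U_{n,1}$, because $f_n(x)>-2^n/3^{n+1}$ or $f_n(x)<2^n/3^{n+1}$. For $x\notin C$ this membership is immediate.
\end{itemize}
The induction is about membership in the open sets $U_{n,i}$, which is $\Sigma^0_2$. It is not about the closed sets $C_0,C_1$ you mention, whose membership is $\Pi^0_2$.

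Once all domains are secured, the bound $|f_n(x)|\le(2/3)^n$ that you need in Step~3 is a $\Pi^0_2$ statement. It follows by a separate induction, which is also available in $\mathsf{I\Sigma}^0_2$.
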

\begin{proof}
Starting with $f_0=f$, we construct continuous~$f_n\colon C\to[-(2/3)^n,(2/3)^n]$ by recursion. In the step, use the previous lemma (and rescaling) to get a continuous function $h_n\colon\mathbb R\to[-2^n/3^{n+1},2^n/3^{n+1}]$ with
\begin{equation}\label{eq:tietze}
    \left|f_n(x)-h_n(x)\right|\leq\left(\frac23\right)^{n+1}\quad\text{for all }x\in C.
\end{equation}
We then set $f_{n+1}=f_n-h_n$.

In the theory $\mathsf{RCA}_0+\mathsf{I\Sigma}^0_2$, we can accommodate this construction as a strong effective recursion in the sense of~\cite{uftring-etr} (see also~\cite{westrick-etr,freund-etr}). To see this, we need to show that $f_{n+1}$ is uniformly $\Delta^0_1$-definable from~$f_n$. We successively get $\Delta^0_1$-definitions of
\begin{enumerate}[label=(\alph*)]
    \item the sets
    \begin{align*}
    U_{n,0}&=\left\{x\in C:f_n(x)>-\frac{2^n}{3^{n+1}}\right\}\cup\mathbb R\backslash C,\\
    U_{n,1}&=\left\{x\in C:f_n(x)<\frac{2^n}{3^{n+1}}\right\}\cup\mathbb R\backslash C,
    \end{align*}
    constructed as in the proof of Lemma~\ref{lem:ContinuousOpen-plus},
    \item the continuous functions $g_{n,i}\colon\mathbb R\to[0,1]$ with $U_{n,i}=g_{n,i}^{-1}((0,1])$ from the proof of Proposition \ref{pro:ContinuousOpen},
    \item the functions $g_n=g_{n,0}/(g_{n,0}+g_{n,1})$ and $h_n=(2/3)^{n+1}\cdot(g_n-1/2)$ and $f_{n+1}=f_n-h_n$, which arise from Lemmas~\ref{lem:Urysohn} and \ref{lem:Urysohn-plus} with $C_i=\mathbb R\backslash U_{n,i}$.
\end{enumerate}
Let us note that steps~(b) and~(c) always produce some continuous function~$f_{n+1}$. While its domain is not a priori guaranteed to be all of~$\mathbb R$, the rational $f_{n+1}(r)_i$ is defined for all~$r\in\mathbb Q$ and~$i\in\mathbb N$, which is important for the $\Delta^0_1$-definition in~(a).

For fixed $x\in C$, we show $x\in U_{n,0}\cup U_{n,1}$ by induction on~$n\in\mathbb N$ (as in the proof of Theorem~II.7.5 from~\cite{simpson09}). If this holds for all~$m<n$, then~$x$ lies in the domain of the~$g_m$ and hence of the~$h_m$, so that it also lies in the domain of~$f_n=f-\sum_{m<n}h_m$. As in the proof of Lemma~\ref{lem:ContinuousOpen-plus}, one can now check
\begin{equation*}
    x\in U_{n,0}\quad\Leftrightarrow\quad f_n(x)>-\frac{2^n}{3^{n+1}}\text{ or }x\in\mathbb R\backslash C.
\end{equation*}
An analogous equivalence holds for~$U_{n,1}$, which completes the induction step.

The previous paragraph establishes that all the $h_n$ have domain~$\mathbb R$. By Lemma~\ref{lem:inf-sum-fct}, we obtain a continuous function
\begin{equation*}
    h=\sum_{n=0}^\infty h_n\colon\mathbb R\to[-1,1].
\end{equation*}
Given any $q\in\mathbb Q$ and natural numbers~$i<n$, we recall $f_i=f-\sum_{j<i}h_j$ to get
\begin{equation*}
    \left|f(q)_n-h(q)_n\right|\leq\left|f_i(q)_n-h_i(q)_n\right|+\left(\frac23\right)^{i+1}.
\end{equation*}
Since~(\ref{eq:tietze}) holds by construction, all~$x\in C$ and $i\in\mathbb N$ validate
\begin{equation*}
    |f(x)-h(x)|\leq\left|f_i(x)-h_i(x)\right|+\left(\frac23\right)^{i+1}\leq2\cdot\left(\frac23\right)^{i+1}.
\end{equation*}
By letting~$i$ grow, we learn that~$f$ and~$h$ coincide on~$C$, as desired.
\end{proof}

To conclude this section, we study the Baire category theorem, by which we mean the statement that $\bigcap_{i\in\mathbb N}U_i$ is dense for any family of dense open sets $U_i\subseteq\mathbb R$. As usual, a set is dense if it intersects every non-empty open set.

\begin{proposition}\label{prop:Baire-category}
    The Baire category theorem can be derived in $\mathsf{RCA}_0$ extended by either of the following:
    \begin{enumerate}[label=(\roman*)]
    \item The principle of $\Sigma^0_2$-induction along $\mathbb N$.
    \item The $\Pi^0_1$-genericity principle $\Pi^0_1\mathsf G$ (\cite{HSS-ATM}; explained in the proof).
    \end{enumerate}
\end{proposition}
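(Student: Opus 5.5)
The plan is a nested-interval construction, run in two different ways for the two base theories. We are given a family $(U_i)_{i\in\mathbb N}$ of dense open sets and a non-empty open set $V$; by the lemma that decomposes open sets into basic pieces, we may take $V$ to be a rational interval $(a,b)$. The goal is a real $x\in(a,b)$ with $x\in U_i$ for all~$i$.

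Throughout we use the following reading of membership. By Definition~\ref{def:open}, $x\in U_i$ means that some $(a',r)$ has $(n,a',r)\in U_i$ for all $n\geq N$ and $x\in B_r(a')$. So a tuple $(a',r)$ that \emph{stays} in $U_i$ from some stage onward is a $\Sigma^0_2$-event, and each guess $(a',r)$ can be refuted at a finite stage.

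\textbf{Case (i): $\mathsf{I\Sigma}^0_2$.} We build a finite-injury style approximation.

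At stage $s$ we maintain a finite sequence of nested closed rational intervals
\[
[a,b]\supseteq J_0\supseteq J_1\supseteq\dots\supseteq J_{k}
\]
with lengths at most $2^{-i}$. Here $J_i$ sits inside a ball $B_r(a')$ chosen for $U_i$, where $(a',r)$ is the least guess (in some fixed enumeration of pairs and starting indices $N$) that has not yet been refuted by stage~$s$ and that meets the interior of $J_{i-1}$. Refutation means some $n$ with $N\leq n\leq s$ and $(n,a',r)\notin U_i$.

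The $s$-th term $x_s$ is the left endpoint of the deepest interval at stage $s$, taking depth $k=s$.

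Density of $U_i$ gives, for any open interval $I$, some point of $I$ in $U_i$, and hence a guess $(a',r,N)$ that is never refuted and meets~$I$. This is the key density usage. It requires that $I$ is a non-empty open interval, which holds because we keep the interiors of the $J_i$ non-empty.

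The main step is to prove by $\Sigma^0_2$-induction on~$i$ that the choice of $J_i$ eventually stabilizes. The statement ``there is a stage after which $J_0,\dots,J_i$ never change'' is $\Sigma^0_2$. Given stabilization up to $i-1$, the least never-refuted guess compatible with $J_{i-1}$ exists; note that least-element search over $\Pi^0_1$ properties is fine. Each smaller guess is refuted at some finite stage, and bounding these finitely many refutation stages uses $\mathsf B\Sigma^0_2$, which follows from $\mathsf{I\Sigma}^0_2$.

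Stabilization of $J_0,\dots,J_i$ gives Cauchyness, since $|x_s-x_t|\leq 2^{-i}$ for late $s,t$. It also gives $x\in\bigcap_i U_i\cap V$, once we choose $J_i$ strictly inside $B_r(a')$ so that the limit lies in the open ball.

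\textbf{Case (ii): $\Pi^0_1\mathsf G$.} Here I would recall the principle from~\cite{HSS-ATM}: for any uniformly $\Pi^0_1$ family of dense sets of strings, there is a set $G$ meeting each of them.

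Consider strings $\sigma$ coding finite sequences of guesses (rational intervals together with starting indices $N$, nested, and inside $(a,b)$). Let $D_i$ be the set of $\sigma$ whose $i$-th guess $(a',r,N)$ satisfies $(n,a',r)\in U_i$ for all $n\geq N$ and contains the later intervals. This is $\Pi^0_1$ uniformly in~$i$, and it is dense by density of $U_i$ applied to the last interval of any $\sigma$.

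A generic $G$ meeting every $D_i$ yields a nested sequence of rational intervals with length at most $2^{-i}$ at level $i$ (which we can force in the definition). Their left endpoints form a fast Cauchy sequence, so they give a real. The real lies in each $U_i$ and in $V$.

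The main obstacle is case~(i): formalizing the stabilization argument correctly within $\mathsf{I\Sigma}^0_2$. We must keep the interval interiors non-empty so that density applies, and we must check that the induction hypothesis really is $\Sigma^0_2$.
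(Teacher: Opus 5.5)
Your case~(i) is essentially the paper's argument. At each stage you build nested balls from the least not-yet-refuted guess $(N,b,s)$, and $\Sigma^0_2$-induction on the level shows that every level eventually stabilizes. Three small points:
\begin{itemize}
\item $\mathsf B\Sigma^0_1$ already suffices to bound the refutation stages, since refuting a fixed guess is $\Sigma^0_1$.
\item You should take $J_0$ strictly inside $(a,b)$, since otherwise the limit may be an endpoint of $V$.
\item $J_i$ must depend only on $J_{i-1}$ and the chosen guess, as the paper stipulates, so that stabilization propagates from one level to the next.
\end{itemize}

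The genuine gap is in case~(ii), at the density of $D_i$.
\begin{itemize}
\item \textbf{Density fails as $D_i$ is defined.} You require that the $i$-th guess coded by $\sigma$ be correct for $U_i$. But $2^{<\omega}$ contains strings $\sigma$ that already code at least $i+1$ guesses, the $i$-th of which, say $(a',r,N)$, is wrong, i.e.\ $(n,a',r)\notin U_i$ for some $n\geq N$. Every extension of such a $\sigma$ has the same $i$-th guess, so no extension lies in $D_i$. Your density argument appends a good guess after the last interval of $\sigma$, which does not place it in position~$i$.
\item \textbf{Invalid codes are ignored.} $\Pi^0_1\mathsf G$ demands density at every $\sigma\in 2^{<\omega}$. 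So binary strings that do not decode to a nested sequence of intervals inside $(a,b)$ must also have extensions in each $D_i$. Restricting attention to ``valid'' codes silently changes the principle you are applying.
\end{itemize}

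Both defects can be repaired. Allow the witness for $U_i$ to occur at any position, and use a total decoding that ignores incompatible guesses.

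The paper avoids both issues at once.
\begin{itemize}
\item Each $\sigma\in 2^{<\omega}$ is assigned its dyadic interval $[\sigma]$, so nesting along $\sqsubseteq$ is automatic.
\item The guess is read off the length: $\sigma\in\mathcal D_i$ iff $|\sigma|$ codes some $(N,b,s)$ with $[\sigma]\subseteq B_s(b)$ and $(n,b,s)\in U_i$ for all $n\geq N$.
\item Since the length can be increased freely, every string has an extension in $\mathcal D_i$.
\item A generic $G$ then directly gives the real $\bigl(\sum_{j<k}\chi_G(j)2^{-j-1}\bigr)_k$, which lies in every $U_i$.
\end{itemize}
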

\begin{proof}
    We begin with the proof based on $\Sigma^0_2$-induction. Given rational numbers $a$ and $r>0$, we want to find a real $x\in B_r(a)$ that lies in each of the~$U_i$. To save indices, we also write $B(r,a)$ at the place of $B_r(a)$. For each $k\in\mathbb N$, we use recursion on $i\in\mathbb N$ to define rational numbers $a_i^k$ and $r_i^k>0$. In the base case, we set $a_0^k=a$ and~$r_0^k=r$. In the recursion step, let $t_i^k:=(N,b,s)\in\mathbb N\times\mathbb Q\times\mathbb Q_{>0}$ have minimal code such that
    \begin{itemize}[label=--]
        \item we have $(n,b,s)\in U_i$ for $N\leq n<k$,
        \item the intersection of $B(r_i^k/2,a_i^k)$ with $B(s,b)$ is non-empty.
    \end{itemize}
    Note that this is possible because the first condition is trivially satisfied when we have $N\geq k$. We now pick $a_{i+1}^k$ and $r_{i+1}^k$ with
    \begin{equation*}
        B\big(r_{i+1}^k,a_{i+1}^k\big)\subseteq B\big(r_i^k/2,a_i^k\big)\cap B\big(s,b\big).       
    \end{equation*}
    We assume that $a_{i+1}^k$ and $r_{i+1}^k$ depend only on $r_i^k,a_i^k$ and $s,b$ (but not on~$k$).
    By $\Sigma^0_2$-induction, we show that each $i\in\mathbb N$ admits a~$K\in\mathbb N$ with $a_i^k=a_i^K$ and $r_i^k=r_i^K$ for all~$k\geq K$. If this holds at~$i$, the map $k\mapsto t_i^k$ is non-descreasing for $k\geq K$. To conclude that the map stabilizes -- which yields the induction step~--, it suffices to note that it is bounded. Indeed, since $U_i$ is dense, there are $N,b,s$ such that we have $B(r_i^K/2,a_i^K)\cap B(s,b)\neq\emptyset$ and $(n,b,s)\in U_i$ for all~$n\geq N$, so that $k\geq K$ entails $t_i^k\leq(N,b,s)$.

    Let us deduce that the sequence of rationals $(a_k^k)$ is Cauchy and hence a real. We have $r_{i+1}^k\leq r_i^k/2$ and thus $r_i^k\leq r/2^i$. Given $\varepsilon>0$, pick an $i\in\mathbb N$ with $r/2^i\leq\varepsilon/2$. The above yields a $K\geq i$ so that $k\geq K$ entails $a_i^k=a_i^K$ and hence
    \begin{equation*}
        a^k_k\in B\big(r_k^k,a_k^k\big)\subseteq B\big(r^k_{k-1},a^k_{k-1}\big)\subseteq\ldots\subseteq B\big(r_i^k,a_i^k\big)\subseteq B\big(\varepsilon/2,a_i^K\big).
    \end{equation*}
    So we get $|a_k^k-a_l^l|<\varepsilon$ for $k,l\geq K$. Since $k>0$ entails
    \begin{equation*}
        a^k_k\in B(r_1^k,a_1^k)\subseteq B(r_0^k/2,a_0^k)=B(r/2,a),
    \end{equation*}
    we have $x:=(a^k_k)\in B_r(a)$. It remains to show that we have $x\in U_i$ for each~$i\in\mathbb N$. Again by the above, there is a $K\geq i+2$ with $t_i^k=t_i^K$ for all $k\geq K$. If we write $t_i^K=(N,b,s)$, we thus have $(n,b,s)\in U_i$ for all~$n\geq N$, which yields
    \begin{equation*}
        B\big(r_{i+1}^k,a_{i+1}^k\big)\subseteq B\big(s,b\big)\subseteq U_i\quad\text{for all }k\geq K.
    \end{equation*}
    Now $k\geq K$ also entails
    \begin{equation*}
        a_k^k\in B(r_{i+2}^k,a_{i+2}^k)\subseteq B(r_{i+1}^k/2,a_{i+1}^k).
    \end{equation*}
    Possibly after increasing~$K$, we may assume that the last ball equals $B(r_{i+1}^K/2,a_{i+1}^K)$ for all~$k\geq K$. So the real $(a^k_k)$ lies in $B(r_{i+1}^K,a_{i+1}^K)$ and hence in~$U_i$.

    We now move on to $\Pi^0_1$-genericity. Consider the tree $2^{<\omega}$ that consists of all finite sequences~$\langle\sigma_0,\ldots,\sigma_{|\sigma|-1}\rangle$ with $\sigma_i\in\{0,1\}$ (where we write $|\sigma|$ for the length of~$\sigma$). Given such a sequence and~$i\leq|\sigma|$, we put~$\sigma[i]=\langle\sigma_0,\ldots,\sigma_{i-1}\rangle$. We write $\sigma\sqsubseteq\tau$ and say that $\tau$ extends $\sigma$ if we have $\sigma=\tau[i]$ for some $i\leq|\tau|$. The $\Pi^0_1$-genericity principle $\Pi^0_1\mathsf G$ (studied in~\cite{HSS-ATM}) concerns uniformly $\Pi^0_1$-subcollections of $2^{<\omega}$. Any such collection is determined by a set $X\subseteq\mathbb N$, from which it is obtained as
    \begin{equation*}
        \mathcal D_i=\left\{\sigma\in 2^{<\omega}:(\sigma,i,n)\in X\text{ for all }n\in\mathbb N\right\}.
    \end{equation*}
    Assuming that each $\mathcal D_i$ is dense (which means that any $\sigma\in 2^{<\omega}$ admits a $\tau\in\mathcal D_i$ with $\sigma\sqsubseteq\tau$), the principle $\Pi^0_1\mathsf G$ asserts the existence of a set $G$ that is generic for the collection, i.e., such that each $i\in\mathbb N$ admits a $k\in\mathbb N$ with $G[k]\in D_i$ (where we have $G[k]=\langle\chi_G(0),\ldots,\chi_G(k-1)\rangle\in 2^{<\omega}$ with $\chi_G(j)=1$ precisely for $j\in G$).

    Again, we want to find a real~$x$ that lies in some given ball $B_r(a)$ as well as in each~$U_i$. Purely for convenience, we assume $[0,1]\subseteq B_r(a)$. For each~$\sigma\in 2^{<\omega}$, we have an interval
\begin{equation*}
    [\sigma]:=\left[p,p+\frac1{2^{|\sigma|}}\right]\subseteq[0,1]\quad\text{with}\quad p=\sum_{i<|\sigma|}\frac{\sigma_i}{2^{i+1}}.
\end{equation*}
    Let $X\subseteq\mathbb N$ consist of all tuples $(\sigma,i,n)$ with $\sigma\in 2^{<\omega}$ such that $|\sigma|$ codes a tuple $(N,b,s)$ with $[\sigma]\subseteq B_s(b)$ and $(n,b,s)\in U_i$ in case $n\geq N$. For the $\Pi^0_1$-collection determined as above, this yields
    \begin{equation*}
        \sigma\in\mathcal D_i\,\,\Leftrightarrow\,\,\text{$|\sigma|$ codes $(N,b,s)$ with $[\sigma]\subseteq B_s(b)$ and $(n,b,s)\in U_i$ for all $n\geq N$.}
    \end{equation*}
    To show that $\mathcal D_i$ is dense, consider an arbitrary~$\sigma\in 2^{<\omega}$. Given that $U_i$ is dense (as a subset of $\mathbb R$ rather than $2^{<\omega}$), there is a real $x\in[\sigma]\cap U_i$. We thus have a tuple $(N,b,s)$ with $x\in B_s(b)$ and hence $[\sigma]\cap B_s(b)\neq\emptyset$ such that $(n,b,s)\in U_i$ holds for all $n\geq N$. Pick a $\tau\in 2^{<\omega}$ with $\sigma\sqsubseteq\tau$ (thus $[\tau]\subseteq[\sigma]$) and $[\tau]\subseteq B_s(b)$. Then find an $N'\geq N$ and $\tau'\in 2^{<\omega}$ with $\tau\sqsubseteq\tau'$ such that $|\tau'|$ codes $(N',b,s)$. The sequence $\tau'$ lies in~$\mathcal D_i$, as needed to see that the latter is dense.

    By $\Pi^0_1\mathsf G$, pick a set $G\subseteq\mathbb N$ that is generic for the given collection of~$\mathcal D_i$. Then
    \begin{equation*}
        x=(x_k)\in[0,1]\subseteq B_r(a)\quad\text{with}\quad x_k=\sum_{j<k}\frac{\chi_G(j)}{2^{j+1}}
    \end{equation*}
    defines a real that lies in each of the $U_i$. To see this, take a $k\in\mathbb N$ with $G[k]\in\mathcal D_i$. Writing $(N,b,s)$ for the tuple that is coded by $k=|G[k]|$, we get
    \begin{equation*}
        x\in\big[G[k]\big]\subseteq B_s(b)\subseteq U_i,
    \end{equation*}
    which is as desired.
    \end{proof}

    Let us recall that a formula is $\mathsf r\Pi^1_2$ if it has the form
    \begin{equation*}
        \forall X\subseteq\mathbb N\big(\varphi(X)\to\exists Y\subseteq\mathbb N\,\theta(X,Y)\big)
    \end{equation*}
    with arithmetical~$\varphi$ and $\theta\in\Sigma^0_3$.

    \begin{corollary}\label{cor:BCT}
        (a) The Baire category theorem is $\mathsf r\Pi^1_2$-conservative over $\mathsf{RCA}_0$. In particular, it does not entail $\Sigma^0_2$-induction (and not even the pigeonhole principle).

        (b) The Baire category theorem does not imply $\Pi^0_1\mathsf G$ over~$\mathsf{RCA}_0$.
    \end{corollary}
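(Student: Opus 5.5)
Both parts follow from Proposition~\ref{prop:Baire-category} together with known conservativity and separation results for the two principles used there. We transfer these results to the Baire category theorem rather than building new models from scratch.

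For part~(a), we use item~(ii) of Proposition~\ref{prop:Baire-category}: over $\mathsf{RCA}_0$, the principle $\Pi^0_1\mathsf G$ implies the Baire category theorem. So it suffices to know that $\mathsf{RCA}_0+\Pi^0_1\mathsf G$ is $\mathsf r\Pi^1_2$-conservative over $\mathsf{RCA}_0$. We would cite this from the literature on $\Pi^0_1$-genericity (Hirschfeldt, Shore and Slaman~\cite{HSS-ATM}, where $\Pi^0_1\mathsf G$ is shown to be $\Pi^1_1$-conservative and in fact $\mathsf r\Pi^1_2$-conservative, in the style of the conservativity of $\mathsf{COH}$ and related genericity principles).

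If only $\Pi^1_1$-conservativity were available, we would prove the $\mathsf r\Pi^1_2$ version by the usual method. Start with a countable model $\mathcal M\vDash\mathsf{RCA}_0$ and a set $X$ in~$\mathcal M$ with $\varphi(X)$. Iterate extensions by sufficiently generic sets for the $\Pi^0_1$-dense families. Each step is taken over a topped model, so the first-order part is preserved and $\mathsf I\Sigma^0_1$ remains valid. The $\Sigma^0_3$-formula $\theta$ then needs no new witness beyond those already present, since $\theta$ is essentially first-order once $Y$ is given. This bookkeeping is the only delicate point, and we expect to rely on the cited theorem for it.

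The remaining claims of~(a) are then immediate. $\mathsf{IPP}$ is equivalent to $\mathsf B\Sigma^0_2$, which is a $\Pi^1_1$-statement, hence in particular $\mathsf r\Pi^1_2$; the same holds for $\mathsf I\Sigma^0_2$. Neither is provable in $\mathsf{RCA}_0$. So by conservativity the Baire category theorem implies neither of them, and since $\mathsf I\Sigma^0_2$ implies $\mathsf B\Sigma^0_2$, failing to prove $\mathsf{IPP}$ is the stronger claim.

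For part~(b), we use item~(i) of Proposition~\ref{prop:Baire-category}: $\mathsf I\Sigma^0_2$ implies the Baire category theorem over $\mathsf{RCA}_0$. It therefore suffices to find a model of $\mathsf{RCA}_0+\mathsf I\Sigma^0_2$ in which $\Pi^0_1\mathsf G$ fails. Take the $\omega$-model $\mathrm{REC}$ of computable sets, which satisfies full induction and hence $\mathsf I\Sigma^0_2$. In this model $\Pi^0_1\mathsf G$ fails, because it has computable instances with no computable solution: the $\Pi^0_1$-dense family of strings $\sigma$ that disagree with the $e$-th total computable set on some argument below~$|\sigma|$ (dense precisely because only total functions count) admits no computable generic. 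Equivalently, $\Pi^0_1\mathsf G$ is known to imply principles such as $\mathsf{AMT}$ that fail in $\mathrm{REC}$~\cite{HSS-ATM}. The main obstacle is ensuring that this density claim is correct within the formal $\Pi^0_1$ definition; if not, we would simply cite the non-computable-instance result.
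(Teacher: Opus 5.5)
Your proposal is correct and has the same skeleton as the paper's proof. In (a) you combine $\Pi^0_1\mathsf G\Rightarrow$ Baire (Proposition~\ref{prop:Baire-category}(ii)) with the $\mathsf r\Pi^1_2$-conservativity of $\Pi^0_1\mathsf G$ from \cite{HSS-ATM}, which is exactly what the paper cites. In (b) you use $\mathsf I\Sigma^0_2\Rightarrow$ Baire together with the $\omega$-model of computable sets.

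The only genuine difference is how you show that $\Pi^0_1\mathsf G$ fails in that model. The paper cites the chain $\Pi^0_1\mathsf G\Rightarrow\mathsf{AMT}\Rightarrow$ omitting types $\Rightarrow$ existence of hyperimmune sets. Your direct diagonalization is self-contained and works with the formal definition. Put $(\sigma,e,s)\in X$ unless $\varphi_{e,s}(n)$ converges with value $\sigma_n$ for all $n<|\sigma|$. Then each $\mathcal D_e$ is $\Pi^0_1$ uniformly in~$e$. It is also dense for every~$e$: if $\varphi_e$ is partial, all sufficiently long strings lie in $\mathcal D_e$; otherwise some one-bit extension of $\sigma$ disagrees with $\varphi_e$. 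Finally, a computable $G$ with $\chi_G=\varphi_e$ never has $G[k]\in\mathcal D_e$. Your route buys an elementary, citation-free argument, whereas the paper's route yields the stronger fact that $\Pi^0_1\mathsf G$ produces hyperimmune sets.

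Your fallback sketch for $\mathsf r\Pi^1_2$-conservativity misplaces the difficulty. What must be ensured is that no new set $Y$ in the extension satisfies the $\Sigma^0_3$ formula $\theta(X,Y)$. Also, $\Sigma^0_1$-induction in the extension comes from genericity, not from toppedness. Since you ultimately rely on the citation, as the paper does, this does not affect the proof.
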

    \begin{proof}
        (a) It is known that $\Pi^0_1\mathsf G$ (but not $\mathsf{IPP}$) is $\mathsf r\Pi^1_2$-conservative over $\mathsf{RCA}_0$ (see the paragraph after the proof of Theorem~4.3 in~\cite{HSS-ATM}; note however that, by the cited theorem, $\Pi^0_1\mathsf G$ and $\mathsf{IPP}$ together entail $\Sigma^0_2$-induction).

        (b) Since the Baire category theorem follows from $\Sigma^0_2$-induction, it is satisfied in the $\omega$-model of computable sets. But this model does not validate $\Pi^0_1\mathsf G$. Indeed, the latter implies the atomic model theorem and hence the omitting types theorem, which yields hyperimmune and in particular non-computable sets (as shown in Sections~4 and~5 of~\cite{HSS-ATM}).
    \end{proof}

    Since the Baire category theorem entails that the reals are uncountable (consider the dense open sets $U_i=\mathbb R\backslash\{x_i\}$ for a countable family of reals~$x_i$), the previous corollary essentially implies Proposition~\ref{prop:cantor_conservative} (modulo the inclusion of $\mathsf{WKL}$). We have decided to keep our earlier proof of Proposition~\ref{prop:cantor_conservative} in this paper, because that proof involves an unusual combination of hyperimmunity and metastability in the sense of proof mining, which may be fruitful for other applications.

\section{Two faces of the strong cohesive principle}\label{sect:cohesive}

In this section, we present the (strong) cohesive principle and some of its consequences and equivalent formulations, mostly in the form of a literature review. We emphasize that the strong cohesive principle has two `faces': On the one hand, it is equivalent to combinatorial facts related to the ascending/descending sequence principle. On the other hand, there is an equivalence with a $\Delta^0_2$-version of weak K\H{o}nig's lemma. One can argue that the latter has a more analytical flavour. Indeed, we will see that it is equivalent to the Heine-Borel theorem for our representation of open sets of reals.

To motivate the cohesive principle, we first recall Ramsey's theorem for pairs and two colours (where $[X]^n$ denotes the collection of~$n$-element subsets of~$X$):
\begin{equation*}\tag{$\mathsf{RT}^2_2$}
        \parbox[t]{.8\textwidth}{Any $c\colon[\mathbb N]^2\to\{0,1\}$ is constant on~$[H]^2$ for some infinite $H\subseteq\mathbb N$.}
\end{equation*}
We recall the following observation (made in~\cite{hirst-thesis}):

\begin{lemma}[$\mathsf{RCA}_0$]\label{lem:rt-ipp}
The pigeonhole principle $\mathsf{IPP}$ is a consequence of~$\mathsf{RT}^2_2$.
\end{lemma}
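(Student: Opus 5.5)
We argue directly inside $\mathsf{RCA}_0+\mathsf{RT}^2_2$. Take any $N\in\mathbb N$ and any colouring $c\colon\mathbb N\to\{0,\ldots,N\}$. Our aim is an $n\leq N$ for which $\{i:c(i)=n\}$ is infinite. The idea is to recast this as an instance of Ramsey's theorem for pairs with two colours. The recursive induction hidden in the obvious approach is avoided by letting the pair colouring record a \emph{comparison} between the colours at two points, rather than the colours themselves.

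First, a naive choice is to colour a pair $\{i<j\}$ by $0$ exactly when $c(i)=c(j)$. Suppose $H$ is infinite and homogeneous for this colouring. If the homogeneous colour is $0$, then every element of $H$ has the same $c$-colour, say $n=c(\min H)$. Hence $\{i:c(i)=n\}\supseteq H$ is infinite, and we are done. If the homogeneous colour is $1$, then $c$ is injective on $H$. But $H$ is infinite, so we can take its first $N+2$ elements as a finite set, which exists by $\Delta^0_1$-comprehension. The finite pigeonhole principle, which is available in $\mathsf{RCA}_0$ by $\Sigma^0_1$-induction, gives two elements of this set with the same colour. This contradicts injectivity, so the second case cannot occur.

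It remains to check the formalization in $\mathsf{RCA}_0$. The pair colouring $d(\{i,j\})=0\Leftrightarrow c(i)=c(j)$ is $\Delta^0_1$ in $c$, so it exists by recursive comprehension. The homogeneous set $H$ is then supplied by $\mathsf{RT}^2_2$. Enumerating the first $N+2$ elements of $H$ is a primitive recursion relative to $H$. Together with the finite pigeonhole principle, these are all the ingredients, so no step should be a real obstacle. The only point needing care is that the finite pigeonhole principle is applied to a finite set whose size $N+2$ is a parameter. This is the standard $\mathsf{I}\Sigma^0_1$ (indeed bounded induction) argument.
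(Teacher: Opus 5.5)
Your proposal is correct and is essentially the paper's proof: you colour a pair by whether $c(i)=c(j)$, take a homogeneous set from $\mathsf{RT}^2_2$, and rule out the ``distinct colours'' case because $c$ would then be injective on an infinite set, which contradicts the finite pigeonhole principle. The only difference is that you swap which value of the pair colouring means ``equal'', and you spell out the formalization details in $\mathsf{RCA}_0$ that the paper leaves implicit.
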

\begin{proof}
    Given $c\colon\mathbb N\to\{0,\ldots,n\}$, define $c'\colon[\mathbb N]^2\to\{0,1\}$ by
    \begin{equation*}
        c'(i,j)=1\quad\Leftrightarrow\quad c(i)=c(j).
    \end{equation*}
    Here we write $c'(i,j)$ at the place of $c'(\{i,j\})$ with $i<j$. By $\mathsf{RT}^2_2$, take an infinite $H\subseteq\mathbb N$ that is homogeneous, i.e., such that $c'$ is constant on~$[H]^2$. The constant value cannot be zero, since this would make~$c\colon H\to\{0,\ldots,n\}$ injective. So $c$ is constant on~$H$, and the value of~$c$ on~$H$ occurs infinitely often.
\end{proof}

By a breakthrough result of Patey and Yokoyama~\cite{patey-yokoyama} and its recent strengthening by these authors and Le~Hou\'erou~\cite{houerou-patey-yokoyama-Pi04}, $\mathsf{RT}^2_2$ is $\Pi^0_4$-conservative over $\mathsf{RCA}_0+\mathsf{IPP}$ and hence $\Pi^0_3$-conservative over~$\mathsf{RCA}_0$. In particular, as already shown by Seetapun (see the paper~\cite{seetapun-slaman} with Slaman), $\mathsf{RT}^2_2$ is strictly weaker than Ramsey's theorem for colourings of sets with three rather than two elements, which is equivalent to arithmetical comprehension (see Theorem~III.7.6 of~\cite{simpson09}). 

It had previously been shown by Cholack, Jockusch and Slaman~\cite{CJS-Ramsey} that $\mathsf{RT}^2_2$ is $\Pi^1_1$-conservative over~$\mathsf{RCA}_0+\mathsf I\Sigma^0_2$. The proof singles out colourings $c\colon[\mathbb N]^2\to\{0,1\}$ that are stable, i.e., for which each~$i$ admits a~$J>i$ with $c(i,j)=c(i,J)$ for all~\mbox{$j\geq J$}. Of course, $\mathsf{RT}^2_2$ follows from its restriction to stable colourings together with the statement that every colouring restricts to a stable one. This latter statement is called the cohesive part of $\mathsf{RT}^2_2$ (e.g.~in~\cite{hirschfeldt-shore}). It is also known that $\mathsf{RT}^2_2$ implies its cohesive part (as discussed below).

We will be particularly interested in a consequence of~$\mathsf{RT}^2_2$ that is known as the ascending/descending sequence principle:
\begin{equation*}\tag{$\mathsf{ADS}$}
        \parbox[t]{.8\textwidth}{In any infinite linear order, there is a strictly monotone sequence.}
\end{equation*}
Let us record the following easy observation.

\begin{lemma}[$\mathsf{RCA}_0$]\label{lem:rt-ads}
 The principle $\mathsf{ADS}$ is a consequence of~$\mathsf{RT}^2_2$.
\end{lemma}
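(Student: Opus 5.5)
Given an infinite linear order $(L,\prec)$ with $L\subseteq\mathbb N$ infinite, I would define a colouring of pairs and apply $\mathsf{RT}^2_2$ directly. Enumerate $L$ in increasing order of natural numbers as $\ell_0<\ell_1<\ldots$; this enumeration exists in $\mathsf{RCA}_0$ because $L$ is an infinite set. Define $c\colon[\mathbb N]^2\to\{0,1\}$ by setting, for $i<j$,
\begin{equation*}
    c(i,j)=1\quad\Leftrightarrow\quad \ell_i\prec\ell_j.
\end{equation*}
This colouring is $\Delta^0_1$ in $L$ and $\prec$, so it exists in $\mathsf{RCA}_0$.

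Next I apply $\mathsf{RT}^2_2$ to obtain an infinite homogeneous set $H\subseteq\mathbb N$, and enumerate it increasingly as $h_0<h_1<\ldots$. Put $s_k=\ell_{h_k}$.
\begin{itemize}
    \item If $c$ takes the constant value $1$ on $[H]^2$, then $k<k'$ gives $h_k<h_{k'}$ and hence $s_k\prec s_{k'}$. So $(s_k)$ is strictly ascending.
    \item If the constant value is $0$, then $k<k'$ gives $\ell_{h_k}\not\prec\ell_{h_{k'}}$. By linearity and distinctness (the $\ell_i$ are pairwise distinct), this means $s_{k'}\prec s_k$, so $(s_k)$ is strictly descending.
\end{itemize}

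There is no real obstacle. The only points to check are that the enumerations of $L$ and $H$ are available in $\mathsf{RCA}_0$, which is standard for infinite sets, and that strict monotonicity follows from homogeneity on all pairs rather than just consecutive ones, which the colouring delivers directly.
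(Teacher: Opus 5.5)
Your proposal is correct and matches the paper's proof: colour $\{i,j\}$ with $i<j$ according to whether $i<_L j$, take an $\mathsf{RT}^2_2$-homogeneous set, and enumerate it in increasing order to get a strictly monotone sequence. The only difference is that the paper assumes the underlying set is $\mathbb N$, whereas you first enumerate an arbitrary infinite $L\subseteq\mathbb N$; this is a harmless generalization.
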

\begin{proof}
    Given a linear order~$L=(\mathbb N,<_L)$, define $c\colon[\mathbb N]^2\to\{0,1\}$ by
    \begin{equation*}
        c(i,j)=1\quad\Leftrightarrow\quad i<_L j.
    \end{equation*}
    By $\mathsf{RT}^2_2$, take an infinite $H\subseteq\mathbb N$ such that $c$ is constant on~$[H]^2$. Let $f\colon\mathbb N\to H$ be the strictly increasing enumeration with respect to the usual order on~$\mathbb N$. Then $f$ is strictly monotone with respect to the order~$<_L$ on~$H$.
\end{proof}

Parallel to the case of~$\mathsf{RT}^2_2$, it makes sense to say that an order $(L,<_L)$ with underlying set $L\subseteq\mathbb N$ is stable if each~$i\in L$ admits a~$J\in L$ such that $i<_L j$ is equivalent to~$i<_L J$ for all~$j\geq J$ (the latter in the usual order on~$\mathbb N$). This can also be expressed as follows.

\begin{definition}
    A linear order is stable if each element has at most finitely many predecessors or at most finitely many successors.
\end{definition}

Recall that a linear order is discrete if every element that is non-minimal or non-maximal has an immediate predecessor or successor, respectively. The following will allow us to connect with the setting of Hirschfeldt and Shore~\cite{hirschfeldt-shore}.

\begin{lemma}[$\mathsf{RCA}_0$]\label{lem:stable-discrete}
Any infinite linear order that is stable has an infinite suborder that is discrete.
\end{lemma}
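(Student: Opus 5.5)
The idea is to build the discrete suborder as a set $H\subseteq L$ by a $\Delta^0_1$ recursion along the usual order of~$\mathbb N$. Every element added to~$H$ is designated either \emph{low} or \emph{high}. At each stage, let $\ell_k$ be the $<_L$-largest low element chosen so far and $h_k$ the $<_L$-smallest high element (or $\mp\infty$ if there is none). A new element~$x$ is admitted only if it lies in the open gap $(\ell_k,h_k)_L$. It then becomes low or high, and the gap shrinks to $(x,h_k)_L$ or $(\ell_k,x)_L$, respectively.

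Suppose this runs forever. The low elements, listed in order of admission, form a strictly $<_L$-increasing sequence, and the high elements form a strictly $<_L$-decreasing one. Moreover, every low element lies $<_L$-below every high element. So $H$ has order type $n+m^*$, $\omega+m^*$, $n+\omega^*$ or $\omega+\omega^*$ with $n,m\le\omega$ finite or infinite. Each such order is discrete: a non-maximal low element has as immediate successor the next low element, or the least high element if it is the last low one; the dual holds for high elements. Verifying this is routine once $H$ exists as a set and is infinite.

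The whole difficulty lies in keeping the gap infinite, because stability is precisely what makes this possible. Every element~$y$ is small (finitely many predecessors) or large (finitely many successors), and no element has infinitely many elements on both sides. If $\ell_k$ is small and $h_k$ is large, then all but finitely many elements of~$L$ lie in the gap, since only finitely many are below~$\ell_k$ and only finitely many above~$h_k$. So the invariant to maintain is: low elements are small and high elements are large. The natural rule is to admit the $\mathbb N$-least~$x$ in the gap and designate it low if small and high if large. Smallness, however, is $\Sigma^0_2$, not decidable.

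This is the step I expect to be the main obstacle. I would first try to decide the designation by a finite-injury/limit construction. One guesses ``low'' or ``high'' by comparing $x$ with the $\mathbb N$-next elements of the gap, so that by stability the guess eventually stabilises; stages at which a guess changes discard the later part of~$H$. One then has to argue, by $\Sigma^0_1$-induction only, that each position of~$H$ is injured only finitely often. Failing that, I would argue non-uniformly, using that the final statement only asserts existence.

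If only finitely many elements of~$L$ are large, fix one bound~$b$ beyond which all elements of~$L$ are small. Then the construction with every element designated low is computable, and the gap above the current low element is infinite. Dually, if only finitely many elements are small, designate every element high.

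If there are both infinitely many small and infinitely many large elements, fix one small~$a$ and one large~$b$ with $a<_L b$. Now refine the recursion using the stable-colouring reformulation from before the Definition: ``$x$ lies below infinitely many later elements'' is equivalent to $x<_L J$ for a suitable threshold~$J$. I would try to replace the global smallness test by a test relative to the current gap endpoints, which are fixed finite data at each stage.
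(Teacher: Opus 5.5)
Your first two cases (only finitely many large, or only finitely many small, elements) work. The third case is left open, though, and the repairs you sketch cannot close it.

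Suppose your recursion, with its low/high designations, is $\Delta^0_1$ in~$L$ and finitely many number parameters, and produces an infinite~$H$. Then the low elements in order of admission, or else the high ones, form a strictly $<_L$-monotone sequence that is computable from~$L$. But there are computable stable orders of type $\omega+\omega^*$ with no computable ascending or descending sequence; this is why the stable ascending/descending sequence principle fails in the $\omega$-model of computable sets. So no such recursion can succeed in $\mathsf{RCA}_0$. A finite-injury construction only yields a $\Delta^0_2$ set, which $\mathsf{RCA}_0$ cannot form. A test relative to the current gap endpoints is still computable and runs into the same obstruction.

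Separately, your claim that all the listed order types are discrete is false. In $1+\omega^*$ the low element has no immediate successor, because there is no least high element; dually, in $\omega+1$ the high element has no immediate predecessor.

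The missing idea is that a monotone sequence is only needed when $L$ is not already discrete, and in your third case $L$ is discrete:
\begin{itemize}
\item A non-minimal small element has a finite non-empty set of predecessors, hence an immediate predecessor.
\item A small~$i$ lies below some small~$j$. The $<_L$-least element of the finite set $(i,j]_L$ is then an immediate successor of~$i$.
\item The dual arguments apply to large elements.
\end{itemize}

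The paper's proof does not use stability at all. If $L$ is discrete, take $L$ itself. Otherwise, say, some non-minimal~$i$ has no immediate predecessor, so one can recursively choose $f(n)<_L f(n+1)<_L i$. Thin the range of~$f$ to an infinite subset that is increasing in~$\mathbb N$, so that it exists as a set. This subset is a suborder of type~$\omega$ and hence discrete.

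Stability matters only because it passes to suborders, which is what the subsequent corollary uses. So your diagnosis that ``the whole difficulty lies in keeping the gap infinite'' misplaces the problem.
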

\begin{proof}
Consider a linear order~$L$ that is not discrete itself. By symmetry, we may assume that there is a non-minimal~$i\in L$ without an immediate predecessor. To get a strictly increasing sequence $f\colon\mathbb N\to L$, search for values $f(n)<_Lf(n+1)<_Li$ by recursion. Within~$\mathsf{RCA}_0$, we may not be able to form the image of~$f$, but we find an infinite subset of the image (by the usual proof that any infinite enumerable set has an infinite computable subset). The order on this subset is discrete.
\end{proof}

By an order of type $\omega$ or $\omega^*$, respectively, Hirschfeldt and Shore~\cite{hirschfeldt-shore} mean an infinite and discrete linear order in which all elements have only finitely many predecessors or all elements have only finitely many successors. The previous lemma shows that a strictly monotone sequence yields a suborder of type~$\omega$ or~$\omega^*$. This shows that our formulation of~$\mathsf{ADS}$ above coincides with the formulation in~\cite{hirschfeldt-shore}. Hirschfeldt and Shore also define an order of type $\omega+\omega^*$ as an infinite linear order that is discrete and stable but not of type~$\omega$ or $\omega^*$ (though it could be, e.g., isomorphic to~$\omega+1$). By the previous lemma and its proof, we get the following.

\begin{corollary}[$\mathsf{RCA}_0$]
    Any infinite linear order that is stable has a suborder that has type~$\omega$ or $\omega^*$ or $\omega+\omega^*$.
\end{corollary}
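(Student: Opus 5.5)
The corollary is a case analysis on a given infinite stable linear order~$L$, driven by Lemma~\ref{lem:stable-discrete} and its proof. The cases are: $L$ is not discrete, or $L$ is discrete and does or does not satisfy the Hirschfeldt--Shore definitions of type~$\omega$ or~$\omega^*$. Each case should yield an infinite suborder of one of the three types.

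\emph{$L$ not discrete.} Then, as in the proof of Lemma~\ref{lem:stable-discrete}, we may assume by symmetry that some non-minimal~$i\in L$ has no immediate predecessor. Recursion then gives a strictly increasing sequence $f(0)<_L f(1)<_L\cdots<_L i$. We pass to an infinite computable subset~$H$ of its range by thinning to elements that appear in increasing order in~$\mathbb N$. On~$H$, every element has only finitely many predecessors, namely those enumerated earlier. Every element also has an immediate successor within~$H$, namely the next element of the enumeration. So $H$ is discrete and every element has finitely many predecessors, which makes it of type~$\omega$. The symmetric situation, where some non-maximal element has no immediate successor, yields type~$\omega^*$.

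\emph{$L$ discrete.} If all elements have finitely many predecessors, $L$ has type~$\omega$. If all elements have finitely many successors, $L$ has type~$\omega^*$. Otherwise $L$ is infinite, discrete and stable but of neither type, so by the Hirschfeldt--Shore definition it has type $\omega+\omega^*$ itself. Hence the case distinction is purely logical and needs no additional comprehension.

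The only point requiring care is the first case: we must check within $\mathsf{RCA}_0$ that the thinned subset really is discrete, with immediate successors and predecessors computed inside the suborder, and that it satisfies the ``finitely many predecessors'' clause. The first is immediate because the suborder order on~$H$ agrees with the enumeration order. For the second, the predecessors of an element of~$H$ are bounded by its position in the enumeration, so the set of predecessors is finite and exists by $\Delta^0_1$-comprehension. Stability of~$L$ is not even needed in this case; it is used only to place a discrete~$L$ into type~$\omega+\omega^*$ when it is of neither pure type.
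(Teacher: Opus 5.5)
Your proposal is correct and follows the paper's route. The paper obtains the corollary directly from Lemma~\ref{lem:stable-discrete} and its proof. A non-discrete $L$ yields a strictly monotone sequence whose thinned range has type $\omega$ or $\omega^*$. A discrete $L$ falls under one of the three types by the Hirschfeldt--Shore definitions, with stability used only in the $\omega+\omega^*$ case, exactly as you note.
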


This means that the following coincides with the cohesive ascending/descending sequence principle as formulated by Hirschfeldt and Shore~\cite{hirschfeldt-shore}.
\begin{equation*}\tag{$\mathsf{CADS}$}
        \parbox[t]{.8\textwidth}{Any infinite linear order has an infinite stable suborder.}
\end{equation*}
Correspondingly, the stable ascending/descending sequence principle says that any infinite linear order that is stable contains a strictly monotone sequence (where only stable orders of type~$\omega+\omega^*$ are of interest). This stable part will play no role in the following, but we will use one special case that is computably true:

\begin{lemma}[$\mathsf{RCA}_0$]\label{lem:sads}
    Consider a stable linear order~$L$. If $I,J\subseteq L$ are infinite with $i<_Lj$ for all~$i\in I$ and~$j\in J$, then~$I$ contains a strictly increasing sequence.
\end{lemma}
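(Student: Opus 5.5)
We build the strictly increasing sequence in $I$ greedily by a primitive recursion that $\mathsf{RCA}_0$ can carry out, using $J$ to keep each step an effective search. The key observation is this: an element $i\in I$ lies below all of $J$, and $J$ is infinite. So $i$ has infinitely many successors, and stability of~$L$ forces $i$ to have only finitely many predecessors. Since $I$ is infinite, for any $i\in I$ there must be some $i'\in I$ with $i<_L i'$; otherwise the infinitely many elements of $I\setminus\{i\}$ would all be predecessors of~$i$. The goal is to make the search for such an $i'$ computable in the given data, avoiding any appeal to a $\Sigma^0_2$ fact as an oracle.

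Concretely, define $f(0)=\min I$ (in the usual order of~$\mathbb N$). Recursively, let $f(n+1)$ be the $\mathbb N$-least element $i'\in I$ with $i'>_{\mathbb N} f(n)$ and $f(n)<_L i'$. The condition is decidable from $I$ and~$<_L$, so this is an unbounded search in $\mathsf{RCA}_0$. The function $f$ exists by $\Delta^0_1$-comprehension, provided each search terminates, which we prove by $\Sigma^0_1$-induction on~$n$ (the statement ``$f(n)$ is defined'' is $\Sigma^0_1$). For the induction step, suppose $f(n)=i\in I$ is defined. By the observation above, $i$ has only finitely many $<_L$-predecessors: stability gives ``finitely many predecessors or finitely many successors'', and the latter fails because every element of the infinite set $J$ is a successor. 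So there is a bound $b$ such that no $k>_{\mathbb N}b$ satisfies $k<_L i$. Since $I$ is infinite, pick $i'\in I$ with $i'>_{\mathbb N}\max(b,i)$. Then $i'\neq i$ and not $i'<_L i$, hence $i<_L i'$ by linearity, and the search halts.

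By construction $f(n)<_L f(n+1)$ for all $n$, so $f$ is a strictly increasing sequence in~$I$; if desired, injectivity is already guaranteed by $f(n+1)>_{\mathbb N}f(n)$. The only delicate point, which I expect to be the main obstacle, is that the finiteness of the predecessor set is a $\Sigma^0_2$ piece of information that cannot be computed. The proof handles this by using it only inside the induction step to show that a search terminates, never to guide the construction. Thus only $\Sigma^0_1$-induction is needed, and the hypothesis on $J$ enters solely to rule out the ``finitely many successors'' alternative in the definition of stability.
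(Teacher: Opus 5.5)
Your proof is correct and follows the same route as the paper. Since $J$ consists of successors, the finitely-many-successors alternative of stability is excluded, so each point of $I$ has finitely many predecessors and therefore has a successor in the infinite set $I$; a recursive search then yields the sequence. Your treatment of totality of the search via $\Sigma^0_1$-induction is a faithful elaboration of the paper's one-line appeal to recursive search.
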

Of course, symmetry also yields a strictly descreasing sequence in~$J$.
\begin{proof}
    Given that~$L$ is stable, any point in~$I$ can have only finitely many predecessor, so that it must have some successor in $I$ (in fact infinitely many). Thus a recursive search yields the desired sequence.
\end{proof}

Correspondingly, we obtain the following reformulation of~$\mathsf{CADS}$, which will be particularly useful for our applications.

\begin{proposition}\label{prop:cads-I<J}
    The following are equivalent over~$\mathsf{RCA}_0$:
    \begin{enumerate}[label=(\roman*)]
        \item The cohesive ascending/descending sequence principle~$\mathsf{CADS}$ holds.
        \item Any infinite linear order~$L$ has an infinite suborder~$S$ such that there are no infinite $I,J\subseteq S$ with $i<_Lj$ for all~$i\in I$ and~$j\in J$.
    \end{enumerate}
\end{proposition}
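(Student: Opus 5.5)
For the direction from (i) to (ii), I will show that any infinite stable suborder $S$ already has the property in (ii). Suppose $S$ is stable and $I,J\subseteq S$ are infinite with $i<_Lj$ for all $i\in I$ and $j\in J$. Fix some $i_0\in I$. By stability, $i_0$ has only finitely many predecessors or only finitely many successors in~$S$. However, every element of~$J$ is a successor of $i_0$, so $i_0$ has infinitely many successors. Hence $i_0$ has only finitely many predecessors.

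The same reasoning applied to a fixed $j_0\in J$ shows that $j_0$ has infinitely many predecessors, namely all elements of~$I$. So $j_0$ has only finitely many successors. This is not yet a contradiction, so I need a sharper argument. By Lemma~\ref{lem:sads}, $I$ contains a strictly increasing sequence $(i_n)$, all of whose elements lie below $j_0$.

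I then intend to derive a contradiction with stability, but an infinite increasing sequence bounded above does not contradict stability directly. So I have to reconsider. An order of type $\omega+\omega^*$ is stable and contains exactly such $I$ and $J$, namely its $\omega$-part and its $\omega^*$-part. This shows that a mere stable suborder does not yield (ii). The forward direction must instead pass through $\mathsf{ADS}$-like reasoning. Here is the plan.

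Given $L$, use $\mathsf{CADS}$ to obtain an infinite stable suborder $S_0$. If $S_0$ admits no such $I,J$, we are done. Otherwise, Lemma~\ref{lem:sads} yields a strictly increasing sequence inside $I\subseteq S_0$, and its range (thinned out to a computable infinite subset, as in the proof of Lemma~\ref{lem:stable-discrete}) gives an infinite suborder $S$ of type $\omega$. Now I check that $S$ satisfies (ii). Suppose $I',J'\subseteq S$ are infinite with $I'<_L J'$. Pick $j\in J'$. In an order of type $\omega$, the element $j$ has only finitely many predecessors, yet every element of $I'$ is a predecessor of $j$. This contradicts the infinitude of $I'$.

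The only subtlety is that "there are no such $I,J$" is not decidable in $\mathsf{RCA}_0$. This is harmless, because we only need a classical case distinction, and each branch produces a set that exists in the model.

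For the converse direction, from (ii) to (i), let $S$ be as in (ii). I claim that $S$ is stable. Suppose instead that some $s\in S$ has infinitely many predecessors and infinitely many successors in~$S$. Then
\[
I=\{t\in S:t<_Ls\}\quad\text{and}\quad J=\{t\in S:s<_Lt\}
\]
are $\Delta^0_1$-definable from $S$ and $L$, so they exist in $\mathsf{RCA}_0$. Both are infinite, and every element of $I$ lies below every element of $J$. This contradicts the choice of $S$, so $S$ is stable and $\mathsf{CADS}$ holds.

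The main obstacle is the forward direction. The key point is to see that stability alone is insufficient because of the $\omega+\omega^*$ case. One must then invoke Lemma~\ref{lem:sads} to pass to a suborder of type $\omega$, or symmetrically of type $\omega^*$, and carefully extract an infinite set from the monotone sequence within $\mathsf{RCA}_0$.
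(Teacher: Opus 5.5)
Your final argument is correct and is essentially the paper's proof. You rightly discard the false start, since stable orders of type $\omega+\omega^*$ show that stability alone does not give (ii); you then use $\mathsf{CADS}$ to get a stable suborder and, in the bad case, apply Lemma~\ref{lem:sads} and take the thinned-out range of the resulting strictly increasing sequence, while your converse via the predecessor and successor sets matches the paper's.
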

\begin{proof}
    For the forward direction, use~$\mathsf{CADS}$ to assume that~$L$ is stable. If $L$ itself does not have the desired property, the previous lemma yields a strictly monotone sequence. By passing to a subsequence, we may assume that the range~$S$ of this sequence exists (as in the proof of Lemma~\ref{lem:stable-discrete}). Clearly, $S$ is as required.

    For the converse, it suffices to note that any $S$ as in~(ii) is stable. To see this, consider some~$i\in S$ and apply the indicated property to the set $I=\{j\in S:j\leq_L i\}$ of predecessors and the set $J=\{j\in S:i<_L j\}$ of successors.
\end{proof}

We will see that the following yields a strong version of the cohesive ascending/\allowbreak{}descending sequence principle:
\begin{equation*}\tag{$\mathsf{StCADS}$}
        \parbox[t]{.85\textwidth}{Any infinite linear order has a suborder~$S$ such that each partition of~$S$ into finitely many intervals contains precisely one infinite interval.}
\end{equation*}
Our formulation of this principle is inspired by but not quite the same as the formulation by Hirschfeldt and Shore~\cite{hirschfeldt-shore}. The following shows that the two formulations are equivalent.

\begin{lemma}[$\mathsf{RCA}_0$]\label{lem:CADS-IPP}
    The conjunction of $\mathsf{CADS}$ and $\mathsf{IPP}$ is equivalent to~$\mathsf{StCADS}$.
\end{lemma}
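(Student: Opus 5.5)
We will prove the two directions separately, working throughout with the formulation of $\mathsf{CADS}$ from Proposition~\ref{prop:cads-I<J}(ii) and with $\mathsf{IPP}$ in the form of $\mathsf B\Sigma^0_2$ where convenient. In what follows, a \emph{partition of~$S$ into finitely many intervals} is given by finitely many cut points $s_0<_L\dots<_L s_{k-1}$ from $S$ (or from~$L$). The intervals are then the sets of elements of~$S$ lying strictly between consecutive cut points, together with the two end pieces and the singletons of the cut points.

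\textbf{From $\mathsf{CADS}+\mathsf{IPP}$ to $\mathsf{StCADS}$.} Given an infinite linear order~$L$, we first apply $\mathsf{CADS}$ in the form of Proposition~\ref{prop:cads-I<J}(ii). This yields an infinite suborder~$S$ containing no infinite $I,J\subseteq S$ with $I<_L J$. Now consider a partition of~$S$ into finitely many intervals $S_0<_L\dots<_L S_k$. Colour each element of~$S$ by the index of the interval containing it; this colouring is computable from~$S$ and the finitely many cut points. By $\mathsf{IPP}$, some colour occurs infinitely often, so at least one interval is infinite. If two intervals $S_a<_L S_b$ were both infinite, then $I=S_a$ and $J=S_b$ would violate the property of~$S$. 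Hence exactly one interval is infinite. Note that the number of intervals is a parameter, so we genuinely need $\mathsf{IPP}$ rather than finite pigeonhole.

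\textbf{From $\mathsf{StCADS}$ to $\mathsf{CADS}$.} Let $S$ be as in $\mathsf{StCADS}$. We claim that $S$ is stable. For $i\in S$, partition $S$ into the three intervals $\{j<_L i\}$, $\{i\}$ and $\{j>_L i\}$. Exactly one of these is infinite, so $i$ has finitely many predecessors or finitely many successors. The remaining subtlety is that $\mathsf{StCADS}$ should produce an \emph{infinite}~$S$. This follows from the one-interval partition: its unique interval must be infinite.

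\textbf{From $\mathsf{StCADS}$ to $\mathsf{IPP}$.} This is the step I expect to be the main obstacle. The difficulty is that a colouring $c\colon\mathbb N\to\{0,\dots,N\}$ must be encoded as a linear order whose intervals reflect the colour classes, and this must survive passing to an arbitrary suborder~$S$. My plan is to use the lexicographic order on pairs: order $\mathbb N$ by $i<_L j$ iff $(c(i),i)<(c(j),j)$ lexicographically. This order is computable from~$c$, and it is infinite. Take $S$ from $\mathsf{StCADS}$. The colour classes $\{i\in S:c(i)=n\}$ for $n\leq N$ form a partition of~$S$ into $N+1$ intervals of~$L$, which are consecutive blocks. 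By $\mathsf{StCADS}$, one of them is infinite, so colour~$n$ occurs infinitely often. We must check that such block partitions, possibly with empty blocks, count as partitions into intervals under the paper's formulation; if cut points are required to lie in~$S$, we instead take, for each $n$ whose class in~$S$ is nonempty, its $<_L$-least element as a cut point. These minima exist because $c$ and $S$ are given as sets and each class is $\Delta^0_1$, and the set of cut points is a finite set bounded by~$N$. Alternatively, we can derive $\mathsf{IPP}$ by contraposition via Lemma~\ref{lem:fin-sets-bounded}-style reasoning if this formulation issue arises.
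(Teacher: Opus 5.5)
Your proposal is correct and follows essentially the same route as the paper: the forward direction combines statement~(ii) of Proposition~\ref{prop:cads-I<J} with $\mathsf{IPP}$, and the converse obtains stability from the predecessor/successor partition and obtains $\mathsf{IPP}$ from the lexicographic order on pairs (colour, index), whose colour classes in $S$ serve as the partition into intervals. Your extra remarks are harmless refinements that the paper's short proof leaves implicit: you show that $S$ is infinite via the trivial one-interval partition, and you deal with possibly empty colour blocks.
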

\begin{proof}
    In the forward direction, use $\mathsf{CADS}$ to find a suborder as in statement~(ii) of Proposition~\ref{prop:cads-I<J}. For any partition into finitely many intervals, it immediately follows that at most one interval is infinite. The pigeonhole principle ensures that there is an infinite interval.

    For the converse, we first derive~$\mathsf{CADS}$. Given a linear order, consider a suborder as provided by~$\mathsf{StCADS}$. To see that this suborder is stable, consider the partition into predecessors and successors of any given element (as in the proof of Proposition~\ref{prop:cads-I<J}).

    Finally, we derive~$\mathsf{IPP}$. Given a function $f\colon\mathbb N\to\{0,\ldots,n\}$, consider
    \begin{equation*}
        L=\{(c,i):i\in\mathbb N\text{ and }f(i)=c\}
    \end{equation*}
    with the lexicographic order. Let $S\subseteq L$ be a suborder as provided by~$\mathsf{StCADS}$. The sets $I_c=\{(c',i)\in S:c'=c\}$ form a partition of~$L$ into intervals. We obtain one interval~$I_c$ that is infinite. This means that there are infinitely many~$i\in\mathbb N$ with $f(i)=c$, as needed for~$\mathsf{IPP}$. 
\end{proof}

We have discussed splittings of $\mathsf{RT}^2_2$ and $\mathsf{ADS}$ into stable and cohesive parts. It turns out that the latter can be subsumed under the following general cohesive principle (see Statement~7.7 of~\cite{CJS-Ramsey}). We write $A\subseteq^*B$ to indicate that $A\backslash B$ is finite.
\begin{equation*}\tag{$\mathsf{COH}$}
        \parbox{.8\textwidth}{Any sequence of sets~$R_i\subseteq\mathbb N$ admits an infinite set $C\subseteq\mathbb N$ such that we have $C\subseteq^* R_i$ or $C\subseteq^*\mathbb N\backslash R_i$ for each~$i\in\mathbb N$.}
\end{equation*}
One can strengthen~$\mathsf{COH}$ by demanding that the finitely many exceptions that occur in $C\subseteq^* R_i$ and $C\subseteq^*\mathbb N\backslash R_i$ are uniformly bounded for all~$i\leq n$ below each~$n\in\mathbb N$. Hirschfeldt and Shore~\cite{hirschfeldt-shore} show that this strong cohesive principle admits the following characterization (which we simply take as our definition):
\begin{equation*}\tag{$\mathsf{StCOH}$}
        \parbox[t]{.85\textwidth}{We have $\mathsf{COH}$ and $\mathsf{IPP}$.}
\end{equation*}
The following proof is similar to the one by of Hirschfeldt and Shore but quite a bit shorter with our formulation of~$\mathsf{StCADS}$ (and we want to refer to the proof later).

\begin{proposition}[$\mathsf{RCA}_0$; \cite{hirschfeldt-shore}]\label{prop:coh-cads}
    The principles $\mathsf{StCOH}$ and $\mathsf{StCADS}$ are equivalent. 
\end{proposition}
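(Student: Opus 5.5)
Since $\mathsf{StCOH}$ is by definition $\mathsf{COH}+\mathsf{IPP}$ and Lemma~\ref{lem:CADS-IPP} identifies $\mathsf{StCADS}$ with $\mathsf{CADS}+\mathsf{IPP}$, it suffices to show two things over $\mathsf{RCA}_0$. First, $\mathsf{COH}$ implies $\mathsf{CADS}$. Second, $\mathsf{StCADS}$ implies $\mathsf{COH}$. The component $\mathsf{IPP}$ is simply carried along in each direction.

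\emph{From $\mathsf{COH}$ to $\mathsf{CADS}$.} Given an infinite linear order $(L,<_L)$ with $L\subseteq\mathbb N$, we take an enumeration $\ell_0,\ell_1,\ldots$ of $L$ in increasing $\mathbb N$-order. We apply $\mathsf{COH}$ to the sets
\begin{equation*}
R_i=\{n:\ell_n>_L\ell_i\}.
\end{equation*}
This yields an infinite $C$ that is eventually inside or eventually outside each $R_i$. The suborder $S=\{\ell_n:n\in C\}$ is then stable: for each $\ell_i\in S$, almost all elements of $S$ lie above $\ell_i$ or almost all lie below it. Hence $\ell_i$ has finitely many predecessors or finitely many successors in $S$.

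\emph{From $\mathsf{StCADS}$ to $\mathsf{COH}$.} Given sets $R_i$, we form the linear order $L=\mathbb N$ ordered by
\begin{equation*}
m<_L n \iff \chi_{R}(m)<_{\mathrm{lex}}\chi_{R}(n),
\end{equation*}
where $\chi_R(m)$ is the infinite binary sequence $(\chi_{R_i}(m))_i$ and ties are broken by the natural order. This relation is decidable, since comparison stops at the first $i$ with differing bits or at $i=\max(m,n)$ after a trivial cutoff. To get this, we compare the finite strings $\langle\chi_{R_0}(m),\ldots,\chi_{R_{k}}(m)\rangle$ with $k=\max(m,n)$, breaking ties by $<_{\mathbb N}$.

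Now let $S$ be the suborder given by $\mathsf{StCADS}$. For fixed $i$, we partition $S$ into at most $2^{i+1}$ intervals according to the first $i+1$ bits $\langle\chi_{R_0}(n),\ldots,\chi_{R_i}(n)\rangle$. These classes are intervals because the order is lexicographic and the first $i+1$ bits form a prefix. We need care with elements $n<i$, whose comparisons were truncated; we throw those finitely many elements into neighbouring classes or discard them.

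By $\mathsf{StCADS}$, exactly one class is infinite, so a single pattern $\sigma_i$ of length $i+1$ is realised by almost all of $S$. In particular, $S\subseteq^*R_i$ or $S\subseteq^*\mathbb N\setminus R_i$, according to the last bit of $\sigma_i$. Thus $C=S$ witnesses $\mathsf{COH}$, and the strong form with uniform bounds comes for free.

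The main obstacle is making the lexicographic order on $L$ genuinely a $\Delta^0_1$ linear order whose prefix classes are really intervals. Transitivity with the truncated comparisons needs checking; we may have to use a cleaner coding, for instance ordering by the finite strings of length $n+1$ extended by a tie-breaking convention. The first direction is routine.
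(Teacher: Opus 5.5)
Your overall architecture matches the paper: you reduce, via Lemma~\ref{lem:CADS-IPP} and the definition of $\mathsf{StCOH}$, to showing $\mathsf{COH}\Rightarrow\mathsf{CADS}$ and $\mathsf{StCADS}\Rightarrow\mathsf{COH}$. Your first direction, applying $\mathsf{COH}$ to the sets of successors, is the paper's argument and is fine.

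\textbf{The gap.} In the converse, the order you actually define is not transitive. You compare $\langle\chi_{R_0}(m),\ldots,\chi_{R_k}(m)\rangle$ with $\langle\chi_{R_0}(n),\ldots,\chi_{R_k}(n)\rangle$ for $k=\max(m,n)$ and break ties by $<_{\mathbb N}$. Take $R_2=\{0\}$ and $0,1,2\notin R_0\cup R_1$. Then:
\begin{itemize}
\item $0$ and $1$ agree on positions $0,1$, so the tie gives $0<_L1$;
\item $1$ and $2$ agree on positions $0,1,2$, so the tie gives $1<_L2$;
\item $0$ and $2$ first differ at position $2\le\max(0,2)$, and $\langle 0,0,0\rangle$ precedes $\langle 0,0,1\rangle$, so $2<_L0$.
\end{itemize}
This is a cycle. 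The same pattern works for any $a<b<c$ that differ only because $a\in R_p$ for some $b<p\le c$. So the relation need not be a linear order, and $\mathsf{StCADS}$ cannot be applied to it.

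\textbf{The fix.} The repair you mention in passing is the right one, and it is exactly what the paper does. Let $\sigma^n[n]=\langle\chi_{R_0}(n),\ldots,\chi_{R_n}(n)\rangle$, a string whose length $n+1$ depends only on $n$. Put $m<_L n$ iff $\sigma^m[m]$ precedes $\sigma^n[n]$ in the lexicographic order on $2^{<\omega}$, with proper prefixes first, say.
\begin{itemize}
\item This is a linear order on strings, and $n\mapsto\sigma^n[n]$ is injective because the lengths differ. So no tie-breaking is needed and $<_L$ is a $\Delta^0_1$ linear order.
\item For $m,n\ge i$ whose length-$(i+1)$ prefixes differ, the first difference of $\sigma^m[m]$ and $\sigma^n[n]$ occurs at a position $\le i$. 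Hence the prefix classes are convex.
\item For $n<i$, the shorter string $\sigma^n[n]$ is either a proper prefix of a class label $\sigma$, so it precedes the whole class, or differs from $\sigma$ below its own length. Either way it lies entirely before or after each class, so you can make it a singleton interval.
\end{itemize}
With this order your interval-partition argument goes through. The paper instead invokes statement~(ii) of Proposition~\ref{prop:cads-I<J} together with the pigeonhole principle to find the single infinite class; that difference is cosmetic.
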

\begin{proof}
    In the forward direction, we prove the stronger result that $\mathsf{COH}$ implies~$\mathsf{CADS}$ (cf.~Lemma~\ref{lem:CADS-IPP}). Consider a linear order~$L=(\mathbb N,<_L)$. Let~$C$ be given by $\mathsf{COH}$ with respect to $R_i=\{j>i:i<_L j\}$. To see that $(C,<_L)$ is stable, consider any~$i\in C$. We may assume that~$C\backslash R_i$ is finite (as the argument for the complement is similar). Then $i$ has only finitely many predecessors in~$C$.

    Conversely, assume we are given sets $R_i$ for~$i\in\mathbb N$. Consider the sequences
    \begin{equation*}
        \sigma^j[n]=\langle\sigma^j_0,\ldots,\sigma^j_n\rangle\quad\text{with}\quad\sigma^j_k=\begin{cases}
            1 & \text{if }j\in R_k,\\
            0 & \text{otherwise}.
        \end{cases}
    \end{equation*}
    To define a linear order~$L$ with underlying set~$\mathbb N$, we declare that $j<_Lk$ holds precisely when~$\sigma^j[j]$ precedes~$\sigma^k[k]$ in the lexicographic order. Let~$S\subseteq L$ be a suborder as in statement~(ii) of Proposition~\ref{prop:cads-I<J}. To see that~$S$ validates~$\mathsf{COH}$, consider an arbitrary~$i\in\mathbb N$. Each 0/1-sequence~$\sigma$ of length $i+1$ determines a set
    \begin{equation*}
        I_\sigma=\left\{j\in S:j\geq i\text{ and }\sigma^j[i]=\sigma\right\}.
    \end{equation*}
    When $\sigma$ precedes~$\tau$ in the lexicographic order, we have $j<_L k$ for all~$j\in I_\sigma$ and~$k\in I_\tau$. So at most one set~$I_\sigma$ can be infinite. By the pigeonhole principle, there are~$\sigma$ and~$J\in\mathbb N$ such that we have $j\in I_\sigma$ for all $j\in S$ with~$j\geq J$. Write $\sigma_i$ for the last entry of~$\sigma$. If we have $\sigma_i=1$, we get $\sigma^j_i=1$ and hence~$j\in R_i$ for all~$j\in S$ with $j\geq J$, which yields~$S\subseteq^*R_i$. If we have $\sigma_i=0$, we get $S\subseteq^*\mathbb N\backslash R_i$.
\end{proof}

At first glance, it may not even be clear that the cohesive principle is true. As more readers may be familiar with Ramsey's theorem, we include the following.

\begin{corollary}[$\mathsf{RCA}_0$]
    The principle~$\mathsf{COH}$ follows from~$\mathsf{RT}^2_2$.
\end{corollary}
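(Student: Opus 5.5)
The plan is to go through the ascending/descending sequence principle: by Lemma~\ref{lem:rt-ads}, $\mathsf{RT}^2_2$ implies $\mathsf{ADS}$, and the proof of Proposition~\ref{prop:coh-cads} already encodes an instance of $\mathsf{COH}$ as a linear order. So I will reuse that coding. Given sets $R_i$, define the order $L$ on $\mathbb N$ exactly as there: put $j<_Lk$ when $\sigma^j[j]$ lexicographically precedes $\sigma^k[k]$, with ties broken by the usual order on~$\mathbb N$ so that $L$ is linear.

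Apply $\mathsf{ADS}$, obtained from $\mathsf{RT}^2_2$ via Lemma~\ref{lem:rt-ads}, to get a strictly monotone sequence in~$L$. As in the proof of Lemma~\ref{lem:stable-discrete}, pass to a subsequence whose range $S$ exists as a set and which is still strictly monotone, say increasing (the decreasing case is symmetric). Then $S$ is infinite and has order type~$\omega$ under~$<_L$. In particular, there are no infinite $I,J\subseteq S$ with $i<_Lj$ for all $i\in I$ and $j\in J$: every element of $S$ has only finitely many $<_L$-predecessors in~$S$, so an infinite $I$ cannot lie entirely below a single element of~$J$.

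Now repeat the argument of Proposition~\ref{prop:coh-cads}. Fix~$i$. The sets $I_\sigma$, for $\sigma$ of length $i+1$, are pairwise ordered in~$L$, so at most one of them is infinite. To conclude that all but finitely many $j\in S$ fall into a single $I_\sigma$, the proof of Proposition~\ref{prop:coh-cads} used $\mathsf{IPP}$. Here $\mathsf{IPP}$ is available from $\mathsf{RT}^2_2$ by Lemma~\ref{lem:rt-ipp}.

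Alternatively, the monotone sequence yields this directly. Since $S$ is enumerated $<_L$-increasingly, the prefixes $\sigma^{s_n}[i]$ are lexicographically non-decreasing in~$n$, and they range over a finite set. Hence they stabilize, by bounded $\Sigma^0_1$-induction: a non-decreasing function into a finite set is eventually constant. Then $S\subseteq^*R_i$ or $S\subseteq^*\mathbb N\setminus R_i$, depending on the last entry of the stable prefix.

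The main point to check is the prefix comparison. A lexicographic comparison between $\sigma^j[j]$ and $\sigma^k[k]$ of different lengths must respect length-$(i+1)$ prefixes for $j,k\geq i$. This holds because if the prefixes differ, the first difference occurs within them.
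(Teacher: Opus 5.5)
Your proof is correct and is essentially the paper's argument written out: the paper combines $\mathsf{RT}^2_2\to\mathsf{ADS}\to\mathsf{CADS}$ (monotone sequences give stable suborders) with $\mathsf{RT}^2_2\to\mathsf{IPP}$ and then invokes Proposition~\ref{prop:coh-cads}, whose coding of the $R_i$ into a linear order is exactly the order to which you apply $\mathsf{ADS}$ directly. Your alternative ending is a small bonus: the length-$(i+1)$ prefixes along the monotone enumeration are themselves monotone and hence eventually constant by $\Sigma^0_1$-induction, so $\mathsf{ADS}$ yields $\mathsf{COH}$ without any separate appeal to $\mathsf{IPP}$.
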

\begin{proof}
    For the ascending/descending sequence principle, it is straightforward to see that the cohesive part~$\mathsf{CADS}$ follows from the principle~$\mathsf{ADS}$ itself (as orders of type~$\omega$ and $\omega^*$ are stable, cf.~the paragraph after Lemma~\ref{lem:stable-discrete}). The result follows by the previous proposition together with Lemmas~\ref{lem:rt-ipp} and~\ref{lem:rt-ads}.
\end{proof}

Let us also state important non-implications between the principles that we have discussed (while it would go beyond the scope of this paper to recall the proofs): The implications from $\mathsf{RT}^2_2$ to~$\mathsf{ADS}$ and from the latter to~$\mathsf{StCOH}$ are strict~\cite{hirschfeldt-shore}. Also, $\mathsf{COH}$ cannot be proved in~$\mathsf{RCA}_0$ or even by weak K\H{o}nig's lemma~\cite{CJS-Ramsey}. The latter is indeed independent of~$\mathsf{RT}^2_2$ over~$\mathsf{RCA}_0$ (see~\cite{liu-RT-WKL}). It has already been mentioned that $\mathsf{RT}^2_2$ and hence also $\mathsf{StCOH}$ (even in conjunction with weak K\H{o}nig's lemma) is $\Pi^0_4$-conservative over~$\mathsf{RCA}_0+\mathsf{IPP}$. In particular, these principles are far weaker than arithmetical comprehension. Stronger conservativity results are known for principles below~$\mathsf{RT}^2_2$ (see in particular~\cite{hirschfeldt-shore} for the case of~$\mathsf{COH}$).

In the first part of this section, we have motivated the cohesive principle in terms of Ramsey's theorem, which is arguably part of combinatorics (though monotone sequences as in~$\mathsf{ADS}$ are of course also central for analysis). Before we come to a different and arguably more analytical side of the cohesive principle -- which in our setting is embodied by the Heine-Borel theorem --, we consider combinatorial results on sequences of rationals.

As a preparation, we study convergence in the tree~$2^{<\omega}$ (see the proof of Proposition~\ref{prop:Baire-category} for relevant notation). A reversal for the following two results can be obtained via Lemma~\ref{lem:kreuzer-bolzano} below.

\begin{lemma}[$\mathsf{RCA}_0+\mathsf{StCOH}$]\label{lem:cantor_compact_cauchy}
    For any sequence in $2^{<\omega}$ with infinite range, there is a sub\-sequence $(\sigma^n)_{n\in\mathbb N}$ such that each~$i\in\mathbb N$ admits an~$N\in\mathbb N$ with $\sigma^n[i]=\sigma^N[i]$ (which we take to include $|\sigma^n|,|\sigma^N|\geq i$) for all~$n\geq N$.
\end{lemma}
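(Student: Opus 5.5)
Given a sequence $(\tau^k)_{k\in\mathbb N}$ in $2^{<\omega}$ with infinite range, we first extract a subsequence that is injective. Recursively choose $k_0<k_1<\dots$ so that $\tau^{k_{m+1}}$ differs from all of $\tau^{k_0},\ldots,\tau^{k_m}$; infinite range guarantees that such an index exists, and the search is computable. After this step we may assume that all $\tau^k$ are distinct. Since only finitely many strings have any bounded length, we get $|\tau^k|\to\infty$, in the weak sense that for each $i$ only finitely many $k$ have $|\tau^k|<i$. This is a $\Pi^0_1$ fact about a finite set of strings, provable in $\mathsf{RCA}_0$.

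Next we apply $\mathsf{StCADS}$, via Proposition~\ref{prop:coh-cads}, in the form of statement~(ii) of Proposition~\ref{prop:cads-I<J} together with $\mathsf{IPP}$. This mirrors the converse direction in the proof of Proposition~\ref{prop:coh-cads}. On $\mathbb N$ we define the linear order $j<_L k$ if and only if $\tau^j$ precedes $\tau^k$ lexicographically, where a proper prefix counts as smaller, say. This is a linear order because the strings are distinct. Proposition~\ref{prop:cads-I<J} yields an infinite $S\subseteq\mathbb N$ with no infinite $I,J\subseteq S$ such that $I<_L J$. We then fix $i$ and partition $S$, apart from the finitely many $k$ with $|\tau^k|<i$, into the sets $I_\sigma=\{k\in S:\tau^k[i]=\sigma\}$ for $\sigma$ of length $i$. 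When $\sigma$ precedes $\tau$ lexicographically, we have $I_\sigma<_L I_\tau$, so at most one $I_\sigma$ is infinite. By $\mathsf{IPP}$, at least one is infinite. Hence there are a unique $\sigma$ and a bound $N$ with $\tau^k[i]=\sigma$ for all $k\in S$ with $k\geq N$. The point that needs care is that the finite family of finite sets $I_\sigma$ for $\sigma\neq\sigma_0$ has a common bound; this is $\mathsf B\Sigma^0_2$, i.e.\ $\mathsf{IPP}$.

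Finally, let $(\sigma^n)$ enumerate $\tau^k$ for $k\in S$ in increasing order of $k$. This is a subsequence, and by the previous paragraph each $i$ admits an $N$ such that $\sigma^n[i]$ is defined and constant for $n\geq N$, which is exactly the claim. I expect the main obstacle to be making sure that the lengths are at least $i$ eventually, and that the ordering is genuinely linear when prefixes occur. Both are handled by the injectivity reduction and by treating shorter strings as a finite exceptional set, and I would verify these details first.
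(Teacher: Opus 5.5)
Your proof is correct and follows the paper's argument. You pass to a subsequence of distinct strings, order it lexicographically, and take $S$ as in statement~(ii) of Proposition~\ref{prop:cads-I<J}; then at most one $I_\sigma$ with $|\sigma|=i$ is infinite, and $\mathsf{IPP}$ (in the form $\mathsf B\Sigma^0_2$) makes one of them contain all sufficiently late elements of $S$. The only cosmetic difference is that the paper extracts a subsequence with strictly increasing lengths, so that the range exists as a set and $|\sigma^n|\geq i$ holds automatically for $n\geq i$, whereas you order the indices and treat the finitely many short strings as exceptions.
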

\begin{proof}
Since the given sequence has infinite range, we find a subsequence~$(\rho^n)$ with $|\rho^n|<|\rho^{n+1}|$ for all~$n\in\mathbb N$. This ensures in particular that $L=\{\rho^n:n\in\mathbb N\}$ exists as a set. We consider~$L$ with the lexicographic order. Let $S\subseteq L$ be a sub\-order as in statement~(ii) of Proposition~\ref{lem:CADS-IPP} (see also Proposition~\ref{prop:coh-cads}). We find a subsequence $(\sigma^n)$ of~$(\rho^n)$ that enumerates~$S$. For given~$i\in\mathbb N$, each~$\sigma\in 2^{<\omega}$ of length~$i$ determines a set
\begin{equation*}
    I_\sigma=\left\{\sigma^n:n\geq i\text{ and }\sigma^n[i]=\sigma\right\}\subseteq S.
\end{equation*}
As in the proof of Proposition~\ref{lem:cantor_compact_cauchy}, there is a~$\sigma$ and an~$N\in\mathbb N$ such that we have $\sigma^n\in I_\sigma$ for all~$n\geq N$, which thus validate $\sigma^n[i]=\sigma=\sigma^N[i]$.
\end{proof}

We will later need the following result about simultaneous convergence in~$\mathbb Q$.

\begin{proposition}[$\mathsf{RCA}_0+\mathsf{StCOH}$]\label{prop:compact_family_rationals}
    Any double sequence of rationals~$q^k_n\in[0,1]$ admits a strictly increasing $n\colon\mathbb N\to\mathbb N$ such that $\big(q^k_{n(i)}\big)_{i\in\mathbb N}$ is Cauchy for each~$k\in\mathbb N$.
\end{proposition}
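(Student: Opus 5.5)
We reduce the statement to Lemma~\ref{lem:cantor_compact_cauchy} by coding all the rationals $q^k_n$ for a fixed index~$n$ into a single binary string, so that agreement on long initial segments forces closeness of all the $q^k_{n(i)}$ with small~$k$.

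\emph{Coding.} For each $n$ we build $\rho^n\in 2^{<\omega}$ of length strictly increasing in~$n$ (e.g.\ $|\rho^n|$ a fixed computable function with $|\rho^n|<|\rho^{n+1}|$). Write $\rho^n$ as a concatenation of blocks. Let $\langle k,j\rangle$ range over the pairs with $k+j$ small, in a fixed computable enumeration. The block for $\langle k,j\rangle$ is the $j$-th binary digit of~$q^k_n$, taken from a canonical expansion; for definiteness use $\lfloor 2^j q^k_n\rfloor \bmod 2$. The binary digits are not continuous in~$q$, but this does not matter. We only need that equal first $j$ digits imply $|q-q'|\le 2^{-j}$, and digits of a rational are computable. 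The sequence $n\mapsto\rho^n$ has infinite range because the lengths increase.

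\emph{Applying the lemma.} Lemma~\ref{lem:cantor_compact_cauchy} yields a subsequence $(\sigma^i)=(\rho^{n(i)})$ with $n$ strictly increasing, such that every initial segment of $\sigma^i$ eventually stabilizes. Fix $k$ and a rational $\varepsilon>0$, and pick $j$ with $2^{-j}<\varepsilon$. Choose $m$ so large that the initial segment of length~$m$ of every $\rho^n$ contains the blocks for all pairs $\langle k,j'\rangle$ with $j'\le j$; this works because the block layout is the same for all~$n$. The lemma provides $N$ such that $\sigma^i[m]=\sigma^N[m]$ for $i\ge N$. Then $q^k_{n(i)}$ and $q^k_{n(N)}$ share their first $j$ binary digits, so $|q^k_{n(i)}-q^k_{n(i')}|\le 2^{-j}<\varepsilon$ for $i,i'\ge N$. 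Hence each sequence $(q^k_{n(i)})_{i}$ is Cauchy. Note that the point $1$ needs a special case: for the value~$1$, either use the digit convention $0.111\ldots$ or rescale to $[0,1/2]$ first.

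\emph{Main obstacle.} The only delicate point is making sure the coding is uniform, i.e.\ independent of~$n$, and that it places enough information about every fixed~$k$ in a bounded initial segment. This is exactly what allows a single application of the lemma to handle all $k$ simultaneously, giving a common subsequence without any diagonal argument. A diagonal argument would need more induction than is available here. All remaining verifications are $\Delta^0_1$ and go through in~$\mathsf{RCA}_0$.
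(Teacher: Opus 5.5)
Your proposal is correct and follows essentially the paper's proof: you interleave dyadic approximations of all the $q^k_n$ into a single binary string $\rho^n$ via a fixed pairing layout, apply Lemma~\ref{lem:cantor_compact_cauchy} once, and read off the Cauchy property for each $k$ from the stabilization of a long enough initial segment. The differences are cosmetic. The paper takes $|\rho^n|=n$ and chooses $\rho^n$ so that $q^k_n$ lies in the closed dyadic interval $[\pi_k(\rho^n)]$, which avoids your special case for the value~$1$.
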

\begin{proof}
As in the proof of Proposition~\ref{prop:Baire-category}, each $\sigma\in 2^{<\omega}$ yields an interval
\begin{equation*}
    [\sigma]:=\left[p,p+\frac1{2^{|\sigma|}}\right]\subseteq[0,1]\quad\text{with}\quad p=\sum_{i<|\sigma|}\frac{\sigma_i}{2^{i+1}}.
\end{equation*}
Let $\pi\colon\mathbb N^2\to\mathbb N$ be the Cantor pairing (or any computable injection). Given $\sigma\in 2^{<\omega}$ and $j\in\mathbb N$, put
\begin{equation*}
    \pi_k(\sigma)=\left\langle\sigma_{\pi(k,0)},\ldots,\sigma_{\pi(k,l-1)}\right\rangle\quad\text{with}\quad l=\min\{l'\in\mathbb N:\pi(k,l')\geq|\sigma|\}.
\end{equation*}
For every~$n\in\mathbb N$, choose $\rho^n\in 2^{<\omega}$ with $|\rho^n|=n$ and $q^k_n\in[\pi_k(\rho^n)]$ for all~$k\leq n$. To see that this is possible, note that $l_{kn}=|\pi_k(\rho^n)|$ depends only on~$k$ and~$n$. For each~$k\leq n$, we find a $\rho_{kn}\in 2^{<\omega}$ with $|\rho_{kn}|=l_{kn}$ and $q^k_n\in[\rho_{kn}]$. Given that~$\pi$ is injective, there is a~$\rho^n$ with $\pi_k(\rho^n)=\rho_{kn}$ for all~$k\leq n$.

Consider a subsequence $\sigma^i=\rho^{n(i)}$ as provided by the previous lemma. To show that $(q^k_{n(i)})_{i\in\mathbb N}$ is Cauchy, we consider an arbitrary~$\varepsilon=2^{-L}>0$. Put
\begin{equation*}
l=\max\{\pi(k,l'):l'\leq L\}+1.
\end{equation*}
Now take an $I\geq k$ such that all $i\geq I$ validate $\sigma^i[l]=\sigma^I[l]$ and hence
\begin{equation*}
q^k_{n(i)}\in\left[\pi_k\left(\rho^{n(i)}\right)\right]=\left[\pi_k\left(\sigma^i\right)\right]\subseteq\left[\pi_k\left(\sigma^i[l]\right)\right]=\left[\pi_k\left(\sigma^I[l]\right)\right].
\end{equation*}
Due to $|\pi_k(\sigma^I[l])|>L$, we get $|q^k_{n(i)}-q^k_{n(j)}|<2^{-L}=\varepsilon$ for all~$i,j\geq I$.
\end{proof}

The following reversal is due to Kreuzer. We give a different and simple proof.

\begin{lemma}[\cite{kreuzer-bolzano}]\label{lem:kreuzer-bolzano}
The following are equivalent over~$\mathsf{RCA}_0$:
\begin{enumerate}[label=(\roman*)]
    \item The strong cohesive principle~$\mathsf{StCOH}$.
    \item Any bounded sequence of rationals has a subsequence that is Cauchy.
\end{enumerate}
\end{lemma}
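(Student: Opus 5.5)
For the direction from~(i) to~(ii), I would reduce to Proposition~\ref{prop:compact_family_rationals} with a single row. Given a bounded sequence $(q_n)$ of rationals, first rescale by a rational affine map so that all $q_n$ lie in~$[0,1]$. Then set $q^0_n=q_n$ and $q^k_n=0$ for $k>0$. The proposition provides a strictly increasing $n\colon\mathbb N\to\mathbb N$ such that $(q_{n(i)})_{i\in\mathbb N}$ is Cauchy. Undoing the rescaling preserves the Cauchy property, which gives~(ii).

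For the converse I would show that~(ii) implies both $\mathsf{IPP}$ and $\mathsf{COH}$, since $\mathsf{StCOH}$ is defined as their conjunction.

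To obtain $\mathsf{IPP}$, take a colouring $c\colon\mathbb N\to\{0,\ldots,N\}$ and regard it as a bounded sequence of rationals $q_i=c(i)$. By~(ii) there is a Cauchy subsequence $(c(n(i)))_{i\in\mathbb N}$. Applying the Cauchy condition with $\varepsilon=1/2$, the subsequence is eventually constant with some value~$m$, so colour~$m$ occurs infinitely often.

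To obtain $\mathsf{COH}$, take sets $R_k\subseteq\mathbb N$ and encode the characteristic vectors as ternary expansions:
\begin{equation*}
q_j=\sum_{k\leq j}\frac{2\chi_{R_k}(j)}{3^{k+1}}\in[0,1].
\end{equation*}
Apply~(ii) to get a strictly increasing $n$ with $(q_{n(i)})$ Cauchy. I would then check that these Cauchy sequences behave well on digits. The $k$-th digit is determined by the initial segment of digits, and two numbers whose digits first differ at position~$k$ are at least $3^{-k-1}$ apart; this uses the usual Cantor-set separation, as in the proof of Proposition~\ref{pro:cantor}. Taking $\varepsilon=3^{-k-1}$ in the Cauchy condition therefore yields an $I$ beyond which all $q_{n(i)}$ with $n(i)\geq k$ share their first $k+1$ digits. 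In particular, $\chi_{R_k}(n(i))$ is eventually constant for each~$k$.

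Finally, since $n$ is strictly increasing, its range $C=\{n(i):i\in\mathbb N\}$ exists in~$\mathsf{RCA}_0$. This $C$ is infinite and satisfies $C\subseteq^*R_k$ or $C\subseteq^*\mathbb N\setminus R_k$ for each~$k$, which is exactly $\mathsf{COH}$.

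The main obstacle is the separation estimate: one has to make sure the tail beyond digit~$k$ cannot bridge the gap. Using digits $0$ and $2$ guarantees this, because the tail sum is at most $3^{-k-1}$ while the gap is at least $2\cdot 3^{-k-1}-3^{-k-1}$. With the strict inequality in the Cauchy condition, it suffices to take $\varepsilon=3^{-k-2}$ to be safe.
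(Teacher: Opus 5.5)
Your proof is correct. The forward direction and the derivation of $\mathsf{IPP}$ are essentially the paper's: it calls (i)$\Rightarrow$(ii) a special case of Proposition~\ref{prop:compact_family_rationals} and the step to $\mathsf{IPP}$ ``straightforward''.

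For the other half of the reversal you take a different route.

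\emph{The paper's route.} It derives $\mathsf{CADS}$ rather than $\mathsf{COH}$. It embeds an arbitrary infinite linear order into $\mathbb Q$ and takes a Cauchy subsequence, enumerated increasingly so that its range $S$ exists. In $S$ at most the limit point can have infinitely many predecessors and infinitely many successors, so omitting it leaves an infinite stable suborder. It then needs Lemma~\ref{lem:CADS-IPP} and the Hirschfeldt--Shore equivalence of Proposition~\ref{prop:coh-cads} to turn $\mathsf{CADS}+\mathsf{IPP}$ into $\mathsf{StCOH}$.

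\emph{Your route.} You code the characteristic vectors of the $R_k$ directly as ternary expansions with digits $0$ and $2$. The Cantor-set separation then converts the Cauchy property into eventual stabilization of each digit. That separation says $|q_j-q_{j'}|>3^{-l-1}$ when the first differing digit sits at position $l\leq k$, since each tail lies in $[0,3^{-l-1})$; so $\varepsilon=3^{-k-1}$ already suffices. Digit stabilization is exactly $\mathsf{COH}$ for the range of the subsequence. This is essentially the converse of Lemma~\ref{lem:cantor_compact_cauchy}.

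\emph{Comparison.} Your argument is self-contained and matches the paper's definition of $\mathsf{StCOH}$ as $\mathsf{COH}+\mathsf{IPP}$ directly, without the order-theoretic characterization. The paper's proof is shorter given the machinery already in place, and it showcases the combinatorial, ascending/descending-sequence face of cohesiveness that the section emphasizes.
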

\begin{proof}
    That~(i) implies~(ii) is a special case of the previous result. For the converse, it is staightforward to see that~(ii) implies~$\mathsf{IPP}$. So it remains to establish~$\mathsf{CADS}$. Given an infinite linear order~$L$, we fix an enumeration~$L=\{\rho^n:n\in\mathbb N\}$ and construct an embedding~$f\colon L\to\mathbb Q$. By~(ii), we get a strictly increasing~\mbox{$n\colon\mathbb N\to\mathbb N$} such that the sequence~$(f(\rho^{n(i)}))_{i\in\mathbb N}$ is Cauchy. If the enumeration of~$L$ is increasing (with respect to the usual order on~$\mathbb N$), then $S=\{\rho^{n(i)}:i\in\mathbb N\}$ exists as a set. In $\{f(\rho):\rho\in S\}$ and hence in~$S$, at most one point (namely the limit of the Cauchy sequence) can have infinitely many predecessors and infinitely many successors. If we omit this point, we have an infinite stable suborder of~$L$.
\end{proof}

The coherent part of the ascending/descending sequence principle can also be characterized as in part~(b) of the following result, which yields an interesting comparison with the full principle in part~(a).

\begin{proposition}[$\mathsf{RCA}_0$]
(a) The principle~$\mathsf{ADS}$ holds precisely if every sequence in~$\mathbb Q$ has a monotone subsequence.

(b) The principle~$\mathsf{StCADS}$ holds precisely if every sequence in~$\mathbb Q$ has a subsequence that is almost increasing or almost decreasing, which means that each~$\varepsilon>0$ admits an~$N\in\mathbb N$ such that all $n>m\geq N$ validate $q_n\leq q_m+\varepsilon$ or $q_n\geq q_m-\varepsilon$, respectively.
\end{proposition}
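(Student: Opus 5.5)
For part~(a), I would go through the standard reduction between sequences of rationals and linear orders. Assuming $\mathsf{ADS}$, take a sequence $(q_n)$ in~$\mathbb Q$. If some value occurs infinitely often, the constant subsequence is monotone. Checking this is a single $\Sigma^0_2$ case distinction, which is fine in classical logic over $\mathsf{RCA}_0$. Otherwise, every value has only finitely many occurrences. I then pass to the subsequence of those indices~$n$ with $q_n\neq q_m$ for all~$m<n$, which is infinite and exists as a set. I order these indices by $n<_L m\Leftrightarrow q_n<q_m$ and apply $\mathsf{ADS}$ to obtain a strictly monotone sequence in~$L$. By thinning (as in the proof of Lemma~\ref{lem:rt-ads}) I make its indices increasing in~$\mathbb N$, which gives a monotone subsequence. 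Conversely, given an infinite linear order $L$ on~$\mathbb N$, I would build an order embedding $f\colon L\to\mathbb Q$ by the usual back-and-forth recursion, placing $f(n)$ between the images of the nearest $<_L$-neighbours among $0,\ldots,n-1$. A monotone subsequence of $(f(n))$ is strictly monotone because $f$ is injective, and its indices give a strictly monotone sequence in~$L$.

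For part~(b), forward direction, I mimic Lemma~\ref{lem:kreuzer-bolzano} but for unbounded sequences. Let $(q_n)$ be given, and apply a homeomorphism such as $q\mapsto q/(1+|q|)$. It is monotone and rational-valued, and it lets me reduce to bounded sequences, except that Cauchy in the image corresponds only to convergence in $\mathbb R\cup\{\pm\infty\}$. So I would rather argue directly with $\mathsf{StCADS}$. If some value repeats infinitely often we are done, and otherwise we take $L$ as in~(a). Let $S$ be the suborder given by $\mathsf{StCADS}$, enumerated increasingly in~$\mathbb N$. Given $\varepsilon>0$, I would like to partition the rational line into finitely many intervals of length $<\varepsilon$ plus two unbounded rays, but one would also need arbitrarily large bounds. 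Instead, I partition $S$ into intervals by cutting at its own points, which are finitely many elements $s_1<_L\dots<_L s_k$. Exactly one resulting interval is infinite, so the subsequence eventually stays between two consecutive cut points.

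To turn this into ``almost increasing or almost decreasing'', I first use $\mathsf{StCADS}$ (via Lemma~\ref{lem:CADS-IPP}, including $\mathsf{IPP}$) to decide whether the suborder is eventually of type $\omega$-like or $\omega^*$-like. That is, for each element, cofinitely many later elements lie above it or cofinitely many lie below it. By $\mathsf{IPP}$, one of the two behaviours occurs for infinitely many elements, say ``above''. Among these elements, the tail behaviour forces the subsequence to be increasing in the limit, except for a single limit point. The key step is then to show that for every $\varepsilon$ one eventually has $q_n\le q_m+\varepsilon$. Suppose instead that $q_n<q_m-\varepsilon$ occurred for infinitely many pairs. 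Then, using the $\mathbb Q$-grid of mesh $\varepsilon/2$ restricted to the finite span between two cut points, I would obtain two infinite intervals, which contradicts $\mathsf{StCADS}$. I expect this to be the main obstacle, since an unbounded sequence gives no finite grid. The cut-point trick is meant to supply the finite span, but it must be combined with the case where the unique infinite interval is an unbounded end ray. In that case I would use the ``above'' behaviour to extract a strictly increasing subsequence directly by Lemma~\ref{lem:sads}.

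For the converse in~(b), I first derive $\mathsf{IPP}$ from the sequence $q_n=c(n)$: an almost monotone subsequence with $\varepsilon=1/2$ must be eventually monotone in finitely many values, hence eventually constant. Then, for $\mathsf{CADS}$, I embed $L$ into~$\mathbb Q$ as in~(a) and take an almost monotone subsequence, say almost increasing, with increasing indices so that its range $S$ exists. Given $i\in S$, I claim that $i$ has finitely many predecessors or finitely many successors in~$S$. If both sets were infinite, then applying almost-increasingness with $\varepsilon$ below the gap between $f(i)$ and values $f(j)<f(i)$ would fail, because the gap is not uniform. Following Lemma~\ref{lem:kreuzer-bolzano}, I would instead note that an almost increasing sequence bounded above by elements of its own range is Cauchy. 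So at most one point is problematic, and omitting it yields a stable suborder. I have not fully checked how to handle the unbounded almost-increasing case.
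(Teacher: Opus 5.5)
\textbf{There is a genuine gap in the forward direction of~(b); the other parts are fine up to two smaller fixes.}

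The forward direction of~(b) does not close. You rightly notice that a grid argument with $\mathsf{StCADS}$ needs a bounded range, but the cut-point construction does not provide one. You leave open the case where the unique infinite interval is an end ray, and the tool you propose for it fails: Lemma~\ref{lem:sads} needs two infinite sets $I<_L J$, and an end ray does not supply them. Restricting to the elements with the ``above'' behaviour is not available either, since that class is only $\Sigma^0_2$-definable. The missing idea is that unboundedness needs no principle at all. If $(q_n)$ is, say, unbounded above, then the search for the least $n>n(i)$ with $q_n>q_{n(i)}$ always terminates, because some $q_n$ exceeds $\max_{k\leq n(i)}q_k$. So $\mathsf{RCA}_0$ alone yields a strictly increasing subsequence. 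If $(q_n)$ is bounded, Lemma~\ref{lem:kreuzer-bolzano} together with Proposition~\ref{prop:coh-cads} gives a Cauchy subsequence; alternatively, argue directly with a grid of mesh $\varepsilon$ on the bounded range. A Cauchy sequence is both almost increasing and almost decreasing. This bounded/unbounded split is the paper's proof. Your discarded map $q\mapsto q/(1+|q|)$ would also have worked, with an extra case distinction on whether the limit is $\pm1$.

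The first smaller point concerns~(a). The set of first-occurrence indices need not be infinite over $\mathsf{RCA}_0$ alone: a colouring that violates $\mathsf{IPP}$ has finite range even though each value occurs only finitely often. You need that $\mathsf{ADS}$ implies $\mathsf{IPP}$ (Proposition~4.5 of~\cite{hirschfeldt-shore}), which the paper cites at exactly this point.

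The second concerns the converse of~(b). Your route is a legitimate variant: you derive $\mathsf{IPP}$ by hand and inline the proof of Lemma~\ref{lem:kreuzer-bolzano} instead of reducing to it. But its crux, that a bounded almost monotone sequence is Cauchy, is only asserted. It needs an argument like the paper's:
\begin{itemize}
\item take $M$ with $q_n<q_m+\varepsilon/2$ for all $n>m\geq M$;
\item use $\Sigma^0_1$-induction to find $N\geq M$ with $q_n>q_N-\varepsilon/2$ for all $n\geq M$;
\item conclude that $q_n\in(q_N-\varepsilon/2,q_N+\varepsilon/2)$ for all $n\geq N$.
\end{itemize}
The unbounded case you left open disappears if you choose the embedding $f\colon L\to\mathbb Q$ with values in $[0,1]$. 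Alternatively, in an almost monotone sequence that is unbounded, every element has only finitely many predecessors (or only finitely many successors), so the suborder is already stable.
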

\begin{proof}
    (a) Consider a sequence~$(q_n)\subseteq\mathbb Q$. As before, passing to a subsequence allows us to assume that~$L=\{q_n:n\in\mathbb N\}$ exists as a set. If $L$ is finite, we can conclude by the pigeonhole principle, which is a consequence of~$\mathsf{ADS}$ (see Proposition~4.5 of~\cite{hirschfeldt-shore}). Otherwise, $\mathsf{ADS}$ yields a strictly monotone sequence in~$L$ (with respect to the usual order on~$\mathbb Q$). A subsequence of the latter is a subsequence of~$(q_n)$.

    Conversely, let $L$ be any infinite linear order. Pick an enumeration~$(\rho^n)$ of~$L$. Then construct an embedding~$f\colon L\to\mathbb Q$. The statement in~(a) yields a strictly increasing~$n\colon\mathbb N\to\mathbb N$ such that~$(f(\rho^{n(i)}))$ and hence~$(\rho^{n(i)})$ is (strictly) monotone.

    (b) Consider a sequence~$(r_n)\subseteq\mathbb Q$. If this sequence is unbounded, we recursively find a subsequence that is even strictly monotone. In the remaining case, the previous lemma yields a subsequence~$(q_n)$ that is Cauchy. This sequence is almost increasing as well as almost decreasing.

    For the converse, we also reduce to the previous lemma. Consider a sequence of rationals~$q_n\in[-B,B]$. Passing to a subsequence, we assume that~$(q_n)$ is almost decreasing. To show that it is even Cauchy, consider some~$\varepsilon>0$. Take~$M\in\mathbb N$ such that we have $q_n<q_m+\varepsilon/2$ for all~$n>m\geq M$. Then use~$\Sigma^0_1$-induction to find an~$N\geq M$ for which $q_N$ is close to minimal, by which we mean that $q_n>q_N-\varepsilon/2$ holds for all~$n\geq M$. For any $n\geq N$, we get $q_n\in(q_N-\varepsilon/2,q_N+\varepsilon/2)$.
\end{proof}

The previous proof reveals that almost monotone sequences are enough to show convergence. This may provide some intuition why we only need cohesiveness rather than the ascending/descending sequence principle for our applications in analysis.

In the following, we discuss a side of cohesiveness that is arguably more analytical or topological. Specifically, we prove an equivalence with the Heine-Borel theorem. The result may not be completely surprising in view of two known facts. First, the cohesive principle is equivalent to a $\Delta^0_2$-version of weak K\H{o}nig's lemma (which states that every infinite binary $\Delta^0_2$-tree has a $\Delta^0_2$-path). On the level of computability, this goes back to work of Jockusch and Stephan~\cite{jockusch-stephan-coh}, which was refined by Brattka, Gherardi and Marcone~\cite{brattko-gherardi-marcone}. As a result of reverse mathematics, it has been established by Belanger~\cite{belanger-cohesive}. Secondly, the `regular' version of weak K\H{o}nig's lemma -- for trees that are (computable and hence) given as sets -- is equivalent to the Heine-Borel theorem under the traditional approach (with reals as fast Cauchy sequences and a corresponding encoding of open sets; see Theorem~IV.1.2 of~\cite{simpson09}). Since our approach with slow Cauchy sequences often `adds a quantifier' (so that, e.g., strict inequalities between reals are $\Sigma^0_2$ rather than~$\Sigma^0_1$), it should be possible to obtain the following theorem as a lift of the traditional result. However, we prefer to give a more direct proof. The argument for the reversal seems particularly `mathematical' in the sense that it could similarly appear in an analysis textbook, where it would be used to derive the Bolzano-Weierstrass theorem.

\begin{theorem}\label{thm:Heine-Borel}
    The following are equivalent over~$\mathsf{RCA}_0$:
    \begin{enumerate}[label=(\roman*)]
    \item The Heine-Borel theorem holds, i.e., any covering $[0,1]\subseteq\bigcup_{n\in\mathbb N}U_i$ by open sets~$U_i$ (cf.~Definition~\ref{def:open}) admits a finite subcovering $[0,1]\subseteq\bigcup_{n\leq N}U_i$.
    \item We have the strong cohesive principle~$\mathsf{StCOH}$.
    \end{enumerate}
\end{theorem}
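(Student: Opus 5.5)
For (ii)$\Rightarrow$(i), I would argue by contradiction. Suppose no finite subcover exists. Then for every $N$ there is a real in $[0,1]$ outside $\bigcup_{i\leq N}U_i$. Such a witness cannot be found effectively, because membership in open sets is $\Sigma^0_2$. So I would work with finite approximations instead.

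For each $N$, look at stage $N$ of the codes. Call a rational interval $[\sigma]$ (with $\sigma\in 2^{<\omega}$, as in the proof of Proposition~\ref{prop:Baire-category}) \emph{stage-$N$ covered} if it lies inside some ball $B_r(a)$ with $(n,a,r)\in U_i$ for all $n$ with $i,M\leq n\leq N$, for some $i,M\leq N$. This is decidable. I would then choose $\rho^N\in 2^{<\omega}$ of length $N$ such that no initial segment of $\rho^N$ is stage-$N$ covered. Such a $\rho^N$ should exist, by a compactness argument for the finitely many balls involved: if every binary string of length $N$ had a covered initial segment, the finitely many stage-$N$ balls would cover $[0,1]$. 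The actual non-covered point $x$ (relative to $U_0,\dots,U_N$) does not directly give $\rho^N$, so I would first have to argue this carefully. A cleaner route is to take $\rho^N$ to be the lexicographically least string of length $N$ with no covered initial segment, provided one exists. If none exists, the stage-$N$ balls from $U_0,\dots,U_N$ cover $[0,1]$. That does not by itself give a genuine cover, since the stage-$N$ data may be spurious, and this is exactly where the dynamic reasoning enters.

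Next, apply Lemma~\ref{lem:cantor_compact_cauchy} (which uses $\mathsf{StCOH}$) to get a subsequence $(\sigma^n)$ whose initial segments stabilise. The stabilised prefixes define a real $x=\lim p(\sigma^n)$, which is a slow Cauchy sequence. Since the $U_i$ cover $[0,1]$, we have $x\in U_i$ for some $i$. Hence there are $M,a,r$ with $x\in B_r(a)$ and $(n,a,r)\in U_i$ for all $n\geq M$. Choose $\ell$ with $[\sigma]\subseteq B_r(a)$ for the stabilised prefix $\sigma$ of length $\ell$. For large $n$, the prefix $\sigma^n[\ell]$ is stage-$N$ covered with $N=|\sigma^n|$, which contradicts the choice of $\sigma^n$.

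I expect the main obstacle to be the case where no uncovered string exists at some stage. That case must be turned into a genuine finite subcover, and this needs $\mathsf{IPP}$ (part of $\mathsf{StCOH}$). Concretely, I would instead fix a candidate bound $K$ and run the construction relative to $U_0,\dots,U_K$. If the strings $\rho^N$ exist for infinitely many $N$, the argument above yields a point of $[0,1]$ covered by some $U_i$ with $i\leq K$ but not witnessed, which is a contradiction. Otherwise covering holds at all large stages, and $\mathsf B\Sigma^0_2$ then shows that the finitely many balls used are genuine, which gives $[0,1]\subseteq\bigcup_{i\leq K}U_i$. To choose $K$, I would assume that no $K$ works and build the strings diagonally with $K=N$, as above.

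For (i)$\Rightarrow$(ii), I would use Lemma~\ref{lem:kreuzer-bolzano} and show that every bounded sequence of rationals $(q_n)\subseteq[0,1]$ has a Cauchy subsequence, following the textbook argument. First, Heine--Borel yields $\mathsf{IPP}$. Indeed, from a bad colouring one can build the open sets $U_n$ of Proposition~\ref{prop:intersections}: the sets $U_n'=\{x:x\notin\text{a shrinking ball}\}$ together with $(\mathbb R\setminus\{0\})$-type pieces give a cover with no finite subcover. Then, call a rational interval \emph{bad} if only finitely many $q_n$ fall into it. Using Definition~\ref{def:open}, the union of the bad intervals is open: the tuple $(n,a,r)$ is enumerated once no $q_m$ with $m\geq n$ hits $B_r(a)$, up to the stage-$n$ check. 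If every point were in this set, Heine--Borel would give finitely many bad intervals covering $[0,1]$, which contradicts the infinitude of indices via $\mathsf{IPP}$. So the complement contains no clean single point. Since only the Cauchy subsequence is needed, I would then emulate Lemma~\ref{lem:cantor_compact_cauchy} by nested halving, using cover sets defined at stages to derive the $\mathsf{CADS}$ instance. Realistically, it would be easier to reduce to Proposition~\ref{prop:cads-I<J}(ii) for the order embedded into $\mathbb Q$: the open cover consisting of all basic intervals that eventually contain no further points of $S$ cannot have a finite subcover, and this forces the desired suborder.
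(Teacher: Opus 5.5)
There is a genuine gap in each direction.

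\textbf{(ii)$\Rightarrow$(i).} Your plan has the right shape: approximate uncovered points stage by stage, extract a convergent subsequence via $\mathsf{StCOH}$, and show that the limit lies in no $U_j$. Your final contradiction step is fine. What is not established is the existence of the approximate points.

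The problem is your notion of ``stage-$N$ covered''. Since the window start $M$ may be as large as $N$, a single appearance of $(N,a,r)$ in some $U_i$ already counts. For example, $U_0=\{(N,\frac12,2):N\text{ even}\}\cup\{(N,\frac12,3):N\text{ odd}\}$ represents the empty set, yet it makes every $[\sigma]$ stage-$N$ covered for every $N$. So your claim is false that covering at all large stages gives, via $\mathsf B\Sigma^0_2$, a genuine subcover $[0,1]\subseteq\bigcup_{i\le K}U_i$: the covering balls can change from stage to stage, and none of them need persist. Two further problems:
\begin{itemize}
\item For fixed $K$, the limit point need not lie in $U_0\cup\dots\cup U_K$, so that branch of your argument gives no contradiction.
\item Without a bound on the code of $(a,r)$, your notion is $\Sigma^0_1$, not decidable.
\end{itemize}

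The missing idea is to fix the \emph{start} of the window before letting its end grow. Call $(a,r)$ $[n,N]$-active if some $i<n$ has $(n',a,r)\in U_i$ for all $n\le n'\le N$. Let $\mathcal B_{n,N}$ be the set of such balls whose code is below $n$. For fixed $n$, this finite set shrinks as $N$ grows, so it stabilises. A ball that stays active is genuinely contained in some $U_i$ with $i<n$; this uses only $\Sigma^0_1$-collection over $i<n$, not $\mathsf B\Sigma^0_2$. Hence, if there is no finite subcover, each $n$ has an $N(n)$, found by a decidable search, with $[0,1]\not\subseteq\bigcup\mathcal B_{n,N(n)}$. Take rationals $x_n\in[0,1]$ outside these balls, and then argue as in your third paragraph.

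\textbf{(i)$\Rightarrow$(ii).} You set up the right sets but stop just before an easy conclusion.
\begin{itemize}
\item Apply Heine--Borel to the family $U_{N,a,r}=\{(n,a,r):q_i\notin B_r(a)\text{ for }N\le i\le n\}$, indexed by $(N,a,r)$. Do not apply it to a single union, which is trivially its own finite subcover.
\item Finitely many of these sets, with indices $N\le N_0$, all miss $q_{N_0}$. So the family has no finite subcover, and by (i) it does not cover $[0,1]$; no $\mathsf{IPP}$ is needed for this.
\item This gives a real $x=(x_j)\in[0,1]$ such that every $B_r(a)\ni x$ contains $q_i$ for arbitrarily large $i$.
\item A decidable search for strictly increasing $i(n),j(n)$ with $|q_{i(n)}-x_{j(n)}|<1/n$ then yields a Cauchy subsequence.
\item Lemma~\ref{lem:kreuzer-bolzano} now gives $\mathsf{StCOH}$, including $\mathsf{IPP}$.
\end{itemize}

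So your remark that ``the complement contains no clean single point'' is mistaken. The subsequent detour through $\mathsf{CADS}$ and a suborder $S$ is not an argument, since it presupposes the $S$ to be constructed. Your separate derivation of $\mathsf{IPP}$ is unnecessary. It is also too vague to check: a single $U_n$ from Proposition~\ref{prop:intersections}, together with one complement of a small enough neighbourhood of $0$, already covers, so the cover you sketch would have a finite subcover.
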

\begin{proof}
    We first show that~(i) implies~(ii). In view of Lemma~\ref{lem:kreuzer-bolzano}, it suffices to~prove that any sequence of rationals~$q_i\in[0,1]$ has a subsequence that is Cauchy. Given a natural number~$N$ as well as rationals~$a$ and $r>0$, we set
    \begin{equation*}
        U_{N,a,r}=\{(n,a,r)\in\mathbb N\times\mathbb Q\times\mathbb Q_{>0}:q_i\notin B_r(a)\text{ for }N\leq i\leq n\}.
    \end{equation*}
    No finite collection of (open sets represented by) the $U_{N,a,r}$ can cover~$[0,1]$. To see this, note that $q_i\in U_{N,a,r}$ requires~$q_i\in B_r(a)$ as well as $(n,a,r)\in U_{N,a,r}$ for all sufficiently large~$n$ (here in fact for all~$n$), which forces~$i<N$. So by~(i), we get a real number~$x=(x_j)\in[0,1]$ that is not contained in any of the sets~$U_{N,a,r}$. This means that all~$N,a,r$ validate
    \begin{equation*}
        x\in B_r(a)\quad\Rightarrow\quad q_i\in B_r(a)\text{ for some }i\geq N.
    \end{equation*}
    We recursively search for two strictly increasing sequences of indices~$i(n)$ and~$j(n)$ with $|q_{i(n)}-x_{j(n)}|<1/n$. In order to see that these always exist, first pick $j(n)$ such that we have $|x-x_{j(n)}|\leq1/3n$ and $j(n)>j(n-1)$ in case~$n>0$. Now take a rational~$a$ with $x\in B_{1/3n}(a)$. By the above, we find an~$i(n)$ with $q_{i(n)}\in B_{1/3n}(a)$ and $i(n)>i(n-1)$ in case~$n>0$. We indeed get
    \begin{equation*}
        \left|q_{i(n)}-x_{j(n)}\right|\leq\left|q_{i(n)}-a\right|+\left|a-x\right|+\left|x-x_{j(n)}\right|<\frac1n.
    \end{equation*}
    It follows that the~$q_{i(n)}$ form a Cauchy sequence (with limit~$x$). Indeed, for any~$\varepsilon>0$ we find an~$N\geq 3/\varepsilon$ such that all $m,n\geq N$ validate $|x_{j(m)}-x_{j(n)}|\leq\varepsilon/3$ for all~$n\geq N$. When we have $m,n\geq N$, we obtain
    \begin{equation*}
        \left|q_{i(m)}-q_{i(m)}\right|\leq\left|q_{i(m)}-x_{j(m)}\right|+\left|x_{j(m)}-x_{j(n)}\right|+\left|x_{j(n)}-q_{i(n)}\right|<\varepsilon.
    \end{equation*}

    To establish that~(ii) implies~(i), we consider a family of open sets~$U_i$. Say that a pair $(a,r)\in\mathbb Q\times\mathbb Q_{>0}$ is $[n,N]$-active if there is some~$i<n$ such that $(n',a,r)\in U_i$ holds for $n\leq n'\leq N$. We write $\mathcal B_{n,N}$ for the set of balls~$B_r(a)$ such that~$(a,r)$ has code below~$n$ and is $[n,N]$-active. Assuming that $(U_i)$ has no finite subcover, each~$n$ admits an~$N=N(n)\geq n$ such that $[0,1]$ is not covered by~$\mathcal B_{n,N}$. To see this, assume the claim is false for some~$n$. Since~$\mathcal B_{n,N}$ shrinks as the second index grows, it must stabilize at some~$N$. For any~$B_r(a)\in\mathcal B_{n,N}$ this means that the pair $(a,r)$ is $[n,N']$-active for all~$N'\geq N$. But then we have~$B_r(a)\subseteq U_i$ for some~$i<n$. Indeed, if this was false, we would find~$n_i'\geq n$ with $(n_i',a,r)\notin U_i$. By bounded collection, we would get an~$N'$ above all~$n_i'$ with~$i<n$. So $(a,r)$ would not be~$[n,N']$-active, against the assumption that~$\mathcal B_{n,N}$ has stabilized. Now since~$\mathcal B_{n,N}$ is a cover of~$[0,1]$, the same holds for~$(U_i)_{i<n}$, which contradicts our assumption.

Let us choose rationals~$x_n\in[0,1]$ that are not contained in any ball in~$\mathcal B_{n,N(n)}$. By~$\mathsf{StCOH}$ we learn that some subsequence $x=(x_{n(i)})\in[0,1]$ is Cauchy and hence a real number (see again Lemma~\ref{lem:kreuzer-bolzano}). \mbox{We show that $x$ is contained in no~$U_j$.} Assuming the contrary, the $x_{n(i)}$ eventually lie in a ball~$B_r(a)$ such that $(n',a,r)\in U_j$ holds for all~$n'$ above some~$n>\max\{j,(a,r)\}$. For any~$i$ with $n(i)\geq n$, it follows that we have $B_r(a)\in\mathcal B_{n(i),N(n(i))}$, which contradicts the choice of~$x_{n(i)}$.
\end{proof}

To conclude this section, we show that a sequential version of the Heine-Borel theorem is strong. In this respect, our setting is different from the classical approach with fast Cauchy sequences, where the sequential version is still as weak as $\mathsf{WKL}_0$ (see Theorem~IV.1.6 of~\cite{simpson09}). Similar observations can be made in other cases, e.g., for the intermediate value theorem. The relevance of the sequential results has been discussed in the introduction.

\begin{proposition}[$\mathsf{RCA}_0$]\label{prop:heine-borel-unif}
    The following are equivalent over $\mathsf{RCA}_0$:
    \begin{enumerate}[label=(\roman*)]
        \item Given any open sets $U_{ni}$ with $[0,1]\subseteq\bigcup_{i\in\mathbb N}U_{ni}$ for all~$n\in\mathbb N$, there is a function~$h\colon\mathbb N\to\mathbb N$ such that all~$n$ validate $[0,1]\subseteq\bigcup_{i\leq h(n)}U_{ni}$.
        \item We have arithmetical comprehension (i.e., the main axiom of~$\mathsf{ACA}_0$).
    \end{enumerate}
\end{proposition}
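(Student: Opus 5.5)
For the direction from~(ii) to~(i), I would work in $\mathsf{ACA}_0$, which proves $\mathsf{StCOH}$ (because $\mathsf{RT}^2_2$ holds there). The plan is to make the Heine-Borel theorem uniform by finding the witness arithmetically.

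\begin{itemize}[label=--]
\item By Theorem~\ref{thm:Heine-Borel}, each cover $(U_{ni})_{i\in\mathbb N}$ admits some $N$ with $[0,1]\subseteq\bigcup_{i\leq N}U_{ni}$.
\item The relation ``$[0,1]\subseteq\bigcup_{i\leq N}U_{ni}$'' is arithmetical in the codes: it says that every real in $[0,1]$, i.e.\ every Cauchy sequence, has a certain $\Sigma^0_2$ property, so it is a priori $\Pi^1_1$.
\item To get rid of the set quantifier, I would use the argument from the proof of Theorem~\ref{thm:Heine-Borel}. That argument shows that $(U_{ni})_{i\leq N}$ covers $[0,1]$ if and only if some $n'$ admits an $M$ with $[0,1]$ covered by finitely many rational balls $B_r(a)$, each of which satisfies $(k,a,r)\in U_{ni}$ for some $i\leq N$ and all $k\geq M$.
\item The ``if'' part is clear. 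The ``only if'' part follows from the construction of the points $x_n$ and the Cauchy subsequence. In $\mathsf{ACA}_0$ the finite rational covering of $[0,1]$ by finitely many intervals is even decidable.
\end{itemize}

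The result is an arithmetical predicate $P(n,N)$, and arithmetical comprehension defines $h(n)=\min\{N:P(n,N)\}$.

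The main work lies in the reversal from~(i) to~(ii). Over $\mathsf{RCA}_0$ it suffices to show that the range of an arbitrary function $f\colon\mathbb N\to\mathbb N$ exists (Lemma~III.1.3 of~\cite{simpson09}). For each $n$, I would build a cover $(U_{ni})_{i}$ of $[0,1]$ in which the number of sets needed for a finite subcover reveals whether $n\in\rng(f)$.

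\begin{itemize}[label=--]
\item Let $U_{n0}$ be a set that is all of $\mathbb R$ as long as no $j$ with $f(j)=n$ has appeared. Concretely, it consists of the tuples $(k,0,2)$ for which no $j<k$ satisfies $f(j)=n$. If $n\in\rng(f)$, then $U_{n0}$ is empty; otherwise it is $B_2(0)\supseteq[0,1]$.
\item For $i\geq1$, let $U_{ni}$ always be the ball $B_2(0)$, say $U_{ni}=\{(k,0,2):k\in\mathbb N\}$. Then $\bigcup_i U_{ni}$ covers $[0,1]$ for every $n$.
\item This choice is trivial, because $h(n)\geq1$ always suffices. So I need a cover in which the index of the useful set encodes the stage at which $n$ enters the range.
\item Refined construction: let $U_{ni}$ be $B_2(0)$ exactly when $i$ is at least the least $j$ with $f(j)=n$, plus one. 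This is coded dynamically by tuples $(k,0,2)$ for those $k$ with $\exists j<k\,(j<i\wedge f(j)=n)$, with $U_{n0}$ as above.
\end{itemize}

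Now $[0,1]$ is covered by $\bigcup_i U_{ni}$ in both cases. Moreover, every finite subcover up to $h(n)$ must contain either $U_{n0}$ with $n\notin\rng(f)$, or some $U_{ni}$ with $i>j$ for a witness $j$. Hence $n\in\rng(f)$ holds exactly when some $j<h(n)$ has $f(j)=n$, which is decidable, so $\Delta^0_1$-comprehension yields $\rng(f)$.

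The step I expect to be delicate is the ``only if'' characterization in the forward direction, namely verifying carefully in $\mathsf{ACA}_0$ that the argument of Theorem~\ref{thm:Heine-Borel} yields an arithmetical criterion for a finite family to cover. Alternatively, I could avoid this by noting that in $\mathsf{ACA}_0$ every real has a fast representative, so the condition ``$x\notin\bigcup_{i\leq N}U_{ni}$ for some $x\in[0,1]$'' can be rewritten through a $\Pi^0_1$ class. By $\mathsf{WKL}$ (provable in $\mathsf{ACA}_0$), the nonemptiness of that class is arithmetical.
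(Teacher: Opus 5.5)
Your proposal is correct and follows essentially the same route as the paper. The reversal codes $\rng(f)$ the way the paper does: your cumulative $U_{ni}$ for $i\geq 1$ is only a variant of the paper's $U_{n,i+1}$, which is $(-1/2,3/2)$ precisely when $f(i)=n$. The forward direction likewise combines the Heine-Borel theorem (available via $\mathsf{StCOH}$ in $\mathsf{ACA}_0$) with an arithmetical description of coverings by finitely many rational balls.

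The paper avoids your ``only if'' characterization. It splits each $U_{ni}$ into pieces $U'_{nk}$, namely the tuples of $U_{ni}$ whose ball has code at most $j$, where $k=\pi(i,j)$. It then applies Heine-Borel to this refined cover directly and takes $h(n)$ to be the least $K$ with $[0,1]\subseteq\bigcup_{k\leq K}U'_{nk}$. That same move, Heine-Borel applied to the basic pieces, is also the cleanest justification of your ``only if'' step.
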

\begin{proof}
    In order to see that~(i) implies~(ii), we show that the range~$\rng(f)$ of any function~$f\colon\mathbb N\to\mathbb N$ exists as a set (cf.~Lemma~III.1.3 of~\cite{simpson09}). Let $U_{n0}$ consist of all triples~$(k,1/2,1)$ such that there is no~$m<k$ with $f(m)=n$. If we also write~$U_{n0}$ for the open set that this represents, then we have
    \begin{equation*}
        U_{n0}=\begin{cases}
            \left(-1/2,3/2\right) & \text{if $n\notin\rng(f)$},\\
            \emptyset & \text{otherwise}.
        \end{cases}
    \end{equation*}
    Also, it is straightforward to represent open sets with
    \begin{equation*}
        U_{n,i+1}=\begin{cases}
            \left(-1/2,3/2\right) & \text{if $f(i)=n$},\\
            \emptyset & \text{otherwise}.
        \end{cases}
    \end{equation*}
    We clearly have $[0,1]\subseteq\bigcup_{i\in\mathbb N}U_{ni}$ for all~$n\in\mathbb N$. Now if we have $[0,1]\subseteq\bigcup_{i\leq h(n)}U_{ni}$, then we obtain
    \begin{equation*}
        \rng(f)=\{n\in\mathbb N:f(i)=n\text{ for some }i<h(n)\},
    \end{equation*}
    where the right side can be formed by~$\Delta^0_1$-comprehension.

    To establish that~(ii) implies~(i), it suffices to turn~$[0,1]\subseteq\bigcup_{i\leq I}U_{ni}$ into an arithmetical property. Using the Cantor pairing function $\pi$, we define
    \begin{equation*}
        U_{nk}'=\big\{(m,a,r)\in U_i:(a,r)\text{ has code at most }j\}\quad\text{for}\quad k=\pi(i,j).
    \end{equation*}
    Crucially, the represented open sets are finite unions of balls with rational endpoints. We thus have an arithmetical definition of
    \begin{equation*}
        h(n)=\min\left\{K\in\mathbb N:[0,1]\subseteq\bigcup_{k\leq K}U_{nk}'\right\},
    \end{equation*}
    which is always defined when we have $[0,1]\subseteq\bigcup_{i\in\mathbb N}U_{ni}$ and hence $[0,1]\subseteq\bigcup_{k\in\mathbb N}U_{nk}'$, due to the previous theorem. As we have $U_{nk}'\subseteq U_{ni}$ for $k=\pi(i,j)\geq i$, we can conclude that we have $[0,1]\subseteq\bigcup_{i\leq h(n)}U_{ni}$ for all~$n\in\mathbb N$.
\end{proof}

\section{Further properties of continuous functions}

In Section~\ref{sect:functions1} we have begun the analysis of continuous functions in our setting. Here we continue with results about continuous functions that rely on open sets or on the cohesive principle, which were discussed in Sections~\ref{sect:open} and~\ref{sect:cohesive}, respectively.

\begin{theorem}\label{thm:StCADSContinuous}
The following are equivalent over $\mathsf{RCA}_0$:
\begin{enumerate}[label=(\roman*)]
\item The strong cohesive principle $\mathsf{StCOH}$ holds.
\item Any continuous function~$f\colon[0,1]\to\mathbb R$ is uniformly continuous.
\item Any continuous function $f\colon[0,1]\to\mathbb R$ has a maximum and a minimum. 
\item Any continuous function $f\colon [0,1] \to \mathbb{R}$ is bounded.
\end{enumerate}
\end{theorem}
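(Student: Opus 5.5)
I plan to prove (i)$\Rightarrow$(ii) and (i)$\Rightarrow$(iii), note (iii)$\Rightarrow$(iv) and (ii)$\Rightarrow$(iv), and close the circle with (iv)$\Rightarrow$(i).

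For (i)$\Rightarrow$(ii), I argue by contradiction in the style of the reversal in Theorem~\ref{thm:Heine-Borel}. Suppose that $f$ is not uniformly continuous. Then for some rational $\varepsilon>0$ and every $n$ there are rationals $p_n,q_n\in[0,1]$ with $|p_n-q_n|<1/n$ and $|f(p_n)-f(q_n)|>\varepsilon$. These can be found by a recursive search if I require the rational approximations $f(p_n)_k,f(q_n)_k$ to differ by more than $\varepsilon$ for all $k$ beyond some point. That condition is $\Sigma^0_2$, so I would work with the $\Sigma^0_2$-version and search instead for witnesses of $|f(p)_k-f(q)_k|>2\varepsilon$ at a large stage~$k$, choosing stages carefully.

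A cleaner route is to use Heine-Borel directly. For each rational pair $(a,r)$ and $N$, form the open set of points $x$ with $B_r(a)\ni x$ on which $f$ oscillates by less than $\varepsilon/2$. I would code it via tuples $(n,a,r)$ such that $|f(q)_i-f(q')_j|<\varepsilon/2$ for all rationals $q,q'\in B_r(a)$ below $n$ and $N\le i,j\le n$. Definition~\ref{def:continuous} shows that these sets cover $[0,1]$. Theorem~\ref{thm:Heine-Borel} then yields a finite subcover, and a Lebesgue-number argument for finitely many rational intervals, using $\mathsf{IPP}$ to bound the finitely many $N$, gives a uniform~$\delta$.

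For (i)$\Rightarrow$(iii), I first get boundedness and uniform continuity. I then take rationals $q_n$ whose approximate values $f(q_n)_{k_n}$ are within $1/n$ of the supremum over a grid, using uniform continuity to reduce to finitely many grid points. By Lemma~\ref{lem:kreuzer-bolzano} I pass to a Cauchy subsequence with limit $x$, and continuity shows that $f(x)$ is the maximum. The subtle point is that the supremum itself is only approximable. To handle this, I would choose $q_n$ maximizing $f(\cdot)_{k}$ over the $2^{-n}$-grid at a stage $k$ beyond the uniform Cauchy index; since $\mathsf{RCA}_0$ may not provide that index as a function, I take a stage sequence growing along $n$ and argue in the limit.

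For (iv)$\Rightarrow$(i), I use Lemma~\ref{lem:kreuzer-bolzano}. Given rationals $q_i\in[0,1]$ without a Cauchy subsequence, I mimic the Heine-Borel reversal. For each rational ball I record, via an open set as there, whether it eventually contains no further $q_i$. Proposition~\ref{pro:ContinuousOpen}(b) turns the sets $U_{N,a,r}$ into functions $g_{N,a,r}$, and Lemma~\ref{lem:inf-sum-fct} sums $\sum 2^{-k}/g$-type expressions into an unbounded continuous function. Since $\mathsf{IPP}$ may not be available, I must derive $\mathsf{IPP}$ separately first: a failure of $\mathsf{IPP}$ yields an unbounded continuous function by Lemma~\ref{lem:fin-sets-bounded}, taking the piecewise-linear interpolation of the unbounded finite family of reals. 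I expect this reversal, namely building a continuous unbounded $f$ with domain all of $[0,1]$ from a sequence lacking Cauchy subsequences, to be the main obstacle, since verifying the domain condition of Definition~\ref{def:continuous} at every real requires the $q_i$ to leave every neighbourhood in a uniform way. I would try defining $f(q)_n=\sum_{i\le n,\,|q-q_i|\text{ small}} h_i(q)$ with spikes $h_i$ that are deleted once later $q_j$ appear nearby.
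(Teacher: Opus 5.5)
Your forward directions are sound and close to the paper. (i)$\Rightarrow$(ii) is its Heine--Borel argument; the paper avoids your Lebesgue-number step by letting $U_m$ consist of balls $B_{1/m}(a)$ whose oscillation is controlled on $B_{2/m}(a)$ at stages $\geq m$, so the index of the finite subcover is directly the uniform bound. (i)$\Rightarrow$(iii) is its grid-maximizer argument at stage $n$ followed by Lemma~\ref{lem:kreuzer-bolzano}. The gaps are in closing the circle. First, (ii)$\Rightarrow$(iv) is not a mere note over $\mathsf{RCA}_0$. Going from uniform continuity to boundedness means bounding the finitely many reals $|f(i/N)|$ for $i\leq N$, which is Lemma~\ref{lem:fin-sets-bounded} and hence $\mathsf{IPP}$; this is why Remark~\ref{rem:UCtoBounded} is stated over $\mathsf{RCA}_0+\mathsf{IPP}$. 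Your separate derivation of $\mathsf{IPP}$ only refutes (iv), not (ii): the natural argument that your piecewise-linear interpolation is not uniformly continuous goes through boundedness, and so through $\mathsf{IPP}$ again. Thus (ii)$\Rightarrow$(i) is not established. The paper proves it directly by showing that its counterexample is not uniformly continuous.

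Second, the reversal is never actually carried out, and the obstacle you name is a misconception: no uniformity is needed. By Lemma~\ref{lem:kreuzer-bolzano}, a failure of $\mathsf{StCOH}$ (including a failure of $\mathsf{IPP}$, so no separate case is needed) gives rationals $q_n\in[0,1]$ without a Cauchy subsequence. The proof of Theorem~\ref{thm:Heine-Borel} then shows that every real $x$ admits $\delta>0$ and $N$ with $q_n\notin B_\delta(x)$ for all $n\geq N$; otherwise a subsequence converging to $x$ could be found by recursive search. This pointwise fact suffices for the plain hat function $f(r)_i=\max_{n\leq i}g_n(r)$ with $g_n(r)=\max(0,2^n(1-2^n|q_n-r|))$:
\begin{itemize}
\item once $2^{-N}\leq\delta/2$, only hats with $n<N$ are nonzero on $B_{2^{-N}}(x)$, which yields the condition of Definition~\ref{def:continuous};
\item $f(q_n)\geq 2^n$ refutes (iv);
\item some $n$ has $2^{-n-1}<\delta$ and $q_m\notin B_{2^{-n}}(q_n)$ for all $m>n$ (else a Cauchy subsequence could be extracted), and then a point $y$ with $|y-q_n|=2^{-n-1}$ has $f(y)\leq 2^{n-1}$, refuting (ii) in $\mathsf{RCA}_0$.
\end{itemize}
No deletion of spikes is required. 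Your open-set route, as written, is ill-defined: ``$\sum 2^{-k}/g$'' divides by functions that vanish. What would work is $1/g$ with $g=\sum_k 2^{-k}g_k$, enumerating the $U_{N,a,r}$ with indices $k\geq N$. The covering fact then gives $g>0$ on $[0,1]$ and $g(q_i)\leq 2^{-i}$. However, this needs $\mathsf{IPP}$ for Lemma~\ref{lem:inf-sum-fct}, and therefore still does not handle (ii).
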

\begin{proof}
    We first show that~(i) implies~(ii). According to Definition~\ref{def:continuous}, a continuous function $f\colon[0,1]\to\mathbb R$ is given as a family of rationals $f(q)_i$ for $q\in\mathbb Q$ and $i\in\mathbb N$, where all $x\in[0,1]$ and $\varepsilon>0$ admit an~$N\in\mathbb N$ with
\begin{equation*}
    \big|f(p)_i-f(q)_j\big|<\varepsilon\quad\text{for all rationals }p,q\in B_{1/N}(x)\text{ and all }i,j\geq N.
\end{equation*}
    In the presence of cohesiveness and for the compact domain~$[0,1]$, we show that this implies the following uniform condition (which we display for future reference):
    \begin{equation}\label{eq:unif-cont}
    \parbox{.8\textwidth}{Every~$\varepsilon>0$ admits an~$N\in\mathbb N$ such that $|f(p)_i-f(q)_j|<\varepsilon$ holds for all rationals~$p,q\in[0,1]$ with $|p-q|<1/N$ and for all~$i,j\geq N$.}
    \end{equation}
    Write $\mathbb Q[n]$ for the set of rationals with code below~$n\in\mathbb N$. For fixed~$\varepsilon>0$, let~$U_m$ be the set of triples~$(n,a,1/m)$ with $a\in\mathbb Q$ and $m,n\in\mathbb N$ that validate
\begin{equation*}
|f(p)_i-f(q)_j|<\varepsilon\quad\text{for all }p,q\in B_{2/m}(a)\cap\mathbb Q[n]\text{ and }m\leq i,j\leq n.
\end{equation*}
To see that any real number~$x\in[0,1]$ lies in one of the represented open sets~$U_m$, take a~$k\in\mathbb N$ with $|f(p)_i-f(q)_j|<\varepsilon$ for all~$p,q\in B_{1/k}(x)\cap\mathbb Q$ and all~$i,j\geq k$. For $m=3k$ we pick a rational $a\in B_{1/m}(x)$, which yields $B_{2/m}(a)\subseteq B_{1/k}(x)$. So we have $(n,a,1/m)\in U_m$ for all~$n\in\mathbb N$, which yields $x\in B_{1/m}(a)\subseteq U_m$. In view of Theorem~\ref{thm:Heine-Borel}, we can use the Heine-Borel theorem to find an~$N\in\mathbb N$ such that $[0,1]$ is already covered by the open sets~$U_m$ with~$m\leq N$. We show that this~$N$ validates~(\ref{eq:unif-cont}) for our fixed~$\varepsilon$. Consider rationals~$p,q\in[0,1]$ with $|p-q|<1/N$. Take an~$m\leq N$ with $p\in U_m$, which means that $p$ lies in some ball~$B_{1/m}(a)$ with~$(n,a,1/m)\in U_m$ for all sufficiently large~$n$ (hence in fact for all~$n$). In view of $q\in B_{2/m}(a)$, the definition of~$U_m$ yields~$|f(p)_i-f(q)_j|<\varepsilon$ for all~$i,j\geq N$.

To complete the direction from~(i) to~(ii), we show that uniform continuity follows from (\ref{eq:unif-cont}). Assume that the latter holds for some~$\varepsilon>0$ and~$N\in\mathbb N$. Let us consider arbitrary reals~$x,y\in [0,1]$ with $|x-y|<1/(3N)$. We may assume that these are represented as Cauchy sequences $x=(x_n)$ and $y=(y_n)$ with rational approx\-imations $x_n,y_n\in[0,1]$. Take an~$M\geq N$ such that $n\geq M$ implies $|x_n-x|<1/(3N)$ and $|y_n-y|<1/(3N)$, so that we get $|x_n-y_n|<1/N$ and thus
\begin{equation*}
\left|f(x_n)_n-f(y_n)_n\right|<\varepsilon.
\end{equation*}
Since $f(x)$ is defined as the Cauchy sequence~$(f(x_n)_n)_{n\in\mathbb N}$ (see Definition~\ref{def:continuous}), this yields $|f(x)-f(y)|\leq\varepsilon$, as needed for uniform continuity.

Next, we show that~(i) implies~(iii). By symmetry, it suffices to prove that any continuous function~$f\colon[0,1]\to\mathbb R$ has a maximum. For $n\in\mathbb N$, pick $x_n=2^{-n}\cdot i$ with $0\leq i\leq 2^n$ such that $f(x_n)_n$ is as large as possible. Using cohesiveness in the form of Lemma~\ref{lem:kreuzer-bolzano}, we get a subsequence $x=(x_{n(i)})_{i\in\mathbb N}$ that is Cauchy and hence a real. To see that $f(x)$ is the maximum value, we prove $f(x)\geq f(y)-\varepsilon$ for arbitrary~$y\in[0,1]$ and $\varepsilon>0$. As before, we may assume that~$y$ is given as a Cauchy sequence $(y_n)$ with $y_n\in[0,1]$ for all~$n\in\mathbb N$. By the proof that~(i) implies~(ii), we find an~$n$ such that $|f(p)_i-f(q)_j|<\varepsilon/4$ holds for any~$p,q\in[0,1]$ with~$|p-q|<2^{-n}$ and any~$i,j\geq n$.  Now pick an~$i\geq n$ such that we have $|f(x)-f(x_{n(i)})_i|<\varepsilon/4$ as well as $|f(y)-f(y_{n(i)})_{n(i)}|<\varepsilon/4$. For some rational $q=2^{-n(i)}\cdot j$ with $0\leq j\leq 2^{n(i)}$, we have $|y_{n(i)}-q|<2^{-n(i)}\leq 2^{-n}$. By the choice of~$x_{n(i)}$, we get
\begin{equation*}
f(x)>f(x_{n(i)})_i-\frac\varepsilon4>f(x_{n(i)})_{n(i)}-\frac\varepsilon2\geq f(q)_{n(i)}-\frac\varepsilon2>f(y_{n(i)})_{n(i)}-\frac{3\varepsilon}4>f(y)-\varepsilon,
\end{equation*}
as desired.

Given that~(iv) is an immediate consequence of~(iii), it remains to show that each of~(ii) and~(iv) implies~(i). We argue by contraposition. In view of of Lemma~\ref{lem:kreuzer-bolzano}, a failure of~(i) gives us a sequence of rationals~$q_n\in[0,1]$ of which no subsequence is Cauchy. To falsify~(ii) and~(iv), we construct a continuous function~$f\colon[0,1]\to\mathbb R$ that is neither uniformly continuous nor bounded. The idea is to define~$f$ as a maximum of ever higher and narrower hats over the~$q_n$. Specifically, for $r\in\mathbb Q$ and $n\in\mathbb N$ we consider the approximations
\begin{equation*}
    f(r)_i=\max_{n\leq i} g_n(r)\quad\text{with}\quad g_n(r)=\max\left(0,2^n\cdot\big(1-2^n\cdot|q_n-r|\right)\big).
\end{equation*}
To see that these represent a continuous function~$f\colon[0,1]\to\mathbb R$, we consider an arbitrary real~$x\in[0,1]$. There are~$N\in\mathbb N$ and $a,r\in\mathbb Q$ with $r>0$ such that we have $x\in B_r(a)$ but $q_n\notin B_r(a)$ for all~$n\geq N$. Indeed, if this was false, the proof of Theorem~\ref{thm:Heine-Borel} would yield a subsequence of~$(q_n)$ that is Cauchy. Equivalently, we have a $\delta>0$ such that $q_n\notin B_\delta(x)$ holds for all~$n\geq N$. We may assume~$2^{-N}\leq\delta/2$. For $r\in B_{2^{-N}}(x)$ and $n\geq N$, we get $|q_n-r|>2^{-n}$ and thus~$g_n(r)=0$. So we have
\begin{equation*}
f(r)_i=\max_{n\leq N}g_n(r)\quad\text{for }r\in B_{2^{-N}}(x)\text{ and }i\geq N.
\end{equation*}
Still assuming~$i\geq N$ as well as $p,q\in B_{2^{-N}}(x)$, we can conclude
\begin{equation*}
    \left|f(p)_i-f(r)_j\right|\leq\max_{n\leq N}\left|g_n(p)-g_n(r)\right|\leq 2^{2N}\cdot|p-r|.
\end{equation*}
Given some~$\varepsilon>0$, we now pick an~$M>\max\{2^N,2^{2N+1}/\varepsilon\}$ to get
\begin{equation*}
    \left|f(p)_i-f(r)_j\right|\leq 2^{2N}\cdot\frac2M<\varepsilon\quad\text{for all }p,r\in B_{1/M}(x)\cap\mathbb Q\text{ and }i,j\geq M.
\end{equation*}
According to Definition~\ref{def:continuous}, this is the condition that representations of continuous functions need to satisfy.

The continuous function $f\colon[0,1]\to\mathbb R$ with the above representation is clearly unbounded: For any~$n\in\mathbb N$ and all $i\geq n$ we have $f(q_n)_i\geq g_n(q_n)=2^n$, so that the real number $f(q_n)=(f(q_n)_i)_{i\in\mathbb N}$ is also at least~$2^n$. It remains to show that~$f$ is not uniformly continuous. Towards a contradiction, assume there is a $\delta>0$ such that we have $|f(x)-f(y)|<1/2$ for all~$x,y\in[0,1]$ with $|x-y|<\delta$. There must be an~$n\in\mathbb N$ with $2^{-n-1}<\delta$ such that $q_N\notin B_{2^{-n}}(q_n)$ holds for all~$N>n$. Otherwise, we could recursively search for a strictly increasing sequence of indices $n(i)$ that validate $|q_{n(i+1)}-q_{n(i)}|<2^{-n(i)}\leq 2^{-i}$, which would yield a subsequence $(q_{n(i)})_{i\in\mathbb N}$ that is Cauchy, against our assumption. Now for~$n$ as indicated, we set $x=q_n$ and choose $y\in[0,1]$ with $|x-y|=2^{-n-1}<\delta$. For $i\geq n$ we have $f(q_n)_i\geq g_n(q_n)=2^n$, which yields~$f(x)\geq 2^n$. For $N>n$, we have
\begin{equation*}
    2^{-n}\leq|q_N-q_n|\leq|q_N-y|+|y-q_n|=|q_N-y|+2^{-n-1},
\end{equation*}
which yields $|q_N-y|\geq 2^{-n-1}\geq 2^{-N}$ and hence~$g_N(y)=0$. Let us also observe that $m<n$ entails $g_m(y)\leq 2^m\leq 2^{n-1}$ and that we have
\begin{equation*}
    g_n(y)=2^n\cdot(1-2^n\cdot|q_n-y|)=2^{n-1}.
\end{equation*}
So we have $f(y)_i\leq 2^{n-1}$ for any~$i\in\mathbb N$, which shows~$f(y)\leq 2^{n-1}$. Together, it follows that we have $|f(x)-f(y)|\geq 2^n-2^{n-1}=2^{n-1}\geq1/2$ despite $|x-y|<\delta$, which is the desired contradiction with uniform continuity.
\end{proof}

For later use, we record the following.

\begin{remark}\label{rem:UCtoBounded}
Already in $\mathsf{RCA}_0+\mathsf{IPP}$, we can show that a uniformly continuous function $f\colon[0,1]\to\mathbb R$ is bounded. Concretely, the assumption yields an $N\in\mathbb{N}$ such that $\vert f(x)-f(y)\vert<1$ holds for all $x,y\in [0,1]$ with $\vert x-y\vert\leq 1/N$. In view of Lemma~\ref{lem:fin-sets-bounded}, we use $\mathsf{IPP}$ to form $K:=\max\{\vert f(a_i)\vert : 0\leq i\leq N\}$ with $a_i:=i/N$. Given any $x\in [0,1]$, we find an $i\leq N$ with $|x-a_i|<1/N$ and hence
\[
\vert f(x)\vert\leq\vert f(x)-f(a_i)\vert +\vert f(a_i)\vert<1+K,
\]
so that $f$ is indeed bounded.
\end{remark}

Using the strong cohesive principle, we can also develop the Riemann integral for continuous functions. Under a partition~$P$ of $[0,1]$, we understand a finite collection of pairwise disjoint intervals with union~$[0,1]$. We put $\Delta P=\max\{\Delta I:I\in P\}$, where $\Delta I$ refers to the usual length of the interval~$I$.

\begin{definition}
A continuous function $f\colon[0,1]\to\mathbb R$ is Riemann integrable if there is a real number~$\int_{[0,1]} f(x)\,dx$ as follows: Any $\varepsilon>0$ admits a $\delta>0$ such that
\begin{equation*}
    \left|\int_{[0,1]} f(x)\,dx-\sum_{I\in P} f(t_I)\cdot\Delta I\right|<\varepsilon
\end{equation*}
holds for any partition~$P$ of $[0,1]$ with $\Delta P<\delta$ and any choice of $t_I\in I$ for~$I\in P$.
\end{definition}

We note that already the definition relies on the pigeonhole principle, which is needed to form finite sums of reals (see Lemma~\ref{lem:fin-sets-bounded} and the remark that follows it). For this reason, we prove the following equivalence over a stronger base theory. Let us recall that the cohesive principle $\mathsf{COH}$ and its strong variant $\mathsf{StCOH}$ are equivalent in the presence of $\mathsf{IPP}$.

\begin{theorem}[$\mathsf{RCA}_0+\mathsf{IPP}$]\label{thm:RiemannInteg}
The following are equivalent:
\begin{enumerate}[label=(\roman*)]
    \item The cohesive principle $\mathsf{COH}$ holds.
    \item Every continuous function~$f\colon[0,1]\to\mathbb R$ is Riemann integrable.
\end{enumerate}
\end{theorem}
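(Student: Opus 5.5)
For the direction from~(i) to~(ii), we work in $\mathsf{RCA}_0+\mathsf{IPP}+\mathsf{COH}$, which amounts to $\mathsf{StCOH}$. By Theorem~\ref{thm:StCADSContinuous}, $f$ satisfies the uniform condition~(\ref{eq:unif-cont}) on the representing rationals. Since $\mathsf{IPP}$ is available, Remark~\ref{rem:UCtoBounded} also shows that $f$ is bounded.

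We then define the candidate integral directly as a rational Cauchy sequence, avoiding any sums of reals in the definition. We set
\begin{equation*}
    y_n=\sum_{k<2^n}f\left(\frac k{2^n}\right)_n\cdot 2^{-n}.
\end{equation*}
Given $\varepsilon>0$, condition~(\ref{eq:unif-cont}) yields an $N$ such that $|f(p)_i-f(q)_j|<\varepsilon$ holds for rationals with $|p-q|<1/N$ and all $i,j\geq N$. For $m,n\geq\max(N,\log N+1)$, comparing the two dyadic Riemann sums interval by interval gives $|y_m-y_n|\leq\varepsilon$. So $y$ is a real.

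Next, we compare $y$ with an arbitrary Riemann sum $\sum_{I\in P}f(t_I)\cdot\Delta I$ for $\Delta P<\delta:=1/(3N)$. This finite sum of reals exists under $\mathsf{IPP}$ by Remark~\ref{rmk:fin-sums}. We represent it pointwise as $\sum_I f((t_I)_n)_n\cdot\Delta I_n$, where $(t_I)_n$ are rational approximations of the tags and $\Delta I_n$ are approximations of the lengths. By $\mathsf{B}\Sigma^0_2$ we obtain a single stage $M$ after which all the finitely many approximations of the endpoints and tags are within $1/(3N)$ of their limits. For large $n$, we then compare the dyadic sum with the partition sum through a common refinement. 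Each piece has mesh below $1/N$, so the error is at most $\varepsilon$ times the total length, plus a boundary error. The boundary error is bounded by the mismatched lengths times the bound on $f$. This bookkeeping is the main technical step: we must make sure that only $\Sigma^0_2$-boundedness over finitely many indices is used, as in the proof of Lemma~\ref{lem:inf-sum-fct}(a).

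For the reversal, assume that $\mathsf{COH}$ fails. Since $\mathsf{IPP}$ holds, $\mathsf{StCOH}$ fails as well. Lemma~\ref{lem:kreuzer-bolzano} then gives rationals $q_n\in[0,1]$ with no Cauchy subsequence. We reuse the hat construction from the proof of Theorem~\ref{thm:StCADSContinuous}, but we now choose the hats so that their areas do not shrink. We take $g_n(r)=\max(0,2^n(1-2^n|q_n-r|))$, whose area is $1$. We restrict to indices that are separated, as established in that proof: for each chosen $n$ there is a ball $B_{2^{-n}}(q_n)$ avoided by all later $q_N$, so the hats are disjoint. As shown there, $f=\max_n g_n$ is continuous on $[0,1]$. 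Suppose $f$ had an integral $J$ with $\delta$ for $\varepsilon=1$. Then two Riemann sums with mesh below $\delta$ would have to differ by less than $2$. Take a hat whose base lies strictly inside some partition interval of width below $\delta$. Choosing the tag of that interval at the peak rather than outside the hat changes the sum by $2^n\cdot\Delta I$, and this can be made larger than $2$ by taking $n$ large, because such a hat exists for infinitely many $n$. Alternatively, and more simply, we can argue through unboundedness: Riemann integrability under $\mathsf{IPP}$ implies boundedness, since a tag at a point with $f(t)>K$ moves the sum by $K\Delta I$. So integrability of all continuous $f$ yields~(iv) of Theorem~\ref{thm:StCADSContinuous}, and hence $\mathsf{StCOH}$. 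I would use this second route, since it needs no separation argument. One only has to check that the partitions with rational endpoints needed for the argument exist in $\mathsf{RCA}_0$.
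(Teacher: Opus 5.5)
Your proposal is correct and takes essentially the paper's route. The integral is the limit of the dyadic sums of the rational approximations $f(k2^{-n})_n$, which are Cauchy by (\ref{eq:unif-cont}) and are compared with arbitrary Riemann sums through a common refinement. You absorb real endpoints and tags directly via $\mathsf B\Sigma^0_2$, whereas the paper first treats rational partitions and then uses $\mathsf{IPP}$ to approximate arbitrary ones. Your reversal (integrability forces boundedness, so Theorem~\ref{thm:StCADSContinuous}(iv) yields $\mathsf{StCOH}$) is the same as the paper's.

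The constants need slight tuning. With $\Delta P<1/(3N)$ and approximations within $1/(3N)$, a dyadic left endpoint can lie up to $1/N+2^{-n}$ from the approximated tag, so shrink $\delta$ and the approximation error a little.
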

\begin{proof}
We first show that (i) implies~(ii). To define a real~$x=(x_n)$ with the property of $\int_{[0,1]} f(x)\,dx$, we consider the partitions into intervals of length $2^{-n}$, i.e., we set
\begin{equation*}
    x_n=\sum_{i=0}^{2^n-1} f\left(i\cdot 2^{-n}\right)_n\cdot2^{-n}.
\end{equation*}
To see that this yields a Cauchy sequence, consider some $\varepsilon>0$. With $\mathsf{StCOH}$ available, (\ref{eq:unif-cont}) lets us pick an~$N\in\mathbb N$ with $|f(p)_m-f(q)_n|<\varepsilon$ for all $m,n\geq N$ and all rationals $p,q\in[0,1]$ with $|p-q|<2^{-N}$. Given $n>m\geq N$, we get
\begin{equation*}
    \left|f\left(i\cdot 2^{-m}\right)_m-f\left((i\cdot 2^{n-m}+j)\cdot 2^{-n}\right)_n\right|<\varepsilon\quad\text{for}\quad 0\leq j<2^{n-m},
\end{equation*}
which yields
\begin{multline*}
    \left|f(i\cdot 2^{-m})_m\cdot2^{-m}-\sum_{j=0}^{2^{n-m}-1}f\left((i\cdot 2^{n-m}+j)\cdot 2^{-n}\right)_n\cdot 2^{-n}\right|\\
    {}\leq2^{-n}\cdot\sum_{j=0}^{2^{n-m}-1}\left|f\left(i\cdot 2^{-m}\right)_m-f\left((i\cdot 2^{n-m}+j)\cdot 2^{-n}\right)_n\right|<\varepsilon\cdot 2^{-m}
\end{multline*}
and then $|x_m-x_n|<\varepsilon$, as needed.

Let us now show that~$x$ has the desired property. We say that a partition~$P$ with points $t_I\in I$ for~$I\in P$ is rational if the~$t_I$ and the endpoints of the intervals~$I$ are rational numbers. Given $\varepsilon>0$, choose~$N\in\mathbb N$ as above. We claim that
\begin{equation*}
    \left|x_n-\sum_{I\in P}f(t_I)_n\cdot\Delta I\right|<\varepsilon
\end{equation*}
holds for any rational partition~$P$ with $\Delta P\leq 2^{-N-1}$ and for all~$n>N$. To see this, take a common refinement of~$P$ and the partition with endpoints~$i/(n+1)$. Then argue as in the proof that $(x_n)$ is Cauchy. By fixing~$P$ while~$n$ goes to infinity, we get $|x-\sum_{I\in P} f(t_I)\cdot\Delta I|\leq\varepsilon$. Finally, in the presence of $\mathsf{IPP}$, all partitions admit arbitrarily good approximations by rational ones.

We now assume~(ii) and derive~(i). Assuming the latter fails, Theorem \ref{thm:StCADSContinuous} gives us a continuous~$f\colon[0,1]\to[0,\infty)$ that is unbounded. If $f$ is Riemann integrable, there are $B$ and $N>0$ such that we have
\begin{equation*}
    \sum_{I\in P}f(t_I)\cdot\Delta I<B
\end{equation*}
for every partition~$P$ with $\Delta P\leq 1/N$ and any choice of $t_I\in I$. Let~$P$ be the partition into intervals with endpoints $i/N$ and pick $t_I\in I$ for $I\in P$ such that we have $f(t_I)\geq B\cdot N$ for some~$I$. We get
\begin{equation*}
    \sum_{I\in P}f(t_I)\cdot\Delta I\geq B\cdot N\cdot\frac1N=B,
\end{equation*}
which yields a contradiction.
\end{proof}

The picture is similar for the Weierstrass approximation theorem. Using the pigeonhole principle, we get the following result for uniformly continuous functions.

\begin{lemma}[$\mathsf{RCA}_0+\mathsf{IPP}$]\label{lem:WeierstrassUniform}
If $f\colon[0,1]\to\mathbb{R}$ is uniformly continuous, any $\varepsilon>0$ admits a polynomial $p$ such that $\vert f(x)-p(x)\vert <\varepsilon$ holds for all $x\in [0,1]$.
\end{lemma}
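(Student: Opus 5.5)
Our plan is to use Bernstein polynomials and the classical quantitative estimate, adapted so that only finitely many real values of $f$ enter (which is where $\mathsf{IPP}$ is used). Since $f$ is uniformly continuous, Remark~\ref{rem:UCtoBounded} yields a rational bound $M$ with $|f(x)|\leq M$ for all $x\in[0,1]$. It also yields, for the given $\varepsilon>0$, a rational $\delta>0$ with $|f(x)-f(y)|<\varepsilon/2$ whenever $|x-y|<\delta$. We then fix $n\in\mathbb N$ with $2M/(n\delta^2)<\varepsilon/2$, say $n>4M/(\varepsilon\delta^2)$, and consider
\begin{equation*}
    p(x)=\sum_{k=0}^n f\left(\frac kn\right)\binom nk x^k(1-x)^{n-k}.
\end{equation*}
Here the coefficients $f(k/n)$ are finitely many reals. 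By Lemma~\ref{lem:fin-sets-bounded} and Remark~\ref{rmk:fin-sums} (i.e., by $\mathsf{IPP}$), the sum is a well-defined real for each $x$. By Lemma~\ref{lem:inf-sum-fct}(a) together with Lemma~\ref{lem:basicContFunctions}, $p$ is a continuous function, and it is a polynomial with real coefficients. If the paper intends polynomials with rational coefficients, we would replace each $f(k/n)$ by a rational $c_k$ with $|c_k-f(k/n)|<\varepsilon/4$ (and use $\varepsilon/4$ in place of $\varepsilon/2$ above). Each $c_k$ exists individually, but choosing them all at once for $k\leq n$ needs a common stage of the Cauchy sequences $f(k/n)$, which is again a $\mathsf B\Sigma^0_2$ application, hence $\mathsf{IPP}$.

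The estimate is the textbook one. We have the polynomial identities $\sum_k b_{n,k}(x)=1$ and $\sum_k (x-k/n)^2 b_{n,k}(x)=x(1-x)/n\leq 1/(4n)$, where $b_{n,k}(x)=\binom nk x^k(1-x)^{n-k}$. These are finite algebraic identities, which hold in $\mathsf{RCA}_0$ for rational $x$ and, passing to the limit pointwise, for real $x$. For a fixed $x\in[0,1]$, we write
\begin{equation*}
    |f(x)-p(x)|\leq\sum_k|f(x)-f(k/n)|\,b_{n,k}(x).
\end{equation*}
We split the indices into those with $|x-k/n|<\delta$, which contribute less than $\varepsilon/2$, and the rest, which contribute at most $2M\sum_k (x-k/n)^2\delta^{-2}b_{n,k}(x)\leq 2M/(4n\delta^2)$. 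Splitting a finite sum of reals by a $\Sigma^0_2$/$\Pi^0_2$ condition on $k$ is not directly available. We avoid it by the pointwise bound
\begin{equation*}
    |f(x)-f(k/n)|\leq\frac\varepsilon2+2M\frac{(x-k/n)^2}{\delta^2}\quad\text{for every }k\leq n,
\end{equation*}
which holds in either case by uniform continuity and boundedness. Multiplying by $b_{n,k}(x)\geq0$ and summing then gives $|f(x)-p(x)|\leq\varepsilon/2+M/(2n\delta^2)<\varepsilon$. The strict inequality is secured by the choice of $n$, with slack to absorb the non-strict inequalities arising from limits.

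The main obstacle is purely formal: to make sure every manipulation of finite sums of reals (linearity, monotonicity of $\sum_k a_kb_k$ in $a_k$ for $b_k\geq0$) is justified. We plan to do this by working at the level of representing sequences, where these are rational identities at each index, and then taking limits. The only nonconstructive inputs are the boundedness of $\{f(k/n):k\leq n\}$ and the existence of a common Cauchy stage, both supplied by $\mathsf{IPP}$ via Lemma~\ref{lem:fin-sets-bounded}.
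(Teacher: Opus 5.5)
Your proposal is correct and follows the paper's proof: Bernstein polynomials $B_n(f)$, boundedness via Remark~\ref{rem:UCtoBounded}, $\mathsf{IPP}$ to form the finite sums, and the same final estimate $\varepsilon/2+M/(2n\delta^2)$. The only difference is a technical refinement. Where the paper splits the sum according to $|k/n-x|<\delta$, your termwise bound $|f(x)-f(k/n)|\leq\varepsilon/2+2M(x-k/n)^2/\delta^2$ avoids having to form that finite index set for a real $x$.
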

\begin{proof}
We recall the well-known proof via Bernstein polynomials, so that the reader can see that it goes through in our system. In the presence of~$\mathsf{IPP}$ (cf.~Remark~\ref{rmk:fin-sums}), we can define 
\begin{equation*}
    B_n(f)(x):=\sum_{k=0}^nf\left(\frac{k}{n}\right)\cdot b_{n,k}(x)\quad\text{with}\quad b_{n,k}(x):=\binom{n}{k}\cdot x^k\cdot(1-x)^{n-k}.
\end{equation*}
Let $\varepsilon>0$ be arbitrary. We show that there is an $n\in\mathbb{N}$ with $\vert B_n(f)(x)-f(x)\vert <\varepsilon$ for all~$x\in[0,1]$. As the binomial formula yields $\sum_{k\leq n}b_{n,k}(x)=1$, we get
\begin{equation*}
B_n(f)(x)-f(x)=\sum_{k=0}^n\left(f\left(\frac{k}{n}\right)-f(x)\right)\cdot b_{n,k}(x).
\end{equation*}
Since $f$ is uniformly continuous, it is bounded (see Remark \ref{rem:UCtoBounded}), say by $b$. Further, there is a $\delta>0$ such that $\vert x-y\vert <\delta$ implies $\vert f(x)-f(y)\vert<\varepsilon/2$ for~$x,y\in [0,1]$. This entails that $\vert B_n(f)(x)-f(x)\vert$ is bounded by
\begin{multline*}
\sum_{\left\vert\frac kn-x\right\vert<\delta}\left\vert f\left(\tfrac{k}{n}\right)-f(x)\right\vert\cdot b_{n,k}(x)+\sum_{\left\vert\frac kn-x\right\vert\geq \delta}\left\vert f\left(\tfrac{k}{n}\right)-f(x)\right\vert\cdot b_{n,k}(x)\\[1.5ex]
{}\leq \frac{\varepsilon}{2}\cdot\sum_{\left\vert\frac{k}{n}-x\right\vert<\delta} b_{n,k}(x)+2b\cdot \sum_{\left\vert\frac{k}{n}-x\right\vert\geq \delta} b_{n,k}(x).
\end{multline*}
By elementary computations (see, e.g., the proof of Theorem~1.1.1 in~\cite{lorentz-bernstein}), one has
\begin{equation*}
    \sum_{\left\vert\frac{k}{n}-x\right\vert\geq \delta} b_{n,k}(x)\leq\frac1{\delta^2}\cdot\sum_{k=0}^n\left(\frac kn-x\right)^2\cdot b_{n,k}(x)=\frac{x\cdot(1-x)}{\delta^2\cdot n}\leq\frac1{4n\cdot\delta^2}.
\end{equation*}
Together with $\sum_{k\leq n}b_{n,k}(x)=1$ as above, we get
\[
\vert B_n(f)(x)-f(x)\vert\leq \frac{\varepsilon}{2}+\frac{b}{2n\cdot\delta^2},
\]
so that choosing $n>b/(\delta^2\cdot\varepsilon)$ suffices.
\end{proof}

If we only assume that the function is continuous, we get the following reversal.

\begin{theorem}\label{thm:weierstrass-approx}
The following are equivalent over $\mathsf{RCA}_0+\ipp$:
\begin{enumerate}[label=(\roman*)]
\item The cohesive principle $\mathsf{COH}$.
\item The Weierstrass approximation theorem: For continuous $f\colon[0,1]\to\mathbb{R}$ and any $\varepsilon>0$, there is a polynomial $p$ with $\vert f(x)-p(x)\vert <\varepsilon$ for all $x\in [0,1]$.
\end{enumerate}
\end{theorem}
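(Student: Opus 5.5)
The forward direction (i)$\Rightarrow$(ii) will be a combination of earlier results. Over $\mathsf{RCA}_0+\ipp$, the principle $\mathsf{COH}$ is equivalent to $\mathsf{StCOH}$. Given a continuous $f\colon[0,1]\to\mathbb R$, Theorem~\ref{thm:StCADSContinuous} then tells us that $f$ is uniformly continuous. Lemma~\ref{lem:WeierstrassUniform}, which is available because $\ipp$ is in the base theory, yields for each $\varepsilon>0$ a polynomial $p$ with $|f(x)-p(x)|<\varepsilon$ on $[0,1]$. Nothing new is needed here.

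For the reversal, we argue by contraposition, in the same way as in the proof of Theorem~\ref{thm:RiemannInteg}. Suppose $\mathsf{COH}$ fails. Since $\ipp$ holds, $\mathsf{StCOH}$ fails as well. By Theorem~\ref{thm:StCADSContinuous}, the direction from (iv) to (i), we obtain a continuous $f\colon[0,1]\to\mathbb R$ that is unbounded. Now assume that (ii) holds and apply it with $\varepsilon=1$ to obtain a polynomial $p$ with $|f(x)-p(x)|<1$ for all $x\in[0,1]$.

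The remaining task is to show, in $\mathsf{RCA}_0+\ipp$, that every polynomial is bounded on $[0,1]$. This is straightforward. A polynomial $p(x)=\sum_{k\le d}c_kx^k$ has finitely many real coefficients. By Lemma~\ref{lem:fin-sets-bounded}, which uses $\ipp$, the finite family $(|c_k|)_{k\le d}$ has a common bound $C$. For $x\in[0,1]$ we then get $|p(x)|\le (d+1)\cdot C$. Alternatively, we could note that $p$ is uniformly continuous on $[0,1]$, with an explicit Lipschitz constant obtained in the same way, and invoke Remark~\ref{rem:UCtoBounded}. Either route gives $|f(x)|<(d+1)\cdot C+1$ for all $x\in[0,1]$, which contradicts the unboundedness of $f$.

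The only point requiring some care is the meaning of ``polynomial''. If coefficients are allowed to be real rather than rational, then forming $p$ as a continuous function and bounding it both rely on the finite sums and bounds supplied by $\ipp$, through Remark~\ref{rmk:fin-sums} and Lemma~\ref{lem:fin-sets-bounded}. With rational coefficients the bound is immediate even in $\mathsf{RCA}_0$. In either case, the main content of the reversal is carried entirely by the unbounded function from Theorem~\ref{thm:StCADSContinuous}.
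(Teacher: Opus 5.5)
Your proposal is correct and follows essentially the same route as the paper: the forward direction combines $\mathsf{StCOH}=\mathsf{COH}+\mathsf{IPP}$ with Theorem~\ref{thm:StCADSContinuous} and Lemma~\ref{lem:WeierstrassUniform}. The reversal uses Lemma~\ref{lem:fin-sets-bounded} to bound polynomials, so that (ii) makes every continuous $f\colon[0,1]\to\mathbb R$ bounded, which yields $\mathsf{StCOH}$ and hence $\mathsf{COH}$ by Theorem~\ref{thm:StCADSContinuous}; the only difference is that you phrase this by contraposition, whereas the paper argues directly.
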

\begin{proof}
Since $\mathsf{StCOH}$ is the conjunction of $\mathsf{COH}$ and $\mathsf{IPP}$, the direction from~(i) to~(ii) holds by Theorem \ref{thm:StCADSContinuous} and Lemma \ref{lem:WeierstrassUniform}. For the converse, Lemma~\ref{lem:fin-sets-bounded} ensures that all polynomials are bounded in the presence of~$\mathsf{IPP}$. So if~(ii) holds, any continuous function $f\colon[0,1]\to\mathbb{R}$ is bounded. Again by Theorem \ref{thm:StCADSContinuous}, this yields~(i).
\end{proof}

As in the case of the Riemann integral, we have analysed the Weierstrass approximation theorem relative to the pigeonhole principle, because the latter is required to handle arbitrary polynomials (see Remark~\ref{rmk:fin-sums}). The following remark indicates how the base theory can be lowered to $\mathsf{RCA}_0$ (in which case $\mathsf{COH}$ should be replaced by $\mathsf{StCOH}$) when we restrict to polynomials with rational coefficients.

\begin{remark}
    If the pigeonhole principle fails, Theorem~\ref{thm:ivt} yields a continuous function $f\colon[0,1]\to\{-1,1\}$ with $f(0)=-1$ and $f(1)=1$. Assume that we have a rational polynomial $p$ with $|f(x)-p(x)|<1$ for all~$x\in[0,1]$. Given that we have $p(0)<0<p(1)$, the intermediate value theorem from the classical setting (see Theorem~II.6.6 of~\cite{simpson09}) gives us an $x\in[0,1]$ with $p(x)=0$. But then we have $|f(x)|<1$, against the assumption. So if every continuous $f\colon[0,1]\to\mathbb R$ can be approximated by a rational polynomial, we get $\mathsf{IPP}$ and then $\mathsf{StCOH}$ over~$\mathsf{RCA}_0$.
\end{remark}

\section{Sequences}

In this section, we show how results about sequences -- such as the Bolzano-Weierstrass and Arzel\`a-Ascoli theorems -- can be accommodated in our setting. The uniform convergence condition in the following definition may well be the most controversial aspect of our approach. A philosophical justification (which connects with H.~Friedman's strict reverse mathematics) has been given in the introduction, where we set out the following crucial test: Does our definition of sequence allow us to develop large parts of analysis in a weak theory? In the present section, we argue that the overall answer is positive, though there are some limitations.

\begin{definition}\label{def:seq}
A sequence of reals~$x_n=(x_{ni})$ is a double sequence of rationals that are uniformly Cauchy, i.e., where each $\varepsilon>0$ admits an $N\in\mathbb N$ such that we have $|x_{ni}-x_{nj}|<\varepsilon$ for all~$n\in\mathbb N$ and all $i,j\geq N$.
\end{definition}

In addition to the justification above, we note that the uniformity condition for sequences is similar to the one for continuous functions (see Definition~\ref{def:continuous}), which some readers may find more appealing. As in the case of Remark~\ref{rem:FastCauchyACA}, the following can be read as a positive result.

\begin{remark}
Under arithmetical comprehension, as every Cauchy sequence of rationals can be made fast (see Remark \ref{rem:FastCauchyACA}), every family of reals can be turned into a sequence in the sense of the previous definition. For the converse, consider a function $f\colon\mathbb{N}\to\mathbb{N}$ and let $(x_n)=(x_{nk})$ be the family of reals with
\begin{equation*}
    x_{nk}=\begin{cases}
        1 & \text{if }f(i)=n\text{ for some }i<k,\\
        0 & \text{otherwise}.
    \end{cases}
\end{equation*}
Assume that the same reals can be represented by sequences $x'_n=(x'_{nk})$ that are uniformly Cauchy. This means, in particular, that we find a $K\in\mathbb N$ such that $|x_n'-x'_{nk}|<1/2$ holds for all~$k\geq K$ and every~$n\in\mathbb N$. But then $n$ lies in the image of~$f$ precisely if we have $x'_{nK}>1/2$. So over $\mathsf{RCA}_0$, arithmetical comprehension follows if every family of reals can be converted into a sequence as in Definition~\ref{def:seq}.
\end{remark}

The following technical lemma will be needed several times.

\begin{lemma}[$\mathsf{RCA}_0$]\label{lem:Cauchy-Q-R}
    Consider a sequence of reals $x_n=(x_{ni})$ and a non-decreasing function $f\colon\mathbb N\to\mathbb N$ with unbounded image. If $x=(x_{n,f(n)})$ is Cauchy (and hence a real), then $(x_n)$ converges to~$x$.
\end{lemma}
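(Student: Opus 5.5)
The plan is a direct $\varepsilon/3$-argument that combines the uniform Cauchy condition of Definition~\ref{def:seq} with the assumed Cauchyness of the diagonal $x=(x_{n,f(n)})$. Convergence of $(x_n)$ to $x$ means that every rational $\varepsilon>0$ admits an $M$ with $|x_n-x|\leq\varepsilon$ for all $n\geq M$. We have to produce such an $M$ using only $\Sigma^0_1$-reasoning available in $\mathsf{RCA}_0$. No bounding principle should be needed, since the uniformity in Definition~\ref{def:seq} hands us a single $N$ for all $n$.

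Fix $\varepsilon>0$ and proceed as follows.
\begin{enumerate}[label=(\arabic*)]
\item Choose $N$ such that $|x_{ni}-x_{nj}|<\varepsilon/3$ holds for all $n\in\mathbb N$ and all $i,j\geq N$. This is possible by Definition~\ref{def:seq}.
\item Since $x$ is Cauchy, choose $K$ with $|x_{m,f(m)}-x_{k,f(k)}|<\varepsilon/3$ for all $m,k\geq K$.
\item Since $f$ is non-decreasing with unbounded image, there is some $L$ with $f(L)\geq N$, and then $f(k)\geq N$ for all $k\geq L$. This is a $\Sigma^0_1$ search, hence fine in $\mathsf{RCA}_0$.
\item Put $M=\max(K,L)$.
\end{enumerate}

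For $n\geq M$ we want to bound $|x_n-x|$. By Definition~\ref{def:reals-ineq}, it suffices to compare the $j$-th terms $x_{nj}$ and $x_{j,f(j)}$ for all sufficiently large $j$. Take $j\geq\max(M,N)$. Then $f(n)\geq N$ because $n\geq L$, and $f(j)\geq N$ because $j\geq L$. We estimate
\begin{equation*}
|x_{nj}-x_{j,f(j)}|\leq|x_{nj}-x_{n,f(n)}|+|x_{n,f(n)}-x_{j,f(j)}|.
\end{equation*}
The first term is below $\varepsilon/3$ by step~(1), since $j,f(n)\geq N$. The second term is below $\varepsilon/3$ by step~(2), since $n,j\geq K$. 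So the $j$-th terms of $x_n$ and $x$ differ by less than $2\varepsilon/3$ for all large $j$, which yields $|x_n-x|\leq 2\varepsilon/3<\varepsilon$. Since $\varepsilon$ was arbitrary, this shows $x_n\to x$.

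The only real point of care is to avoid choosing witnesses that depend on $n$, which would require collection or $\Sigma^0_2$-bounding. The uniformity in Definition~\ref{def:seq} provides exactly the single $N$ that avoids this. The other spot to watch is step~(3): there we use monotonicity of $f$, rather than mere unboundedness, so that $f(k)\geq N$ holds for all large $k$ and not just infinitely many. I do not expect a genuine obstacle beyond this bookkeeping. Note also that the limit $x$ is given as a real only via its diagonal terms, which is why we compare term by term rather than through any rate.
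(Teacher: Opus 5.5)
Your proof is correct and essentially matches the paper's argument: the paper also takes $N$ from the uniform Cauchy condition and from the Cauchyness of the diagonal, picks $I\geq N$ with $f(n)\geq N$ for all $n\geq I$ using monotonicity, and bounds $|x_{ni}-x_{i,f(i)}|$ by the same two-term triangle inequality through $x_{n,f(n)}$. The only differences are cosmetic: the paper merges your $N$ and $K$ into one index and uses $\varepsilon/2$ where you use $\varepsilon/3$, and your remark that $f(j)\geq N$ is never used.
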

\begin{proof}
    Given $\varepsilon>0$, consider an $N\in\mathbb N$ with 
    \begin{align*}
        |x_{ni}-x_{nj}|<\frac\varepsilon2\quad & \text{for every }n\in\mathbb N\text{ and all }i,j\geq N,\\
        |x_{i,f(i)}-x_{j,f(j)}|<\frac\varepsilon2\quad & \text{for all }i,j\geq N.
    \end{align*}
    Then find an $I\geq N$ with $f(n)\geq N$ for $n\geq I$. When we have $n,i\geq I$, we get
    \begin{equation*}
        |x_{ni}-x_{i,f(i)}|\leq|x_{ni}-x_{n,f(n)}|+|x_{n,f(n)}-x_{i,f(i)}|<\varepsilon,
    \end{equation*}
    which yields $|x_n-x|\leq\varepsilon$.
\end{proof}

We now show that $\mathbb R$ is complete over a weak base theory.

\begin{proposition}[$\mathsf{RCA}_0$]\label{lem:cauchy-converges}
If a sequence of reals is Cauchy, then it converges.
\end{proposition}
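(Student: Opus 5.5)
The plan is to reduce to Lemma~\ref{lem:Cauchy-Q-R}: I will choose the diagonal $x=(x_{n,n})_{n\in\mathbb N}$, i.e., take $f$ to be the identity. This $f$ is non-decreasing with unbounded image. Once $x$ is shown to be Cauchy, the lemma gives that $(x_n)$ converges to~$x$. So everything comes down to showing that the diagonal is Cauchy. No search or minimization is needed, so the construction is plainly available in~$\mathsf{RCA}_0$.

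Here $(x_n)$ is a sequence in the sense of Definition~\ref{def:seq}, with $x_n=(x_{ni})$ uniformly Cauchy, and it is Cauchy as a sequence of reals. That is, each $\varepsilon>0$ admits an $M$ with $|x_m-x_n|<\varepsilon$ (as reals) for all $m,n\geq M$. Fix $\varepsilon>0$. By uniformity, pick $N$ with $|x_{ni}-x_{nj}|<\varepsilon/3$ for all~$n$ and all $i,j\geq N$. From this, every $n$ and every $i\geq N$ satisfy $|x_n-x_{ni}|\leq\varepsilon/3$ as reals; this follows directly from Definition~\ref{def:reals-ineq}, since the tail of $(x_{nj})_j$ stays within $\varepsilon/3$ of~$x_{ni}$. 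Also pick $M$ with $|x_m-x_n|<\varepsilon/3$ for $m,n\geq M$. For $m,n\geq\max(M,N)$, the triangle inequality in the ordered field of Proposition~\ref{prop:archimedean} gives
\begin{equation*}
|x_{mm}-x_{nn}|\leq|x_{mm}-x_m|+|x_m-x_n|+|x_n-x_{nn}|<\varepsilon,
\end{equation*}
where the comparison of rationals with reals is read via the constant embedding of~$\mathbb Q$. Strictly, this chain gives $\leq\varepsilon$, so I would run it with $\varepsilon/4$ in place of $\varepsilon/3$ to get the strict bound. Hence $(x_{nn})$ is Cauchy and Lemma~\ref{lem:Cauchy-Q-R} applies.

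The key step, and the only point needing care, is the uniformity. Without it, the estimate $|x_n-x_{ni}|\leq\varepsilon/3$ would need an $N$ depending on~$n$. Choosing such witnesses for all $n$ at once would require more than $\mathsf{RCA}_0$, since the existence of such an $N$ is a $\Sigma^0_2$ statement about~$n$. Definition~\ref{def:seq} supplies one common~$N$, so the argument involves no induction or collection beyond~$\mathsf{RCA}_0$.
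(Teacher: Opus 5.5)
Your proof is correct and is essentially the paper's argument: the diagonal $(x_{nn})$, a single $N$ from the uniform Cauchy condition of Definition~\ref{def:seq}, a three-term triangle inequality, and then Lemma~\ref{lem:Cauchy-Q-R}. The only difference is cosmetic. The paper stays with rationals by choosing an index $k\geq N$ with $|x_{mk}-x_{nk}|<\varepsilon/3$ (via Lemma~\ref{lem:reals-ineq-Sigma02}), rather than comparing $x_{mm}$ with the real $x_m$ in the ordered field. In your version the switch to $\varepsilon/4$ is unnecessary, since the middle term is already strict.
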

\begin{proof}
Writing $(x_n)=(x_{ni})$ for the sequence in question, we show that $x := (x_{nn})$ is a real. Let us consider an arbitrary $\varepsilon > 0$. Given that $(x_n)$ is Cauchy and the rational sequences $(x_{ni})_{i\in\mathbb N}$ are uniformly so, there is an $N\in\mathbb N$ with
\begin{align*}
    |x_m-x_n|<\frac\varepsilon3\quad&\text{for }m,n\geq N,\\
    |x_{ni}-x_{nj}|<\frac\varepsilon3\quad&\text{for }i,j\geq N\text{ and any }n\in\mathbb N.
\end{align*}
When we have $m,n\geq N$, we find a $k\geq N$ with $|x_{mk}-x_{nk}|<\varepsilon/3$ (cf.~Lemma~\ref{lem:reals-ineq-Sigma02}), so that we get
\begin{equation*}
    |x_{mm}-x_{nn}|\leq|x_{mm}-x_{mk}|+|x_{mk}-x_{nk}|+|x_{nk}-x_{nn}|<\varepsilon.
\end{equation*}
The previous lemma shows that $(x_n)$ converges to~$x$.
\end{proof}

We also get completeness in the following form.

\begin{proposition}[$\mathsf{RCA}_0$]\label{prop:sup}
    Any bounded sequence of reals has a supremum.
\end{proposition}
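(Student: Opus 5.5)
The natural candidate is the pointwise supremum. Given a bounded sequence of reals $x_n=(x_{ni})$ in the sense of Definition~\ref{def:seq}, I would put
\begin{equation*}
    y_i=\max\{x_{ni}:n\leq i\}
\end{equation*}
and then check that $y=(y_i)$ is Cauchy, is an upper bound and is a least upper bound. The uniform Cauchy condition is exactly what should make this work in $\mathsf{RCA}_0$, in contrast to Lemma~\ref{lem:fin-sets-bounded}, where merely finite families already required $\mathsf{IPP}$.

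The key steps, in order, are as follows.

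\textbf{Upper bound.} Fix $\varepsilon>0$ and take $N$ with $|x_{ni}-x_{nj}|<\varepsilon$ for all $n$ and all $i,j\geq N$. For fixed $m$ and $i\geq\max(m,N)$ we have $x_{mi}\leq y_i$. This gives $x_m\leq y$ directly, once $y$ is known to be a real.

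\textbf{Cauchy property, $y_j$ below $y_i$.} Since the $x_n$ are bounded by some $B$, each $y_i\leq B+\varepsilon$ for $i\geq N$. For $i\geq j\geq N$, the index $n\leq j$ realising $y_j$ also satisfies $n\leq i$ and $|x_{ni}-x_{nj}|<\varepsilon$. Hence $y_j<y_i+\varepsilon$.

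\textbf{Cauchy property, $y_i$ below $y_j$: the main obstacle.} Here new indices $n\in(j,i]$ may enter the maximum. I would use $\Sigma^0_1$-induction (or just the least number principle applied to the bounded, almost increasing sequence of rationals) to choose $M\geq N$ with $y_M$ close to maximal. Concretely, I want $y_k<y_M+\varepsilon$ for all $k\geq N$. Such an $M$ exists: the set of rationals $\{y_k:k\geq N\}$ is bounded, so by $\Sigma^0_1$-induction on the number of $\varepsilon$-jumps, the $\Sigma^0_1$ statement ``$\exists k\geq N\, y_k\geq y_M+\varepsilon$'' cannot hold for all $M$. Otherwise iterating it would produce values exceeding $B+2\varepsilon$, and this iteration can be carried out by a computable recursion.

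\textbf{Finishing the Cauchy property.} For $k,l\geq M$ I then combine the two bounds. The bound $y_k<y_M+\varepsilon$ comes from the choice of $M$. The bound $y_M<y_l+\varepsilon$ comes from the previous step, since $l\geq M\geq N$. Together these give $|y_k-y_l|<2\varepsilon$, which is the same argument as in the proof that almost decreasing bounded sequences are Cauchy.

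\textbf{Least upper bound.} Suppose $z<y$. By Lemma~\ref{lem:reals-ineq-Sigma02} there are $\delta$ and $K$ with $z_i+\delta<y_i$ for $i\geq K$. Pick $N$ witnessing uniform Cauchyness for $\delta/3$ and for $z$, and let $i\geq\max(K,N)$. Then $y_i=x_{ni}$ for some $n\leq i$. For $j\geq i$ this gives $x_{nj}>x_{ni}-\delta/3>z_i+2\delta/3>z_j+\delta/3$. Hence $x_n>z$, so $z$ is not an upper bound.

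Alternatively, one could derive the result from Corollary~\ref{pro:monotoneConvergence}-style reasoning. But I expect the direct construction above to be cleanest, with the choice of an almost-maximal $M$ being the only delicate point.
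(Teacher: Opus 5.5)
Your proposal is correct and follows the paper's proof: the same pointwise maximum $\max_{n\leq i}x_{ni}$, the same almost-increasing estimate, and the same ``bounded and almost increasing implies Cauchy'' step. The paper only cites that last step from Remark~\ref{rem:FastCauchyACA}, while you spell it out via an almost-maximal $M$. The one small difference is the least-upper-bound step: the paper applies Lemma~\ref{lem:Cauchy-Q-R} to the family $(x_{n(i)})$ of maximising indices to see that it converges to $y$, whereas you argue directly that any $z<y$ is exceeded by a single $x_n$.
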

\begin{proof}
    Writing $(x_n)=(x_{ni})$ for the sequence in question, put $s_i=\max_{n\leq i}x_{ni}$ and choose $n(i)\leq i$ with $s_i=x_{n(i),i}$. To see that the $s_i$ are bounded, assume $x_n\leq B$ for all $n\in\mathbb N$ and take an $N\in\mathbb N$ such that $i,j\geq N$ entails $|x_{ni}-x_{nj}|<1$ and hence $|x_{ni}-x_n|\leq 1$, which yields $s_i\leq B+1$ for $i\geq N$. Also, the $s_i$ are almost increasing in the following sense: Given $\varepsilon>0$, take an $I\in\mathbb N$ with $|x_{ni}-x_{nj}|<\varepsilon$ for all~$i,j\geq I$. When we have $j>i\geq I$, we get
    \begin{equation*}
        s_i=x_{n(i),i}\leq x_{n(i),j}+\varepsilon\leq s_j+\varepsilon.
    \end{equation*}
   Thus $(s_i)$ is Cauchy (as in Remark~\ref{rem:FastCauchyACA}). By Lemma~\ref{lem:Cauchy-Q-R}, the sequence $(x_{n(i)})$ converges to $x=(s_i)$, which shows $x\leq\sup_{n\in\mathbb N}x_n$. Conversely, since $i\geq n$ entails $x_{ni}\leq s_i$, we have $x_n\leq x$ for all~$n\in\mathbb N$. To avoid misunderstanding, we note that $(x_{n(i)})$ need not be a subsequence of $(x_n)$, as $i\mapsto n(i)$ could, e.g., be constant.
\end{proof}

In the case of a monotone sequence, it is immediate that the supremum is the limit. So our base theory proves the monotone convergence theorem:

\begin{corollary}[$\mathsf{RCA}_0$]\label{pro:monotoneConvergence}
Any bounded monotone sequence of reals converges.
\end{corollary}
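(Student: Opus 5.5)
By Proposition~\ref{prop:sup}, a bounded sequence $(x_n)=(x_{ni})$ has a supremum, and the plan is to show that the monotone sequence converges to it. I treat the non-decreasing case; the non-increasing case follows by applying the same argument to $(-x_n)$. Negating the sequence is harmless: $(-x_{ni})$ is still uniformly Cauchy in the sense of Definition~\ref{def:seq}, and it is bounded and non-decreasing.

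So assume $x_n\leq x_{n+1}$ for all $n$ and $x_n\leq B$. Let $x$ be the supremum from Proposition~\ref{prop:sup}. Concretely, $x=(s_i)$ with $s_i=\max_{n\leq i}x_{ni}$; we know $x_n\leq x$ for every $n$, and $x$ is the least upper bound. To get convergence, fix $\varepsilon>0$. Since $x-\varepsilon<x$ and $x$ is least, $x-\varepsilon$ is not an upper bound, so there is an $N$ with $x_N>x-\varepsilon$. Monotonicity gives $x-\varepsilon<x_N\leq x_n\leq x$ for all $n\geq N$, hence $|x_n-x|<\varepsilon$.

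The one step needing care is extracting this $N$ in $\mathsf{RCA}_0$, without any choice or bounding principle. It is cleanest to argue directly from the construction. The proof of Proposition~\ref{prop:sup} shows that $(x_{n(i)})$ converges to $x$, where $n(i)\leq i$. So for our $\varepsilon$ there is some $i$ with $|x_{n(i)}-x|<\varepsilon$, and then $N:=n(i)$ works. This only uses that $x_{n(i)}\leq x$ and $x_{n(i)}>x-\varepsilon$. It also holds even if $i\mapsto n(i)$ is not monotone, since we need only one index.

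Everything else is the elementary order-theoretic manipulation of Lemma~\ref{lem:reals-order}, which is available in $\mathsf{RCA}_0$. I therefore expect no real obstacle: the logical work has already been done in Proposition~\ref{prop:sup}.
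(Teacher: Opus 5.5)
Your proof is correct and matches the paper's, which simply observes that for a monotone sequence the supremum from Proposition~\ref{prop:sup} is the limit. Your detour through the indices $n(i)$ is harmless but unnecessary: leastness of $x$ already yields some $N$ with $x_N>x-\varepsilon$ by classical logic, with no choice or bounding involved. One small point: since you assume only $x_k\leq x_{k+1}$, the step $x_N\leq x_n$ for $n\geq N$ is not a plain induction in $\mathsf{RCA}_0$, because the statement is $\Pi^0_2$. It does follow from the uniform Cauchy bound of Definition~\ref{def:seq}, which gives the rational estimates $x_{k,i}\leq x_{k+1,i}+\delta$ for all $k$ and all large $i$; summing these over $N\leq k<n$, with $\delta(n-N)\leq\varepsilon$, gives the claim.
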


When a result is provable in $\mathsf{RCA}_0$, one can investigate its strength over a weaker base theory. We do not systematically do this in the present paper, but we record the following known result:

\begin{remark}
    Over a theory $\mathsf{EA}$ of second order elementary arithmetic, Kohlenbach has shown that $\Sigma^0_1$-induction is equivalent to the statement that every bounded monotone sequence of reals that are represented by fast Cauchy sequences is itself Cauchy (see Proposition~5.2 and Corollary~5.3 of~\cite{kohlenbach-pra}).
\end{remark}

From monotone convergence, we obtain nested-interval completeness:

\begin{corollary}[$\mathsf{RCA}_0$]\label{cor:nestedIntervals}
Let $(x_n)$ and $(y_n)$ be sequences of real numbers with
\begin{equation*}
x_n\leq x_{n+1}\leq y_{n+1}\leq y_n\quad\text{for all }n\in\mathbb N.    
\end{equation*}
There exists a real number $z$ such that we have $x_n\leq z\leq y_n$ for all $n\in\mathbb N$. If we have $\lim_{n\to\infty}\vert x_n-y_n\vert = 0$, this $z$ is unique.
\end{corollary}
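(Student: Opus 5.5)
The plan is to obtain $z$ from Corollary~\ref{pro:monotoneConvergence}, applied to the non-decreasing sequence $(x_n)$. The hypothesis gives $x_n\leq y_n\leq y_0$ for all~$n$, so $(x_n)$ is bounded above by $y_0$ and bounded below by $x_0$. Hence, in $\mathsf{RCA}_0$, it converges to some real~$z$. One could equally take $z$ to be the supremum from Proposition~\ref{prop:sup}; in the proof of that proposition, $z$ is built directly from the double sequence $(x_{ni})$ and comes with $x_n\leq z$ for all~$n$.

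Next we check the two inequalities. For $x_n\leq z$: the sequence is non-decreasing, so $x_n\leq x_m$ for all $m\geq n$, and passing to the limit $m\to\infty$ preserves non-strict inequalities, which gives $x_n\leq z$. For $z\leq y_n$: for all $m\geq n$ we have $x_m\leq y_m\leq y_n$, hence $z=\lim_m x_m\leq y_n$. The only care needed is that "limits preserve $\leq$" is verified directly from Definition~\ref{def:reals-ineq}. Suppose $z>y_n+\varepsilon$ for some rational $\varepsilon>0$. Choosing $m\geq n$ with $|x_m-z|<\varepsilon$ then yields $x_m>y_n$, a contradiction. This uses only that $<$ and $\leq$ are complementary (Lemma~\ref{lem:reals-order}), so no induction beyond $\mathsf{RCA}_0$ is needed.

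Uniqueness is elementary. Suppose $z$ and $z'$ both satisfy $x_n\leq z,z'\leq y_n$ for all~$n$. Then $|z-z'|\leq y_n-x_n$ for every~$n$. Since $|x_n-y_n|\to0$, every rational $\varepsilon>0$ admits some $n$ with $y_n-x_n<\varepsilon$, so $|z-z'|\leq\varepsilon$ for all $\varepsilon$, and therefore $z=z'$ as reals.

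I do not expect a genuine obstacle. The only subtle point is that the convergence result must be applied to a \emph{sequence} in the uniform sense of Definition~\ref{def:seq}, and this is exactly what the hypothesis provides for $(x_n)$, so Corollary~\ref{pro:monotoneConvergence} applies without constructing any new sequence.
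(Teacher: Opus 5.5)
Your proposal is correct and is essentially the paper's argument via Corollary~\ref{pro:monotoneConvergence}. The paper takes the limits $x,y$ of both $(x_n)$ and $(y_n)$ and rules out $x>y$, whereas you bound the single limit $z=\lim_n x_n$ by each $y_n$ directly, and you also write out the uniqueness argument that the paper leaves implicit.

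One step that you (like the paper) pass over is deriving $x_n\le x_m$ and $y_m\le y_n$ for $m\ge n$ from the consecutive inequalities. This is not a plain induction in $\mathsf{RCA}_0$, since $\le$ is $\Pi^0_2$, but it does follow from the uniform modulus of Definition~\ref{def:seq}: if $x_m<x_n$, Lemma~\ref{lem:reals-ineq-Sigma02} gives $\delta>0$ with $x_{mi}+\delta<x_{ni}$ for all large~$i$, and for a large enough uniform index $N$ belonging to some $\varepsilon<\delta/(3(m-n))$, bounded induction on the rational inequalities $x_{kN}<x_{k+1,N}+3\varepsilon$ for $n\le k<m$ yields $x_{nN}<x_{mN}+\delta$, a contradiction.
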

\begin{proof}
The sequences $(x_n)$ and $(y_n)$ are bounded and monotone. So they converge to some limits $x$ and $y$, by Proposition~\ref{pro:monotoneConvergence}. If we had $x>y$, we would get $x_n>y_n$ for large~$n$. So we have $x_n\leq x\leq y\leq y_n$ for all~$n\in\mathbb N$. We may thus pick $z=x$.
\end{proof}

We move on to stronger theorems for sequences of real numbers. As mentioned before, the following result is essentially due to Kreuzer~\cite{kreuzer-bolzano}. The latter works with sequences of reals that are given by fast Cauchy sequences, while our Definition~\ref{def:seq} requires uniformity but not a rate. This makes no difference for the proof but allows us to treat the Bolzano-Weierstrass theorem within a coherent approach to analysis. More specifically, we avoid the asymmetry in the result by Kreuzer, whose Cauchy sequences are fast in the input but slow in the output.

\begin{theorem}\label{thm:BolzanoWeierstrass}
The following are equivalent over $\mathsf{RCA}_0$:
\begin{enumerate}[label=(\roman*)]
\item The strong cohesive principle $\mathsf{StCOH}$.
\item The Bolzano-Weierstrass theorem: Any bounded sequence of reals has a convergent subsequence.
\end{enumerate}
\end{theorem}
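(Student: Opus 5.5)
For the direction from~(ii) to~(i), I would reduce to Lemma~\ref{lem:kreuzer-bolzano}. Given a bounded sequence of rationals $(q_n)$, I regard it as a sequence of reals via the constant double sequence $x_{ni}=q_n$. This is trivially uniformly Cauchy in the sense of Definition~\ref{def:seq}. Statement~(ii) then yields a strictly increasing $n(k)$ such that $(q_{n(k)})_{k}$ converges to some real~$x$ when viewed as a sequence of reals. It remains to see that $(q_{n(k)})$ is Cauchy as a sequence of rationals. This follows directly: convergence gives, for each $\varepsilon>0$, a $K$ with $|q_{n(k)}-x|<\varepsilon/2$ for $k\geq K$, hence $|q_{n(k)}-q_{n(l)}|<\varepsilon$. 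So Lemma~\ref{lem:kreuzer-bolzano} yields $\mathsf{StCOH}$.

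For the direction from~(i) to~(ii), let $x_n=(x_{ni})$ be a bounded sequence of reals in the sense of Definition~\ref{def:seq}. First, I use uniformity to replace it by a bounded double sequence of rationals. Pick $N$ with $|x_{ni}-x_{nj}|<1$ for all $n$ and $i,j\geq N$; then all $x_{ni}$ with $i\geq N$ lie in a fixed interval $[-B-1,B+1]$. After dropping the first $N$ entries of each row and rescaling, I may assume $q^k_n:=x_{nk}\in[0,1]$ for all $n,k$. Note the roles of the indices here: $k$ indexes the approximation stage and $n$ the position in the sequence.

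Next, I apply Proposition~\ref{prop:compact_family_rationals}. It gives a strictly increasing $n\colon\mathbb N\to\mathbb N$ such that for each fixed~$k$ the rationals $(x_{n(j),k})_{j\in\mathbb N}$ form a Cauchy sequence. Denote its limit by $y_k\in\mathbb R$. The subsequence $(x_{n(j)})_j$ is still uniformly Cauchy, so it is a sequence of reals in our sense. It remains to show that it converges.

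I expect this last step to be the main obstacle. My plan is a diagonal argument in the style of Lemma~\ref{lem:Cauchy-Q-R}. I define $z_j:=x_{n(j),m(j)}$ for a suitable non-decreasing unbounded $m\colon\mathbb N\to\mathbb N$ (simplest choice: $m(j)=j$) and show that $(z_j)$ is Cauchy. Given $\varepsilon$, uniformity supplies $M$ with $|x_{ni}-x_{nl}|<\varepsilon/3$ for all $n$ and $i,l\geq M$. The Cauchy property at the single stage $k=M$ supplies $J$ with $|x_{n(j),M}-x_{n(j'),M}|<\varepsilon/3$ for $j,j'\geq J$. For $j,j'\geq\max(J,M)$ this gives
\begin{equation*}
|z_j-z_{j'}|\leq|x_{n(j),j}-x_{n(j),M}|+|x_{n(j),M}-x_{n(j'),M}|+|x_{n(j'),M}-x_{n(j'),j'}|<\varepsilon.
\end{equation*}
This only uses a single stage~$M$, so no collection over infinitely many~$k$ is needed. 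Hence $z=(z_j)$ is a real, and Lemma~\ref{lem:Cauchy-Q-R}, applied to the sequence $(x_{n(j)})_j$ with $f(j)=j$, shows that the subsequence converges to~$z$. Finally, I would check that the rescaling and the truncation of the first $N$ entries are harmless, since they do not change the represented reals.
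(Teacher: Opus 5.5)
Your proof is correct, and the direction from~(ii) to~(i) is exactly the paper's: view rationals as constant reals and invoke Lemma~\ref{lem:kreuzer-bolzano}.

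For the direction from~(i) to~(ii), the paper is more economical. It first clips the rationals $x_{ni}$ to $[-B,B]$; your truncation via uniformity, as in the proof of Proposition~\ref{prop:sup}, works equally well. It then applies Lemma~\ref{lem:kreuzer-bolzano} to the single diagonal sequence $(x_{nn})_{n\in\mathbb N}$, obtaining $n(0)<n(1)<\ldots$ with $(x_{n(i),n(i)})_{i\in\mathbb N}$ Cauchy, and concludes by Lemma~\ref{lem:Cauchy-Q-R} with $f=n$. The point is that, for a uniformly Cauchy sequence, Cauchyness of one diagonal already forces convergence, so only one sequence of rationals has to be made Cauchy.

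You instead use the simultaneous version, Proposition~\ref{prop:compact_family_rationals}, to make every column $(x_{n(j),k})_{j\in\mathbb N}$ Cauchy. Your three-term estimate then uses only the single column $k=M$ for each~$\varepsilon$, which in effect re-derives the diagonal mechanism by hand. This costs nothing in strength, since the forward direction of Lemma~\ref{lem:kreuzer-bolzano} is itself a special case of Proposition~\ref{prop:compact_family_rationals}. It also gives a slightly stronger subsequence, along which every approximation stage converges. That is the pattern the paper genuinely needs for Arzel\`a-Ascoli (Theorem~\ref{thm:arzela}), where convergence at many rational points is required, but for Bolzano-Weierstrass the extra information is superfluous.

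A minor point: the affine rescaling does change the represented reals, so it is not literally invisible. Use the rescaled rationals only to obtain $n(j)$, and run the rest of the argument with the truncated original rationals; Cauchyness of the columns is invariant under the rescaling.
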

\begin{proof}
That~(ii) implies~(i) follows from Lemma~\ref{lem:kreuzer-bolzano} (as a convergent sequence is Cauchy). For the converse, consider a sequence of reals~$x_n=(x_{ni})\in[-B,B]$. Put
\begin{equation*}
    x_{ni}'=\begin{cases}
        -B & \text{if }x_{ni}<-B,\\
        x_{ni} & \text{if }-B\leq x_{ni}\leq B,\\
        B & \text{if }B<x_{ni}.
    \end{cases}
\end{equation*}
One checks that the sequences $(x_{ni}')_{i\in\mathbb N}$ remain uniformly Cauchy and represent the same reals~$x_n$. So we may as well assume that the rationals $x_{nn}$ are bounded. Assuming~(i), we again use Lemma~\ref{lem:kreuzer-bolzano} to get a sequence $n(0)<n(1)<\ldots$ such that $(x_{n(i),n(i)})_{i\in\mathbb N}$ is Cauchy. By Lemma~\ref{lem:Cauchy-Q-R}, it follows that $(x_{n(i)})$ converges.
\end{proof}

In connection with sequential completeness, we also record the following core property of continuous functions.

\begin{proposition}[$\mathsf{RCA}_0+\mathsf{StCOH}$]\label{lem:fct-to-seq}
    Consider a continuous function~$f\colon\mathbb R\to\mathbb R$.
    
    (a) If $(x_n)$ is a bounded sequence of reals, so is~$(f(x_n))$.
    
    (b) If $(x_n)$ is a convergent sequence of reals, so is $(f(x_n))$ and we have
    \begin{equation*}
        \lim_{n\to\infty}f(x_n)=f\left(\lim_{n\to\infty}x_n\right).
    \end{equation*}
\end{proposition}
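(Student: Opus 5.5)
The key point is that the obvious candidate $y_{ni}:=f(x_{ni})_i$ represents each real $f(x_n)$ individually (this is how $\bar f$ is defined), so the only work is to show that these rational sequences are \emph{uniformly} Cauchy in the sense of Definition~\ref{def:seq}. For this I would transfer the uniform condition~(\ref{eq:unif-cont}) from the proof of Theorem~\ref{thm:StCADSContinuous} to a compact interval $[-B,B]$ containing all the $x_n$. That result was proved for $[0,1]$, but it carries over to $[-B,B]$ by composing with an affine rescaling (Lemmas~\ref{lem:basicContFunctions} and~\ref{lem:compositionCont}). Alternatively, one can rerun its Heine-Borel argument (Theorem~\ref{thm:Heine-Borel}, which needs $\mathsf{StCOH}$) on $[-B-1,B+1]$. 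One can also normalise the approximations as in the proof of Theorem~\ref{thm:BolzanoWeierstrass}: clip them so that $x_{ni}\in[-B-1,B+1]$ for all $n,i$, which still represents the same reals uniformly.

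Given this, the argument for~(a) goes as follows. Fix $\varepsilon>0$ and take $N$ as in~(\ref{eq:unif-cont}) for the interval $[-B-1,B+1]$. By uniform Cauchyness of $(x_{ni})$, pick $M\geq N$ with $|x_{ni}-x_{nj}|<1/N$ for all $n$ and all $i,j\geq M$. Then $|y_{ni}-y_{nj}|=|f(x_{ni})_i-f(x_{nj})_j|<\varepsilon$ for $i,j\geq M$, uniformly in~$n$. Boundedness of $(f(x_n))$ follows as well: the proof of Theorem~\ref{thm:StCADSContinuous} shows that $f$ is bounded on $[-B-1,B+1]$, which directly bounds all $f(x_n)$.

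For~(b), a convergent sequence is bounded. To see this, take $N$ with $|x_n-x|<1$ for $n\geq N$ and bound the finitely many remaining terms, either through uniform Cauchyness in $\mathsf{RCA}_0$ or via Lemma~\ref{lem:fin-sets-bounded} using $\mathsf{IPP}\subseteq\mathsf{StCOH}$. So the sequence $(f(x_n))$ exists by~(a). Convergence to $f(x)$, where $x=\lim x_n$, follows from Lemma~\ref{lem:eps-delta}: given $\varepsilon$, take $\delta$ with $|f(x)-f(z)|<\varepsilon$ for $|z-x|<\delta$, and then take $n$ large so that $|x_n-x|<\delta$. This uses only pointwise continuity, and any ordering or limit arguments go through Lemma~\ref{lem:reals-ineq-Sigma02}.

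I expect the main obstacle to be the first step, namely making sure that~(\ref{eq:unif-cont}) really holds on the relevant interval and applies to all the approximations $x_{ni}$, not just to the limits $x_n$. This is exactly where boundedness is needed: for an unbounded sequence, no single $N$ can work, which matches the remark in the introduction. I would handle it by the clipping trick plus the rescaling to $[0,1]$.
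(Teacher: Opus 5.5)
Your proposal is correct, and for part~(a) it is essentially the paper's proof: clip the approximations into a bounded interval, apply the uniform condition~(\ref{eq:unif-cont}) there (the paper also transfers it from $[0,1]$ to $[-B,B]$, though without comment), and combine it with the uniform Cauchy property of the $x_{ni}$ to get that the $f(x_{ni})_i$ are uniformly Cauchy. For part~(b) you conclude via pointwise continuity at the limit (Lemma~\ref{lem:eps-delta}), whereas the paper applies Lemma~\ref{lem:Cauchy-Q-R} twice to identify the limit as the diagonal $(x_{ii})$ and $f(x)$ as $(f(x_{ii})_i)$; both arguments work in $\mathsf{RCA}_0$ once part~(a) supplies the sequence $(f(x_n))$.
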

\begin{proof}
    (a) From Theorem~\ref{thm:StCADSContinuous} we know that the values~$f(x_n)$ are bounded. The crucial claim is that $(f(x_n))$ is a sequence in the sense of Definition~\ref{def:seq}. To see this, assume that we have $|x_n|\leq B$ for all~$n\in\mathbb N$. Writing $x_n=(x_{ni})$, we may assume that the rationals~$x_{ni}$ also have absolute value at most~$B$, as in the proof of Theorem~\ref{thm:BolzanoWeierstrass}. In order to establish uniform convergence for the Cauchy sequences $f(x_n)=(f(x_{ni})_i)_{i\in\mathbb N}$ (cf.~Definition~\ref{def:continuous}), we consider an arbitrary~$\varepsilon>0$. By the proof of uniform continuity (see statement~(\ref{eq:unif-cont}) in the proof of Theorem~\ref{thm:StCADSContinuous}), there is an $M\in\mathbb N$ with
    \begin{equation*}
        |f(p)_i-f(q)_j|<\varepsilon\quad\text{for all }p,q\in[-B,B]\cap\mathbb Q\text{ with }|p-q|<\frac1M\text{ and all }i,j\geq M.
    \end{equation*}
    Since $(x_n)$ is a sequence in the sense of Definition~\ref{def:seq}, we find an $N\geq M$ such that $|x_{ni}-x_{nj}|<1/M$ holds for all $i,j\geq N$ and all $n\in\mathbb N$. So for $i,j\geq N$, we get
    \begin{equation*}
        |f(x_{ni})_i-f(x_{nj})_j|<\varepsilon
    \end{equation*}
    independently of~$n$, as required.
    
    (b) First note that the convergent sequence $(x_n)$ is also bounded. In the presence of $\mathsf{StCOH}$ (which entails~$\mathsf{IPP}$), one can see this via Lemma~\ref{lem:fin-sets-bounded}, though the uniform convergence from Definition~\ref{def:seq} also allows for a proof over~$\mathsf{RCA}_0$. By Lemma~\ref{lem:Cauchy-Q-R}, the limit of~$(x_n)$ is given by $x=(x_{ii})$. Hence $f(x)$ is represented by the sequence of rationals~$f(x_{ii})_i$. Again by Lemma~\ref{lem:Cauchy-Q-R} (with $f(x_{ni})_i$ at the place of $x_{ni}$), it follows that the $f(x_n)$ converge with limit~$f(x)$.
\end{proof}

Part~(a) of the previous proof is the first place where we verify the uniform convergence condition of Definition~\ref{def:seq} for a sequence -- here $(f(x_n))$ -- that we have constructed (whereas previous proofs rely on the assumption that some given sequence converges uniformly). Note that uniformity for $(f(x_n))$ was only established under the assumption that $(x_n)$ is bounded. While this hints at a limitation of our approach, we emphasize that the sequence $(f(x_n))$ can be constructed when it matters most -- namely, in the case where it converges.

According to Proposition~\ref{prop:sup}, the supremum of a sequence can be constructed in~$\mathsf{RCA}_0$. In contrast, we do not know if the limit superior can be constructed below~$\mathsf{ACA}_0$. What we now is that $\mathsf{RCA}_0$ does not suffice here. This is a consequence of the following proposition, which is essentially due to Kohlenbach (see Theorem~1.4 of~\cite{kohlenbach-pra} as well as~\cite{kohlenbach-low-growth}; we include a proof that is adapted to our setting). That the limit superior is more complex than the supremum also conforms with experience from proof mining.

\begin{proposition}[$\mathsf{RCA}_0$]\label{prop:limsup-approx}
    The following are equivalent:
    \begin{enumerate}[label=(\roman*)]
        \item For any bounded sequence~$(x_n)$ of real numbers and any~$\varepsilon>0$, there is an $\varepsilon$-approximation~$q\in\mathbb Q$ to the limit superior, which means that
        \begin{itemize}[label=--]
            \item any $m$ admits an~$n\geq m$ with $x_n\geq q-\varepsilon$,
            \item there is an~$m$ such that all~$n\geq m$ validate~$x_n\leq q+\varepsilon$,
        \end{itemize}
        \item the principle of $\Sigma^0_2$-induction holds.
    \end{enumerate}
\end{proposition}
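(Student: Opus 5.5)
We prove (ii)$\Rightarrow$(i) with a greatest-element argument for a $\Pi^0_2$ property over a finite grid. For (i)$\Rightarrow$(ii), we first extract $\mathsf{IPP}$ and then a bounded $\Pi^0_2$ greatest-element principle, which together yield $\Sigma^0_2$-induction. Throughout, the uniformity condition of Definition~\ref{def:seq} lets us replace the reals $x_n$ by rationals at a single fixed stage.

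For (ii)$\Rightarrow$(i), let $(x_n)=(x_{ni})$ be bounded by $B$, and let $\varepsilon>0$. By Definition~\ref{def:seq}, we fix one $N$ with $|x_{ni}-x_n|\leq\varepsilon/4$ for all $n$ and all $i\geq N$, and put $y_n=x_{nN}\in\mathbb Q$. Every statement about the $x_n$ up to error $\varepsilon/4$ then becomes a statement about the rationals $y_n$.
\begin{itemize}
\item Consider the grid $q_k=-B-1+k\varepsilon/2$ for $k\leq K$, where $K$ is large enough that $q_K>B+1$.
\item Let $P(k)$ be the statement ``$\forall m\exists n\geq m\colon y_n\geq q_k$''. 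This is $\Pi^0_2$, and $P(0)$ holds.
\item The $\Pi^0_2$ greatest-number principle on the bounded domain $\{0,\ldots,K\}$ is available from $\mathsf I\Sigma^0_2$ (Theorem~I.2.5 of~\cite{hajek91} and its dual form). It yields a largest $k<K$ with $P(k)$.
\item Put $q=q_k$. From $P(k)$ we get infinitely many $n$ with $x_n\geq q-\varepsilon/4$. From $\neg P(k+1)$ we get an $m$ such that $y_n<q+\varepsilon/2$, and hence $x_n<q+\varepsilon$, for all $n\geq m$.
\end{itemize}
Thus $q$ is an $\varepsilon$-approximation to the limit superior.

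For (i)$\Rightarrow$(ii), we first derive $\mathsf{IPP}$ along the lines of Lemma~\ref{lem:fin-sets-bounded}. Suppose a colouring $c\colon\mathbb N\to\{0,\ldots,a\}$ uses every colour only finitely often. The constant rational sequences $x_n=c(n)$ form a bounded sequence of reals, since uniformity is trivial for them. Apply (i) with $\varepsilon=1/3$ and let $k^*$ be the integer nearest to $q$.
\begin{itemize}
\item Eventually we have $c(n)\leq q+1/3$, so eventually $c(n)\leq k^*$.
\item Infinitely often we have $c(n)\geq q-1/3$, so infinitely often $c(n)\geq k^*$.
\end{itemize}
Hence $c(n)=k^*$ for infinitely many $n$, which is a contradiction. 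So $\mathsf{IPP}$, and with it $\mathsf B\Sigma^0_2$, is available.

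Next we show the greatest-element principle for bounded $\Pi^0_2$ properties. Let $\varphi(k)\equiv\forall m\exists l\,\theta(k,m,l)$ with $k\leq a$, and assume $\varphi(0)$, which we may arrange by shifting indices. Define a decidable relation ``$k$ progresses at stage $n$'', meaning that at stage $n$ a new $m$ receives a witness. This relation is set up so that $\varphi(k)$ holds precisely when $k$ progresses at infinitely many stages. Then put
\begin{equation*}
y_n=\max\big(\{0\}\cup\{k\leq a: k\text{ progresses at stage }n\}\big).
\end{equation*}
Apply (i) to the constant sequences $(y_n)$ with $\varepsilon=1/3$, and let $k^*$ be the integer nearest to $q$.
\begin{itemize}
\item Eventually $y_n\leq k^*$, so no $k'>k^*$ progresses beyond some stage. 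Hence $\neg\varphi(k')$ for all $k'>k^*$.
\item Infinitely often $y_n\geq k^*$, which combined with the previous point gives $y_n=k^*$ infinitely often. So $k^*$ progresses infinitely often and $\varphi(k^*)$ holds.
\end{itemize}
Here $\mathsf B\Sigma^0_2$ is used to pass between ``each $k'>k^*$ progresses only finitely often'' and a uniform stage bound. This yields the bounded $\Pi^0_2$ greatest-number principle, which is equivalent to the $\Pi^0_2$ least-number principle and hence to $\mathsf I\Sigma^0_2$ over $\mathsf{RCA}_0$ (again by~\cite{hajek91}).

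The main obstacle is this last step. One must design the progress coding so that $\mathsf B\Sigma^0_2$ is used only in the direction where it is already available. One must also check that the bounded $\Pi^0_2$ extremal principle, as obtained here, really gives full $\Sigma^0_2$-induction. If the coding proves awkward, the fallback is to derive bounded $\Sigma^0_2$-comprehension directly, by iterating the argument downward over $k\leq a$.
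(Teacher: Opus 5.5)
Your proof is correct. For (ii)$\Rightarrow$(i) it is essentially the paper's argument. The paper takes the least $l$ such that $x_{nn}\geq b-(l+1)\cdot\varepsilon/2$ holds for infinitely many $n$, which is your extremal $\Pi^0_2$ grid point read from the other end. All you actually need is some $k<K$ with $P(k)\wedge\neg P(k+1)$, which is just the contrapositive of $\Pi^0_2$-induction.

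For (i)$\Rightarrow$(ii) you take a genuinely different route. The paper fixes an inductive $\Sigma^0_2$-formula and packs all of $\varphi(0),\ldots,\varphi(K)$ into one rational sequence $x_i=\sum_{k\leq i}s_k(i)\cdot 2^{-k}$, where $\limsup_i s_k(i)$ is $0$ or $1$ according to whether $\varphi(k)$ holds. It then analyses a single $2^{-K-1}$-approximation $q$ by an auxiliary induction on $k\leq K$ proving $q\leq 2^{-k}$, invoking the inductivity of $\varphi$ at every step. This gives the induction instance directly, without any extremal principle, but the precision must depend on $K$ and the bookkeeping is delicate. Your integer-valued sequence lets precision $1/3$ suffice. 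It turns (i) into the transparent statement that a $\{0,\ldots,a\}$-valued sequence has a largest value recurring infinitely often, i.e.\ the bounded $\Pi^0_2$ maximum principle.

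The worries in your last paragraph are unfounded. The passage to induction takes one line. Apply the maximum principle to an inductive $\Pi^0_2$-formula $\psi$ with $\psi(0)$ on $\{0,\ldots,a\}$. The maximal $k^*$ cannot lie below $a$, since then $\psi(k^*+1)$ would contradict maximality. So you get $\mathsf I\Pi^0_2$ and hence $\mathsf I\Sigma^0_2$.

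Nor is $\mathsf B\Sigma^0_2$ used anywhere. The second clause of (i) already hands you a single stage after which no $k'>k^*$ progresses. Your separate derivation of $\mathsf{IPP}$ can therefore be dropped; it is in any case the special case $y_n=c(n)$ of the same argument.

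The one point that needs care is the progress relation. Suppose it means that some new $m\leq n$ receives its first witness at stage $n$. Then $k$ progresses infinitely often as soon as all sufficiently large $m$ have witnesses, even if some small $m$ has none, and the equivalence with $\varphi(k)$ breaks. Define progress instead as an increase in the length of the longest initial segment of values $m$ that all have witnesses $l\leq n$. This is dual to the paper's ``change of first component'' in its lexicographic search. With this definition, $\varphi(k)$ holds iff $k$ progresses infinitely often, provably in $\mathsf{RCA}_0$ using only $\mathsf B\Sigma^0_1$.
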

\begin{proof}
We first show that~(ii) implies~(i). Given that the sequence is bounded, we find a~$b$ with $|x_n|\leq b$ for all~$n$. Invoking~$\Sigma^0_2$-induction, we may consider the minimal~$l\in\mathbb N$ with $x_{nn}\geq b-(l+1)\cdot\varepsilon/2$ for infinitely many~$n$. For any sufficiently large~$n$, we have $|x_n-x_{nn}|\leq\varepsilon/2$. Given any~$m$, we thus find an~$n\geq m$ with
\begin{equation*}
    x_n\geq x_{nn}-\varepsilon/2\geq q-\varepsilon\quad\text{for}\quad q:=b-l\cdot\varepsilon/2.
\end{equation*}
On the other hand, the minimality of~$l$ entails that we have $x_{nn}\leq q$ for any sufficiently large~$n$. For suitable~$m$, this means that all~$n\geq m$ validate
\begin{equation*}
    x_n\leq x_{nn}+\varepsilon\leq q+\varepsilon,
\end{equation*}
as required.

We now show that~(ii) implies~(i). Given a $\Sigma^0_2$-formula~$\varphi(k)=\exists m\forall n\,\theta(m,n,k)$, let~$P_k$ be the collection of pairs~$(m,n)$ such that $\theta(m,n',k)$ holds for all~$n'<n$. Let $e_k\colon\mathbb N\to P_k$ list the elements of~$P_k$ in lexicographic order, which means that $(m,n)$ is before $(m+1,n')$ for any second components, so that not all pairs may be reached. We put $s_k(i)=0$ if $e_k(i)$ and $e_k(i+1)$ have equal first component and $s_k(i)=1$ if not. This yields
    \begin{equation*}
        \limsup_{i\to\infty} s_k(i)=\begin{cases}
            0 & \text{if we have }\varphi(k),\\
            1 & \text{otherwise}.
        \end{cases}
    \end{equation*}
    Now define a sequence~$(x_i)$ of reals (in fact of rationals) by setting
    \begin{equation*}
        x_i=\sum_{k=0}^i s_k(i)\cdot 2^{-k}.
    \end{equation*}
    Assuming that we have $\varphi(0)$ and that $\varphi(k)$ always entails~$\varphi(k+1)$, we want to establish~$\varphi(K)$ for arbitrary~$K$. Let~$q$ be a $2^{-K-1}$-approximation to the limit superior. By induction on~$k\leq K$, we prove $q\leq 2^{-k}$. For~$k=0$, we note that~$\varphi(0)$ and $\varphi(1)$ together imply~$s_0(i)=s_1(i)=0$ and hence~$x_i<2^{-1}$ for all large enough~$i$, which indeed yields $q\leq 2^{-1}+2^{-K-1}\leq 2^0$. Inductively, we now assume~$q\leq 2^{-k}$ with $k<K$. Take an~$m$ such that $x_n\leq q+2^{-K-1}<2^{-k+1}$ hold for all~$n\geq m$. For any~$l\leq k-1$, it follows that $n\geq\max(l,m)$ entails
    \begin{equation*}
        s_l(n)\cdot 2^{-k+1}\leq s_l(n)\cdot 2^{-l}\leq x_n<2^{-k+1}
    \end{equation*}
    and thus~$s_l(n)=0$. This means that we have $\varphi(l)$ for~$l\leq k-1$ and hence (using that $\varphi$ is inductive) even for all~$l\leq k+2$. Increasing~$m$ if necessary, we get $s_l(n)=0$ for all~$l\leq k+2$ and all~$n\geq m$. Considering some suitable large~$n$, we can derive
    \begin{equation*}
        q\leq x_n+2^{-K-1}\leq 2^{-K-1}+\sum_{l=k+3}^n 2^{-l}<2^{-K-1}+2^{-k-2}\leq 2^{-(k+1)},
    \end{equation*}
    as needed for the induction step.
\end{proof}

We now approach the Arzel\`a-Ascoli theorem.

\begin{definition}\label{def:seq-fct}
    Given continuous functions $f_n\colon D\to\mathbb R$ for $D\subseteq\mathbb R$ (with representations $f_n(q)=(f_n(q)_i)_{i\in\mathbb N}$ as in Definition~\ref{def:continuous}), we say that $(f_n)_{n\in\mathbb N}$ is a sequence of functions if all $x\in D$ and $\varepsilon>0$ admit an $N\in\mathbb N$ such that we have
    \begin{equation*}
        |f_n(q)_i-f_n(q)_j|<\varepsilon\quad\text{for any rational }q\in B_{1/N}(x)\text{ and all }n\in\mathbb N\text{ and }i,j\geq N.
    \end{equation*}
\end{definition}

The following example will help to motivate our definition of sequence.

\begin{example}
    The functions $f_n\colon\mathbb R\to\mathbb R$ with $f_n(x)=x^n$ trivially form a sequence, because $f_n(q)_i=q^n$ does not depend on~$i$.
\end{example}

For pointwise equicontinuous functions, the following result shows that we get a uniform version of the condition from Definition~\ref{eq:unif-cont}. We have not required this stronger condition as part of Definition~\ref{def:seq-fct}, because we did not want to force all sequences of functions to be equicontinuous (see the example above).

\begin{lemma}[$\mathsf{RCA}_0$]
Consider a sequence of continuous functions $f_n\colon D\to\mathbb R$ that are pointwise equicontinuous, i.e., where all $x\in D$ and $\varepsilon>0$ admit a $\delta>0$ with $|f_n(x)-f_n(y)|<\varepsilon$ for all $y\in B_\delta(x)$ and~$n\in\mathbb N$. Then all $x\in D$ and~$\varepsilon>0$ admit an $N\in\mathbb N$ with
\begin{equation*}
|f_n(q)_i-f_n(r)_j|<\varepsilon\quad\text{for any rationals }q,r\in B_{1/N}(x)\text{ and all }i,j\geq N.
\end{equation*}
\end{lemma}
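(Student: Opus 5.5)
Fix $x\in D$ and $\varepsilon>0$; the aim is to combine three uniform pieces of information near $x$ by a triangle inequality. First, pointwise equicontinuity gives a rational $\delta>0$ with $|f_n(x)-f_n(y)|<\varepsilon/4$ for all $y\in B_\delta(x)\cap D$ and all $n\in\mathbb N$. Second, Definition~\ref{def:seq-fct} gives an $N_0$ such that $|f_n(q)_i-f_n(q)_j|<\varepsilon/4$ holds for every rational $q\in B_{1/N_0}(x)$, every $n$ and all $i,j\geq N_0$. The latter means that for such $q$ (which lie in $D$ whenever $D$ contains a neighbourhood of $x$, as in the intended applications; otherwise I restrict to $q\in D$), the real $f_n(q)$ differs from each approximation $f_n(q)_i$ with $i\geq N_0$ by at most $\varepsilon/4$, uniformly in $n$. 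Here I use that for $q\in\mathbb Q\cap D$ the value $\bar f_n(q)$ is represented by the constant sequence $q$, so that $\bar f_n(q)_i=f_n(q)_i$; hence $f_n(q)=(f_n(q)_i)_{i\in\mathbb N}$ as reals.

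Now I would choose $N\geq N_0$ with $1/N<\delta$. For rationals $q,r\in B_{1/N}(x)$ and $i,j\geq N$, the estimate
\begin{equation*}
|f_n(q)_i-f_n(r)_j|\leq|f_n(q)_i-f_n(q)|+|f_n(q)-f_n(x)|+|f_n(x)-f_n(r)|+|f_n(r)-f_n(r)_j|
\end{equation*}
bounds each of the four terms by $\varepsilon/4$, with the outer terms coming from the uniform Cauchy condition and the inner ones from equicontinuity. This yields $\leq\varepsilon$, and running the argument with $\varepsilon/2$ gives the strict inequality. The key point is that $N$ does not depend on $n$, because both ingredients are uniform in~$n$.

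The main obstacle is making sure that no hidden non-uniform step occurs in $\mathsf{RCA}_0$. In particular, the passage from $|f_n(q)_i-f_n(q)_j|<\varepsilon/4$ for all $j\geq N_0$ to $|f_n(q)_i-f_n(q)|\leq\varepsilon/4$ is just the definition of $\leq$ on reals, applied pointwise in $n$. It therefore needs no collection principle, since the bound $N_0$ is already given uniformly by Definition~\ref{def:seq-fct}. Likewise $\delta$ is a single rational chosen before $n$ varies, so no bounding over infinitely many $n$ is needed. The remaining inequalities are pure arithmetic on reals, which is available by Proposition~\ref{prop:archimedean}.
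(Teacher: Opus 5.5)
Your proposal is correct and follows the paper's proof: the paper uses the same triangle inequality, with the outer terms controlled by the uniform condition of Definition~\ref{def:seq-fct} and the middle term $|f_n(q)-f_n(r)|$ by pointwise equicontinuity, which is exactly your split through $f_n(x)$ made explicit. Your caveat about rationals near $x$ lying in $D$ is implicit in the paper's formulation as well, since its equicontinuity hypothesis quantifies over all $y\in B_\delta(x)$, so it does not set your argument apart from the paper's.
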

\begin{proof}
    By the triangle inequality, we have
    \begin{equation*}
        |f_n(q)_i-f_n(r)_j|\leq|f_n(q)_i-f_n(q)|+|f_n(q)-f_n(r)|+|f_n(r)-f_n(r)_j|.
    \end{equation*}
    The middle summand on the right is covered by equicontinuity, while the other summands are covered by the condition from the previous definition.
\end{proof}

We can conclude that pointwise evaluation preserves the notion of sequence.

\begin{corollary}[$\mathsf{RCA}_0$]\label{cor:equicont-seq}
    In the situation of the previous lemma and for any $x\in D$, the reals $f_n(x)$ with $n\in\mathbb N$ form a sequence in the sense of Definition~\ref{def:seq}.
\end{corollary}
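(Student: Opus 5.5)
We have to show that the reals $f_n(x)$, represented by the double sequence $(f_n(x)_i)_{n,i\in\mathbb N}$, are uniformly Cauchy in the sense of Definition~\ref{def:seq}. Recall from Definition~\ref{def:continuous} that $f_n(x)_i=f_n(x_i)_i$, where $x=(x_i)$.

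Fix $\varepsilon>0$. The previous lemma gives an $N\in\mathbb N$ such that
\begin{equation*}
|f_n(q)_i-f_n(r)_j|<\varepsilon
\end{equation*}
holds for all rationals $q,r\in B_{1/N}(x)$, all $n\in\mathbb N$ and all $i,j\geq N$. Here $N$ depends only on $x$ and $\varepsilon$, not on $n$. Reading the lemma's display this way (uniformly in $n$) is legitimate: its proof splits $|f_n(q)_i-f_n(r)_j|$ into three summands. The middle one is handled by pointwise equicontinuity, whose $\delta$ is independent of $n$. The outer two are handled by the condition in Definition~\ref{def:seq-fct}, which is uniform in $n$ by hypothesis.

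Since $(x_i)$ is Cauchy, there is an $M\geq N$ with $|x_i-x_j|<1/(2N)$ for all $i,j\geq M$. It follows that $|x-x_i|\leq 1/(2N)<1/N$, so $x_i\in B_{1/N}(x)$ for every $i\geq M$.

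Now take $i,j\geq M$ and substitute $q=x_i$ and $r=x_j$ into the display above. This yields
\begin{equation*}
|f_n(x)_i-f_n(x)_j|=|f_n(x_i)_i-f_n(x_j)_j|<\varepsilon
\end{equation*}
for every $n$. So $M$ witnesses the uniform Cauchy condition for the given $\varepsilon$. Every step is a plain choice of witnesses, so the argument goes through in $\mathsf{RCA}_0$ without any collection principle.

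The one point that needs care is the uniformity of $N$ in $n$ in the first step; everything after that is routine.
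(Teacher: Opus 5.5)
Your proof is correct and follows the paper's argument. Take $N$ from the previous lemma, uniformly in $n$; choose $M\geq N$ with $x_i\in B_{1/N}(x)$ for all $i\geq M$; then substitute $q=x_i$, $r=x_j$ to get $|f_n(x_i)_i-f_n(x_j)_j|<\varepsilon$ for every $n$. Your explicit remark that the lemma's $N$ does not depend on $n$ is a clarification the paper leaves implicit.
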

\begin{proof}
    For $x=(x_i)$, Definition~\ref{def:continuous} yields $f_n(x)=(f_n(x_i)_i)_{i\in\mathbb N}$. Given $\varepsilon>0$, take~$N$ as in the previous lemma. Then pick $N'\geq N$ such that we have $x_i\in B_{1/N}(x)$ for all $i\geq N'$. When we have $i,j\geq N'$, we get $|f_n(x_i)_i-f_n(x_j)_j|<\varepsilon$ for all $i,j\geq N'$ and every~$n\in\mathbb N$, as Definition~\ref{def:seq} demands.
\end{proof}

Applied to the previous example, this has the following implication.

\begin{example}\label{ex:x^n}
    The functions $f_n(x)=x^n$ are pointwise equicontinuous on $(-1,1)$. By the previous corollary, it follows that $(x^n)_{n\in\mathbb N}$ with $|x|<1$ (and trivially also with $|x|=1$) is a sequence in the sense of Definition~\ref{def:seq}. For $x=(x_i)$ with $|x|>1$, on the other hand, the convergence of $x^n=(x_i^n)$ becomes slower as~$n$ increases. To restore uniformity as in Definition~\ref{def:seq}, we need a rate of convergence for~$x$, which relies on arithmetical comprehension (see Remark~\ref{rem:FastCauchyACA}). This may be seen as a limitation of our approach. At the same time, one is ultimately most interested in sequences that converge, so that the exclusion of $(x^n)$ for $|x|>1$ may be tolerable.
\end{example}

The following lifts Proposition~\ref{lem:cauchy-converges} from single numbers to functions. We recall that uniform convergence means convergence with respect to the supremum norm.

\begin{proposition}[$\mathsf{RCA}_0$]\label{prop:cauchy-fct}
    Consider a sequence of continuous $f_n\colon D\to\mathbb R$ that is uniformly Cauchy, i.e., such that any $\varepsilon>0$ admits an $N\in\mathbb N$ with
    \begin{equation*}
        |f_m(x)-f_n(x)|<\varepsilon\quad\text{for any }x\in D\text{ and all }m,n\geq N. 
    \end{equation*}
    Then there is a continuous~$f\colon D\to\mathbb R$ such that $(f_n)$ converges uniformly to~$f$.
\end{proposition}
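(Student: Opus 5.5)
The plan is to imitate the proof of Proposition~\ref{lem:cauchy-converges} and take the diagonal. Given representations $f_n(q)=(f_n(q)_i)_{i\in\mathbb N}$, we define $f$ by
\begin{equation*}
    f(q)_i=f_i(q)_i\quad\text{for }q\in\mathbb Q\text{ and }i\in\mathbb N.
\end{equation*}
Two things must then be checked. First, every $x\in D$ lies in $\dom(f)$, i.e., the condition of Definition~\ref{def:continuous} holds. Second, $\bar f(x)$ is the uniform limit of the $f_n(x)$, with an $N$ independent of~$x$.

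For the continuity condition, fix $x\in D$ and $\varepsilon>0$. We pick $N_0$ by the uniform Cauchy hypothesis with $|f_m(y)-f_n(y)|<\varepsilon/5$ for all $y\in D$ and $m,n\geq N_0$. We would like to compare $f_i(q)_i$ with $f_j(r)_j$ by passing through $f_{N_0}$, estimating
\begin{equation*}
|f_i(q)_i-f_j(r)_j|\leq|f_i(q)_i-f_{N_0}(q)_i|+|f_{N_0}(q)_i-f_{N_0}(r)_j|+|f_{N_0}(r)_j-f_j(r)_j|.
\end{equation*}
The middle term is handled by the condition of Definition~\ref{def:continuous} for the single function $f_{N_0}$ at~$x$, which yields some $N_1$.

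The main obstacle is the outer terms. They compare approximations at a rational $q$ near $x$, whereas the hypothesis only speaks about real values $f_m(y)$ for $y\in D$, and $q$ need not lie in~$D$. I would attack this with Definition~\ref{def:seq-fct}: there is an $N_2$ such that $|f_n(q)_i-f_n(q)_j|<\varepsilon/5$ for all rationals $q\in B_{1/N_2}(x)$, all $n$, and all $i,j\geq N_2$. Thus, for such $q$, each $f_n(q)_i$ with $i\geq N_2$ is within $\varepsilon/5$ of a common ``value'' of $f_n$ at~$q$. To relate these pointwise values to the hypothesis, I would replace $q$ by the real $x$ itself. Fix $q\in B_{1/N}(x)$ with $N\geq N_1,N_2$, and choose $k$ large so that $x_k$ is close enough to $x$ for the continuity condition at $x$ of each of the two relevant functions $f_i$ and $f_{N_0}$; this involves only finitely many, indeed two, functions, so no $\mathsf{IPP}$ is needed. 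Then $|f_i(q)_i-f_i(x_k)_k|$ is small. However, this needs $N$ to depend on $i$, which is not allowed.

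To avoid that dependence, I would instead first prove a uniform-in-$n$ local continuity estimate near $x$, analogous to the lemma after Definition~\ref{def:seq-fct}. Uniform Cauchyness on $D$, together with the continuity of $f_{N_0}$ at~$x$ (Lemma~\ref{lem:eps-delta}), gives pointwise equicontinuity of $(f_n)_{n\geq N_0}$ at $x$ up to $3\varepsilon/5$, but only on $D$. The gap between rationals and $D$ seems genuinely problematic, so I would restrict attention to the case where the relevant rationals are close to points of~$D$. If necessary, I would modify the representation: keep $f(q)_i=f_i(q)_i$, but note that Definition~\ref{def:continuous} at $x$ only involves rationals in $B_{1/N}(x)$ and limits along $x_k\to x$, for which the estimate via Definition~\ref{def:seq-fct} plus a triangle inequality through $f_{N_0}(x)$ works, giving a bound of order $\varepsilon$.

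Finally, for uniform convergence, let $x=(x_k)\in D$ and $n\geq N_0$. Since $\bar f(x)_k=f_k(x_k)_k$ and $f_n(x)=\lim_k f_n(x_k)_k$, we get
\begin{equation*}
|\bar f(x)-f_n(x)|\leq\limsup_k|f_k(x_k)_k-f_n(x_k)_k|.
\end{equation*}
Using Definition~\ref{def:seq-fct} at $x$, this is at most $\sup_{k\geq N_0}|f_k(x)-f_n(x)|+2\varepsilon/5\leq 3\varepsilon/5$. This is uniform in $x$, because $N_0$ does not depend on~$x$. This step mirrors Lemma~\ref{lem:Cauchy-Q-R}.
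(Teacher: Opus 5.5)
There is a genuine gap, and it sits exactly where the proposition has content. Everything reduces to bounding your outer terms $|f_i(q)_i-f_{N_0}(q)_i|$ uniformly in $i$, and the proposal never does this. The detour through $x_k$ makes $N$ depend on $i$, as you observe. The equicontinuity you extract from uniform Cauchyness holds only on $D$. Your closing claim that ``a triangle inequality through $f_{N_0}(x)$ works'' would need $|f_i(q)_i-f_i(x)|$ to be small uniformly in $i$, which is the very equicontinuity at rationals you had just found unavailable. Definition~\ref{def:seq-fct} only controls the dependence on the approximation index at a \emph{fixed} rational; it says nothing about passing from $q$ or $x_k$ to $x$. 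The same unjustified step is hidden in your uniform-convergence estimate, which needs $|f_k(x_k)_k-f_k(x)|\leq 2\varepsilon/5$ uniformly in $k$.

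In fact, for arbitrary $D$ the diagonal need not represent a continuous function on $D$ at all. Take $D=\{0\}$ and $f_n(q)_i=\max(0,1-2n^2|q-1/n|)$ for $n\geq 1$, with $f_0=0$. These functions are continuous on $\mathbb R$. They form a sequence in the sense of Definition~\ref{def:seq-fct}, since there is no dependence on $i$. They are uniformly Cauchy on $D$, since all values at $0$ vanish. Yet $f(1/i)_i=1$ and $f(0)_i=0$, so $0\notin\dom(f)$. Also $\limsup_k|f_k(x_k)_k-f_n(x_k)_k|=1$ for $x_k=1/k$, contradicting your bound.

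The missing idea, which is how the paper argues, is to apply the uniform-Cauchy hypothesis at the rational $q$ itself, viewed as a real, rather than routing through $x$. Take $N$ such that Definition~\ref{def:seq-fct} gives $|f_n(q)_i-f_n(q)|\leq\varepsilon/5$ for all rationals $q\in B_{1/N}(x)$, all $n$ and all $i\geq N$, and such that the hypothesis holds with $\varepsilon/5$. Then Lemma~\ref{lem:eps-delta} for the single function $f_N$ gives $M\geq N$ with $|f_N(q)-f_N(r)|\leq\varepsilon/5$ for $q,r\in B_{1/M}(x)$. For rationals $q,r\in B_{1/M}(x)$ and $i,j\geq M$, one then bounds $|f(q)_i-f(r)_j|$ by
\begin{equation*}
|f_i(q)_i-f_i(q)|+|f_i(q)-f_N(q)|+|f_N(q)-f_N(r)|+|f_N(r)-f_j(r)|+|f_j(r)-f_j(r)_j|<\varepsilon,
\end{equation*}
with no equicontinuity needed. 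Your own three-term split closes the same way.

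This does use that the rationals near $x$ lie in $D$, which the paper's proof takes for granted. It holds for $D=\mathbb R$ or open $D$, and for $[0,1]$ after composing all representations with the projection onto $[0,1]$. So your worry about $q\notin D$ was well placed, but the cure is to use the hypothesis at $q$ where it is available, not to pass through $x$.

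Likewise, route your uniform-convergence estimate through the real values $f_k(x_k)$ and $f_n(x_k)$ at the rationals $x_k$. This gives $|f_k(x_k)_k-f_n(x_k)_k|\leq 3\varepsilon/5$ for all large $k$ and all $n\geq N_0$. Your limsup argument then yields uniform convergence directly, without the paper's final upgrade from pointwise to uniform convergence.
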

\begin{proof}
        We put $f(q)_i=f_i(q)_i$. In order to see that these rationals represent a continuous function $f\colon D\to\mathbb R$, consider any $x\in D$ and $\varepsilon>0$. By Definition~\ref{def:seq-fct}, take $N\in\mathbb N$ with $|f_n(q)_i-f_n(q)|\leq\varepsilon/5$ for any rational $q\in B_{1/N}(x)$ and all $n\in\mathbb N$ and $i\geq N$. Increasing~$N$ if necessary, we may also assume that the condition from the present proposition holds with $\varepsilon/5$ at the place of~$\varepsilon$. Finally, find an~$M\geq N$ such that $|f_N(q)-f_N(r)|\leq\varepsilon/5$ holds for $q,r\in B_{1/M}(x)$ (cf.~Lemma~\ref{lem:eps-delta}). When we have $q,r\in B_{1/M}(x)\cap\mathbb Q$ and $i,j\geq M$, we thus get
        \begin{multline*}
            |f(q)_i-f(r)_j|\leq|f_i(q)_i-f_i(q)|+|f_i(q)-f_N(q)|+|f_N(q)-f_N(r)|\\
            {}+|f_N(r)-f_j(r)|+|f_j(r)-f_j(r)_j|<\varepsilon,
        \end{multline*}
        as required by Definition~\ref{def:continuous}. Let us also note that we get $|f(q)_i-f_n(q)_i|\leq 3\varepsilon/5$ for all rationals $q\in B_{1/N}(x)$ and all~$i,n\geq N$, so that we get
        \begin{equation*}
        |f(x)-f_n(x)|<\varepsilon\quad\text{for all }n\geq N.
        \end{equation*}
        This shows that the $f_n$ converge to $f$ pointwise (as we chose $N$ depending on~$x$). To get convergence with respect to the supremum norm, consider any $\varepsilon>0$ and take $K\in\mathbb N$ with $|f_m(x)-f_n(x)|<\varepsilon/2$ for any $x\in D$ and all $m,n\geq N$. Given $n\geq K$ and $x\in D$, use pointwise convergence to find an $m\geq K$ with $|f(x)-f_m(x)|<\varepsilon/2$. The triangle inequality yields $|f(x)-f_n(x)|<\varepsilon$, as required.
\end{proof}

Finally, we come to the Arzel\`a-Ascoli theorem. The following is related to a result of Kreuzer~\cite{kreuzer-ascoli}, though the latter works with the classical representation of continuous functions. As a consequence, Kreuzer obtains an equivalence with the conjunction of $\mathsf{StCOH}$ and weak K\H{o}nig's lemma, while we get $\mathsf{StCOH}$ by itself.

\begin{theorem}\label{thm:arzela}
    The following are equivalent over $\mathsf{RCA}_0$:
    \begin{enumerate}[label=(\roman*)]
        \item The strong cohesive principle $\mathsf{StCOH}$.
        \item The Arzel\`a-Ascoli theorem: If a sequence of continuous $f_n\colon[0,1]\to[-B,B]$ with $B\in\mathbb N$ is uniformly equicontinuous, i.e., if any $\varepsilon>0$ admits $\delta>0$~with
        \begin{equation*}
            \qquad\quad|f_n(x)-f_n(y)|<\varepsilon\quad\text{for all }n\in\mathbb N\text{ and }x,y\in[0,1]\text{ with }|x-y|<\delta,
        \end{equation*}
        then a subsequence of $(f_n)$ is uniformly convergent (with continuous limit).
    \end{enumerate}
\end{theorem}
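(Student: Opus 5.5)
For the reversal from~(ii) to~(i), I will reduce to Lemma~\ref{lem:kreuzer-bolzano}. Given a sequence of rationals $q_n\in[0,1]$, I take the constant functions $f_n(x)=q_n$, represented by $f_n(q)_i=q_n$. These are trivially a sequence in the sense of Definition~\ref{def:seq-fct}, uniformly equicontinuous, and bounded by~$1$. A uniformly convergent subsequence $(f_{n(i)})$ is in particular uniformly Cauchy, so evaluating at~$0$ shows that $(q_{n(i)})$ is Cauchy. Lemma~\ref{lem:kreuzer-bolzano} then yields $\mathsf{StCOH}$.

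For the direction from~(i) to~(ii), I follow the usual diagonal argument, arranged so that a single application of cohesiveness suffices. Enumerate the rationals in $[0,1]$ as $(p_k)$. As in the proof of Theorem~\ref{thm:BolzanoWeierstrass}, I first truncate the representations so that all rationals $f_n(p)_i$ lie in $[-B-1,B+1]$. The uniformity in Definition~\ref{def:seq-fct} is only pointwise in~$x$, so the truncation must preserve it; I expect clipping to do so. I then consider the double sequence $q^k_n:=f_n(p_k)_n$, rescaled into $[0,1]$, and apply Proposition~\ref{prop:compact_family_rationals}. This gives a strictly increasing $n\colon\mathbb N\to\mathbb N$ such that $(f_{n(i)}(p_k)_{n(i)})_{i}$ is Cauchy for every~$k$. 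By Definition~\ref{def:seq-fct} at the point $x=p_k$, the rationals $f_m(p_k)_j$ approximate $f_m(p_k)$ uniformly in~$m$ for large~$j$. An argument as in Lemma~\ref{lem:Cauchy-Q-R} then shows that the reals $f_{n(i)}(p_k)$ form a Cauchy, hence convergent, sequence in~$i$ for each fixed~$k$.

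Next I show that $(f_{n(i)})$ is uniformly Cauchy. Given $\varepsilon>0$, take $\delta$ from uniform equicontinuity for $\varepsilon/3$ and finitely many rationals $p_{k_0},\dots,p_{k_L}$ forming a $\delta$-net of $[0,1]$. For each of these finitely many $k_l$, I need an $I_l$ beyond which $|f_{n(i)}(p_{k_l})-f_{n(j)}(p_{k_l})|<\varepsilon/3$. A common bound $I\geq I_l$ over $l\leq L$ requires $\Sigma^0_2$-collection, which is available because $\mathsf{StCOH}$ includes $\mathsf{IPP}$. To keep the bounded statement at the right complexity, I phrase it in terms of the rational approximations with a slack, as in the proof of Lemma~\ref{lem:inf-sum-fct}. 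The triangle inequality via the nearest net point then gives $|f_{n(i)}(x)-f_{n(j)}(x)|<\varepsilon$ for all $x\in[0,1]$ and $i,j\geq I$. Finally, Proposition~\ref{prop:cauchy-fct} supplies a continuous limit~$f$ with uniform convergence, since $(f_{n(i)})$ is still a sequence of functions in the sense of Definition~\ref{def:seq-fct}.

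I expect the main obstacle to be the bookkeeping around the uniformity conditions. One point is that the diagonal values $f_n(p_k)_n$ really converge to the intended reals. The other is that the finitely many pointwise Cauchy witnesses can be bounded simultaneously via $\mathsf{B}\Sigma^0_2$ rather than a stronger collection principle. I will try first to handle both by working with the rationals $f_n(p_k)_j$ directly and the uniform-in-$n$ modulus of Definition~\ref{def:seq-fct} at each $p_k$.
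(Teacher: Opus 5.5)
Your proposal is correct and has the same overall structure as the paper's proof. For the reversal you use constant functions; you go through Lemma~\ref{lem:kreuzer-bolzano} with rationals where the paper goes through Theorem~\ref{thm:BolzanoWeierstrass} with reals, which makes no difference. For the forward direction you use a diagonal subsequence from Proposition~\ref{prop:compact_family_rationals}, a finite $\delta$-net, one application of $\mathsf{IPP}$, and Proposition~\ref{prop:cauchy-fct}, as the paper does.

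The one genuine difference is how you control the approximations $f_n(q)_i$ uniformly in~$n$:
\begin{itemize}
\item The paper first upgrades Definition~\ref{def:seq-fct} to a modulus that is uniform over all rationals $q\in[0,1]$. It does this via Heine-Borel, exactly as for~(\ref{eq:unif-cont}), and then uses $\mathsf{IPP}$ only to bound the diagonal Cauchy witnesses at the net points.
\item You take the uniform-in-$n$ modulus of Definition~\ref{def:seq-fct} only at the finitely many net points and bound it together with the diagonal witnesses. Both conditions are $\Pi^0_1$ in the witness when phrased with the rationals $f_m(p_k)_j$. So $\mathsf B\Sigma^0_2$ (in the form of $\Pi^0_1$-bounding, as in the proof of Lemma~\ref{lem:inf-sum-fct}) yields a common bound, and the argument goes through.
\end{itemize}
Your version confines the use of cohesiveness to Proposition~\ref{prop:compact_family_rationals}, with only $\mathsf{IPP}$ needed elsewhere. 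The paper's uniform modulus instead makes the final five-term estimate a little cleaner.

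Your clipping step also works. Clipping is $1$-Lipschitz, so it preserves the conditions of Definitions~\ref{def:continuous} and~\ref{def:seq-fct}, and it leaves the represented values in $[-B,B]$ unchanged. It also supplies the boundedness of the rationals $f_n(q)_n$ that Proposition~\ref{prop:compact_family_rationals} requires, which the paper leaves implicit.
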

\begin{proof}
    To see that~(ii) implies~(i), it suffices to note that Arzel\`a-Ascoli is a generalization of Bolzano-Weierstrass. Specifically, assume that $(x_n)$ with $x_n=(x_{ni})_{i\in\mathbb N}$ is a bounded sequence of reals. Then the rationals $f_n(q)_i:=x_{ni}$ represent a sequence of continuous functions $f_n\colon[0,1]\to\mathbb R$ (compare the conditions from Definitions~\ref{def:seq} and~\ref{def:seq-fct}). Here $f_n$ is constant with value~$x_n$, as a real $y=(y_i)$ is mapped to
    \begin{equation*}
    f_n(y)=(f_n(y_i)_i)_{i\in\mathbb N}\quad\text{with}\quad f_n(y_i)_i=x_{ni}.
    \end{equation*}
    Hence the functions $f_n$ are uniformly bounded and uniformly equicontinuous. Given that~(ii) holds, we find a strictly increasing map $i\mapsto n(i)$ so that the functions~$f_{n(i)}$ converge uniformly to some~$f$ as $i$ grows. So the reals $x_{n(i)}=f_n(0)$ converge to the value~$f(0)$. We can conclude via Theorem~\ref{thm:BolzanoWeierstrass}.

    For the converse direction, we first note that the condition from Definition~\ref{def:seq-fct} becomes uniform in the presence of the strong cohesive principle: Any $\varepsilon>0$ admits an~$N\in\mathbb N$ such that we have
    \begin{equation*}
        |f_n(q)_i-f_n(q)_j|<\varepsilon\quad\text{for all }n\in\mathbb N\text{ and }q\in[0,1]\cap\mathbb Q\text{ and }i,j\geq N.
    \end{equation*}
    This is derived like statement~(\ref{eq:unif-cont}) in the proof of Theorem~\ref{thm:StCADSContinuous}, using Heine-Borel.
    
    We now use Proposition~\ref{prop:compact_family_rationals} to find a strictly increasing map $i\mapsto n(i)$ such that the sequence $(f_{n(i)}(q)_{n(i)})_{i\in\mathbb N}$ is Cauchy for each rational~$q$. Due to Proposition~\ref{prop:cauchy-fct}, it is enough to show that the sequence~$(f_{n(i)})$ of functions is uniformly Cauchy. Given any $\varepsilon>0$, let $\delta>0$ witness the uniform equicontinuity that is assumed by Arzel\`a-Ascoli. As shown at the beginning of this paragraph, we have an $N\geq1/\delta$ with $|f_n(q)_i-f_n(q)|\leq\varepsilon$ for all $n\in\mathbb N$ and $q\in[0,1]\cap\mathbb Q$ and $i\geq N$. Now consider the rationals $q_k=k/N$ for $k\leq N$. Using $\mathsf{IPP}$, we find an $N'\geq N$ with
    \begin{equation*}
        |f_{n(i)}(q_k)_{n(i)}-f_{n(j)}(q_k)_{n(j)}|<\varepsilon\quad\text{for all }k\leq N\text{ and }i,j\geq N'.
    \end{equation*}
    Given any $x\in[0,1]$, pick $k\leq N$ with $|x-q_k|<1/N\leq\delta$. For $i,j\geq N'$, we learn that $|f_{n(i)}(x)-f_{n(j)}(x)|$ is bounded by
    \begin{align*}
        |f_{n(i)}(x)-f_{n(i)}(q_k)|+|f_{n(i)}(q_k)-f_{n(i)}(q_k)_{n(i)}|+|f_{n(i)}(q_k)_{n(i)}-f_{n(j)}(q_k)_{n(j)}|&\\
        {}+|f_{n(j)}(q_k)_{n(j)}-f_{n(j)}(q_k)|+|f_{n(j)}(q_k)-f_{n(j)}(x)|&
    \end{align*}
    and hence by $5\varepsilon$.
\end{proof}

In the rest of this section, we discuss applications of our convergence results to the fixed-point theorems of Banach and Caristi.

\begin{remark}
In order to use the Bolzano-Weierstrass theorem and other convergence results, we need to construct sequences of reals that validate the uniformity condition from Definition~\ref{def:seq}. In Example~\ref{ex:x^n}, we have seen that this is possible in some but not all situations. The following applications provide further evidence that we can construct sequences when they are relevant. This is an important justification for our approach, though the construction of sequences remains a subtle issue, which calls for more research in the future. In the worst case -- if future work should show that several relevant applications require arithmetical comprehension -- we would still have an interesting new picture of analysis, where abstract theorems are weak while concrete applications have logical strength.
\end{remark}

In contrast to Caristi's theorem (discussed below), the Banach fixed-point theorem is already weak in the classical setting (see Theorem~2.1 of~\cite{peng-yamazaki}). Nevertheless, it is interesting that we can derive it from the convergence of Cauchy sequences in~$\mathbb R$, which is classically equivalent to arithmetical comprehension.

\begin{proposition}[$\mathsf{RCA}_0+\mathsf{StCOH}$]\label{prop:Banach}
Consider a continuous $f\colon[0,1]\to[0,1]$ that admits a real $\rho\in[0,1)$ with
\begin{equation*}
|f(x)-f(y)|\leq\rho\cdot |x-y|\quad\text{for all }x,y\in[0,1].
\end{equation*}
Starting with any~$x_0\in[0,1]$, we then have a sequence of reals~$x_n\in[0,1]$ such that $x_{n+1}=f(x_n)$ holds for all~$n\in\mathbb N$. It converges to the unique fixed-point of~$f$.
\end{proposition}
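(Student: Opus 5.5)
The plan is to build the iterates as a double sequence of rationals by composing the rational representation of $f$ with itself, check the uniform Cauchy condition of Definition~\ref{def:seq} using uniform continuity (statement~(\ref{eq:unif-cont})), and then obtain convergence from the contraction estimate together with Proposition~\ref{lem:cauchy-converges}.

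\emph{Step 1: replacing $\rho$ by a rational.} Since $\rho<1$ is a real, Lemma~\ref{lem:reals-ineq-Sigma02} provides a rational $\rho'$ with $\rho\leq\rho'<1$. We will work with $\rho'$ throughout.

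\emph{Step 2: normalising the representation.} Write $x_0=(x_{0,i})$. As in the proof of Theorem~\ref{thm:BolzanoWeierstrass}, we clip $x_{0,i}$ and every value $f(q)_i$ to $[0,1]$. This changes neither $x_0$ nor $f$ as reals, and all rational approximations now lie in $[0,1]$.

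\emph{Step 3: the definition of the iterates.} By recursion on $n$ (a primitive recursive construction, so available in $\mathsf{RCA}_0$), set
\begin{equation*}
    x_{n+1,i}=f(x_{n,i})_i.
\end{equation*}
By Definition~\ref{def:continuous}, for every fixed $n$ the sequence $(x_{n+1,i})_i$ is exactly $\bar f$ applied to $(x_{n,i})_i$. Hence each $x_{n+1}$ is a real with $x_{n+1}=f(x_n)$, provided $x_n$ is a real. Induction on $n$ gives this, and the induction is $\Pi^0_3$. To avoid relying on $\Pi^0_3$-induction, we instead derive it from the uniform statement in Step~4.

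\emph{Step 4: uniform Cauchyness (the main obstacle).} We need the uniformity required by Definition~\ref{def:seq}. Naively, errors in $x_{n,i}$ propagate through the iteration. Uniform continuity alone gives no control independent of $n$, so we must use the contraction on rational approximations. First apply (\ref{eq:unif-cont}), available under $\mathsf{StCOH}$ by Theorem~\ref{thm:StCADSContinuous}. For each $\varepsilon$ it yields an $N$ with $|f(p)_i-f(q)_j|<\varepsilon$ whenever $|p-q|<1/N$ and $i,j\geq N$.

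The plan is then to prove, by induction on $n$ (a $\Pi^0_1$ statement once $\varepsilon$, $N$ and the bound $K$ are fixed), that
\begin{equation*}
    |x_{n,i}-x_{n,j}|<\eta \quad\text{for all } i,j\geq K.
\end{equation*}
The induction step needs $|f(p)_i-f(q)_j|\leq\rho'|p-q|+\text{small}$ for rationals $p,q$. This is where the care lies. From $|f(p)-f(q)|\leq\rho|p-q|$ for the reals, combined with (\ref{eq:unif-cont}), we get
\begin{equation*}
    |f(p)_i-f(q)_j|\leq\rho|p-q|+2\varepsilon \quad\text{for } i,j\geq N,
\end{equation*}
because (\ref{eq:unif-cont}) with $p=q$ gives $|f(p)_i-f(p)|\leq\varepsilon$ for all $i\geq N$ and all $p$ simultaneously. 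Choose $\varepsilon$ with $2\varepsilon\leq(1-\rho')\eta$, and take $K\geq N$ with $|x_{0,i}-x_{0,j}|<\eta$ for $i,j\geq K$. The step then reads
\begin{equation*}
    |x_{n+1,i}-x_{n+1,j}|\leq\rho'\eta+(1-\rho')\eta=\eta.
\end{equation*}
To get a strict inequality, use $\eta$ slightly smaller on the right-hand side, or work with $\leq$ and adjust at the end. Since $K$ does not depend on $n$, this yields the uniform Cauchy condition. As a consequence each $x_n$ is a real, and $x_{n+1}=f(x_n)$.

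\emph{Step 5: convergence and uniqueness.} By induction on $n$ (the statement $\leq$ between reals is $\Pi^0_2$, which is acceptable as we only need it for each fixed $n$), we get $|x_{n+1}-x_n|\leq\rho'^n$. The geometric series then shows that $(x_n)$ is Cauchy. Proposition~\ref{lem:cauchy-converges} gives a limit $x$. Proposition~\ref{lem:fct-to-seq}(b) gives $f(x)=\lim f(x_n)=\lim x_{n+1}=x$. Uniqueness follows from $|x-y|=|f(x)-f(y)|\leq\rho|x-y|$ with $\rho<1$.
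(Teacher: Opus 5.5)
Your Steps 1--4 match the paper's proof closely and are fine: the same recursion $x_{n+1,i}=f(x_{n,i})_i$, the same use of (\ref{eq:unif-cont}) to get $|f(p)_i-f(q)_j|\leq\rho|p-q|+2\varepsilon$, and the same induction for the uniform Cauchy condition.

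\textbf{The gap is in Step~5.} There you prove $|x_{n+1}-x_n|\leq\rho'^n$ by induction on $n$. You note that this is a $\Pi^0_2$-formula but dismiss the issue because ``we only need it for each fixed $n$''. That argument is not valid in $\mathsf{RCA}_0$, which only has $\Sigma^0_1$-induction. Adding $\mathsf{StCOH}$ does not help: it is $\Pi^0_3$-conservative over $\mathsf{RCA}_0$ and so does not yield $\Sigma^0_2$-induction. That the bound holds for each standard $n$ is a fact about the metatheory. To conclude that $(x_n)$ is Cauchy, you need the universally quantified statement inside the theory, because Cauchyness speaks about all $m,n\geq M$, and these may be nonstandard. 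Obtaining that universal statement is exactly what $\Pi^0_2$-induction would do. You avoided precisely this kind of problem in Step~3, and the paper explicitly flags it at this point. Likewise, summing the geometric series via the triangle inequality between reals, proved by induction on $n$, would again be an induction on a $\Pi^0_2$-formula.

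\textbf{The repair} is to run the induction on the rational approximations, after fixing the error tolerance.
Given $m<n$ and $\varepsilon>0$, take $N$ from (\ref{eq:unif-cont}) such that $|f(q)_i-f(q)|\leq\varepsilon$ for all rationals $q\in[0,1]$ and all $i\geq N$.
For each fixed $i\geq N$ you have $|x_{k+1,i}-x_{k+2,i}|\leq\rho'\cdot|x_{k,i}-x_{k+1,i}|+2\varepsilon$.
Induction on $k$ is now for a statement about rationals only, which is unproblematic, and yields $|x_{k,i}-x_{k+1,i}|\leq\rho'^k+2\varepsilon/(1-\rho')$.
Summing gives $|x_{m,i}-x_{n,i}|\leq\rho'^m/(1-\rho')+2(n-m)\varepsilon/(1-\rho')$ for all $i\geq N$.
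Since $\varepsilon$ was arbitrary for fixed $m<n$, you get $|x_m-x_n|\leq\rho'^m/(1-\rho')$, which is the Cauchy property.
Your rational $\rho'$ makes this cleaner than the paper's version, where the real $\rho$ appears in the inductive bound.

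A minor point: Proposition~\ref{lem:fct-to-seq}(b) is stated for $f\colon\mathbb R\to\mathbb R$. For the fixed point it is simpler to use $|f(x_n)-f(x)|\leq\rho\cdot|x_n-x|$ directly, or Lemma~\ref{lem:eps-delta} as the paper does.
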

\begin{proof}
    In our setting, the crucial task is to construct $(x_n)$ as a sequence in the sense of Definition~\ref{def:seq}. As in the proof of Lemma~\ref{lem:inf-sum-fct}(b), we may assume that we have $f(q)_i\in[0,1]$ for the rational approximations that determine~$f$. Let us also show that any~$\varepsilon>0$ admits an $N\in\mathbb N$ such that all rationals $p,q\in[0,1]$ validate
    \begin{equation*}
        |p-q|\leq\varepsilon\quad\Rightarrow\quad|f(p)_i-f(q)_j|\leq\varepsilon\quad\text{for all }i,j\geq N.
    \end{equation*}
    Set $\delta=(1-\rho)\cdot\varepsilon$. Statement (\ref{eq:unif-cont}) from the proof of Theorem~\ref{thm:StCADSContinuous} yields an~$N\in\mathbb N$ such that $|f(r)_i-f(r)_j|\leq\delta/2$ holds for any rational~$r\in[0,1]$ and all~$i,j\geq N$. Given $|p-q|\leq\varepsilon$ and $i,j\geq N$, we learn that $|f(p)_i-f(q)_j|$ is bounded by
    \begin{equation*}
        |f(p)_i-f(p)|+|f(p)-f(q)|+|f(q)-f(q)_j|\leq\frac\delta2+\rho\cdot\varepsilon+\frac\delta2=\varepsilon.
    \end{equation*}
    We may assume that the given real $x_0=(x_{0i})$ has approximations $x_{0i}\in[0,1]$. To define $x_n=(x_{ni})_{i\in\mathbb N}$ by recursion, we set $x_{n+1,i}=f(x_{ni})_i\in[0,1]$. The desired equality $x_{n+1}=f(x_n)$ is immediate by Definition~\ref{def:continuous} once it is confirmed that each~$x_n$ is a real number. To achieve the latter and to show that the $x_n$ form a sequence in our sense, we verify the uniform Cauchy condition from Definition~\ref{def:seq}. Given $\varepsilon>0$, take an~$N\in\mathbb N$ such that we have $|x_{0i}-x_{0j}|\leq\varepsilon$ for all~$i,j\geq N$. We may assume that the displayed implication from the beginning of this proof holds as well (possibly for increased~$N$). For any fixed~$i,j\geq N$, we then get $|x_{ni}-x_{nj}|\leq\varepsilon$ by induction on~$n\in\mathbb N$.

    We now show that the sequence $(x_n)$ is Cauchy. Note that we cannot directly use induction to get $|x_n-x_{n+1}|\leq\rho^n$, since the latter is a $\Pi^0_2$-statement. So we work with approximations. Given any $\varepsilon>0$, use statement~(\ref{eq:unif-cont}) from the proof of Theorem~\ref{thm:StCADSContinuous} to find an $N\in\mathbb N$ such that $i\geq N$ entails $|f(q)_i-f(q)|\leq\varepsilon/2$ for any rational $q\in[0,1]$ and hence
    \begin{multline*}
        |x_{n+1,i}-x_{n+2,i}|=|f(x_{ni})_i-f(x_{n+1,i})_i|\\
        {}\leq|f(x_{ni})_i-f(x_{ni})|+|f(x_{ni})-f(x_{n+1,i})|+|f(x_{n+1,i})-f(x_{n+1,i})_i|\\
        {}\leq\rho\cdot|x_{ni}-x_{n+1,i}|+\varepsilon,
    \end{multline*}
    which inductively yields
    \begin{equation*}
        |x_{ni}-x_{n+1,i}|\leq\rho^n+\varepsilon\cdot\sum_{i=0}^{n-1}\rho^i<\rho^n+\frac\varepsilon{1-\rho}.
    \end{equation*}
    For $m<n$, we get
    \begin{equation*}
        |x_{mi}-x_{ni}|\leq\sum_{i=m}^{n-1}\rho^i+\frac{n-m}{1-\rho}\cdot\varepsilon<\frac{\rho^m}{1-\rho}+\frac{n-m}{1-\rho}\cdot\varepsilon.
    \end{equation*}
    Since $i\geq N$ and $\varepsilon>0$ were arbitrary, this shows $|x_m-x_n|\leq\rho^m/(1-\rho)$, as needed to conclude that the sequence is Cauchy. Now Proposition~\ref{lem:cauchy-converges} guarantees that there is a limit $x=\lim_{n\to\infty}x_n$. In view of $x_{n+1}=f(x_n)$, we have
    \begin{equation*}
        |x-f(x)|\leq|x-x_n|+|x_n-x_{n+1}|+|f(x_n)-f(x)|.
    \end{equation*}
    Here the right side becomes arbitrarily small as $n$ grows (see Lemma~\ref{lem:eps-delta}), which shows that $x$ is a fixed-point. For any fixed-point~$x'$ we get
    \begin{equation*}
    |x-x'|=|f(x)-f(x')|\leq\rho\cdot|x-x'|
    \end{equation*}
    and hence~$x'=x$.
\end{proof}

Under the assumptions of the previous result, we have been able to construct a sequence of reals by iterated application of a function~$f$. Without any assumption on~$f$, this is not possible over a weak theory, as the following result shows.

\begin{proposition}\label{prop:iterates-ACA}
    The following are equivalent over~$\mathsf{RCA}_0$:
    \begin{enumerate}[label=(\roman*)]
        \item The principle of arithmetical comprehension holds.
        \item For any continuous function $f\colon[0,1]\to[0,1]$ and any $x\in[0,1]$, there is a sequence~$(x_n)_{n\in\mathbb N}$ of reals with $x_0=x$ and $x_{n+1}=f(x_n)$ for all~$n\in\mathbb N$.
    \end{enumerate}
\end{proposition}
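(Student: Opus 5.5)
For the direction from~(i) to~(ii), I would use Remark~\ref{rem:FastCauchyACA}. Arithmetical comprehension gives a rate for every Cauchy sequence of rationals, so I can replace $x$ and each value $f(q)$ by fast Cauchy sequences. More useful is to form the iterates arithmetically. By $\Sigma^0_1$-induction inside $\mathsf{ACA}_0$ (or by arithmetical recursion in the form available via arithmetical comprehension applied to a single arithmetically defined set), I would define a family of rationals $y_{nk}$ with $|y_{nk}-y_{n,l}|\leq 2^{-k}$ for $k\leq l$, where $y_{n+1,k}$ is computed from a fast approximation to $f(y_n)$. Concretely, I would use $\mathsf{ACA}_0$ to obtain a modulus $\omega$ of uniform continuity for $f$, together with rates for the values $f(q)$; uniform continuity is available since $\mathsf{ACA}_0$ proves $\mathsf{StCOH}$ and hence Theorem~\ref{thm:StCADSContinuous}. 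Then, with these functions as parameters, $y_{n+1,k}$ can be defined by primitive recursion on $n$: take $f(y_{n,m})$ evaluated to precision $2^{-k-1}$, where $m$ is chosen via $\omega$. This recursion is $\Delta^0_1$ in the parameters, so the family exists in $\mathsf{RCA}_0$ relative to them. Since each $y_n$ has the common rate $2^{-k}$, the family is uniformly Cauchy and hence a sequence in the sense of Definition~\ref{def:seq}. The identity $y_{n+1}=f(y_n)$ follows from Lemma~\ref{lem:eps-delta} together with the choice of precision.

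For the reversal from~(ii) to~(i), I would show that the range of an arbitrary injection $g\colon\mathbb N\to\mathbb N$ exists, as in Lemma~III.1.3 of~\cite{simpson09}. The idea is to encode ``$n\in\rng(g)$'' into the $n$-th iterate, in a way that the common rate forced by uniformity (as in the remark after Definition~\ref{def:seq}) decides membership. A convenient choice is a map that doubles distances: the tent map, or more simply $f(x)=\min(1,2x)$ on $[0,1]$, together with a slow starting point $x=\sum_i 2^{-g(i)\cdot c}$ of Specker type. However, $\min(1,2x)$ saturates, so I would prefer the tent map $T(x)=1-|1-2x|$, whose iterates expose the binary digits of $x$. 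For $x=\sum_i 2^{-2g(i)-2}$, which is a slow Cauchy sequence because it is monotone and bounded (Remark~\ref{rem:FastCauchyACA}), the real $T^{2n}(x)$ has a leading digit that reveals whether $n\in\rng(g)$. Uniformity then gives a single $K$ such that the rational $x_{2n+\cdot,K}$ approximates all iterates to within $1/8$, which decides membership by $\Delta^0_1$-comprehension. To keep the decoding clean, I would rather use a piecewise-linear map that acts as a left shift on a ternary or sparse-digit representation, so that the relevant quantity at stage $n$ is bounded away from the decision threshold by a fixed margin.

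Continuity of the tent map is routine by Lemma~\ref{lem:basicContFunctions}. The main obstacle is the decoding step in the reversal. Because the shift map amplifies errors, the digits of $x$ after position $2n$ could influence $T^{2n}(x)$ near the threshold. I would handle this by spacing the digits, using a base such as $4$ and the digits $\{0,2\}$ only, so that the tail always contributes less than a fixed fraction. I would then verify that the iterate identity $x_{n+1}=f(x_n)$ as reals forces $x_n=T^n(x)$ by an external induction. This step is delicate: ``$x_n=T^n(x)$'' is $\Pi^0_2$, so the induction has to be arranged carefully, possibly by instead proving the needed bounded digit statements directly by $\Sigma^0_1$-induction on approximations.
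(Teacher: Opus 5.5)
Your direction (i)$\Rightarrow$(ii) is correct, but it is argued differently from the paper. The paper simply sets $x_{n+1,i}=f(x_{ni})_i$ and uses $\Pi^0_3$-induction (available in $\mathsf{ACA}_0$) to see that each row is a real in $[0,1]$, so $x_{n+1}=f(x_n)$ holds by Definition~\ref{def:continuous}. Only at the end does it speed all rows up simultaneously to get uniformity. You instead first extract a modulus of uniform continuity (via $\mathsf{StCOH}$ and Theorem~\ref{thm:StCADSContinuous}) and rates for the values $f(q)$. After that your recursion is computable in these parameters, directly yields rows with the common rate $2^{-k}$, and needs only a $\Pi^0_1$-induction. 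This works, provided you clamp the approximations into $[0,1]$ so that the modulus and the rates apply to them. It does, however, use compactness where the paper needs none.

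The reversal has the same architecture as the paper's: a continuous piecewise linear map shifts a sparse digit expansion of a Specker-type starting point, and membership is decoded through a common index from Definition~\ref{def:seq}. But the step you leave open is the actual content of the proof, so there is a genuine gap. You must show inside $\mathsf{RCA}_0$ that the $n$-th member of the \emph{given} sequence lies on the correct side of the threshold. An induction outside the system reaches only standard $n$, and $x_n=T^n(x)$ is $\Pi^0_2$, so it cannot be proved by induction in $\mathsf{RCA}_0$.

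The missing idea is to use uniformity a second time, at a precision that depends on $n$:
\begin{enumerate}[label=(\arabic*)]
\item Fix $n$ and take $N$ sufficiently far above $n$. The paper takes $N>n$ with $\varphi(N)$; such $N$ exist because otherwise bounded $\Sigma^0_1$-comprehension would already give the set.
\item Use bounded $\Sigma^0_1$-comprehension to compute the exact finite truncation $s$ of the starting point up to digit $N$.
\item Choose a single $J$ with $|x_m-x_{mJ}|\le 3^{-N-2}$ for \emph{all} $m$, and put $q_m=x_{mJ}$.
\item Prove by induction on $i\le n$ that $q_i$ lies within about $3^{i-N}$ of the rational iterate $f^i(s)$. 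This claim mentions only rationals, so it is $\Delta^0_1$ and the induction is legitimate.
\end{enumerate}
The induction step uses the hypothesis $x_{i+1}=f(x_i)$, which holds for all $i$ and is not part of the induction formula. It also uses that $f$ is piecewise linear with slopes $\pm3$, so errors grow by at most a factor $3$ per step. Since the precision was tied to $N$, the accumulated error at $i=n$ stays below the margin separating $f^n(s)\le 1/3$ from $f^n(s)\ge 2/3$. This places $x_n$ on the correct side of $1/2$ with a fixed margin, which is all the final decoding needs.

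Your remark about proving `bounded digit statements by $\Sigma^0_1$-induction on approximations' points in this direction. Without the $n$-dependent uniform approximation and the comparison with exact rational iterates of a finite truncation, however, the argument is not yet there.

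Incidentally, your tent map would also serve. It shifts and complements according to the digit just shifted out, so with your spacing it is $T^{2n+1}(x)$, not $T^{2n}(x)$, that reveals whether $n\in\rng(g)$. The same verification issue arises there, with factor $2$ in place of $3$.
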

\begin{proof}
    To show that (i) implies (ii), we define $x_{n+1,i} := f(x_{ni})_i$ for all $n,i\in \mathbb{N}$. By $\Pi^0_3$-induction, we see that $x_n=(x_{ni})$ is Cauchy for each~$n\in\mathbb N$. By construction, we have $x_{n+1}=f(x_n)$. Finally, arithmetical comprehension allows us to speed up all sequences $(x_{ni})_{i\in\mathbb N}$ into fast Cauchy sequences (cf.~Remark~\ref{rem:FastCauchyACA}), so that $(x_n)_{n\in\mathbb N}$ becomes a sequence in the sense of Definition~\ref{def:seq}.

    For the converse direction, consider a $\Sigma^0_1$-statement $\varphi(n) = \exists m\ \theta(m, n)$. Aiming at a contradiction, we assume that~(ii) holds while $\{n\in\mathbb N:\varphi(n)\}$ does not exist as a set. Consider the continuous function $f\colon[0,1]\to[0,1]$ with
    \begin{equation*}
        f(x) =
        \begin{cases}
            3\cdot x & \text{if $0 \leq x \leq 1/3$,}\\
            2 - 3\cdot x & \text{if $1/3 < x < 2/3$,}\\
            3\cdot x - 2 & \text{if $2/3 \leq x \leq 1$}.
        \end{cases}
    \end{equation*}
    To obtain a representation in the sense of Definition~\ref{def:continuous}, we declare that $f(q)_i$ is equal to~$f(q)$ (defined like $f(x)$ above). As the starting point of our iteration, we take the real number $x_0=(x_{0j})\in[0,1]$ with
    \begin{equation*}
        x_{0j} = \frac{2}{3}\cdot\sum_{i=0}^js_{ij}\quad\text{for}\quad s_{ij}=
        \begin{cases}
            3^{-i} & \text{if there is an $m \leq j$ with $\theta(m,i)$,}\\
            0 & \text{otherwise}.
        \end{cases}
    \end{equation*}
    Note that $(x_{0j})$ is non-decreasing and hence indeed Cauchy. By~(ii), we obtain a sequence of iterations~$x_{n+1}=f(x_n)$. We will show that all $n\in\mathbb N$ validate
    \begin{equation*}
        x_n \leq \frac{1}{3}\ \Leftrightarrow\ \lnot\varphi(n) \qquad \text{and} \qquad x_n \geq \frac{2}{3} \ \Leftrightarrow\ \varphi(n).
    \end{equation*}
    Once this is achieved, we invoke Definition~\ref{def:seq} to find an~$I\in\mathbb N$ with $|x_n-x_{nI}|<1/6$ for all~$n\in\mathbb N$, which allows us to form the set
    \begin{equation*}
        \{n\in\mathbb N:\varphi(n)\}=\{n\in\mathbb N:x_{nI}>1/2\}.
    \end{equation*}
    To establish the open claim, we consider an arbitrary~$n\in\mathbb N$. We find an~$N>n$ such that $\varphi(N)$ holds, since we could otherwise form the set $\{n\in\mathbb N:\varphi(n)\}$ by bounded $\Sigma^0_1$-comprehension (see Theorem~II.3.9 in~\cite{simpson09}). The latter also gives us access to the rational
    \begin{equation*}
        s=\frac{2}{3}\cdot\sum_{i=0}^N s_i\quad\text{for}\quad s_i=
        \begin{cases}
            3^{-i} & \text{if $\varphi(i)$ holds,}\\
            0 & \text{otherwise}.
        \end{cases}
    \end{equation*}
    Consider the (rational) iterates $f^i(s)$ with $f^0(s)=s$ and $f^{i+1}(s)=f(f^i(s))$. We use induction on $i\leq n$ to prove
    \begin{equation*}
        f^i(s)\leq\frac13-\frac1{3^{N+1-i}}\ \Leftrightarrow\ s_i=0\qquad\text{and}\qquad f^i(s)\geq\frac23+\frac1{3^{N+1-i}}\ \Leftrightarrow\ s_i=3^{-i}.
    \end{equation*}
    As part of the same induction, we prove the auxiliary claim
    \begin{equation*}
        f^i(s)=\frac23\cdot 3^i\cdot\sum_{j=0}^{N-i} s_{i+j}.
    \end{equation*}
    If the latter holds and we have $s_i=0$, then we get
    \begin{equation*}
        f^i(s)\leq\frac23\cdot 3^i\cdot\sum_{j=1}^{N-i}3^{-i-j}=\frac13-\frac1{3^{N+1-i}}.
    \end{equation*}
    When we have $s_i=3^{-i}$ (as well as $s_N=3^{-N}$ by construction), we obtain
    \begin{equation*}
        f^i(s)\geq\frac23\cdot 3^i\cdot(s_i+s_N)=\frac23+\frac2{3^{N+1-i}}.
    \end{equation*}
    In the induction step for the auxiliary claim, the simultaneous induction hypothesis ensures that $f^i(s)$ is smaller than $1/3$ or larger than~$2/3$. If we have $f^i(s)<1/3$ and consequently $s_i=0$, the definition of $f$ yields
    \begin{equation*}
        f^{i+1}(s)=\frac23\cdot 3^{i+1}\cdot\sum_{j=1}^{N-i}s_{i+j}=\frac23\cdot 3^{i+1}\cdot\sum_{j=0}^{N-(i+1)}s_{i+1+j}.
    \end{equation*}
    When we have $f^i(s)>2/3$ and hence $s_i=3^{-i}$, we can compute
    \begin{equation*}
        f^{i+1}(s)=\frac23\cdot 3^{i+1}\cdot\left(3^{-i}+\sum_{j=0}^{N-(i+1)} s_{i+1+j}\right)-2=\frac23\cdot 3^{i+1}\cdot\sum_{j=0}^{N-(i+1)}s_{i+1+j},
    \end{equation*}
    which completes the induction.
    
    Back to our sequence of reals, another application of Definition~\ref{def:seq} yields a~$J\in\mathbb N$ with $|x_m-x_{mJ}|\leq 3^{-N-2}$ for all $m\in\mathbb N$. Write $q_m=x_{mJ}$ for $m>0$ and put
    \begin{equation*}
        q_0=\begin{cases}
            s+2\cdot 3^{-N-1} & \text{if $\varphi(N+1)$ holds},\\
            s & \text{otherwise}.
        \end{cases}
    \end{equation*}
    Note that we have
    \begin{equation*}
        q_0\leq x_0\leq q_0+\frac23\cdot\sum_{i=N+2}^\infty 3^{-i}=q_0+3^{-N-2}
    \end{equation*}
    and hence $|x_m-q_m|\leq3^{-N-2}$ also for $m=0$. We now inductively prove
    \begin{equation*}
        |q_i-f^i(s)|\leq 3^{-N+i}-3^{-N-1}\qquad\text{for }i\leq n.
    \end{equation*}
    For $i=0$, this holds by the choice of~$q_0$. In the induction step, we distinguish two cases. Let us first assume that we have $s_i=0$. By the induction hypothesis and the above, we get
    \begin{equation*}
        x_i\leq q_i+3^{-N-2}\leq f^i(s)+3^{-N-i}-3^{-N-1}+3^{-N-2}<\frac13
    \end{equation*}
    and hence $x_{i+1}=f(x_i)=3\cdot x_i$ as well as $f^{i+1}(s)=f(f^i(s))=3\cdot f^i(s)$. This yields
    \begin{align*}
        |q_{i+1}-f^{i+1}(s)|&\leq|q_{i+1}-x_{i+1}|+|x_{i+1}-f^{i+1}(s)|\leq 3^{-N-2}+3\cdot|x_i-f^i(s)|\\
        {}&\leq 3^{-N-2}+3\cdot\big(|x_i-q_i|+|q_i-f^i(s)|\big)\\
        {}&\leq 3^{-N-2}+3^{-N-1}+3^{-N+i+1}-3^{-N}<3^{-N+i+1}-3^{-N-1}.
    \end{align*}
    When we have $s_i=3^{-i}$, we similarly get $x_i>2/3$, which yields $x_{i+1}=3\cdot x_i-2$ as well as $f^{i+1}(s)=3\cdot f^i(s)-2$. So the same chain of inequalities applies. Finally, when we apply this to $i=n$, the same argument as above shows that $\neg\varphi(n)$ implies $s_n=0$ and hence $x_n<1/3$ while $\varphi(n)$ implies $s_n=1$ and hence $x_n>2/3$.
\end{proof}

To establish a version of Caristi's fixed point theorem, we introduce a representation that covers many though not all semi-continuous functions (cf.~Remark~\ref{rmk:semi-cont}). Consider any family of rationals $g_i(q)\geq 0$ indexed by~$i\in\mathbb N$ and $q\in\mathbb Q$. In a sufficiently strong meta theory, this induces a function $g\colon\mathbb R\to[0,\infty]$ via
\begin{equation*}
    g(x)=\sup_{n\in\mathbb N}\hat g_n(x)\quad\text{with}\quad\hat g_n(x)=\inf\left\{g_i(p):i\in\mathbb N\text{ and }p\in B_{2^{-n}}(x)\cap\mathbb Q\right\}.
\end{equation*}
This function is lower semi-continuous, i.e., for any~$y<g(x)$ there is an~$N\in\mathbb N$ with $y<g(x')$ for all $x'\in B_{2^{-N}}(x)$. Indeed, the latter is satisfied whenever we have~$y<\hat g_N(x)$. For any $x'\in B_{2^{-N}}(x)$, we then get $B_{2^{-n}}(x')\subseteq B_{2^{-N}}(x)$ for some~$n\in\mathbb N$, which yields $y<\hat g_N(x)\leq\hat g_n(x')\leq g(x')$.

In the weak theories that we consider, it does not seem possible to construct~$g(x)$ or even just $\hat g_n(x)$ as reals, due to the complexity of the condition $p\in B_{2^{-n}}(x)$. However, certain expressions that involve values~$g(x)$ can be interpreted as abbreviations in a canonical way. To make this explicit for a case that we will need below, we assume that $g$ maps into~$[0,\infty)$. We then have
\begin{equation*}
    g(x)\leq g(y)+z\quad\Leftrightarrow\quad\forall\varepsilon>0\forall m\exists n:\hat g_m(x)\leq\hat g_n(y)+z+\varepsilon.
\end{equation*}
In addition, we obtain
\begin{multline*}
    \hat g_m(x)\leq\hat g_n(y)+z+\frac\varepsilon2\\
    \begin{aligned}
        {}&\Rightarrow\quad\forall j\forall q\in B_{2^{-n}}(y)\exists i\exists p\in B_{2^{-m}}(x):g_i(p)\leq g_j(q)+z+\varepsilon\\
        {}&\Rightarrow\quad\hat g_m(x)\leq\hat g_n(y)+z+\varepsilon.
    \end{aligned}
\end{multline*}
These implications remain valid if we replace $g_k(r)$ by $g'_k(r):=\min_{i\leq k}g_i(r)$, which has monotonicity properties that will become relevant later. We thus get
\begin{multline}\label{eq:lower-semicont-abbrev}
    g(x)\leq g(y)+z\quad\Leftrightarrow\\
    \forall\varepsilon>0\forall m\exists n\forall j\forall q\in B_{2^{-n}}(y)\exists i\exists p\in B_{2^{-m}}(x):g'_i(p)\leq g'_j(q)+z+\varepsilon.
\end{multline}
Let us now establish our version of Caristi's theorem.

\begin{theorem}[$\mathsf{RCA}_0$]\label{thm:caristi}
Consider a continuous function $f\colon\mathbb R\to\mathbb R$ and a lower semi-continuous function $g\colon\mathbb R\to[0,\infty)$, represented as above. If we have
\begin{equation*}
    g(f(x))\leq g(x)-|x-f(x)|\quad\text{for all }x\in\mathbb R,
\end{equation*}
then~$f$ has a fixed point.
\end{theorem}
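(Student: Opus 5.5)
The plan is to mimic the usual proof via iterates, but to work throughout with rational approximations rather than with a sequence of reals in the sense of Definition~\ref{def:seq}; by Proposition~\ref{prop:iterates-ACA} such a sequence cannot be expected to exist in general. Write $x_0=(x_{0i})$ for some starting point, for instance $x_0=0$. Define rationals recursively by $x_{n+1,i}=f(x_{ni})_i$, exactly as in the proof of Proposition~\ref{prop:iterates-ACA}. The target is the diagonal sequence $y=(y_k)$ with $y_k=x_{k,k}$, or a suitably delayed diagonal $y_k=x_{k,h(k)}$ for a computable $h$ that grows fast enough. I will show that $y$ is Cauchy and hence a real, and that $f(y)=y$. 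The last step should be easy: once $y$ is known to be a limit of the approximate iterates, Definition~\ref{def:continuous} and Lemma~\ref{lem:eps-delta} give $f(y)=y$. This is the same as the classical argument ``$x_n\to x$ and $x_{n+1}=f(x_n)\to f(x)$''. Note that no semicontinuity of~$g$ is needed at this final stage.

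The substance lies in the Cauchy property. Classically, the Caristi inequality telescopes to
\begin{equation*}
    \sum_{n<N}|x_n-x_{n+1}|\leq g(x_0)-g(x_N)\leq g(x_0).
\end{equation*}
So the total length of the orbit is bounded, and the partial sums of step lengths form a bounded monotone sequence. That sequence is Cauchy by the Specker-type argument of Remark~\ref{rem:FastCauchyACA}, which is available in~$\mathsf{RCA}_0$ since it needs only a $\Sigma^0_1$/bounded search. To transfer this to the rational level, I would use the abbreviation~(\ref{eq:lower-semicont-abbrev}) together with the monotone approximations $g'_k(r)=\min_{i\leq k}g_i(r)$. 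Fix $\varepsilon>0$ and a finite stage. The $\forall\exists$-form of~(\ref{eq:lower-semicont-abbrev}) then supplies rational witnesses $p$ near $x_{n+1}$ and $q$ near $x_n$ with
\begin{equation*}
    g'_i(p)\leq g'_j(q)-|x_n-f(x_n)|+\varepsilon.
\end{equation*}
The aim is to replace the real values $g(x_n)$ by a computable, essentially non-increasing sequence of rationals $G_n$ built from such witnesses. This should yield, for each fixed $N$ and all sufficiently late approximation stages $i$, the bound
\begin{equation*}
    \sum_{n<N}|x_{ni}-x_{n+1,i}|\leq G_0+\varepsilon.
\end{equation*}
The error terms are to be summable, for instance by using $\varepsilon\cdot 2^{-n-1}$ at step~$n$.

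I expect this bookkeeping to be the main obstacle, and in particular the fact that we cannot use induction on $\Pi^0_2$ statements such as $|x_n-x_{n+1}|\leq\ldots$; the proof of Proposition~\ref{prop:Banach} runs into the same difficulty. My first attempt would be to make the whole construction ``dynamical'', as in the proofs of Theorem~\ref{thm:ivt} and Proposition~\ref{prop:Baire-category}. At stage~$k$ one computes $y_k$ together with a finite certificate: candidate witnesses $(n,m,i,j,p,q)$ for the instances of~(\ref{eq:lower-semicont-abbrev}) along the first $k$ iterates, where the witnesses are searched only below~$k$. The monotonicity of $g'_k$ in $k$ ensures that a certificate, once found, remains valid, apart from finitely many revisions. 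Given $\varepsilon$, the bounded total length then lets us pick $M$ beyond which the remaining orbit length is below~$\varepsilon$. The point to check is that this choice can be made with a $\Sigma^0_1$-search, as for Specker sequences, rather than through $\Sigma^0_2$-collection. Once $M$ is fixed, the diagonal $y_k$ for large $k$ stays within $\varepsilon$ of $x_{M,k}$, and $x_{M,k}$ itself stabilises because $x_M$ is a genuine real: this follows from continuity applied finitely often, and only $\Sigma^0_1$-induction is needed for a fixed~$M$. Together these give the Cauchy property of~$y$.
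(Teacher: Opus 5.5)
Your plan cannot be completed, and the obstacle is not bookkeeping. You aim to show that the orbit $x_n=f^n(x_0)$ of a prescribed starting point converges to a real, but in $\mathsf{RCA}_0$ this limit need not exist.

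To see this, work in the $\omega$-model of computable sets. Take an increasing sequence of rationals $r_s\in[0,1)$ that is computable in the halting problem while its limit $x^*$ is not (a relativized Specker sequence, cf.~Remark~\ref{rem:FastCauchyACA}). Put $\phi(x)=\sum_s 2^{-s-1}\min(1,\max(0,r_s-x))$, $f(x)=x+\phi(x)$ and $g(x)=\max(0,1-x)$, the latter given by $g_i(q)=\max(0,1-q)$.
\begin{itemize}
\item We have $0\le\phi(x)\le\max(0,x^*-x)$, with $\phi(x)>0$ precisely for $x<x^*$. Hence $g(f(x))=g(x)-|x-f(x)|$ for all $x$, and the fixed points of $f$ are exactly the reals $x\geq x^*$.
\item Using limit-lemma approximations $r_{s,i}\to r_s$, the computable rationals $f(q)_i=q+\sum_{s\le i}2^{-s-1}\min(1,\max(0,r_{s,i}-q))$ satisfy the condition of Definition~\ref{def:continuous} at every real.
\item In this model all your iterates $x_n$ from $x_0=0$ are genuine reals. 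But they increase to $x^*$, which by Ho's theorem (recalled in the introduction) has no computable slow Cauchy representation.
\end{itemize}
However your $y$ is computed from the $x_{ni}$ (with a computable delay or a certificate search), it lies in the model. So your claim that $y_k$ eventually stays within $\varepsilon$ of $x_{M,k}$ must fail there, since it would make $y$ a real equal to $\lim_n x_n=x^*$.

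Two auxiliary steps are also unsupported. First, that $x_M$ is a real for arbitrary $M$ is an induction along $n\le M$ on the $\Pi^0_3$-property ``$x_n$ is Cauchy''. This is exactly where Proposition~\ref{prop:iterates-ACA} invokes $\Pi^0_3$-induction; it is not a $\Sigma^0_1$-induction. Second, without the uniformity of Definition~\ref{def:seq}, nothing makes $x_{k,h(k)}$ track $x_k$ (compare Lemma~\ref{lem:Cauchy-Q-R}).

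The missing idea is to give up the orbit of $f$ and build a pseudo-orbit of rationals steered by $g$. At a rational point $q$, the requirement $q'\in B_{2^{-n}}(q)$ in (\ref{eq:lower-semicont-abbrev}) is met by $q'=q$ for every $n$, so the existential quantifier over $n$ becomes harmless. Together with the fact that $g'_i(p)$ is non-increasing in $i$, the hypothesis at $q$ yields a $\Pi^0_2$-statement: all $q,m,j$ admit an $i>j$ and a rational $p\in B_{2^{-m+1}}(f(q)_i)$ with $g'_i(p)\le g'_j(q)-|q-p|+2^{-m+1}$.

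An unbounded search then produces $i(m+1)>i(m)$ and $q_{m+1}$ of this kind for $q=q_m$ and $j=i(m)$.
\begin{itemize}
\item The rationals $p_m=g'_{i(m)}(q_m)\geq 0$ are almost decreasing and hence Cauchy, as for Specker sequences.
\item The bound $|q_m-q_n|\le p_m-p_n+2^{-m+2}$ transfers this to $(q_m)$.
\item Finally, $x=(q_m)$ is a fixed point by the domain condition of $f$ at $x$ alone, since $q_{m+1}$ is close to $f(q_m)_{i(m+1)}$ with $i(m+1)\to\infty$.
\end{itemize}
The freedom for $q_{m+1}$ to deviate from the true next iterate is what lets this construction reach a fixed point that actually exists in the model. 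In the example above, it necessarily lands strictly above $x^*$.
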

\begin{proof}
Consider $q\in\mathbb Q$ and interpret $g(f(q))\leq g(q)-|q-f(q)|$ according to (\ref{eq:lower-semicont-abbrev}). For an arbitrary~$m$ and $\varepsilon=2^{-m}$, we find an~$n$ with
\begin{equation*}
    \forall j\forall q'\in B_{2^{-n}}(q)\exists i\exists p\in B_{2^{-m}}(f(q)):g_i'(p)\leq g_j'(q')-|q-f(q)|+2^{-m}.
\end{equation*}
If we take~$q'=q$, then the condition $q'\in B_{2^{-n}}(q)$ is satisfied independently of~$n$. For any~$j$, we thus find an~$i$ and a $p\in B_{2^{-m}}(f(q))$ with
\begin{equation*}
    g_i'(p)\leq g_j'(q)-|q-f(q)|+2^{-m}\leq g_j'(q)-|q-p|+2^{-m+1}.
\end{equation*}
Since $g_i'(p)$ is non-increasing in~$i$, we may assume~$i>j$ and $|f(q)-f(q)_i|\leq 2^{-m}$, which yields~$p\in B_{2^{-m+1}}(f(q)_i)$. We have thus established
\begin{equation*}
    \forall q\in\mathbb Q\forall m,j\exists i>j\exists p\in B_{2^{-m+1}}(f(q)_i):g_i'(p)\leq g_j'(q)-|q-p|+2^{-m+1}.
\end{equation*}
Starting with $i(0)=0$ and $q_0=0$, we can now recursively search for $i(m+1)>i(m)$ and $q_{m+1}\in B_{2^{-m+1}}(f(q_m)_{i(m+1)})$ with
\begin{equation*}
    g'_{i(m+1)}(q_{m+1})\leq g'_{i(m)}(q_m)-|q_m-q_{m+1}|+2^{-m+1}.
\end{equation*}
Let us abbreviate $p_m=g'_{i(m)}(q_m)$, so that we get
\begin{equation*}
|q_m-q_{m+1}|\leq p_m-p_{m+1}+2^{-m+1}.
\end{equation*}
For $m<n$, we now obtain
\begin{equation*}
|q_m-q_n|=\sum_{k=m}^{n-1}|q_k-q_{k+1}|=\sum_{k=m}^{n-1}p_k-p_{k+1}+2^{-k+1}\leq p_m-p_n+2^{-m+2}.
\end{equation*}
In particular, this entails that the sequence $(p_m)$ is almost decreasing in the sense that we have $p_n\leq p_m+2^{-m+2}$ for $m<n$. One can conclude that $(p_m)$ and hence $(q_m)$ is Cauchy, which allows us to form the real number $x=(q_m)$.

Let us show that $x$ is a fixed point of~$f$. Towards a contradiction, we assume that we have~$|x-f(x)|>\varepsilon$ for some~$\varepsilon>0$. By the continuity of~$f$, we find an~$M\in\mathbb N$ with $|f(q_m)_{i(m+1)}-f(x)|\leq\varepsilon/3$ for all~$m\geq M$. If we choose $m$ large enough, we also get $|x-q_m|\leq\varepsilon/3$ and
\begin{multline*}
    |q_m-f(q_m)_{i(m+1)}|\leq|q_m-q_{m+1}|+|q_{m+1}-f(q_m)_{i(m+1)}|\\
    {}\leq p_m-p_{m+1}+2^{-m+2}\leq\frac\varepsilon3.
\end{multline*}
This yields
\begin{equation*}
    |x-f(x)|\leq|x-q_m|+|q_m-f(q_m)_{i(m+1)}|+|f(q_m)_{i(m+1)}-f(x)|\leq\varepsilon,
\end{equation*}
which contradicts the choice of~$\varepsilon$.
\end{proof}

To conclude this section, we comment on the representation of lower semi continuous functions.

\begin{remark}\label{rmk:semi-cont}
    Consider a continuous function $g\colon\mathbb R\to[0,\infty)$ that is represented by Cauchy sequences~$(g(q)_i)_{i\in\mathbb N}$ as in Definition~\ref{def:continuous}. We may assume that all rationals $g(q)_i$ are non-negative. By taking $h_i(q)=g(q)_i$, we get a representation of a lower semi-continuous function~$h\colon\mathbb R\to[0,\infty]$ as above. However, the functions~$g$ and~$h$ do not coincide in general. To see this, note that the represented function $g$ does not change when we set $g(q)_0=0$ for all~$q$, while this makes~$h$ constant zero. In contrast, we can achieve $g=h$ when the Cauchy sequences $(g(q)_i)$ are fast in the sense that we always have $|g(q)-g(q)_i|\leq 2^{-i}$. In this case, we consider the modified approximations $g^\star(q)_i=g(q)_i+2^{-i+2}$, which satisfy
    \begin{align*}
        g^\star(q)_{i+1}=g(q)_{i+1}+2^{-i+1}\leq g(q)+2^{-i-1}&+2^{-i+1}\\
        \leq g(q)_i+2^{-i}+2^{-i-1}&+2^{-i+1}<g(q)_i+2^{-i+2}=g^\star(q)_i.
    \end{align*}
    The sequences $(g^\star(q)_i)$ are still Cauchy and represent the same function~$g$. For notational convenience, we assume that we have $g(q)_{i+1}\geq g(q)_i$ to begin with. Under this assumption, we get
    \begin{equation*}
        \hat h_n(x)=\inf\{g(p)_i:i\in\mathbb N\text{ and }p\in B_{2^{-n}}(x)\cap\mathbb Q\}=\inf\{g(p):p\in B_{2^{-n}}(x)\cap\mathbb Q\}.
    \end{equation*}
    Given that $g$ is continuous, this yields
    \begin{equation*}
        h(x)=\sup_{n\in\mathbb N}\hat h_n(x)=g(x).
    \end{equation*}
    To summarize, without arithmetical comprehension we have no proof that every continuous function is lower semi-continuous. As a consequence, we cannot apply Theorem~\ref{thm:caristi} to an arbitrary continuous function~$g$. This is related to the crucial monotonicity property $g'_i(q)\geq g'_{i+1}(q)$ that was used in the proof of the theorem. At the same time, the theorem applies to a wide range of lower semi-continuous functions~$g$. This includes all continuous~$g$ such that the values $g(q)$ on arguments $q\in\mathbb Q$ can be given by fast Cauchy sequences or even just by non-increasing Cauchy sequences (think of right-computable reals). Let us also note that there are no restrictions on the continuous function $f$ from Theorem~\ref{thm:caristi}. So our theorem (proved in~$\mathsf{RCA}_0$) subsumes all instances that are covered by the classical approach (where Caristi's theorem for continuous functions needs arithmetical comprehension, by Theorem~2.3 of~\cite{peng-yamazaki}; see \cite{caristi} for further important results on the reverse mathematics of Caristi's theorem).
\end{remark}

\bibliographystyle{amsplain}
\bibliography{Slow-Cauchy}

\end{document}